\theoremstyle{plain}
\newtheorem{theorem}{Theorem}[section]
\newtheorem{lemma}[theorem]{Lemma}
\newtheorem{proposition}[theorem]{Proposition}
\theoremstyle{remark}
\newtheorem{definition}[theorem]{Definition}
\newtheorem{remark}[theorem]{Remark}
\newtheorem{example}[theorem]{Example}
\newtheorem{corollary}[theorem]{Corollary}
\renewcommand{\eqref}[1]{\textnormal{(\ref{#1})}}
\numberwithin{equation}{section}
\newcommand{\bfx}{\mathbf{x}}
\newcommand{\R}{\mathbb{R}}
\newcommand{\C}{\mathbb{C}}
\def\bsi{{\mathrm{i}}}
\def\Oh{{\mathcal  O}}
\newcommand{\mm}[1]{{\color{black} #1}}
\newcommand{\q}{\quad}   
\newcommand{\red}[1]{{\color{black} #1}}
\title[On nodal and generalized singular structures of Laplacian eigenfunctions]{On nodal and generalized singular structures of Laplacian eigenfunctions and applications to inverse scattering problems}
\author{Xinlin Cao}
\address{Department of Mathematics, Hong Kong Baptist University, Kowloon, Hong Kong, China.}
\email{xlcao.math@foxmail.com}
\author{Huaian Diao}
\address{School of Mathematics and Statistics, Northeast Normal University,
Changchun, Jilin 130024, China.}
\email{hadiao@nenu.edu.cn}
\author{Hongyu Liu}
\address{Department of Mathematics, Hong Kong Baptist University, Kowloon, Hong Kong, China.}
\email{hongyu.liuip@gmail.com, hongyuliu@hkbu.edu.hk}
\author{Jun Zou}
\address{Department of Mathematics, The Chinese University of Hong Kong, Hong Kong, China.}
\email{zou@math.cuhk.edu.hk}
\begin{document}

\begin{abstract}

In this paper, we present some novel and intriguing findings on the geometric structures of Laplacian eigenfunctions and their deep relationship to the quantitative behaviours of the eigenfunctions 
{in two dimensions}. {We introduce a new notion of generalized singular lines of the Laplacian eigenfunctions, and carefully study these singular lines and 
the nodal lines. The studies reveal that the intersecting angle between two of those lines is closely related to 
the vanishing order of the eigenfunction at the intersecting point. We establish an accurate and comprehensive quantitative characterisation of the relationship. Roughly speaking, 
the vanishing order is generically infinite if the intersecting angle is {\it irrational}, 
%whereas if the intersecting angle is {\it rational}, then 
and the vanishing order is finite if the intersecting angle is rational. In fact, in the latter case, the vanishing order is the degree of the rationality. \mm{The theoretical findings are original and of significant interest in spectral theory. } Moreover, they are applied directly 
to some physical problems of great importance, including the inverse obstacle scattering problem and 
the inverse diffraction grating problem. It is shown in a certain polygonal setup that 
one can recover the support of the unknown scatterer as well as the surface impedance parameter by finitely many far-field patterns. Indeed, at most two far-field patterns are sufficient 
for some important applications. Unique identifiability by finitely many far-field patterns remains to be a highly challenging fundamental mathematical problem in the inverse scattering theory.}

\medskip 
\noindent{\bf Keywords} Laplacian eigenfunctions, geometric structures, nodal and generalised singular lines, inverse scattering, impedance obstacle, uniqueness, a single far-field pattern
 
\medskip
\noindent{\bf Mathematics Subject Classification (2010)}: 35P05, 35P25, 35R30, 35Q60

\end{abstract}

\maketitle

\section{Introduction}\label{sec:Intro}

\subsection{Background}
Laplacian eigenvalue problem is arguably the simplest PDE eigenvalue problem, which is stated as finding $u\in L^2(\Omega)$ and $\lambda\in\mathbb{R}_+$ such that
\begin{equation}\label{eq:eig}
-\Delta u=\lambda u,
\end{equation}
where $\Omega$ is an open set in $\mathbb{R}^n$, $n\geq 2$, \mm{under a certain homogeneous boundary condition on $\partial\Omega$, such as the Dirichlet, Neumann or Robin condition.} 
There is a long and colourful history on the spectral theory of Laplacian eigenvalues and eigenfunctions; see e.g.  \cite{CH,weyl11,weyl12,faber,hayman,HHM,krahn,kuttler,makai,ashbaugh,Shn,Zel}. It still remains an inspiring source for many technical, practical and computational developments \cite{burdzy96,burdzy00,Rayleigh,Sapoval91,Sapoval93,Zel}. 

In this paper, we are concerned with the geometric structures of Laplacian eigenfunctions as well as their applications to inverse scattering theory. There is a rich theory on the geometric properties of Laplacian eigenfunctions in the literature; see e.g. the review papers \cite{jakobson,kuttler,grebenkov}. The celebrated Courant's nodal domain theorem states that the first Dirichlet eigenfunction doest not change sign in $\Omega$ and the $n$th eigenfunction (counting multiplicity) $u_n$ has at most $n$ nodal domains. In particular, a famous  conjecture concerning the topology of the 2nd Dirichlet eigenfunction states that in $\mathbb{R}^2$, the nodal line of $u_2$ divides $\Omega$ by intersecting its boundary at exactly two points if $\Omega$ is convex (cf. \cite{yau}). A large amount of literature has been devoted to this conjecture and significant progresses have been made in various situations \cite{payne73,lin,melas,alessandrini,hoffmann,schoen,freitas,jerison,fournais,freitas02,freitas07,kennedy,kt15}.
%Let $\xi_n \leq n$ be the number of nodal domains of the $n$th Dirichlet Laplacian eigenfunction $u_n$. Pleijel \cite{pleijel} proved   that
% $
%{\overline \lim}_{n \rightarrow \infty} \frac{\xi_n}{n} =\frac{4}{j^2_{0,1}}\approx 0.691\ldots,
% $
%where $j_{0,1}$ is the first positive zero of the zeroth-order Bessel functions of the first kind $J_0(z)$.  {\color{blue} Besides, Pleijel sharpened Courant's nodal domain theorem by showing that the upper bound $n$ of $x_n$ is attained only for a finite number of Dirichlet Laplacian eigenfunctions \cite{pleijel}. The result in \cite{lewy} implied that no nontrivial lower bound for $\xi_n$ is possible.} 
The ``hot spots" conjecture formulated by J. Rauch in 1974 says that the maximum of the second Neumann eigenfunction is attained at a boundary point. This conjecture was proved to be true for a class of planar domains \cite{banuelos}, \mm{but the statement may not be correct in general; see several counterexamples \cite{bass,burdzy99,burdzy05,jerison00}. The hot spots conjecture was proved recently for a new class of domains (possibly non-convex and non-Euclidean) \cite{kt19}.} Another famous longstanding problem in spectral theory is the Schiffer conjecture which states that if a Neumann eigenfunction takes constant value on the boundary, then the domain must be a ball. The Schiffer conjecture is closely related to the Pompeium property in integral geometry (cf. \cite{yau}) and has also an interesting connection to invisibility cloaking (cf. \cite{liu}). In \cite{grieser96}, the nodal set of the second Dirichlet Laplacian eigenfunction was proved to be close to a straight line when the eccentricity of a bounded and convex domain $\Omega \subset  \R^2$ is large. \mm{On the other hand, 
one may also have some estimate 
about the size of eigenfunctions, e.g., the size of the first eigenfunction can be estimated} uniformly for all convex domains; see \cite{grieser98}. \mm{Other geometrical characteristics may also be analyzed, 
e.g., the volume of  a set on which an eigenfunction is positive \cite{nadirashvili}, and lower and upper bounds for the length of the  nodal line of an eigenfunction of the Laplace operator in two-dimensional Riemannian  manifolds \cite{bruning,nadirashvili88}. }

\mm{As we see from the above, the study of the geometric structures of Laplacian eigenfunctions is intriguing and challenging.} In this paper, we present novel findings on the geometric structures of Laplacian eigenfunctions and their deep relationship to the quantitative behaviours of the eigenfunctions in $\mathbb{R}^2$. One of the motivations of our study comes from the inverse scattering theory, which is concerned with the recovery of an obstacle from the measurement of the wave pattern due to an impinging field. It is a longstanding problem in the inverse scattering theory whether one can establish the one-to-one correspondence between the geometric shape of an obstacle and its scattering wave pattern due to a single impinging wave field. This is also known as the Schiffer problem in the inverse scattering theory.

\subsection{Motivation and discussion of our main findings}

\mm{We first introduce three definitions for the descriptions of our main results.}

\begin{definition}\label{def:1}
For a Laplacian eigenfunction $u$ in \eqref{eq:eig}, a line segment $\Gamma_h\subset\Omega$ is called a \emph{nodal line} of $u$ \mm{if $u=0$ on ${\Gamma_h}$,  where $h\in\mathbb{R}_+$ signifies the length of the line segment. For a given complex-valued function $\eta\in L^\infty(\Gamma_h)$, if it holds that}
\begin{equation}\label{eq:normal1}
\partial_\nu u(\mathbf{x})+\eta(\mathbf{x})u(\mathbf{x})=0,\quad \mathbf{x}\in\Gamma_h,
\end{equation} 
then $\Gamma_h$ is referred to as a \emph{generalized singular line} of $u$. \mm{For the special case that 
$\eta\equiv 0$ in \eqref{eq:normal1}, a generalized singular line is also called a \emph{singular line} of $u$ in $\Omega$. We use ${\mathcal N}_{\Omega}^\lambda$, ${\mathcal S}_{\Omega}^\lambda$ and ${\mathcal M}_{\Omega}^\lambda$ to denote the sets of nodal, singular and generalized singular lines, respectively, 
of an eigenfunction $u$ in \eqref{eq:eig}.}
\end{definition}

According to Definition~\ref{def:1}, a singular line is obviously a generalized singular line. 
However, \mm{for unambiguity and definiteness, $\Gamma_h$ in \eqref{eq:normal1} is called 
a generalized singular line only if $\eta$ is not identically zero,} otherwise it is referred to as a singular line. 
\mm{We remark that as $u$ is (real) analytic inside $\Omega$, a nodal line or a singular line can be extended by the analytic continuation within $\Omega$. We are mainly concerned with the local properties of the eigenfunction $u$ around the intersecting point of two lines,  
%from Definition~\ref{def:1}. 
and hence the length $h$ of $\Gamma_h$ does not play an essential role as long as it is positive. 
We further emphasize that no any specific boundary condition is specified for $u$ on $\partial\Omega$ in Definition~\ref{def:1},  
that is, all our subsequent results hold for a generic Laplacian eigenfunction as long as it satisfies \eqref{eq:eig}, 
therefore applicable to the particular Dirichlet, Neumann or Robin eigenfunction.}

\begin{definition}\label{def:2}
Let $\Gamma$ and $\Gamma'$ be two line segments in $\Omega$ that intersect with each other. Denote by $\theta=\angle (\Gamma, \Gamma')\in(0, 2\pi)$ the intersecting angle. Set
\begin{equation}\label{eq:normal2}
\theta=\alpha\cdot 2\pi, \ \ \alpha\in(0,1). 
\end{equation}
$\theta$ is called an \emph{irrational angle} if $\alpha$ is an irrational number; and it is called a rational angle of degree $q$ if $\alpha=p/q$ with $p, q\in\mathbb{N}$ and irreducible. 
\end{definition}

\begin{definition}\label{def:3}
Let $u$ satisfy \eqref{eq:eig} and be a nontrivial eigenfunction. For a given point $\mathbf{x}_0 \in \Omega$, if there  exits a number $N \in {\mathbb N}\cup\{0\}$ such that  
\begin{equation}\label{eq:normal3}
	\lim_{r\rightarrow +0} \frac{1}{r^m} \int_{B(\mathbf{x}_0,r)}\, |u(\mathbf{x})|\, {\rm d} \mathbf{x}=0\ \ \mbox{for}\ \ m=0,1,\ldots, N,
\end{equation}
	where $B(\mathbf{x}_0,r)$ is a disk  centered at $\mathbf{x}_0$ with radius $r\in\mathbb{R}_+$, we say that $u$ vanishes at $\mathbf{x}_0$ up to the order $N$. The largest possible $N$ such that \eqref{eq:normal3} is fulfilled is called the vanishing order of $u$ at $\mathbf{x}_0$, and we write 
\begin{equation}\label{eq:normal4}
\mathrm{Vani}(u; \mathbf{x}_0)=N. 
\end{equation}
If \eqref{eq:normal3} holds for any $N\in\mathbb{N}$, then we say that the vanishing order is infinity. 
\end{definition}

%\begin{remark}\label{re:14}
%	In Proposition \ref{pro:32}, we shall clarify the relationship between $\mathrm{Vani}(u; \mathbf{x}_0)$ and the order of the lowest nontrivial homogeneous polynomial in the Taylor expansion of $u$ at $\mathbf{x}_0$. Indeed, they coincide with each other. 
%\end{remark}

{Since $u$ to \eqref{eq:eig} is analytic in $\Omega$, it is straightforward to verify that $\mathrm{Vani}(u; \mathbf{x}_0)$ is actually the lowest order of the  homogeneous polynomial in the Taylor series expansion of $u$ at $\mathbf{x}_0$.} Moreover, by the strong unique continuation principle, we know that if the vanishing order of $u$ is infinity at a given point $\mathbf{x}_0\in\Omega$, then $u$ is identically zero in $\Omega$. 

%In what follows, we shall derive a more accurate characterisation of the vanishing order of a Laplacian eigenfunction in terms of its spherical wave expansion. 

To motivate our study, we next consider a simple example which connects the vanishing order of an eigenfunction with the intersecting angle of its nodal lines. Set 
\begin{equation}\label{eq:ab1}
u(\mathbf{x})=J_n(\sqrt{\lambda} \,r)\sin n\theta,\quad \mathbf{x}=(x_1,x_2)=r(\cos\theta,\sin\theta)\in\Omega,
\end{equation}
where $J_n$ is the first-kind Bessel function of order $n\in\mathbb{N}$ (cf. Section 3.4 of \cite{CK}). $u(\mathbf{x})$ is a single spherical wave mode and satisfies \eqref{eq:eig}, and we can verify that 
\begin{equation}\label{eq:V1}
\mathrm{Vani}(u; \mathbf{0})=n. 
\end{equation}
In particular, it is noted that if one considers $u$ in a central disk $B_{r_0}$ with $\sqrt{\lambda}\,r_0$ being a root of $J_n(t)$ or $J_n'(t)$, then $u$ is actually a Dirichlet or Neumann eigenfunction in $\Omega=B_{r_0}$. However, 
we are more interested in the nodal lines of $u$, and it can be easily seen that 
\begin{equation}\label{eq:nodal1}
\Gamma_h^m:=\{\mathbf{x}=r(\cos\theta_m, \sin\theta_m); -h<r<h, \theta_m=\frac{m}{n}\pi\},\ \ m=0, 1, 2,\ldots, 2n-1. 
\end{equation}
The nodal lines in \eqref{eq:nodal1} all intersect at the origin and the intersecting angle between any two of them is rational of degree $n$. Clearly, this simple example reveals an intriguing
connection between the intersecting angle of two nodal lines and the vanishing order of the eigenfunction at the intersecting point. The aim of the present paper is to establish an accurate 
and comprehensive characterisation of such a relationship in the most general scenario. Roughly speaking, 
\mm{we shall show that the vanishing order is generically infinity if the intersecting angle is {\it irrational}, 
and the vanishing order is finite if the intersecting angle is {\it rational}.} In the latter case, the vanishing order is actually the degree of rationality of the intersecting angle.
 The result is not only established for the nodal lines, but also 
for the generalized singular lines. \mm{Hence our study uncovers} a deep relationship between the nodal and generalized singular structures of the Laplacian eigenfunctions and the quantitative behaviours of the eigenfunctions. To the best 
of our knowledge, it is the first time in the literature to present a systematic study 
of such intriguing connections between the vanishing orders 
of Laplacian eigenfunctions and the intersecting angles of their nodal/generalized singular lines. 
Hence, these results should be truly original and of significant interest in the spectral theory of Laplacian eigenfunctions, 
and possibly very closely related Helmholtz and Maxwell eigenfunctions as well.
In order to establish the aforementioned results, we make essential use of the spherical wave expansion of the eigenfunction, which in combination with the homogeneous conditions on the intersecting lines can yield certain recursive formulae of the Fourier coefficients. In order to trigger off the recursion for us to achieve the desired vanishing order of the eigenfunction, we need to show the vanishing of the first few polynomial terms (basically up to the third order) of the eigenfunction. For this part, we develop a ``localized" argument, which makes use of tools from microlocal analysis to derive accurate characterisation of the singularities of the eigenfunction at the intersecting point in the phase space.
\mm{This involves rather delicate and technical analysis, but it only requires the ``local" information of the eigenfunction in a corner region formed by the intersecting lines. This is in sharp contrast to the Fourier expansion, which requires the ``global" information of the eigenfunction around the intersecting point. In principle, the arguments that are developed in this work can be used to extend our study to the case with general second order elliptic operators as well as to the case that the nodal or singular lines are lying on the boundary $\partial\Omega$ of the domain.} However, we choose in this work 
to stick to the fundamental case with the Laplacian eigenfunctions and the nodal or generalised singular lines lying within the domain $\Omega$, and study the aforementioned technical extensions in our future work. 

\mm{In addition to their theoretical beauty and profundity,
our new spectral findings in this work can be directly applied 
to some physical problems of great practical importance, including the inverse obstacle scattering problem and 
the inverse diffraction grating problem. } 
By using the new critical connection 
between the intersecting angles of the nodal/generalized singular lines and the vanishing order of the eigenfunctions, we establish in a certain polygonal setup that one can recover the support of the unknown scatterer as well as the surface impedance parameter 
by finitely many far-field patterns. In fact, two far-field patterns are sufficient 
for some important applications. It is well known that unique identifiability by finitely many far-field patterns remains a highly challenging fundamental mathematical problem in the inverse scattering theory. Using the new spectral findings, 
we are able not only to establish the unique identifiability results for some open inverse scattering problems, 
especially for the impedance case, but also to develop a completely new approach 
that can treat the unique identifiability issue for several inverse scattering problems in a unified manner, especially 
in terms of general material properties. Most existing analytical theories for the unique identifiability 
of inverse scattering problems need to handle each special material property very differently.
We shall give more background introduction in Section~\ref{sec8} about these practical problems so that we can first focus on the theoretical study of the nodal and singular structures of the Laplacian eigenfunctions. 

{The rest of the paper is organized as follows. In Sections \ref{sec:irra} and \ref{sec:irrational gene}, we consider the case that the intersecting angle is irrational and show that the vanishing order is infinity. In Sections \ref{sec:3}, \ref{sec:pro1} and \ref{sec:pro2}, we study the case that the intersecting angle is rational and the vanishing order is finite. Section \ref{sec:3} is devoted to the presentation and discussions of the main results, whereas Sections \ref{sec:pro1} and \ref{sec:pro2} are concentrated on the corresponding rigorous proofs. 
Section~\ref{sect:discussion} discusses a generic condition required in Sections \ref{sec:irra}--\ref{sec:pro2}. 
 In Section \ref{sec8}, we establish the unique recovery results for the inverse obstacle problem and the inverse diffraction grating problem by at most two incident waves. }

\section{Irrational intersection and infinite vanishing order: two intersecting nodal and singular lines}\label{sec:irra}

In this section, we consider a relatively simple case that two nodal lines or two singular lines intersect at an irrational angle. We show that in such a case, the vanishing order of the eigenfunction at the intersecting point is generically infinite, and hence it is identically vanishing in $\Omega$. 

\begin{theorem}\label{Th:u}
Let $u$ be a Laplacian eigenfunction to \eqref{eq:eig}. If there are two nodal lines $\Gamma_h^+$ and $\Gamma_h^-$ from ${\mathcal N}^\lambda_{\Omega }$ such that 
\begin{equation}\label{eq:cond1}
\Gamma_h^+\cap\Gamma_h^-=\mathbf{x}_0\in\Omega\quad\mbox{and}\quad \angle (\Gamma_h^+,\Gamma_h^-)=\alpha\cdot 2\pi, 
\end{equation}
where $\alpha\in (0, 1)$ is irrational. \mm{Then the vanishing order of $u$ at $\mathbf{x}_0$ is infinite, namely} 
\begin{equation}\label{eq:cond2}
\mathrm{Vani}(u; \mathbf{x}_0)=+\infty. 
\end{equation}
\end{theorem}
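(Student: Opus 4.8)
The plan is to exploit that $u$ is a regular solution of the Helmholtz equation $\Delta u+\lambda u=0$ near the intersection point, so that it admits a convergent spherical-wave (Fourier--Bessel) expansion, and then to read off the two homogeneous nodal conditions on $\Gamma_h^{\pm}$ as linear constraints on the expansion coefficients. First I would translate $\mathbf{x}_0$ to the origin and fix a disk $B(\mathbf{0},h)\subset\Omega$ on which $u$ is analytic. After a rotation I may assume that $\Gamma_h^+$ lies along the directions $\theta\in\{0,\pi\}$ and that $\Gamma_h^-$ lies along $\theta=\phi:=\alpha\cdot 2\pi$. Writing $k=\sqrt{\lambda}$ and expanding the $2\pi$-periodic angular profile of $u$ in a Fourier series, the radial coefficients are forced by \eqref{eq:eig} to solve Bessel's equation, and regularity at $r=0$ singles out the first-kind Bessel functions, so that
\begin{equation*}
u(r,\theta)=\sum_{m=0}^{\infty}J_m(kr)\bigl(a_m\cos m\theta+b_m\sin m\theta\bigr),\qquad 0\le r<h,
\end{equation*}
with $b_0=0$, the series converging together with its derivatives on compact subsets.

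The second step is to impose the nodal condition on $\Gamma_h^+$. Since $u(r,0)=\sum_{m\ge 0}a_m J_m(kr)=0$ for all $r\in(0,h)$, I would argue that every $a_m$ vanishes. The key observation is that $J_m(kr)=\tfrac{(k/2)^m}{m!}\,r^m\bigl(1+O(r^2)\bigr)$, so $J_m$ contributes to the Taylor expansion in $r$ only through the powers $r^{m},r^{m+2},\ldots$ sharing the parity of $m$. Collecting the coefficient of $r^N$ in the analytic function $\sum_m a_m J_m(kr)$, which vanishes identically on $(0,h)$, yields $a_N(k/2)^N/N!$ plus a linear combination of the $a_m$ with $m<N$ and $m\equiv N\ (\mathrm{mod}\ 2)$; an induction on $N$ (with $N=0,1$ as base cases) then forces $a_m=0$ for every $m$. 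Equivalently, this is simply the linear independence of $\{J_m(kr)\}_{m\ge 0}$ on any interval, a consequence of their distinct vanishing orders at the origin.

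It then remains to impose the condition on $\Gamma_h^-$. With $a_m\equiv 0$ we have $u(r,\theta)=\sum_{m\ge 1}b_m J_m(kr)\sin m\theta$, and the vanishing $u(r,\phi)=\sum_{m\ge 1}b_m\sin(m\phi)\,J_m(kr)=0$ on $(0,h)$ gives, by the same independence of the Bessel functions, $b_m\sin(m\phi)=0$ for all $m\ge 1$. Here the irrationality of $\alpha$ enters decisively: $\sin(m\phi)=\sin(2\pi m\alpha)=0$ would force $2m\alpha\in\mathbb{Z}$, i.e. $\alpha$ rational, which is excluded, so $\sin(m\phi)\neq 0$ and hence $b_m=0$ for every $m\ge 1$. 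Consequently all expansion coefficients vanish, $u\equiv 0$ in $B(\mathbf{0},h)$, every Taylor coefficient of $u$ at $\mathbf{x}_0$ is zero, and therefore $\mathrm{Vani}(u;\mathbf{x}_0)=+\infty$; the strong unique continuation principle recorded after Definition~\ref{def:3} even upgrades this to $u\equiv 0$ throughout $\Omega$.

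The step I expect to be the main (though still elementary) obstacle is the rigorous justification that the two nodal identities decouple coefficient-by-coefficient, that is, the linear independence of the system $\{J_m(kr)\}$ on the short interval $(0,h)$ together with the legitimacy of extracting Taylor coefficients term by term from the infinite series. Once this is secured, the arithmetic dichotomy between $\sin(2\pi m\alpha)=0$ and $\alpha$ irrational delivers the conclusion at once, and it is precisely this crude dichotomy that will have to be replaced by a finer, quantitative analysis — via the recursive relations for the Fourier coefficients and the microlocal treatment of the lowest-order terms — in the rational and generalized-singular settings treated later.
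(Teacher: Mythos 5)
Your proposal is correct, but it proves Theorem~\ref{Th:u} by a genuinely different route than the paper does. The paper's own proof is geometric: it invokes the reflection principle for nodal lines (Lemma~\ref{lem:reflection}) to reflect $\Gamma_h^-$ across $\Gamma_h^+$ repeatedly, and the arithmetic fact (Lemma~\ref{lem:alpha}) that the sequence $\alpha_{n+1}=1-\lfloor 1/\alpha_n\rfloor\alpha_n$ tends to zero for irrational $\alpha_1$, thereby producing a set of nodal lines through $\mathbf{x}_0$ with dense directions, whence $u\equiv 0$ near $\mathbf{x}_0$ by continuity. You instead use the Fourier--Bessel expansion and the linear independence of $\{J_m(kr)\}_{m\geq 0}$ --- which is precisely the machinery the paper itself sets up in Lemmas~\ref{lem:25} and \ref{lem:27} but deploys only for the two-\emph{singular}-lines case (Theorem~\ref{th:28}); your adaptation to nodal lines, with the dichotomy $\sin(2\pi m\alpha)=0\Rightarrow\alpha\in\mathbb{Q}$ replacing the $2\times 2$ determinant computation there, is clean and correct. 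The step you flag --- extracting Taylor coefficients term by term from $\sum_m a_m J_m(kr)=0$ on $(0,h)$ --- is legitimate at the same level of rigor as the paper's Lemma~\ref{lem:27}: the coefficient bounds $|a_m J_m(k\rho)|\leq 2\|u\|_{L^\infty}$ (from the Fourier-coefficient formula at a fixed radius $\rho<h$) together with $J_m(t)\sim (t/2)^m/m!$ give geometric decay of $|a_m J_m(kr)|$ for $r<\rho$, so the rearrangement is justified by absolute convergence, and your parity-respecting induction on the coefficient of $r^N$ then closes the argument. What each approach buys: yours is shorter, avoids the number-theoretic lemma and the density-plus-continuity step, and delivers at once that every expansion coefficient vanishes on a disk (then $u\equiv 0$ in $\Omega$ by unique continuation, exactly as the paper's conclusion); the paper's reflection argument, on the other hand, requires no series expansion and exhibits the geometric mechanism (propagation of nodal lines) that the authors reuse for their inverse-scattering applications. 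One cosmetic point: your normalization ``$\Gamma_h^+$ lies along $\theta\in\{0,\pi\}$'' tacitly extends the segment through the vertex, which need not be given since $\mathbf{x}_0$ may be an endpoint of the segments; but as you only ever use the single ray $\theta=0$ (and the single ray $\theta=2\pi\alpha$), this is harmless.
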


In order to prove Theorem~\ref{Th:u}, \mm{we need some auxiliary results about reflection principles of 
nodal and singular lines from the following two lemmas.} In what follows, for a line segment $\Gamma\subset\mathbb{R}^2$, we define $\mathcal{R}_\Gamma$ to be the (mirror) reflection in $\mathbb{R}^2$ with respect to the line containing $\Gamma$.

\begin{lemma}\label{lem:reflection}
Let $u$ be a Laplacian eigenfunction to \eqref{eq:eig}. There hold the following reflection principles:
\begin{enumerate}
\item Let $\Gamma\in\mathcal{N}_\Omega^\lambda$ (resp. $\Gamma\in\mathcal{S}_\Omega^\lambda$) and $\Gamma'\in \mathcal{N}_\Omega^\lambda\cup\mathcal{S}_\Omega^\lambda$. If $\widetilde{\Gamma}=\mathcal{R}_{\Gamma'}(\Gamma)\subset\Omega$, then $\widetilde\Gamma\in\mathcal{N}_\Omega^\lambda$ (resp. $\widetilde\Gamma\in\mathcal{S}_\Omega^\lambda$);

\item Let $\Gamma\in\mathcal{M}_\Omega^\lambda$ with $\partial_\nu u+\eta u=0$ on $\Gamma$ and $\Gamma'\in\mathcal{N}_\Omega^\lambda$. If $\widetilde{\Gamma}=\mathcal{R}_{\Gamma'}(\Gamma)\subset\Omega$, then $\widetilde\Gamma\in\mathcal{M}_\Omega^\lambda$ satisfies $\partial_{\widetilde{\nu}} u+\widetilde{\eta}u=0$ on $\widetilde{\Gamma}$, where $\widetilde{\nu}=\mathcal{R}_{\Gamma'}(\nu)$ and $\widetilde{\eta}=\mathcal{R}_{\Gamma'}(\eta)$. 
\end{enumerate}
\end{lemma}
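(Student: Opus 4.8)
The plan is to reduce all three assertions to a single \emph{symmetry property} of the eigenfunction across the reflecting line $\Gamma'$, and then to transfer the homogeneous conditions from $\Gamma$ to $\widetilde\Gamma=\mathcal R_{\Gamma'}(\Gamma)$ by a pointwise computation. First I would establish that symmetry. Choose Cartesian coordinates so that the line containing $\Gamma'$ is the $x_1$-axis, whence $\mathcal R_{\Gamma'}$ becomes the linear involution $R\colon(x_1,x_2)\mapsto(x_1,-x_2)$, with $R=R^{\mathrm T}=R^{-1}$ and $\det R=-1$. Set $\hat u=u\circ\mathcal R_{\Gamma'}$; since $-\Delta$ commutes with the orthogonal map $R$, the function $\hat u$ again solves $\Delta\hat u+\lambda\hat u=0$ on $\mathcal R_{\Gamma'}(\Omega)$. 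Introduce the auxiliary function $w=u-\hat u$ when $\Gamma'\in\mathcal S_\Omega^\lambda$ and $w=u+\hat u$ when $\Gamma'\in\mathcal N_\Omega^\lambda$. In either case $w$ solves the Helmholtz equation, and a short check shows that $w$ \emph{and} its normal derivative $\partial_\nu w$ both vanish on $\Gamma'$: when $\Gamma'$ is nodal one uses $u=0$ on $\Gamma'$ together with the fact that $u+\hat u$ is even across $\Gamma'$ (hence $\partial_\nu w=0$ there), while when $\Gamma'$ is singular one uses $\partial_\nu u=0$ together with the fact that $u-\hat u$ is odd (hence $w=0$ there). Thus $w$ carries zero Cauchy data on the segment $\Gamma'$, and Holmgren's uniqueness theorem (the coefficients being analytic) forces $w\equiv0$ near $\Gamma'$; real analyticity of $u$ then propagates the resulting identity by analytic continuation to all of $\Omega\cap\mathcal R_{\Gamma'}(\Omega)$, in particular to neighborhoods of $\Gamma$ and $\widetilde\Gamma$. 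Equivalently,
\begin{equation}\label{eq:sym}
u\circ\mathcal R_{\Gamma'}=\epsilon\,u,\qquad \epsilon=\begin{cases}+1,&\Gamma'\in\mathcal S_\Omega^\lambda,\\-1,&\Gamma'\in\mathcal N_\Omega^\lambda.\end{cases}
\end{equation}

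With \eqref{eq:sym} in hand, the three statements follow by evaluating at a point $\mathbf x\in\widetilde\Gamma$ and writing $\mathbf y=\mathcal R_{\Gamma'}(\mathbf x)\in\Gamma$ (using that $\mathcal R_{\Gamma'}$ is an involution, so $\mathbf y$ indeed lies on $\Gamma$). Differentiating \eqref{eq:sym} via the chain rule $\nabla\hat u(\mathbf x)=R\,(\nabla u)(\mathbf y)$ gives $\nabla u(\mathbf x)=\epsilon\,R\,\nabla u(\mathbf y)$; since $R$ is orthogonal and $\widetilde\nu=R\nu$, the directional derivative transforms cleanly as $\partial_{\widetilde\nu}u(\mathbf x)=\widetilde\nu\cdot\nabla u(\mathbf x)=\epsilon\,(R\nu)\cdot(R\nabla u(\mathbf y))=\epsilon\,\partial_\nu u(\mathbf y)$. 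For part (1): if $u=0$ on $\Gamma$ then $u(\mathbf x)=\epsilon\,u(\mathbf y)=0$ for all $\mathbf x\in\widetilde\Gamma$, so $\widetilde\Gamma\in\mathcal N_\Omega^\lambda$; if instead $\partial_\nu u=0$ on $\Gamma$ then $\partial_{\widetilde\nu}u(\mathbf x)=\epsilon\,\partial_\nu u(\mathbf y)=0$, so $\widetilde\Gamma\in\mathcal S_\Omega^\lambda$. For part (2), where $\Gamma'$ is nodal so that $\epsilon=-1$, put $\widetilde\eta=\eta\circ\mathcal R_{\Gamma'}$; then for $\mathbf x\in\widetilde\Gamma$ one gets $\partial_{\widetilde\nu}u(\mathbf x)+\widetilde\eta(\mathbf x)u(\mathbf x)=-\bigl(\partial_\nu u(\mathbf y)+\eta(\mathbf y)u(\mathbf y)\bigr)=0$, which is exactly the claimed generalized singular condition on $\widetilde\Gamma$.

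I expect the genuine content to be concentrated in the symmetry step \eqref{eq:sym}; the subsequent transfer of conditions is essentially bookkeeping. The main point requiring care is the zero-Cauchy-data argument: one must verify that \emph{both} $w=0$ and $\partial_\nu w=0$ hold on $\Gamma'$ (not merely one of them), which is precisely where the even/odd parity of the chosen combination $w=u\mp\hat u$ is used, and only then may one invoke Holmgren/unique continuation to conclude $w\equiv0$. A secondary point to track carefully is the sign $\epsilon$ and the reflection chain rule for the normal derivative: the two sign flips appearing in part (2) (one in $u(\mathbf x)=-u(\mathbf y)$ and one in $\partial_{\widetilde\nu}u(\mathbf x)=-\partial_\nu u(\mathbf y)$) must cancel so that the impedance relation is preserved with $\widetilde\eta=\eta\circ\mathcal R_{\Gamma'}$. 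Getting the orthogonality identity $(R\nu)\cdot(R\nabla u)=\nu\cdot\nabla u$ and the relation $\widetilde\nu=R\nu$ correct is what guarantees this cancellation and fixes the correct sign in each of the three cases.
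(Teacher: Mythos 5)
The paper does not actually prove this lemma: it dismisses it with the remark that ``the reflection principles are rather standard for the Laplacian'' and a citation to \cite{Liu-Zou,Liu-Zou3}. Your proposal therefore supplies the argument the paper omits, and it is the standard one: the even/odd extension $w=u\pm u\circ\mathcal R_{\Gamma'}$, the observation that the parity of $w$ together with the nodal (resp.\ singular) condition on $\Gamma'$ forces \emph{both} $w=0$ and $\partial_\nu w=0$ on $\Gamma'$, Holmgren/unique continuation to get $w\equiv0$ locally, real-analyticity to propagate, and then the chain-rule bookkeeping $\nabla u(\mathbf x)=\epsilon\,R\,\nabla u(\mathbf y)$ with $R$ orthogonal to transfer the conditions to $\widetilde\Gamma$. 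All of this checks out, including the delicate point you flag yourself: in part (2) the two sign flips ($u(\mathbf x)=-u(\mathbf y)$ and $\partial_{\widetilde\nu}u(\mathbf x)=-\partial_\nu u(\mathbf y)$) cancel, so the impedance relation survives with $\widetilde\eta=\eta\circ\mathcal R_{\Gamma'}$, consistent with the lemma's $\widetilde\eta=\mathcal R_{\Gamma'}(\eta)$ and $\widetilde\nu=\mathcal R_{\Gamma'}(\nu)$.

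One point needs tightening. Analytic continuation yields $w\equiv0$ only on the \emph{connected component} of $\Omega\cap\mathcal R_{\Gamma'}(\Omega)$ containing $\Gamma'$, whereas you assert the symmetry identity ``to all of $\Omega\cap\mathcal R_{\Gamma'}(\Omega)$''; that set need not be connected, and if $\Gamma$ and $\widetilde\Gamma$ lay in a component not containing $\Gamma'$, the argument would give nothing about them (with $\Omega$ disconnected one can even manufacture outright counterexamples, since the lemma as stated imposes no relation between $\Gamma$, $\Gamma'$ beyond $\widetilde\Gamma\subset\Omega$). This gap is harmless in every use the paper makes of the lemma: there $\Gamma$ and $\Gamma'$ intersect at a point $\mathbf x_0$ and $h$ is taken small enough that everything lives in a disk $B_h(\mathbf x_0)\subset\Omega$, which is invariant under $\mathcal R_{\Gamma'}$ because its center lies on the line of $\Gamma'$, so the required connectivity is automatic. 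You should either restrict the conclusion to the component of $\Omega\cap\mathcal R_{\Gamma'}(\Omega)$ containing $\Gamma'$, or record the local-disk remark; with that amendment your proof is complete and, incidentally, slightly more general than stated, since the same computation with $\epsilon=+1$ also shows a generalized singular line reflects correctly across a singular line, a case the lemma's part (2) does not claim.
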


\mm{The reflection principles are rather standard for the Laplacian \cite{Liu-Zou,Liu-Zou3}.} 
The first reflection principle in Lemma~\ref{lem:reflection} shall be used in the proof of Theorem~\ref{Th:u}, 
whereas the second one is needed in our subsequent study. 

\begin{lemma}\label{lem:alpha}
	Let $0<\alpha_1<1$ be an irrational number. Define
	\begin{equation}\label{eq:21lim}
		\alpha_{n+1}=1-\left\lfloor  \frac{1}{\alpha_n} \right \rfloor   \alpha_{n},\quad n=1,2,\ldots, 
	\end{equation}		
	where $\lfloor \cdot  \rfloor$ is the floor function. Then it holds that 
	$$
	\lim_{n\rightarrow \infty} \alpha_n=0. 
	$$
\end{lemma}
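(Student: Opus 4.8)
The plan is to first make the recursion well posed and force monotone convergence, and then to rule out a positive limit by turning the recursion, near its limit, into an affine map whose multiplier is incompatible with convergence. The only genuinely delicate point is the jump behaviour of the floor function at the limit.

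First I would check by induction that every $\alpha_n$ lies in $(0,1)$ and is irrational, so that $\lfloor 1/\alpha_n\rfloor$ is always applied to an irrational number larger than $1$. Indeed, if $\alpha_n\in(0,1)$ is irrational then $k_n:=\lfloor 1/\alpha_n\rfloor\ge 1$, and since $1/\alpha_n$ is irrational the inequalities $k_n<1/\alpha_n<k_n+1$ are strict; multiplying by $\alpha_n>0$ gives $k_n\alpha_n<1<(k_n+1)\alpha_n$, whence
\[ 0<\alpha_{n+1}=1-k_n\alpha_n<\alpha_n<1 . \]
Moreover $\alpha_{n+1}=1-k_n\alpha_n$ is irrational because $k_n\in\mathbb{N}$. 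Consequently $\{\alpha_n\}$ is strictly decreasing and bounded below by $0$, so it converges to some $L\in[0,\alpha_1)$, and it remains only to show $L=0$. (I note in passing the equivalent Gauss-map form $\alpha_{n+1}=\alpha_n\{1/\alpha_n\}$, which makes the strict decrease transparent but does not by itself pin down the limit.)

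Next I would argue by contradiction, assuming $L>0$. Since $\alpha_n\downarrow L$ with $\alpha_n>L$ for every $n$, we have $1/\alpha_n\uparrow 1/L$ strictly from below, so the integer $k_n=\lfloor 1/\alpha_n\rfloor$ stabilizes for all large $n$ to a constant $k\ge 1$: namely $k=\lfloor 1/L\rfloor$ when $1/L\notin\mathbb{Z}$, and $k=1/L-1$ when $1/L\in\mathbb{Z}$ (here the strict approach from above is exactly what lowers the floor value by one). Thus eventually $\alpha_{n+1}=1-k\alpha_n$; letting $n\to\infty$ gives $L=1/(k+1)$, and subtracting this fixed point yields
\[ \alpha_{n+1}-L=-k\,(\alpha_n-L)\qquad\text{for all large }n . \]
If $k\ge 2$, then $|\alpha_{n+1}-L|=k|\alpha_n-L|\ge 2|\alpha_n-L|$ grows geometrically, which is impossible since $\alpha_n\to L$ and $\alpha_n\neq L$ (the $\alpha_n$ are irrational while $L=1/(k+1)$ is rational). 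If $k=1$, then $\alpha_{n+2}=2L-\alpha_{n+1}=\alpha_n$, contradicting the strict monotonicity established above. Hence $L>0$ is untenable and $L=0$.

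The step I expect to be the main obstacle is the stabilization of $k_n$, i.e.\ controlling the discontinuity of $x\mapsto\lfloor 1/x\rfloor$ at $x=L$. The generic case $1/L\notin\mathbb{Z}$ is immediate, but the borderline case $1/L\in\mathbb{Z}$ requires the observation that $\alpha_n$ approaches $L$ strictly from above, so that $\lfloor 1/\alpha_n\rfloor=1/L-1$ rather than $1/L$; this keeps the induced fixed point consistent ($1/(k+1)=L$) and lets the uniform multiplier argument close the proof. Everything else is routine once well-posedness and monotonicity are in hand.
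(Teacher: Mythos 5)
Your proof is correct, and at the decisive step it takes a genuinely different route from the paper's. Both arguments open the same way: from $0<\alpha_{n+1}=\alpha_n(1/\alpha_n-\lfloor 1/\alpha_n\rfloor)<\alpha_n$ one gets $\alpha_n\downarrow L\ge 0$, and one assumes $L>0$ for contradiction. The paper then identifies $\lim_n\lfloor 1/\alpha_n\rfloor$ with $\lfloor 1/L\rfloor$ by expanding the floor in its sawtooth Fourier series and passing to the limit term by term, after which $L=1-\lfloor 1/L\rfloor L$ gives $1=1/L-\lfloor 1/L\rfloor$, impossible for a fractional part. You avoid the series entirely: since $\alpha_n\downarrow L$ \emph{strictly}, $1/\alpha_n\uparrow 1/L$ strictly from below, so $k_n=\lfloor 1/\alpha_n\rfloor$ stabilizes at an integer $k\ge 1$ (namely $\lfloor 1/L\rfloor$ generically, and $1/L-1$ in the borderline case $1/L\in\mathbb{Z}$); the recursion is then affine with fixed point $L=1/(k+1)$, and $\alpha_{n+1}-L=-k(\alpha_n-L)$ is incompatible with convergence ($k\ge 2$ forces geometric growth of $|\alpha_n-L|>0$; $k=1$ forces period two against strict monotonicity — in fact the sign flip alone already contradicts $\alpha_{n+1}>L$, so your case split could be compressed). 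What your route buys is robustness exactly at the discontinuity of the floor: the paper's limit interchange is justified only where the sawtooth is continuous, i.e.\ when $1/L\notin\mathbb{Z}$, yet your own Case-1 computation shows any positive limit must satisfy $1/L=k+1\in\mathbb{Z}$, so the borderline case is the \emph{only} one that can occur; there $\lfloor 1/\alpha_n\rfloor\to 1/L-1\neq\lfloor 1/L\rfloor$, the paper's limit identity fails, and its limiting equation degenerates to the tautology $\beta_0=1-(1/\beta_0-1)\beta_0=\beta_0$, yielding no contradiction. Your stabilization-plus-multiplier argument closes precisely this gap, at the cost of a short case analysis that the paper's slicker but less careful computation hides.
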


\begin{proof}
We prove this lemma by  contradiction. Since 
$$
0<\frac{1}{\alpha_n} -\left\lfloor  \frac{1}{\alpha_n} \right \rfloor  <1 ,
$$
we know that the sequence $\{\alpha_n \}$ is bounded below and decreasing. Suppose that 
$$
\lim_{n\rightarrow \infty} \alpha_n=\beta_0>0. 
$$
 It is easy to see that $\{\alpha_n \}\subset  \R \backslash {\mathbb Q}$, where  ${\mathbb Q}$ is the set of rational numbers. Since $1/\alpha_n$ is not an integer, we know from \cite[p. 15, Eq.(2.1.7)]{Titchmarsh} that the Fourier series expansion of $\lfloor 1/\alpha_n  \rfloor$ as follows
\begin{equation}\label{eq:ll1}
\left\lfloor  \frac{1}{\alpha_n} \right \rfloor=\frac{1}{\alpha_n}-\frac{1}{2}+\frac{1}{\pi }\sum_{k=1}^\infty \frac{\sin(2k\pi /\alpha_n)}{k}. 
\end{equation}
Taking limits as $n\rightarrow \infty$ on the both sides of \eqref{eq:ll1}, we can prove that
$$
\lim_{n\rightarrow \infty } \left\lfloor  \frac{1}{\alpha_n} \right \rfloor= \left\lfloor  \frac{1}{\beta_0} \right \rfloor.
$$
Moreover, by taking the limits on both sides of \eqref{eq:21lim}, we also have that
\begin{equation}\label{eq:ll2}
\beta_0=1-\left\lfloor  \frac{1}{\beta_0} \right \rfloor \beta_0. 
\end{equation}
Dividing $\beta_0$ on the both sides of \eqref{eq:ll2}, we finally arrive at a contradiction
$$
1=  \frac{1}{\beta_0} -\left\lfloor  \frac{1}{\beta_0} \right \rfloor,
$$
which completes the proof. 
\end{proof}

We are now ready to present the proof of Theorem~\ref{Th:u}. 

\begin{proof}[Proof of Theorem~\ref{Th:u}]

By a rigid motion if necessary, we can assume without loss of generality that $\mathbf{x}_0=\mathbf{0}$ is the origin and $\Gamma_h^-$ lies in the $x_1^+$-axis while $\Gamma_h^{+}$ has the angle $2\alpha \pi $ away from $\Gamma^-_h$ in the  clockwise direction. Since we are mainly concerned with the local properties, it is assumed that $h\in\mathbb{R}_+$ is sufficiently small such that $\mathcal{R}_{\Gamma_h^+}(\Gamma_h^-)\Subset\Omega$. By Lemma~\ref{lem:reflection}, we see that
\[
\Gamma_{1,h}:=\mathcal{R}_{\Gamma_h^+}(\Gamma_h^-)\in\mathcal{N}_\Omega^\lambda. 
\]
By repeating the above reflection, one can find a nodal line $\Gamma_{{\ell_1},h}$ which is the nearest one to $x_1^+$-axis in the sense that
\[
{\Gamma_{\ell_1,h}} \in\mathcal{N}_\Omega^\lambda,\quad \ell_1=\left\lfloor  \frac{1}{\alpha} \right \rfloor \quad\mbox{and}\quad  \alpha_{1}=1-\left\lfloor  \frac{1}{\alpha} \right \rfloor   \alpha < \alpha, \ \ \angle(\Gamma_{\ell_1, h},\Gamma_h^-)=\alpha_1\cdot 2\pi.
\]
Next, by replacing $\Gamma_h^+$ with $\Gamma_{\ell_1,h}$ and repeating the above reflection argument, one can find a nodal line $\Gamma_{{\ell_2},h}$ which is the nearest one to $x_1^+$-axis such that
\[
{\Gamma_{\ell_2,h}} \in\mathcal{N}_\Omega^\lambda,\quad \ell_2=\left\lfloor  \frac{1}{\alpha_1} \right \rfloor \quad\mbox{and}\quad  \alpha_{2}=1-\left\lfloor  \frac{1}{\alpha_1} \right \rfloor   \alpha_1 < \alpha_1, \ \ \angle(\Gamma_{\ell_2, h},\Gamma_h^+)=\alpha_2\cdot 2\pi.
\]
Clearly, by repeating this reflection argument, one can find a series of nodal lines $\Gamma_{\ell_n, h}$ such that
\begin{equation}\label{eq:aa0}
\ell_n=\left\lfloor  \frac{1}{\alpha_{n-1} } \right \rfloor\ \ \mbox{and}\ \ \alpha_n=1-\left\lfloor  \frac{1}{\alpha_{n-1} } \right \rfloor   \alpha_{n-1}, \ \ \angle(\Gamma_{\ell_n, h},\Gamma_h^+)=\alpha_n\cdot 2\pi.
\end{equation}
Then by Lemma \ref{lem:alpha} we have 
\begin{equation}\label{eq:aa1}
\lim_{n\rightarrow \infty} \alpha_n=0. 
\end{equation}
By combining \eqref{eq:aa0}, \eqref{eq:aa1} and the reflection principle in Lemma~\ref{lem:reflection}, one can actually show that there exists a dense set of nodal lines around the origin. Hence, by the continuity of $u$ one readily has that $u$ is identically zero. 
This completes the proof of Theorem~\ref{Th:u}.
\end{proof}

The next theorem is concerned with the intersection of two singular lines.

\begin{theorem}\label{th:28}
Let $u$ be a Laplacian eigenfunction to \eqref{eq:eig}. If there are two singular  lines $\Gamma_h^+$ and $\Gamma_h^-$ from ${\mathcal S}^\lambda_{\Omega }$ such that 
\begin{equation}\label{eq:cond1}
\Gamma_h^+\cap\Gamma_h^-=\mathbf{x}_0\in\Omega\quad\mbox{and}\quad \angle (\Gamma_h^+,\Gamma_h^-)=\alpha\cdot 2\pi, 
\end{equation}
where $\alpha\in (0, 1)$ is irrational, then there hold that 
%\begin{subequations*}
		\begin{align*}
		&\mathrm{Vani}(u; \mathbf{x}_0)= 0,\quad\ \ \ \, \mbox{if } u({\mathbf x}_0) \neq 0;  \\
		%\label{eq:342a} \\
		&\mathrm{Vani}(u; \mathbf{x}_0)= +\infty,\quad \mbox{if } u({\mathbf x}_0) = 0.
		% \label{eq:342a} 
% \label{eq:342b}
		\end{align*}
		Moreover, if $u({\mathbf x}_0) \neq 0$, we have the following expansion of $u$ in a neighborhood of  $\mathbf x_0$:
		\begin{equation*}%\label{new u}
	u(\mathbf{x})=u({\mathbf x}_0) J_0(\sqrt{\lambda}r),\quad \mathbf  x=\mathbf x_0+ r (\cos \theta, \sin \theta). 
	\end{equation*}
	where $J_0(t)$ is the zeroth  Bessel function of the first kind. 
%	\end{subequations*}
%\begin{equation}\label{eq:cond2}
%\mathrm{Vani}(u; \mathbf{x}_0)=+\infty. 
%\end{equation}
\end{theorem}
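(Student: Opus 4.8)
The plan is to mimic the reflection-based construction used in the proof of Theorem~\ref{Th:u} and then combine it with the spherical wave (Fourier--Bessel) expansion of $u$ to extract the explicit radial form. First I would normalize by a rigid motion so that $\mathbf{x}_0 = \mathbf{0}$, take $h$ small enough that all reflected segments stay inside $\Omega$, and apply the first reflection principle in Lemma~\ref{lem:reflection}: reflecting the singular line $\Gamma_h^-$ across $\Gamma_h^+$, and then iterating, keeps producing singular lines, since the reflection of a line in $\mathcal{S}_\Omega^\lambda$ across a line in $\mathcal{S}_\Omega^\lambda$ again belongs to $\mathcal{S}_\Omega^\lambda$. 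Because $\alpha$ is irrational, the recursive angle sequence $\alpha_n = 1 - \lfloor 1/\alpha_{n-1}\rfloor\,\alpha_{n-1}$ tends to $0$ by Lemma~\ref{lem:alpha}, so exactly as in Theorem~\ref{Th:u} the iterated reflections yield a family of singular lines through $\mathbf{0}$ whose directions form a dense subset of the circle of directions.

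The second step converts ``being a singular line'' into a condition on the Fourier coefficients. Around $\mathbf{0}$ I would write the spherical wave expansion
\[
u(r,\theta) = a_0 J_0(\sqrt\lambda\, r) + \sum_{n=1}^\infty J_n(\sqrt\lambda\, r)\,(a_n\cos n\theta + b_n\sin n\theta),
\]
which is legitimate since $u$ solves \eqref{eq:eig} and is analytic. On a line through the origin at angle $\phi$ the unit normal is the angular direction, so $\partial_\nu u = r^{-1}\partial_\theta u|_{\theta=\phi}$, and the singular-line condition $\partial_\nu u = 0$ becomes
\[
\sum_{n=1}^\infty n\, J_n(\sqrt\lambda\, r)\bigl(-a_n\sin n\phi + b_n\cos n\phi\bigr) = 0 \quad\text{for all admissible } r.
\]
Since the functions $\{J_n(\sqrt\lambda\, r)\}_{n\ge 1}$ vanish at $r=0$ to mutually distinct orders and are therefore linearly independent, this forces $-a_n\sin n\phi + b_n\cos n\phi = 0$ for every $n\ge 1$ and every direction $\phi$ in the dense set.

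In the third step I would invoke density: for each fixed $n$ the real-analytic function $\phi\mapsto -a_n\sin n\phi + b_n\cos n\phi = \sqrt{a_n^2+b_n^2}\,\cos(n\phi+\delta_n)$ vanishes on a dense set and hence identically, which is impossible unless $a_n = b_n = 0$. Thus all nonradial modes drop out and $u(r,\theta) = a_0 J_0(\sqrt\lambda\, r)$, with $a_0 = u(\mathbf{0}) = u(\mathbf{x}_0)$ because $J_0(0)=1$. The dichotomy then reads off at once: if $u(\mathbf{x}_0)\ne 0$ then $u = u(\mathbf{x}_0)J_0(\sqrt\lambda\,r)$ is the asserted expansion and does not vanish at $\mathbf{x}_0$, so $\mathrm{Vani}(u;\mathbf{x}_0)=0$; if $u(\mathbf{x}_0)=0$ then $a_0=0$, hence $u\equiv 0$ near $\mathbf{x}_0$ and, by the strong unique continuation principle recalled after Definition~\ref{def:3}, $u\equiv 0$ in $\Omega$, whence $\mathrm{Vani}(u;\mathbf{x}_0)=+\infty$.

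I expect the main obstacle to be the rigorous justification that the iterated reflections really produce an angularly dense family of \emph{singular} lines: the bookkeeping of which line is reflected across which, the control needed to keep every reflected segment inside $\Omega$ for small $h$, and the verification that each reflection preserves membership in $\mathcal{S}_\Omega^\lambda$. The subsequent passage from ``vanishing on a dense angular set'' to ``identically vanishing,'' via linear independence of the Bessel modes and analyticity in $\phi$, is the other point requiring care, while the remaining computations are routine.
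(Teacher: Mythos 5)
Your proposal is correct and follows essentially the same route as the paper's own proof, which likewise combines the reflection principle (Lemma~\ref{lem:reflection}) with the spherical wave expansion and the linear independence of the Bessel modes (Lemmas~\ref{lem:25}--\ref{lem:27}) to force $a_n=b_n=0$ for all $n\geq 1$, leaving $u=(a_0+b_0)J_0(\sqrt{\lambda}\,r)$ with $a_0+b_0=u(\mathbf{x}_0)$ and the stated dichotomy. The only difference is the final step: where you kill each mode via density of the reflected singular directions plus analyticity in the angle $\phi$, the paper evaluates the angular-derivative condition only at the two original angles and observes that the resulting $2\times 2$ system per mode is nonsingular precisely because $\alpha$ is irrational, so the dense-family bookkeeping you flag as the main obstacle can in fact be bypassed.
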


\mm{In order to prove Theorem \ref{th:28}, we need some auxiliary results from the following three lemmas, 
especially about the spherical wave expansion, for which we refer to \cite{CK} for more details.
In what follows, $\mathrm{i}:=\sqrt{-1}$ is used for the imaginary unit. 
}

\begin{lemma}\cite[Section 3.4]{CK}\label{lem:25}
	Suppose that $u$ is an eigenfunction to  \eqref{eq:eig}, then $u$ has the following spherical wave expansion in polar coordinates around the origin:
	\begin{equation}\label{eq:uSex}
	u(\mathbf{x})=\sum_{n=0}^\infty\left( a_n e^{\bsi n \theta }+b_n e^{-\bsi n \theta }\right)J_n\left(\sqrt{\lambda}  r\right ),
	\end{equation}	
	where $\mathbf{x}=(x_1,x_2)=r(\cos \theta, \sin\theta)\in \R^2$, $\lambda$ is the corresponding eigenvalue of \eqref{eq:eig}, $a_n$ and $b_n$ are constants, and $J_n(t)$ is the $n$-th  Bessel function of the first kind. \end{lemma}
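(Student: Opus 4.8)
The plan is to establish \eqref{eq:uSex} by separation of variables in polar coordinates: I rewrite \eqref{eq:eig} as the Helmholtz equation $\Delta u+\lambda u=0$ with wavenumber $\sqrt\lambda$, reduce it to a family of Bessel-type ordinary differential equations for the angular Fourier coefficients of $u$, and then use the regularity of $u$ at the origin to discard the Bessel functions of the second kind. Since the assertion is purely local around $\mathbf 0$, I first fix $R>0$ with $\overline{B(\mathbf 0,R)}\subset\Omega$ and work on the punctured disk $0<r<R$, $\theta\in[0,2\pi)$. As noted in the text, $u$ is real-analytic in $\Omega$, hence $C^\infty$ and bounded together with all its derivatives on $\overline{B(\mathbf 0,R)}$. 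Writing $u=u(r,\theta)$, which is $2\pi$-periodic in $\theta$, I introduce the angular Fourier coefficients
\begin{equation*}
u_n(r)=\frac{1}{2\pi}\int_0^{2\pi}u(r,\theta)\,e^{-\bsi n\theta}\,\mathrm d\theta,\qquad n\in\Z,\ \ 0<r<R .
\end{equation*}
Smoothness of $u$ together with compactness justifies differentiation under the integral sign, so each $u_n$ is smooth on $(0,R)$; and since $u(r,\cdot)$ is smooth and periodic, its Fourier series $\sum_{n\in\Z}u_n(r)e^{\bsi n\theta}$ converges uniformly to $u(r,\cdot)$ for each $r$.

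Next, I insert the polar form of the Laplacian, $\Delta=\partial_r^2+r^{-1}\partial_r+r^{-2}\partial_\theta^2$, into $\Delta u+\lambda u=0$, multiply by $e^{-\bsi n\theta}/(2\pi)$ and integrate in $\theta$. Using the orthogonality of the exponentials and $\int_0^{2\pi}\partial_\theta^2u\,e^{-\bsi n\theta}\,\mathrm d\theta=-n^2\,2\pi\,u_n(r)$ (integration by parts, the boundary terms cancelling by periodicity), each coefficient is seen to satisfy
\begin{equation*}
u_n''(r)+\frac1r u_n'(r)+\Big(\lambda-\frac{n^2}{r^2}\Big)u_n(r)=0,\qquad 0<r<R .
\end{equation*}
The substitution $t=\sqrt\lambda\,r$ transforms this into Bessel's equation of integer order $|n|$, whose solution space is spanned by the regular solution $J_{|n|}(\sqrt\lambda r)$ and the singular solution $Y_{|n|}(\sqrt\lambda r)$ (the Bessel function of the second kind).

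Finally, I pin down the coefficients. Because $u$ is bounded near $\mathbf 0$, the definition of $u_n(r)$ gives $|u_n(r)|\le\sup|u|$, so $u_n$ stays bounded as $r\to+0$; but $Y_0(\sqrt\lambda r)\sim\log r$ and $Y_{|n|}(\sqrt\lambda r)\sim r^{-|n|}$ blow up at the origin, forcing the $Y_{|n|}$-component to vanish. Hence $u_n(r)=c_n J_{|n|}(\sqrt\lambda r)$ for suitable constants $c_n$. Summing the uniformly convergent Fourier series, separating $n\ge1$ from $n\le-1$, and invoking $J_{-n}=(-1)^nJ_n$ to rewrite the negative modes in terms of $J_n$, I collect the coefficients into new constants $a_n,b_n$ and arrive at \eqref{eq:uSex}. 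The step I expect to require the most care is the treatment of the coordinate singularity at $r=0$: polar coordinates degenerate there, so one must argue rigorously that mere boundedness (equivalently, the $O(r^{|n|})$ decay forced by smoothness of $u$ at the center) selects exactly the regular Bessel solutions, and that the reassembled Fourier--Bessel series genuinely converges back to $u$ rather than to a distributional artefact supported at the origin -- both of which follow from the $C^\infty$ regularity of $u$, but are the points where the argument must be made precise.
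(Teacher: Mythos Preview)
Your argument is correct and is precisely the standard separation-of-variables derivation for the Helmholtz equation in polar coordinates. Note, however, that the paper does not actually prove this lemma: it is stated with a citation to \cite[Section 3.4]{CK} and no proof is given, so there is no ``paper's own proof'' to compare against---your write-up simply supplies the standard argument that the cited reference contains.
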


\begin{lemma}\label{lem:26}
	Let $\Gamma$ be a line segment that can be parameterized in polar coordinates as 
	$\mathbf{x} \in \Gamma$, where $\mathbf{x}=r(\cos \theta, \sin \theta)$ 
	with $0\leq r < \infty \mbox{ and } \theta ~\mbox{fixed}$.  Let $\nu$ be the unit normal vector to $\Gamma$,  then it holds that 
	$$
	\frac{\partial u}{\partial \nu }= \pm \frac{1}{r} \frac{\partial u}{\partial \theta }. 
	$$ 
\end{lemma}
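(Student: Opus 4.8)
The plan is to recognize that $\Gamma$ is a radial segment emanating from the origin at the fixed angle $\theta$, so its natural frame is precisely the polar frame $\{\mathbf{e}_r,\mathbf{e}_\theta\}$ with $\mathbf{e}_r=(\cos\theta,\sin\theta)$ and $\mathbf{e}_\theta=(-\sin\theta,\cos\theta)$. The tangent direction of $\Gamma$ is $\mathbf{e}_r$, so any unit normal must be orthogonal to it, forcing $\nu=\pm\mathbf{e}_\theta$; the two signs correspond to the two choices of orientation of the normal and account for the $\pm$ in the claimed identity. First I would establish this elementary geometric fact, which reduces the lemma to comparing the directional derivative of $u$ along $\mathbf{e}_\theta$ with the angular partial derivative $\partial_\theta u$.

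The main computation is a one-line application of the chain rule. Writing $\mathbf{x}=(x_1,x_2)$ with $x_1=r\cos\theta$ and $x_2=r\sin\theta$, and holding $r$ fixed along $\Gamma$, I would compute
\begin{equation*}
\frac{\partial u}{\partial\theta}=\frac{\partial u}{\partial x_1}\frac{\partial x_1}{\partial\theta}+\frac{\partial u}{\partial x_2}\frac{\partial x_2}{\partial\theta}=-r\sin\theta\,\frac{\partial u}{\partial x_1}+r\cos\theta\,\frac{\partial u}{\partial x_2}=r\,\mathbf{e}_\theta\cdot\nabla u.
\end{equation*}
Dividing by $r$ gives $\tfrac{1}{r}\partial_\theta u=\mathbf{e}_\theta\cdot\nabla u$. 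On the other hand, the normal derivative is by definition $\partial_\nu u=\nu\cdot\nabla u=\pm\mathbf{e}_\theta\cdot\nabla u$, and combining the two identities yields $\partial_\nu u=\pm\tfrac{1}{r}\partial_\theta u$, which is exactly the asserted formula.

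Since $u$ is (real) analytic in $\Omega$, all the partial derivatives above exist and the chain rule applies without any regularity caveat, so there is in fact no genuine obstacle in the argument; it is a routine verification. The only point that warrants a word of care is the sign: the identity holds with $+$ when $\nu$ is chosen as $\mathbf{e}_\theta$ and with $-$ when $\nu=-\mathbf{e}_\theta$, and I would note explicitly that the sign in applications is fixed once the orientation of $\nu$ relative to the line is fixed. Equivalently, one may phrase the proof via the standard polar decomposition of the gradient $\nabla u=\partial_r u\,\mathbf{e}_r+\tfrac{1}{r}\partial_\theta u\,\mathbf{e}_\theta$ together with the orthonormality of $\{\mathbf{e}_r,\mathbf{e}_\theta\}$, which makes the cancellation of the radial term $\partial_r u\,\mathbf{e}_r$ against $\nu\perp\mathbf{e}_r$ transparent and delivers the same conclusion.
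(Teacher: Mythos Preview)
Your proof is correct and follows essentially the same idea as the paper: both use the chain rule in polar coordinates together with the observation that the unit normal to a radial segment is orthogonal to $\mathbf{e}_r$, i.e.\ $\nu=\pm\mathbf{e}_\theta$ (equivalently $|\varphi-\theta|=\pi/2$). The paper phrases it by first expressing $\partial_{x_1}u,\partial_{x_2}u$ in terms of $\partial_r u,\partial_\theta u$ and then dotting with $\nu=(\cos\varphi,\sin\varphi)$, whereas you go in the reverse direction by computing $\partial_\theta u$ via the chain rule and recognizing it as $r\,\mathbf{e}_\theta\cdot\nabla u$; these are the same computation read in opposite directions.
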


\begin{proof}
	 Using the polar coordinates and {\color{black} the chain rule}, we have
	\begin{align*}
	\frac{\partial u }{ \partial x_1}&=  \frac{\partial u }{ \partial r} \cos \theta -\frac{\sin  \theta   }{r} \frac{\partial u }{ \partial \theta},\quad 
	\frac{\partial u }{ \partial x_2}=  \frac{\partial u }{ \partial r} \sin \theta+\frac{\cos \theta }{r} \frac{\partial u }{ \partial \theta}. 
	\end{align*}
	Thus 
	\begin{align} \label{eq:directional derivatitve}
	\begin{split}
	\frac{\partial u }{ \partial \nu}&=\left( \frac{\partial u }{ \partial r}\Big|_{\Gamma } \cos \theta -\frac{\sin \theta   }{r} \frac{\partial u }{ \partial \theta}\Big |_{\Gamma} \right )\cos \varphi 
	+\left( \frac{\partial u }{ \partial r}\Big |_{\Gamma }  \sin \theta +\frac{\cos \theta }{r} \frac{\partial u }{ \partial \theta}\Big |_{\Gamma }  \right)\sin  \varphi, \\
%	\\
%	&\quad +\left( \frac{\partial u }{ \partial r}\Big |_{\Gamma }  \sin \theta +\frac{\cos \theta }{r} \frac{\partial u }{ \partial \theta}\Big |_{\Gamma }  \right)\sin  \varphi, 
		\end{split}
	\end{align}
	where $\nu =(  \cos \varphi, \sin \varphi )$ denotes the  unit normal vector to $\Gamma$ .  Using the fact $|\varphi-
	\theta|=\pi/2$, we complete the proof.
\end{proof}

\begin{lemma}\label{lem:27}
	Suppose that for $0<h \ll 1$ and $t\in (0,h)$, 
	\begin{equation}\label{eq:23pro}
	\sum_{n=0}^\infty \alpha_n J_n(t)=0,
	\end{equation}
	where $J_n(t)$ is the $n$-th  Bessel function of the first kind.  Then
	$$
	\alpha_n=0, \quad n=0,1,2,\ldots. 
	$$
\end{lemma}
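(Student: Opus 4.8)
The plan is to exploit the fact that each Bessel function $J_n$ has a zero of \emph{exact} order $n$ at the origin. Recall the power series
\[
J_n(t)=\sum_{k=0}^\infty \frac{(-1)^k}{2^{n+2k}\,k!\,(n+k)!}\,t^{n+2k},
\]
whose lowest-order term is $t^n/(2^n n!)$ (and which contains only powers $t^{n+2k}$). Writing $f(t):=\sum_{n=0}^\infty \alpha_n J_n(t)$, the hypothesis gives $f\equiv 0$ on $(0,h)$, and I would read off the vanishing of the coefficients $\alpha_n$ one at a time from the behaviour of $f$ as $t\to 0^+$, matching successive powers of $t$. The crucial structural observation is that the system relating the Taylor coefficients of $f$ to the $\alpha_n$ is \emph{triangular}: the coefficient of $t^n$ in $f$ involves only $\alpha_0,\dots,\alpha_n$, and the top one $\alpha_n$ enters with the nonzero weight $1/(2^n n!)$.

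Concretely, I would argue by induction on $n$. Since $J_0(0)=1$ and $J_m(0)=0$ for $m\ge 1$, continuity of $f$ at the origin gives $\alpha_0=\lim_{t\to 0^+}f(t)=0$. Assuming $\alpha_0=\cdots=\alpha_{n-1}=0$, we have $f(t)=\sum_{m\ge n}\alpha_m J_m(t)$, and dividing by $t^n$ and letting $t\to 0^+$ should yield
\[
0=\lim_{t\to 0^+}\frac{f(t)}{t^n}=\alpha_n\lim_{t\to 0^+}\frac{J_n(t)}{t^n}+\sum_{m>n}\alpha_m\lim_{t\to 0^+}\frac{J_m(t)}{t^n}=\frac{\alpha_n}{2^n\,n!},
\]
because $J_n(t)/t^n\to 1/(2^n n!)$ while $J_m(t)/t^n\to 0$ for every $m>n$. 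Hence $\alpha_n=0$, closing the induction. Equivalently, one may regard $f$ as a real-analytic function, invoke the identity theorem to deduce $f\equiv 0$ on a full neighbourhood of $0$, and then match the coefficient of $t^n$ in its Taylor expansion, which reduces to the same triangular elimination.

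The one genuine technical point, and the step I expect to be the main obstacle, is justifying the interchange of the limit $t\to 0^+$ with the infinite summation, that is, controlling the tail $\sum_{m>n}\alpha_m J_m(t)/t^n$. Here I would use the elementary bound $|J_m(t)|\le (t/2)^m e^{t^2/4}/m!$ for real $t>0$, which follows directly from the power series above, so that $|\alpha_m J_m(t)/t^n|\le |\alpha_m|\,h^{m-n}e^{h^2/4}/(2^m m!)$ for $t\in(0,h)$ and $m>n$. The majorant $\sum_m |\alpha_m|\,h^m/(2^m m!)$ converges under the mild growth of the coefficients that is automatic in our setting, where the $\alpha_n$ arise from the uniformly convergent spherical wave expansion of Lemma~\ref{lem:25}; a Weierstrass $M$-test (or dominated convergence) then legitimises the term-by-term passage to the limit. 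With this uniform control in hand, the induction above becomes rigorous and the lemma follows.
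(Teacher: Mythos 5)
Your proposal is correct and follows essentially the same route as the paper: substituting the power series \eqref{eq:jnt} for $J_n(t)$ and eliminating the coefficients $\alpha_n$ by comparing powers of $t$, exploiting the triangular structure coming from $J_n(t)=t^n/(2^n n!)+\Oh(t^{n+2})$. The only difference is that you make explicit the interchange of limit and summation (via the bound $|J_m(t)|\leq (t/2)^m e^{t^2/4}/m!$ and the growth control on $\alpha_m$ inherited from the expansion in Lemma~\ref{lem:25}), a technical point the paper's one-line proof leaves implicit.
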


\begin{proof}
    From \cite{CK}, we know that 
	\begin{equation}\label{eq:jnt}
	J_n(t)=\frac{t^n}{2^n n!} \left( 1+ \sum_{p=1}^\infty \frac{(-1)^p n!}{p!(n+p)!} \left(\frac{t }{2} \right)^{2p}  \right  ) . 
	\end{equation}
	Substituting \eqref{eq:jnt}  into  \eqref{eq:23pro} and comparing the coefficient of $t^n$ ($n=0,1,2,
	\ldots$), we can prove this lemma. 
\end{proof}

%\begin{lemma}\label{lem:28}
%	Suppose that $\alpha \in {\mathbb R} \backslash \mathbb Q $. For any $n\in \mathbb N$, there holds
%	$$
%	D=\left|\begin{matrix}
%		1 & -1\\
%		e^{\bsi n \alpha \pi  } &-e^{\bsi n \alpha \pi  } 
%	\end{matrix} \right| \neq 0. 
%	$$
%\end{lemma}
%\begin{proof}
%	It is easy to see that $D=-2\bsi \sin (\alpha n \pi) \neq 0$ for any $n \in \mathbb N$ since $\alpha\in \R \backslash \mathbb Q$.  
%\end{proof}

Now we are in a position to prove Theorem \ref{th:28}.
\begin{proof}[Proof of Theorem~\ref{th:28}] {\color{black} Without loss of generality, we assume that two  singular  lines $\Gamma_h^+$ and $\Gamma_h^-$ intersect  with each other at the origin. Using the reflection principle and a similar argument to the proof of Theorem \ref{Th:u}, for any line segment $\Gamma \Subset \Omega=\{\mathbf{x}; \mathbf{x}=r(\cos \beta, \sin \beta),\, 0\leq r\leq h \}$ pointed out from the origin we can show that
	$$
\frac{\partial u} {\partial \nu_\Gamma } \equiv 0 \mbox{ in } \Omega, 	$$
	where $\nu_{\Gamma}$ is a unit normal vector to $\Gamma$. 
	\mm{Recalling the expansion \eqref{eq:uSex}, it is easy to see from Lemma \ref{lem:26} that}
	\begin{equation}\label{eq:23utheta}
	\frac{\partial u}{\partial \theta}\Big |_{\Gamma} =\sum_{n=0}^\infty \bsi n \left( a_n e^{\bsi n \beta  }- b_n e^{-\bsi n \beta  }\right)J_n\left(\sqrt{\lambda}  r\right )=0.
	\end{equation}
	Taking $\beta=0$ in \eqref{eq:23utheta},  we derive from Lemma \ref{lem:27} that 
	$$
	\sum_{n=0}^\infty \bsi n \left( a_n - b_n \right)J_n\left(\sqrt{\lambda}  r\right )=0,
	$$
thus 
	$$
	\bsi n \left( a_n - b_n \right)=0, \quad n=1,2,\ldots. 
	$$
	Moreover, evaluating \eqref{eq:23utheta}  at  $ \beta=\alpha \pi $ where $\alpha \in \R \backslash \mathbb Q$,} we can deduce that
	$$
	\bsi n \left( a_n e^{\bsi n \alpha \pi }- b_n e^{-\bsi n \alpha \pi  } \right)=0, \quad n=1,2,\ldots. 
	$$
	Hence $a_n$ and $b_n$ satisfy 
	$$
	\begin{bmatrix}
	1&-1\\
	e^{\bsi n \alpha \pi }&-e^{-\bsi n \alpha \pi }\end{bmatrix} \begin{bmatrix}
	a_n \\ b_n
	\end{bmatrix}=0,\quad n=1,2,\ldots, 
	$$
	which readily implies that $a_n=b_n=0 $ for $n=1,2,\cdots$, {in view of Lemma \ref{lem:27}. }
	Therefore, $u(\mathbf{x})$ has the simplified form:
	\begin{equation*}%\label{new u}
	u(\mathbf{x})=(a_0+b_0)J_0(\sqrt{\lambda}r).
	\end{equation*}
	Finally, from the assumptions in the theorem we can easily see 
		\begin{align*}
		&a_0+b_0=  u({\mathbf 0} ) \neq 0,\quad \mbox{if } u({\mathbf 0} ) \neq 0;  \\
		%\label{eq:342a} \\
		&a_0+b_0=  u({\mathbf 0} ) = 0,\quad \mbox{if } u({\mathbf 0}) = 0,
		% \label{eq:342a} 
% \label{eq:342b}
		\end{align*}
		 which complete the proof.
	\end{proof}
	
%	\begin{remark}
%		From Theorem \ref{th:28}, if $u(\mathbf x_0)\neq 0$, where $x_0$ is the intersecting point of two singular line of a Laplacian eigenfunction $u$ with respect to a eigenvalue $\lambda$, then $u$ must have the following explicit expansion around $\mathbf x_0$ as
%		$$
%	u(	\mathbf x)=u(\mathbf x_0) J_0(\sqrt{\lambda} r), \quad \mathbf x=\mathbf x_0+ r (\cos \theta, \sin \theta). 
%		$$
%	\end{remark}
	
%	And since $u(\mathbf{x})$ is the eigenfunction to \eqref{eq:eig}, we know that it satisfies the following equation in polar coordinates as 
%	\begin{equation}\label{polar func}
%	\Delta u(\mathbf{x})+\lambda u(\mathbf{x})=\frac{\partial^2u}{\partial r^2}+\frac{1}{r}\frac{\partial u}{\partial r}+\frac{1}{r^2}\frac{\partial^2 u}{\partial\theta^2}+\lambda u=0
%	\end{equation}
%	By substituting the explicit expression of $J_0(\sqrt{\lambda}r)$ into \eqref{polar func}, we can derive that
%	\begin{align}
%	&(a_0+b_0)\bigg(\sum_{p=0}^{\infty}\frac{(-1)^p2p(2p-1)}{(p!)^2}\left(\frac{\sqrt{\lambda}}{2}\right)^{2p}r^{2p-2} \\\notag
%	&+\sum_{p=0}^{\infty}\frac{(-1)^p2p}{(p!)^2}\left(\frac{\sqrt{\lambda}}{2}\right)^{2p}r^{2p-2} 
%	+\lambda \sum_{p=0}^{\infty}\frac{(-1)^p}{(p!)^2}\left(\frac{\sqrt{\lambda}}{2}\right)^{2p}r^{2p}\bigg)=0,\notag
%	\end{align}
%	which can be further simplified as 
%	\begin{equation}
%	(a_0+b_0)\sum_{p=0}^{\infty}\left((2p)^2+\lambda r^2\right)\frac{(-1)^p}{(p!)^2}\left(\frac{\sqrt{\lambda}}{2}\right)^{2p}r^{2p-2}=0.
%	\end{equation}
%	And this leads to $a_0+b_0=0$, since  $r\in(0, h)$ is arbitrary.

\section{Rational intersection and finite vanishing order}\label{sec:3}

In this section, we consider the general case that two line segments from Definition~\ref{def:1} which intersect at a rational angle. Throughout the present section, we let $\Gamma_h^+$ and $\Gamma_h^-$ signify the two line segments which could be either one of the three types: nodal line, singular line or generalized singular line. It is also assumed that for a generalized singular line of the form \eqref{eq:normal1}, the parameter $\eta$ is a constant. Nevertheless, we would like to point out that for the case that $\eta$ is a function in the generalized singular line, we can derive similar conclusions, but through more tedious and subtle calculations. We shall address 
this point more in Section~\ref{sec:pro1}. Let $\eta_1$ and $\eta_2$ signify the parameters associated with $\Gamma_h^-$ and $\Gamma_h^+$ respectively if they are generalized singular lines. Set 
\begin{equation}\label{eq:angle1}
\angle(\Gamma_h^+,\Gamma_h^{-})=\alpha\cdot \pi, \quad \alpha\in (0, 2),
\end{equation}
where $\alpha$ is a rational number of the form $\alpha=p/q$ with $p, q\in\mathbb{N}$ and irreducible. Since the Laplace operator $-\Delta$ is invariant under rigid motions, without loss of generality, we assume throughout the rest of this work that
\begin{equation}\label{eq:angle}
	\Gamma_h^+\cap\Gamma_h^-=\mathbf{0}\in\Omega,
	\end{equation}
and $\Gamma^-_h$ coincides with the  ${x_1}^+$-axis while $\Gamma^+_h$ has the angle $\alpha\cdot \pi$ away from $\Gamma^-_h$ in the anti-clockwise direction; see Figure \ref{fig1} for a schematic illustration. 
 \begin{figure}%[h!]
	\centering
	\includegraphics[width=0.3\textwidth]{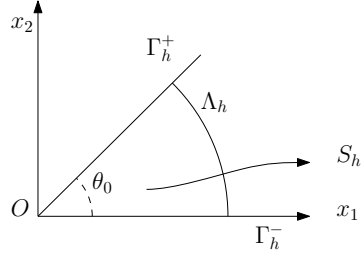}
	\caption{Schematic of the geometry of two intersecting lines with an angle $\alpha \pi$, where $\alpha\in (0, 1)\cap \mathbb{Q}$.}
	%\label{fig:geometry}
	\label{fig1}
\end{figure}    
Finally, we mainly deal with the case that the two intersecting line segments $\Gamma_h^+$ and $\Gamma_h^-$ form an angle satisfying
\begin{equation}\label{eq:angle2a}
\angle(\Gamma_h^+,\Gamma_h^{-})=\alpha\cdot \pi, \quad \alpha\in (0, 1),
\end{equation}
and {the other case with $1<\alpha<2$ can be reduced to the previous case by a straightforward argument. Indeed, for $1<\alpha<2$, we know that $\Gamma_h^+$ belongs to the half-plane of $x_2<0$ (see Fig.~\ref{fig1}). Let $\widetilde{\Gamma}_h^+$ be the extended line segment of length $h$ in the half-plane of $x_2>0$. By the analytic continuation, we know that $\widetilde{\Gamma}_h^+$ is of the same type of $\Gamma_h^+$. Hence, instead of studying the intersection of $\Gamma_h^+$ and $\Gamma_h^-$, one can study the intersection of $\widetilde{\Gamma}_h^+$ and $\Gamma_h^-$, and its relations to the vanishing order of the eigenfunction. Clearly, the angle between $\widetilde{\Gamma}_h^+$ and $\Gamma_h^-$ satisfies \eqref{eq:angle2a}. 
 }

For a clear exposition, the rest of the section is devoted to the presentation and discussion of our main results, and their proofs shall be postponed to Sections~\ref{sec:pro1} and \ref{sec:pro2}. In Section~\ref{sec:pro1}, we consider the case where the vanishing of the eigenfunction is up to the third order, whereas in Section~\ref{sec:pro2}, we consider the case of general vanishing orders. 

 \begin{theorem}\label{th:41}
Let $u$ be a Laplacian eigenfunction to \eqref{eq:eig}. Suppose that there are two generalized singular lines $\Gamma_h^+$ and $\Gamma_h^-$ from ${\mathcal M}^\lambda_{\Omega }$ such that \eqref{eq:angle} and \eqref{eq:angle2a} hold. 
%	\begin{equation}\label{eq:cond1}
%	\Gamma_h^+\cap\Gamma_h^-=\mathbf{x}_0\in\Omega\quad\mbox{and}\quad \angle (\Gamma_h^+,\Gamma_h^-)=\alpha\cdot \pi, 
%	\end{equation}
%	where $\alpha\in (0, 1)$ is rational. 
	Assume that $\eta_1 \equiv C_1$ and $\eta_2 \equiv  C_2$, where $C_1 $ and $C_2$ are two constants.  Then  the  Laplacian eigenfunction $u$ vanishes up to the order $N$ at $\mathbf{0}$ with respect to $\angle (\Gamma_h^+,\Gamma_h^-)=\alpha\cdot \pi$: 
%	Assume that $\eta_1 \equiv C_1$ and $\eta_2 \equiv  C_2$, where $C_1 $ and $C_2$ are two constants. Then we have the following vanishing property that u vanishes to order $N$ at the origin with respect to  $\theta_0$:
	\begin{equation}\label{eq:th1}
%			&N\geq 2,\quad \mbox{if }\eta_1(0) \neq 0, \, \theta_0\neq \pi ,  \\ %\label{eq:342a} \\
			N= n,\quad \mbox{if } u(\mathbf 0)  =0 \mbox{ and } \alpha  \neq \frac{q  }{p}, \, p=1,\ldots, n-1,  %\label{eq:342b}
	\end{equation}
	where $n\in\mathbb{N}$, $n\geq 3$ and for a fixed $p$, $ q=1,2,\ldots, p-1.$
	
	\end{theorem}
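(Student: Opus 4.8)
The plan is to work entirely from the spherical wave expansion of Lemma~\ref{lem:25}, writing $u(\mathbf{x})=\sum_{m=0}^{\infty}\bigl(a_m e^{\bsi m\theta}+b_m e^{-\bsi m\theta}\bigr)J_m(\sqrt{\lambda}\,r)$ in polar coordinates about $\mathbf{0}$, and to convert the two impedance conditions on $\Gamma_h^{\pm}$ into linear relations among the coefficients $(a_m,b_m)$. Since by the remark following Definition~\ref{def:3} the quantity $\mathrm{Vani}(u;\mathbf{0})$ equals the lowest degree of a nonvanishing homogeneous term in the Taylor expansion, and $(a_m e^{\bsi m\theta}+b_m e^{-\bsi m\theta})J_m(\sqrt{\lambda}\,r)$ contributes first at order $r^{m}$ (recall $r^m e^{\pm\bsi m\theta}=(x_1\pm\bsi x_2)^m$), the whole task reduces to showing $a_m=b_m=0$ for every $m\le n-1$. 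The base of the induction is immediate: only the combination $a_0+b_0$ survives at $m=0$, and $u(\mathbf{0})=a_0+b_0$, so the hypothesis $u(\mathbf{0})=0$ gives $a_0+b_0=0$.

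Next I would translate $\partial_\nu u+C_i u=0$. On $\Gamma_h^-$ ($\theta=0$) and on $\Gamma_h^+$ ($\theta=\alpha\pi$), Lemma~\ref{lem:26} replaces $\partial_\nu$ by $\pm r^{-1}\partial_\theta$, so after inserting the Bessel series \eqref{eq:jnt} each condition becomes a power series in $r$ that must vanish identically; matching coefficients is legitimate, the relevant injectivity being precisely Lemma~\ref{lem:27}. The decisive structural point is the order count: the term $r^{-1}\partial_\theta u$ places the coefficient $(a_m,b_m)$ at order $r^{m-1}$, whereas the impedance term $C_i u$ places it only at the higher order $r^{m}$. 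Consequently, isolating the coefficient of $r^{m-1}$ yields, at the top index $m$, the homogeneous system with matrix (up to nonzero scalar factors) $\begin{pmatrix}1&-1\\ e^{\bsi m\alpha\pi}&-e^{-\bsi m\alpha\pi}\end{pmatrix}$, whose determinant is $2\bsi\sin(m\alpha\pi)$ and, crucially, is independent of the impedance constants $C_1,C_2$; the constants influence only strictly lower-index contributions.

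The induction on $m$ then runs as follows. Suppose $a_\ell=b_\ell=0$ for all $\ell<m$ (with $a_0+b_0=0$); then every lower-index term and every Bessel-tail contribution to the coefficient of $r^{m-1}$ vanishes, so $(a_m,b_m)$ must satisfy the homogeneous system above. If $\sin(m\alpha\pi)\neq0$ the matrix is invertible and $a_m=b_m=0$. It remains to identify $\sin(m\alpha\pi)\neq0$ with the hypothesis: since $\alpha\in(0,1)$ we have $m\alpha\in(0,m)$, so $m\alpha\in\mathbb{Z}$ exactly when $\alpha=q/m$ for some $q\in\{1,\dots,m-1\}$; hence the assumption that $\alpha\neq q/p$ for $p=1,\dots,n-1$ and $q=1,\dots,p-1$ says precisely that $m\alpha\notin\mathbb{Z}$, i.e. $\sin(m\alpha\pi)\neq0$, for every $m=1,\dots,n-1$. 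Iterating from $m=1$ up to $m=n-1$ kills all these coefficients and gives $\mathrm{Vani}(u;\mathbf{0})\ge n$.

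The main obstacle is twofold. In the constant-$\eta$ case the recursion is self-starting, so the first difficulty is purely organizational: one must keep exact track of which pairs $(m,p)$ in \eqref{eq:jnt} feed each power $r^{j}$, and verify that under the inductive hypothesis all lower-index and tail terms indeed cancel, leaving only the $\sin(m\alpha\pi)$ determinant. The deeper difficulty---and the reason the authors isolate the orders up to three (whence the restriction $n\ge3$) and treat them by a localized microlocal argument rather than the global coefficient matching---is to secure the base orders using only the behaviour of $u$ in the corner region, which is what one needs in order to relax the constant-$\eta$ assumption or to allow $\Gamma_h^{\pm}\subset\partial\Omega$. Finally, sharpness deserves a word: the hypothesis yields the lower bound $\mathrm{Vani}(u;\mathbf{0})\ge n$, and the stated value $n$ is attained exactly when the order-$n$ system degenerates (so that $(a_n,b_n)$ may be nonzero), as realized by the model \eqref{eq:ab1}--\eqref{eq:V1} with $\alpha=1/n$.
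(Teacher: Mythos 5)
Your proposal is correct and follows essentially the same route as the paper's own proof (Subsection~\ref{sub61}): the recursive relations of Lemma~\ref{pro:62} obtained by matching powers of $r$ in the spherical wave expansion \eqref{eq:uSex}, followed by induction on $m$ using the nonvanishing of the determinant $2\bsi\sin(m\alpha\pi)$ (equivalently $1-e^{2\bsi m\theta_0}\neq 0$), with the rationality hypothesis entering exactly as you identify it. Your remark that the recursion is self-starting from $a_0+b_0=u(\mathbf 0)=0$ is consistent with the paper, whose computation likewise derives $a_1=b_1=0$ and $a_2=b_2=0$ from the same linear systems, so that its citation of the CGO-based Theorem~\ref{th:311} supplies nothing beyond the hypothesis $u(\mathbf 0)=0$ in this constant-impedance setting.
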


	In Theorem \ref{th:41}, we require that $n\geq 3$. That means, we exclude the special case that the intersecting angle is $\pi/2$. Nevertheless, we shall discuss this special case in Remark \ref{re:38} with more details in what follows. In the next two theorems, we consider the case of two  intersecting singular and nodal lines respectively.

	\begin{theorem}\label{th:47}
	Let $u$ be a Laplacian eigenfunction to \eqref{eq:eig}. Suppose that there are two nodal lines $\Gamma_h^+$ and $\Gamma_h^-$ from ${\mathcal N}^\lambda_{\Omega }$ such that  \eqref{eq:angle} and \eqref{eq:angle2a} hold. 
%	\begin{equation*}%\label{eq:cond1}
%	\Gamma_h^+\cap\Gamma_h^-=\mathbf{x}_0\in\Omega\quad\mbox{and}\quad \angle (\Gamma_h^+,\Gamma_h^-)=\alpha\cdot \pi, 
%	\end{equation*}
%	where $\alpha\in (0, 1)$ is rational.  
	Then  the  Laplacian eigenfunction $u$ vanishes up to the order $N$ at $\mathbf{0}$ with respect to $\angle (\Gamma_h^+,\Gamma_h^-)=\alpha\cdot \pi$:
	\begin{subequations}
		\begin{align*}
%			&N\geq 2,\quad \mbox{if }\eta_1(0) \neq 0, \, \theta_0\neq \pi ,  \\ %\label{eq:342a} \\
			&N= n,\quad \mbox{if } \alpha \neq \frac{q  }{p}, \, p=1,\ldots, n-1,  %\label{eq:342b}
				\end{align*}
				where $n\in\mathbb{N}$, $n\geq 3$ and for a fixed $p$, $ q=1,2,\ldots, p-1.$
	\end{subequations}
%	\begin{itemize}
%		\item $N=0$ if
%		\item $N=1$ if
%		\item $N=2$ if
%	\end{itemize}
	\end{theorem}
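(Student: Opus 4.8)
The plan is to extract the local vanishing order directly from the spherical wave expansion of $u$ about the intersection point, in the same spirit as the proof of Theorem~\ref{th:28}, but now exploiting the two \emph{Dirichlet} traces together with the arithmetic of the rational angle. With $\mathbf 0$ the intersection point, $\Gamma_h^-$ along the positive $x_1$-axis ($\theta=0$) and $\Gamma_h^+$ along $\theta=\alpha\pi$, Lemma~\ref{lem:25} gives $u(\mathbf x)=\sum_{m=0}^\infty(a_m e^{\bsi m\theta}+b_m e^{-\bsi m\theta})J_m(\sqrt\lambda\,r)$, and the whole argument reduces to determining which coefficient pairs $(a_m,b_m)$ are forced to vanish.

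First I would impose $u=0$ on $\Gamma_h^-$. Evaluating at $\theta=0$ gives $\sum_m(a_m+b_m)J_m(\sqrt\lambda\,r)=0$ for $r\in(0,h)$, so the linear independence of Bessel functions in Lemma~\ref{lem:27} forces $b_m=-a_m$ for all $m$. Imposing $u=0$ on $\Gamma_h^+$ at $\theta=\alpha\pi$ and substituting $b_m=-a_m$ yields $\sum_m a_m(e^{\bsi m\alpha\pi}-e^{-\bsi m\alpha\pi})J_m=0$, and a second application of Lemma~\ref{lem:27} gives $a_m\sin(m\alpha\pi)=0$ for every $m$. Hence $a_m=b_m=0$ unless $\sin(m\alpha\pi)=0$, i.e. unless $m\alpha\in\Z$; writing $\alpha=p/q$ in lowest terms with $\gcd(p,q)=1$, this happens precisely when $q\mid m$ (and the $m=0$ mode drops out since $a_0+b_0=0$).

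The next step is the arithmetic translation of the hypothesis. The requirement $\alpha\ne q'/p'$ for all $1\le q'<p'\le n-1$ says exactly that $\alpha$ admits no representation with denominator $\le n-1$, i.e. that the reduced denominator satisfies $q\ge n$. Therefore $q\nmid m$ for $1\le m\le n-1$, so $a_m=b_m=0$ for $m=0,1,\dots,n-1$ and $u=\sum_{m\ge n}2\bsi a_m\sin(m\theta)J_m(\sqrt\lambda\,r)$. Since $J_m(\sqrt\lambda\,r)=c_m r^m(1+O(r^2))$ with $c_m\ne0$ and $r^m e^{\pm\bsi m\theta}$ is a homogeneous harmonic polynomial of degree $m$, the Taylor series of $u$ at $\mathbf 0$ carries no nonzero homogeneous term of degree below $n$. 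By the remark after Definition~\ref{def:3} identifying the vanishing order with the lowest Taylor degree, this yields $\mathrm{Vani}(u;\mathbf 0)\ge n$, which is exactly the asserted ``$u$ vanishes up to order $n$''; choosing $n$ to be the reduced denominator $q$ gives the sharpest such bound.

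I expect the only genuinely delicate point to be the sharp (rather than lower-bound) statement, namely that the order does not exceed $n$ when $n$ equals the reduced denominator. The two nodal conditions leave the coefficients $a_q,a_{2q},\dots$ free, and if the leading surviving coefficient $a_q$ vanished the order would jump to a higher multiple of $q$; ruling this out requires the non-degeneracy $a_q\ne0$, which is precisely the generic condition examined in Section~\ref{sect:discussion}. This also explains the standing assumption $n\ge3$: for $n=2$ the hypothesis is vacuous and would erroneously assert order $2$ for every rational $\alpha$, while the borderline case $\alpha=1/2$ (angle $\pi/2$) is treated separately in Remark~\ref{re:38}. I note finally that one could instead run the unified coefficient-recursion of Sections~\ref{sec:pro1}--\ref{sec:pro2}, but for purely nodal lines the system decouples, so the direct Bessel argument above needs none of the microlocal ``seeding'' required in the impedance case.
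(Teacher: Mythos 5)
Your proposal is correct and follows essentially the same route as the paper's proof in Section~\ref{sec:pro2}: the spherical wave expansion of Lemma~\ref{lem:25}, the Bessel linear-independence Lemma~\ref{lem:27} applied to the two Dirichlet traces (your sequential elimination $b_m=-a_m$, then $a_m\sin(m\alpha\pi)=0$, is exactly the nonvanishing of the $2\times 2$ determinant in the paper's Lemma~\ref{pro:63}), and the identification of the vanishing order with the lowest surviving Taylor degree as in Lemma~\ref{pro:45}. Your arithmetic translation of the angle hypothesis and your remark that the sharpness (rather than the lower bound) hinges on the generic nonvanishing of the leading free coefficient match the paper's treatment; only your side comment that the $n=2$ case would be ``erroneous'' is off, since two crossing nodal lines always force vanishing to order $2$.
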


		\begin{theorem}\label{th:48}
	Let $u$ be a Laplacian eigenfunction to \eqref{eq:eig}. Suppose that there are two singular lines $\Gamma_h^+$ and $\Gamma_h^-$ from ${\mathcal S}^\lambda_{\Omega }$ such that  \eqref{eq:angle} and \eqref{eq:angle2a} hold. 
%	\begin{equation*}%\label{eq:cond1}
%	\Gamma_h^+\cap\Gamma_h^-=\mathbf{x}_0\in\Omega\quad\mbox{and}\quad \angle (\Gamma_h^+,\Gamma_h^-)=\alpha\cdot \pi, 
%	\end{equation*}
%	where $\alpha\in (0, 1)$ is rational.  
	Then  the  Laplacian eigenfunction $u$ vanishes up to the order $N$ at $\mathbf{0}$ with respect to $\angle (\Gamma_h^+,\Gamma_h^-)=\alpha\cdot \pi$:
	\begin{subequations}
		\begin{align*}
%			&N\geq 2,\quad \mbox{if }\eta_1(0) \neq 0, \, \theta_0\neq \pi ,  \\ %\label{eq:342a} \\
			&N= n,\quad \mbox{if }  u(\mathbf 0)  =0 \mbox{ and } \alpha \neq \frac{q  }{p}, \, p=1,\ldots, n-1,  %\label{eq:342b}
				\end{align*}
				where $n\in\mathbb{N}$, $n\geq 3$ and for a fixed $p$, $ q=1,2,\ldots, p-1.$
	\end{subequations}
%	\begin{itemize}
%		\item $N=0$ if
%		\item $N=1$ if
%		\item $N=2$ if
%	\end{itemize}
	\end{theorem}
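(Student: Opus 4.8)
The plan is to treat Theorem~\ref{th:48} as the homogeneous Neumann specialisation of the generalised singular case. Two routes are available. Since a singular line is precisely a generalised singular line with $\eta\equiv 0$, the theorem is the instance $C_1=C_2=0$ of Theorem~\ref{th:41}, so one may simply invoke that result. More transparently, when $\eta\equiv 0$ the two homogeneous conditions decouple mode by mode, and the conclusion can be read off directly from the spherical wave expansion without any of the recursive or microlocal machinery needed when $\eta\neq 0$. I would write out this direct argument, which parallels the proof of Theorem~\ref{th:28}.

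Concretely, I would normalise the configuration as in \eqref{eq:angle}, so that $\Gamma_h^-$ lies on the positive $x_1$-axis and $\Gamma_h^+$ makes the angle $\alpha\pi$ with it, and expand $u$ about the origin by Lemma~\ref{lem:25} as $u(\mathbf{x})=\sum_{m=0}^\infty\bigl(a_m e^{\mathrm{i}m\theta}+b_m e^{-\mathrm{i}m\theta}\bigr)J_m(\sqrt{\lambda}\,r)$. By Lemma~\ref{lem:26}, the singular condition $\partial_\nu u=0$ on each ray is equivalent to $\partial_\theta u=0$ there. Substituting the expansion and using the linear independence of the $J_m$ on $(0,h)$ from Lemma~\ref{lem:27}, the condition on $\Gamma_h^-$ (at $\theta=0$) forces $\mathrm{i}m(a_m-b_m)=0$, and the condition on $\Gamma_h^+$ (at $\theta=\alpha\pi$) forces $\mathrm{i}m\bigl(a_m e^{\mathrm{i}m\alpha\pi}-b_m e^{-\mathrm{i}m\alpha\pi}\bigr)=0$, for every $m\geq 1$. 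For each fixed $m$ this is the homogeneous system
\[
\begin{bmatrix} 1 & -1\\ e^{\mathrm{i}m\alpha\pi} & -e^{-\mathrm{i}m\alpha\pi}\end{bmatrix}\begin{bmatrix} a_m\\ b_m\end{bmatrix}=0,\qquad m=1,2,\ldots,
\]
whose coefficient determinant equals $2\mathrm{i}\sin(m\alpha\pi)$.

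The heart of the matter is then a short number-theoretic translation. The hypothesis that $\alpha\neq q/p$ for all $1\leq q<p\leq n-1$ is exactly the statement that $m\alpha\notin\mathbb{Z}$, equivalently $\sin(m\alpha\pi)\neq 0$, for every $1\leq m\leq n-1$; in lowest terms this says the denominator of $\alpha$ is at least $n$. For those $m$ the system above is nonsingular, forcing $a_m=b_m=0$ for $1\leq m\leq n-1$, while the hypothesis $u(\mathbf{0})=0$ gives $a_0+b_0=0$ and removes the $J_0$ term. Consequently the lowest-order homogeneous polynomial surviving in the Taylor expansion of $u$ at $\mathbf{0}$ has degree at least $n$, so by the characterisation of $\mathrm{Vani}$ as that lowest degree we obtain $\mathrm{Vani}(u;\mathbf{0})\geq n$; it equals $n$ exactly when $n$ is the lowest-terms denominator of $\alpha$, the borderline case in which the $m=n$ system becomes singular and the $J_n$ term may persist, in agreement with the model example $u=J_n(\sqrt{\lambda}\,r)\sin n\theta$ of the introduction.

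I expect the genuinely delicate points to be bookkeeping rather than analysis: verifying the equivalence between the denominator condition and the nonvanishing of $\sin(m\alpha\pi)$ on $1\leq m\leq n-1$, and handling the borderline mode $m=2$, that is $\alpha=1/2$ and intersecting angle $\pi/2$, where the determinant vanishes and the quadratic term need not die. The latter is precisely why $n\geq 3$ is imposed, consistent with the exclusion discussed in Remark~\ref{re:38}. The main conceptual point to record is that, unlike the generalised singular case of Theorem~\ref{th:41}, the pure singular setting needs no separate microlocal triggering of the low-order terms, because with $\eta\equiv 0$ the determinant computation disposes of all orders $m$ uniformly.
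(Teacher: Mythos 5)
Your proposal is correct and follows essentially the same route as the paper: the paper proves Theorem~\ref{th:48} by combining Lemma~\ref{pro:64} --- the decoupled mode-by-mode equations $a_m-b_m=0$ and $a_m e^{\mathrm{i}m\theta_0}-b_m e^{-\mathrm{i}m\theta_0}=0$ obtained from the spherical wave expansion via Lemmas~\ref{lem:25}--\ref{lem:27} --- with the determinant argument of Subsection~\ref{sub61} specialized to $\eta_1=\eta_2=0$, which is exactly your nonsingularity condition $\sin(m\alpha\pi)\neq 0$ for $1\leq m\leq n-1$ together with the hypothesis $u(\mathbf{0})=0$ giving $a_0+b_0=0$. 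Your further observation that the microlocal (CGO) triggering of Theorem~\ref{th:311} is dispensable in the purely singular case is accurate and consistent with the paper, since Lemma~\ref{pro:64} disposes of all modes at once.
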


\begin{example} Let  $\Omega:=\{ (x_1,x_2) \in \R^2~|~ -2\pi \leq x_1 \leq 2\pi, \, -4\pi \leq x_2 \leq 4\pi \}$ be a rectangle.  It is easy to see that 
	$$
	%u(x_1,x_2)=\sin( x_1)\sin\left(2 x_2\right)
	u(x_1,x_2)=\sin x_1 \sin 2 x_2
		$$
		is an eigenfunction to \eqref{eq:eig} with a homogeneous Dirichlet boundary condition on $\partial \Omega$. The corresponding eigenvalue is $\lambda=5$. One pair of nodal lines of $u$ in $\Omega$ are $\{(x_1,x_2)~|~ x_2=0, -2\pi+h\leq x_1\leq 2\pi -h\}$   and $\{(x_1,x_2)~|~ x_1=0, -4\pi+h\leq x_2\leq 4\pi -h\}$ for a fixed $0<h<2\pi $, which are perpendicular to each other at the origin. Therefore from Theorem \ref{th:47}, since $\angle (\Gamma_h^+,\Gamma_h^-)=\pi/2$ which implies that $\alpha \neq 1$,  the vanishing order $N$ at the origin satisfies $N=2$.  In fact, by the explicit expression of $u$, we know that the order of lowest nontrivial homogeneous polynomial of the Taylor expansion of $u$ at the origin is 2, which coincides with the conclusion given by Theorem \ref{th:48}. 
		
\end{example}

We now proceed to consider that a nodal line intersects with a generalized singular line. {Without loss of generality, we can assume that $\Gamma_h^-$ is the generalized singular line, while $\Gamma_h^+$ is the nodal line. }

\begin{theorem}\label{Th:49}
	Let $u$ be a Laplacian eigenfunction to \eqref{eq:eig}. Suppose that a generalized singular  line $\Gamma_h^+ \in {\mathcal M}^\lambda_{\Omega }$ intersects with a nodal  line $\Gamma_h^- \in {\mathcal N}^\lambda_{\Omega }$ at $\mathbf 0$ with the angle $\angle (\Gamma_h^+,\Gamma_h^-)=\alpha\cdot \pi$. Assume that the boundary parameter $\eta_2 \equiv C_2 $ on $\Gamma_h^+$ is a constant.  Then  the  Laplacian eigenfunction $u$ vanishes up to the order $N$ at $\mathbf{0}$ with respect to $\angle (\Gamma_h^+,\Gamma_h^-)=\alpha\cdot \pi$:
		\begin{align*}
%			&N\geq 2,\quad \mbox{if }\eta_1(0) \neq 0, \, \theta_0\neq \pi ,  \\ %\label{eq:342a} \\
			&N=n,\quad \mbox{if }   \alpha \neq \frac{2q +1 }{2p}  , \, p=1,\ldots, n-1,  %\label{eq:342b}
				\end{align*}
				where $n\in\mathbb{N}$, $n\geq 2$  and for a fixed $p$, $ q=0,1,\ldots, p-1.$
		\end{theorem}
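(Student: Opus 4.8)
The plan is to run the spherical-wave/recursion machinery that underlies Theorems~\ref{th:47} and \ref{th:48}, now feeding in one nodal and one generalized singular condition. Using the normalization \eqref{eq:angle}--\eqref{eq:angle2a}, I place the nodal line $\Gamma_h^-$ on the positive $x_1$-axis and the generalized singular line $\Gamma_h^+$ at $\theta=\alpha\pi$, and expand $u$ about $\mathbf 0$ by Lemma~\ref{lem:25}, namely $u=\sum_{n\geq 0}(a_n e^{\mathrm{i}n\theta}+b_n e^{-\mathrm{i}n\theta})J_n(\sqrt{\lambda}\,r)$. First I would impose $u\equiv 0$ on $\theta=0$: substituting $\theta=0$ and invoking Lemma~\ref{lem:27} forces $a_n+b_n=0$ for every $n$, so that $u=\sum_{n\geq 1}c_n\sin(n\theta)J_n(\sqrt{\lambda}\,r)$ with $c_n=2\mathrm{i}a_n$. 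In particular $u(\mathbf 0)=0$ is automatic, and since $J_n(\sqrt{\lambda}\,r)\sim r^n$, the vanishing order $\mathrm{Vani}(u;\mathbf 0)$ equals the least index $n$ with $c_n\neq 0$.

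Next I would impose $\partial_\nu u+C_2 u=0$ on $\theta=\alpha\pi$. By Lemma~\ref{lem:26} this reads $\pm r^{-1}\partial_\theta u+C_2 u=0$ there, with $\partial_\theta u=\sum_{n\geq 1}n\,c_n\cos(n\theta)J_n$. The crucial manoeuvre is to absorb the singular factor $r^{-1}$ using the Bessel recurrence $\tfrac{2n}{t}J_n(t)=J_{n-1}(t)+J_{n+1}(t)$, which turns $r^{-1}n J_n(\sqrt{\lambda}\,r)$ into $\tfrac{\sqrt{\lambda}}{2}(J_{n-1}+J_{n+1})$. After reindexing and collecting the coefficient of each $J_m(\sqrt{\lambda}\,r)$, Lemma~\ref{lem:27} yields the three-term recursion
\[
\pm\frac{\sqrt{\lambda}}{2}\cos((m+1)\alpha\pi)\,c_{m+1}\pm\frac{\sqrt{\lambda}}{2}\cos((m-1)\alpha\pi)\,c_{m-1}+C_2\sin(m\alpha\pi)\,c_m=0,\quad m\geq 0,
\]
with the convention $c_0=c_{-1}=0$. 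The key structural feature, in contrast to the nodal--nodal and singular--singular cases where the analogous identities carry $\sin(n\alpha\pi)$, is that the leading coefficient here carries $\cos((m+1)\alpha\pi)$; this is precisely why the exceptional set becomes the zero set of cosine, i.e. $\alpha=\tfrac{2q+1}{2p}$, rather than that of sine.

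I would then run the induction. The $m=0$ relation reads $c_1\cos(\alpha\pi)=0$, giving $c_1=0$ whenever $\cos(\alpha\pi)\neq 0$; the $m=1$ relation then gives $c_2\cos(2\alpha\pi)=0$. In general, once $c_{m-1}=c_m=0$, the recursion collapses to $c_{m+1}\cos((m+1)\alpha\pi)=0$; note that $C_2$ drops out entirely, since its term always multiplies an already-vanished $c_m$. Hence $c_k=0$ for $k=1,\dots,n-1$ as soon as $\cos(k\alpha\pi)\neq 0$ for those $k$, which is exactly the hypothesis $\alpha\neq\tfrac{2q+1}{2p}$ for $p=1,\dots,n-1$ and $q=0,\dots,p-1$ (translating $\cos(p\alpha\pi)=0$ into $\alpha=\tfrac{2j+1}{2p}$ with $j=0,\dots,p-1$ for $\alpha\in(0,1)$). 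This yields $\mathrm{Vani}(u;\mathbf 0)\geq n$; the matching upper bound (order exactly $n$) follows by observing that allowing $\alpha=\tfrac{2q+1}{2n}$ makes the level $m=n-1$ relation vacuous, so it no longer annihilates $c_n$, which therefore generically survives.

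I expect the main obstacle to be the rigorous \emph{seeding} of the recursion rather than its propagation. Propagation is clean once the lowest coefficients are under control, but justifying the termwise comparison across the singular factor $r^{-1}$ — in particular pinning down the first few terms (up to order three) and controlling the interplay between $C_2$ and the small-index Bessel identities — is the delicate part, and is exactly where the localized/microlocal argument foreshadowed in Section~\ref{sec:Intro} is needed to make the comparison legitimate.
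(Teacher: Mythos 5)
Your proposal is correct and follows essentially the same route as the paper's Section~\ref{sec:pro2} proof: expand $u$ via Lemma~\ref{lem:25}, use the nodal condition on $\Gamma_h^-$ together with Lemma~\ref{lem:27} to force $a_n+b_n=0$ for all $n$, feed the impedance condition on $\theta=\alpha\pi$ into a recursion for the coefficients, and induct; your non-degeneracy condition $\cos(p\alpha\pi)\neq 0$ is exactly the paper's determinant condition, since $e^{2\mathrm{i}p\theta_0}+1=2e^{\mathrm{i}p\theta_0}\cos(p\theta_0)$. One genuine (and favorable) difference is how the recursion is derived: you absorb the factor $r^{-1}$ with the recurrence $\tfrac{2n}{t}J_n=J_{n-1}+J_{n+1}$ and then apply Lemma~\ref{lem:27} to the coefficient of each $J_m$, which yields an \emph{exact} three-term recursion; the paper's Lemma~\ref{pro:62} instead multiplies through by $r$ and compares powers $r^n$ while keeping only the leading term of each Bessel series, silently discarding lower-index contributions that in fact drop out only inside the induction (where all earlier coefficients have already been shown to vanish) --- so on this point your derivation is the more scrupulous one, and the two recursions coincide once the induction hypothesis is in force, the $C_2$-term multiplying an already-vanished coefficient in both. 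Your closing worry, however, is misplaced: no microlocal/CGO ``seeding'' is needed for this theorem, and the paper's proof of Theorem~\ref{Th:49} uses none. As your own computation shows, the nodal line gives $u(\mathbf 0)=0$ and $c_0=0$ for free, and the $m=0$ relation $c_1\cos(\alpha\pi)=0$ (the paper's $n=1$ system $a_1+b_1=0$, $a_1e^{2\mathrm{i}\theta_0}-b_1=0$) starts the induction under the hypothesis $\alpha\neq\tfrac12$, i.e.\ $p=1$, $q=0$. The CGO machinery of Section~\ref{sec:pro1} is reserved for the theorems whose seeding is not automatic (e.g.\ Theorem~\ref{th:41}, whose general-order proof invokes Theorem~\ref{th:311} to obtain $u(\mathbf 0)=\nabla u(\mathbf 0)=0$) and for variable $\eta$; here it is superfluous, and the termwise comparison you perform is already fully justified by Lemma~\ref{lem:27}.
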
 
	
%Similar to Theorem \ref{Th:49}, we can always assume that a generalized singular line $\Gamma_h^-$ lies in $x_1^+$-axis. Thus taking $\eta_2$ on $\Gamma_h^+$ to be zero in Theorem \ref{th:41}, we can obtain the following theorem regarding on the geometric structure of two intersecting ingular and generalized singular lines with respect to the vanishing order of the Laplacian eigenfunction $u$. 

Next, we consider the intersection of a singular line and a generalized singular line. Similar to Theorem \ref{Th:49}, without loss of generality, we can assume that $\Gamma_h^-$ is the generlized singular line. Indeed, the vanishing order of the eigenfunction in such a case can be obtained from formally taking $\eta_2$ on $\Gamma_h^+$ to be zero in Theorem \ref{th:41}. 

\begin{theorem}\label{th:410}
	Let $u$ be a Laplacian eigenfunction to \eqref{eq:eig}. Suppose that a singular line $\Gamma_h^+ \in {\mathcal S}^\lambda_{\Omega }$ intersects with a generalized  singular line $\Gamma_h^- \in {\mathcal M}^\lambda_{\Omega }$ at the origin with the angle $\angle (\Gamma_h^+,\Gamma_h^-)=\alpha\cdot \pi$. Assume that the boundary parameter $\eta_1$ on $\Gamma_h^-$ is a non-zero constant, i.e., $\eta_1 \equiv C_1 \neq 0$. Then  the  Laplacian eigenfunction $u$ vanishes up to the order $N$ at $\mathbf{0}$ with respect to $\angle (\Gamma_h^+,\Gamma_h^-)=\alpha\cdot \pi$:
	\begin{subequations}
		\begin{align*}
%			&N\geq 2,\quad \mbox{if }\eta_1(0) \neq 0, \, \theta_0\neq \pi ,  \\ %\label{eq:342a} \\
			&N= n,\quad \mbox{if }  u(\mathbf 0)  =0 \mbox{ and } \alpha \neq \frac{q  }{p}, \, p=1,\ldots, n-1,  %\label{eq:342b}
				\end{align*}
				where $n\in\mathbb{N}$, $n\geq 3$  and $ q=1,2,\ldots, p-1$ for a fixed $p$. 
	\end{subequations}
	\end{theorem}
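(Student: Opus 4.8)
The plan is to deduce Theorem~\ref{th:410} as the specialization $C_2=0$ of Theorem~\ref{th:41}, while exploiting the fact that $\Gamma_h^+$ is a genuine singular line to bypass the heaviest part of that theorem's argument. A singular line is exactly a generalized singular line whose boundary parameter vanishes identically, so the configuration here is that of Theorem~\ref{th:41} with $\eta_2\equiv C_2=0$ and $\eta_1\equiv C_1\neq0$; the forbidden-angle set $\{\alpha\neq q/p\}$, the threshold $n\geq3$, and the conclusion $N=n$ all coincide with Theorem~\ref{th:41}, which is precisely what one should expect from such a reduction. The hypothesis $C_1\neq0$ is kept only so that $\Gamma_h^-$ remains a genuine generalized singular line; were $C_1=0$ as well, the statement would be subsumed by the two-singular-line Theorem~\ref{th:48}, whose conclusion is identical.

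To make the reduction self-contained I would run the spherical-wave argument directly. By Lemma~\ref{lem:25} write
\[ u(\mathbf x)=\sum_{n=0}^\infty\big(a_n e^{\bsi n\theta}+b_n e^{-\bsi n\theta}\big)J_n(\sqrt\lambda\,r), \]
with $\Gamma_h^-$ along $\theta=0$ and $\Gamma_h^+$ along $\theta=\alpha\pi$. Since $\Gamma_h^+$ is a plain singular line, Lemma~\ref{lem:26} turns its condition $\partial_\nu u=0$ into $\partial_\theta u|_{\theta=\alpha\pi}=0$, that is
\[ \sum_{n=0}^\infty \bsi n\big(a_n e^{\bsi n\alpha\pi}-b_n e^{-\bsi n\alpha\pi}\big)J_n(\sqrt\lambda\,r)=0 . \]
This sum is of the form covered by Lemma~\ref{lem:27}, so it yields the clean per-mode identities $a_n e^{\bsi n\alpha\pi}=b_n e^{-\bsi n\alpha\pi}$ for every $n\geq1$. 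This clean linearization of the $\Gamma_h^+$ condition is the simplification afforded by working with a singular rather than a generalized singular line, and it is precisely the step that is unavailable in Theorem~\ref{th:41}.

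The condition on $\Gamma_h^-$, namely $\partial_\nu u+C_1u=0$, becomes $\pm r^{-1}\partial_\theta u|_{\theta=0}+C_1u|_{\theta=0}=0$; multiplying by $r$ and inserting the Bessel expansion \eqref{eq:jnt}, I would match the coefficient of each power $r^M$. The normal-derivative part feeds the combination $a_M-b_M$ into the $r^M$-identity, while the Robin part $C_1u$, carrying the extra factor $r$, feeds $a_{M-1}+b_{M-1}$ (of the opposite parity) together with corrections from still lower modes. Starting from $a_0+b_0=u(\mathbf 0)=0$ and inducting, once $a_k=b_k=0$ for all $k<M\leq n-1$ every correction and the whole Robin coupling drop out, and the $r^M$-identity collapses to $a_M=b_M$. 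Combined with the clean relation $a_M e^{\bsi M\alpha\pi}=b_M e^{-\bsi M\alpha\pi}$ from $\Gamma_h^+$, this is the $2\times2$ homogeneous system with determinant $2\bsi\sin(M\alpha\pi)$, which is nonzero exactly when $M\alpha\notin\Z$, i.e. when $\alpha\neq q/M$. The hypothesis $\alpha\neq q/p$ for $p=1,\dots,n-1$ therefore forces $a_M=b_M=0$ for all $M\leq n-1$, so the lowest surviving homogeneous term of the Taylor expansion of $u$ has degree $n$ and $\mathrm{Vani}(u;\mathbf 0)=n$. Sharpness (that the order is not larger for a rational angle realizing denominator $n$) comes from the vanishing of the same determinant at $M=n$, where $\sin(n\alpha\pi)=0$, so that the order-$n$ mode is no longer constrained.

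I expect the only genuinely delicate bookkeeping to be the cross-parity coupling produced by the Robin term $C_1u$: because it carries no $r^{-1}$ factor, at order $r^M$ it injects the opposite-parity mode $M-1$ and lower into the identity, so the recursion is not parity-block-diagonal and one must check that each such term is annihilated by the inductive hypothesis before the clean $2\times2$ reduction is legitimate. This is exactly the point that makes Theorem~\ref{th:41} hard --- there neither intersecting line linearizes through Lemma~\ref{lem:27}, which is why its proof must first secure the vanishing up to third order by the localized, microlocal argument of Section~\ref{sec:pro1} before the recursion can be triggered (hence the restriction $n\geq3$). In the present setting the plain singular line $\Gamma_h^+$ supplies those seed relations for free, so the main remaining task is simply to confirm that specializing $C_2\to0$ neither removes a genuine resonance nor creates a spurious one, i.e. that the determinant's zero set stays exactly $\{\alpha=q/p:1\leq q<p\leq n-1\}$; the residual $n\geq3$ condition is then inherited from Theorem~\ref{th:41} rather than forced anew.
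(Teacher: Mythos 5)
Your proposal is correct and follows essentially the same route as the paper, which proves Theorem~\ref{th:410} precisely by formally setting $\eta_2\equiv 0$ in Theorem~\ref{th:41}: the spherical-wave expansion of Lemma~\ref{lem:25}, the clean per-mode relations from the singular line (as in Lemma~\ref{pro:64}) combined with the Robin-side recursion of Lemma~\ref{pro:62}, the seed $a_0+b_0=u(\mathbf 0)=0$, and a $2\times 2$ system whose determinant $2\bsi\sin(n\alpha\pi)$ vanishes exactly at the excluded rational angles are the paper's own ingredients. Your two additional observations --- that Lemma~\ref{lem:27} linearizes the $\Gamma_h^+$ condition mode by mode so the CGO/microlocal seed of Section~\ref{sec:pro1} is not actually needed here once $u(\mathbf 0)=0$ is hypothesized, and that the cross-parity and lower-order Bessel corrections in each $r^M$-coefficient identity must be checked to be annihilated by the inductive hypothesis --- are sound and make explicit a bookkeeping point that the statement of Lemma~\ref{pro:62} glosses over.
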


Using a similar proof to Theorem \ref{Th:49}, we can find the relationship between the vanishing order of 
the Laplacian eigenfunction and the intersecting angle between a singular line and a nodal line.  

\begin{theorem}\label{Th:411}
	Let $u$ be a Laplacian eigenfunction to \eqref{eq:eig}.  Suppose that a singular line $\Gamma_h^+ \in {\mathcal S}^\lambda_{\Omega }$ intersects with a  nodal line $\Gamma_h^- \in  {\mathcal N}^\lambda_{\Omega }$ at the origin with the angle $\angle (\Gamma_h^+,\Gamma_h^-)=\alpha\cdot \pi$. Then  the  Laplacian eigenfunction $u$ vanishes up to the order $N$ at $\mathbf{0}$ with respect to $\angle (\Gamma_h^+,\Gamma_h^-)=\alpha\cdot \pi$:
	\begin{subequations}
		\begin{align*}
%			&N\geq 2,\quad \mbox{if }\eta_1(0) \neq 0, \, \theta_0\neq \pi ,  \\ %\label{eq:342a} \\
			&N= n,\quad \mbox{if } \alpha \neq \frac{2q +1 }{2p}  , \, p=1,\ldots, n-1,  %\label{eq:342b}
				\end{align*}
				where $n\in\mathbb{N}$, $n\geq 2$  and for a fixed $p$, $ q=0,1,\ldots, p-1.$
	\end{subequations}

	\end{theorem}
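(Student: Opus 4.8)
The plan is to follow the proof of Theorem~\ref{Th:49}, exploiting that a singular line is the special case $\eta\equiv 0$ of a generalized singular line, which makes the analysis considerably more direct. I would adopt the normalization fixed in Section~\ref{sec:3}: place the intersection point at $\mathbf 0$, let the nodal line $\Gamma_h^-$ lie along the positive $x_1$-axis (so $\theta=0$), and let the singular line $\Gamma_h^+$ be the ray $\theta=\alpha\pi$. I would then expand $u$ in the spherical wave series of Lemma~\ref{lem:25},
\begin{equation*}
u(\mathbf x)=\sum_{n=0}^\infty\bigl(a_n e^{\mathrm{i} n\theta}+b_n e^{-\mathrm{i} n\theta}\bigr)J_n(\sqrt\lambda\,r),
\end{equation*}
and read off the two homogeneous conditions as constraints on the coefficients $\{a_n,b_n\}$.

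First I would encode the nodal condition: setting $\theta=0$ gives $u(r,0)=\sum_n(a_n+b_n)J_n(\sqrt\lambda r)=0$ for $r\in(0,h)$, so Lemma~\ref{lem:27} yields $a_n+b_n=0$ for every $n\ge 0$. In particular the $n=0$ term drops out and $u(\mathbf 0)=a_0+b_0=0$, which is why no separate hypothesis $u(\mathbf 0)=0$ is needed here. Next I would encode the singular condition. By Lemma~\ref{lem:26}, on the ray $\theta=\alpha\pi$ one has $\partial_\nu u=\pm r^{-1}\partial_\theta u$, so $\partial_\nu u=0$ is equivalent to $\partial_\theta u|_{\theta=\alpha\pi}=\sum_n \mathrm{i} n\bigl(a_n e^{\mathrm{i} n\alpha\pi}-b_n e^{-\mathrm{i} n\alpha\pi}\bigr)J_n(\sqrt\lambda r)=0$ on $(0,h)$; applying Lemma~\ref{lem:27} again gives $a_n e^{\mathrm{i} n\alpha\pi}-b_n e^{-\mathrm{i} n\alpha\pi}=0$ for all $n\ge 1$.

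The crux is to combine the two relations. Substituting $b_n=-a_n$ into the singular relation collapses it to $2a_n\cos(n\alpha\pi)=0$, so $a_n\ne 0$ forces $\cos(n\alpha\pi)=0$, i.e.\ $\alpha=(2k+1)/(2n)$ for some integer $k$; restricting to $\alpha\in(0,1)$ leaves exactly $\alpha=(2q+1)/(2n)$ with $q=0,1,\dots,n-1$. Hence, under the hypothesis $\alpha\ne(2q+1)/(2p)$ for $p=1,\dots,n-1$ and $q=0,\dots,p-1$, all coefficients satisfy $a_p=b_p=0$ for $1\le p\le n-1$, together with the vanishing of the $n=0$ term. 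The series therefore begins at $J_n$, whose leading behaviour is of order $r^n$, so the lowest-degree homogeneous Taylor polynomial of $u$ at $\mathbf 0$ has degree $n$, yielding $\mathrm{Vani}(u;\mathbf 0)=n$ as claimed.

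Finally I would remark on where this differs from Theorem~\ref{Th:49}, since that is the only delicate point. The genuine difficulty in the generalized-singular setting is that the zeroth-order coupling $\eta u=C_2 u$ mixes the factors $r^{-1}J_{n+1}\sim r^n\sim J_n$, turning the coefficient identities into a recursion between consecutive orders whose base case (the vanishing of the lowest few polynomial terms, established by the localized microlocal argument) must be supplied separately. Here, with $\eta\equiv 0$, that coupling is absent and the two conditions decouple cleanly, so neither a recursion nor a base-case argument is required; the only care needed is the legitimacy of the term-by-term separation, which rests on Lemma~\ref{lem:27} and hence on both lines having positive length. I would also note that the \emph{mixed} pairing of the symmetric nodal relation $a_n+b_n=0$ with the antisymmetric singular relation produces a cosine, hence the half-integer resonances $(2q+1)/(2p)$ and the threshold $n\ge 2$ (the perpendicular case $\alpha=1/2$ is already excluded at $p=1$), in contrast to the sine and integer resonances $q/p$ arising from two lines of the same type as in Theorems~\ref{th:47} and \ref{th:48}.
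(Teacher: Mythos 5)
Your proof is correct and is essentially the argument the paper itself intends for this theorem: the paper proves the general-order case of Theorem~\ref{Th:411} by the same spherical-wave-expansion route as Theorem~\ref{Th:49}, where the nodal line yields $a_n+b_n=0$ (so $u(\mathbf 0)=0$ automatically) and the singular line yields $a_ne^{\bsi n\theta_0}-b_ne^{-\bsi n\theta_0}=0$, forcing $\cos(n\alpha\pi)=0$ at any surviving order and hence the half-integer resonances $(2q+1)/(2p)$. Your observation that the $\eta\equiv 0$ case decouples the relations order by order (no recursion or separate low-order base case needed) matches how the paper's recursion in Lemma~\ref{pro:62} degenerates when the impedance constant vanishes.
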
 
	
	\begin{remark}\label{re:38}
	
	As mentioned after Theorem~\ref{th:41}, we exclude the special case that the intersecting angle 
	between two lines is $\pi/2$. In fact, for Theorems \ref{th:41}-\ref{th:48} and Theorem \ref{th:410}, one may  see from their proofs in Section \ref{sec:pro1} that if $\angle (\Gamma_h^+,\Gamma_h^-)=\pi/2$, then there holds that $\nabla u(\mathbf 0)  =0$ if $u(\mathbf 0) =0$. That means, the eigenfunction is vanishing at least to the second order in such a case. For the other two cases in Theorems \ref{Th:49} and \ref{Th:411}, we can only have that if $\alpha=1/2$ and $u(\mathbf 0)=0$, the eigenfunction is vanishing at least to the first order.

	\end{remark}
	
	\begin{remark}
	{It is noted that in Theorems \ref{Th:49} and \ref{Th:411}, we require that $n\geq 2$, whereas in other theorems, we require that $n\geq 3$. In particular, when $n=2$, $\alpha \neq 1/2$, one can conclude in Theorems \ref{Th:49} and \ref{Th:411} that the eigenfunction is vanishing at least to the second order. This conclusion is different from Theorems \ref{th:41}-\ref{th:48} and \ref{th:410}, where one has that if $\alpha \neq 1/2$ then the eigenfunction is vanishing at least to the third order.  }
	\end{remark}

	\section{Irrational intersection and infinite vanishing order: general cases}\label{sec:irrational gene}

In this section we consider the irrational intersection, namely $\alpha$ in \eqref{eq:angle1} is an irrational number. We show that the eigenfunction is generically vanishing to infinity, namely $u$ is identically zero in $\Omega$. Here, the generic condition is provided by $u$ vanishing or not at the intersecting point. We shall present more 
discussions on this generic condition in Section~\ref{sect:discussion}. From Theorems \ref{th:41}, \ref{Th:49}, \ref{th:410} and \ref{Th:411}, we have the following four theorems directly without proofs. 

 %	The next theorem is concerned with the intersection of two generalized singular lines. 

\begin{theorem}
Let $u$ be a Laplacian eigenfunction to \eqref{eq:eig}. Suppose that there are two generalized singular lines $\Gamma_h^+$ and $\Gamma_h^-$ from ${\mathcal M}^\lambda_{\Omega }$ such that \eqref{eq:angle} and \eqref{eq:angle2a} hold. 
%	\begin{equation}\label{eq:cond1}
%	\Gamma_h^+\cap\Gamma_h^-=\mathbf{x}_0\in\Omega\quad\mbox{and}\quad \angle (\Gamma_h^+,\Gamma_h^-)=\alpha\cdot \pi, 
%	\end{equation}
%	where $\alpha\in (0, 1)$ is rational. 
	Assume that $\eta_1 \equiv C_1$ and $\eta_2 \equiv  C_2$, where $C_1 $ and $C_2$ are two constants. If $\angle (\Gamma_h^+,\Gamma_h^-)=\alpha\cdot \pi$ with $\alpha\in (0, 2)$ irrational, then there hold that 
		\begin{align*}
		& \mathrm{Vani}(u; \mathbf{0})=0,\quad\ \ \ \, \mbox{if } u({\mathbf 0}) \neq 0;  \\
		&\mathrm{Vani}(u; \mathbf{0})= +\infty,\quad \mbox{if } u({\mathbf 0}) = 0.
		\end{align*}
	\end{theorem}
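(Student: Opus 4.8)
The plan is to deduce this statement as a direct corollary of Theorem~\ref{th:41}, using only the elementary fact that an irrational $\alpha$ avoids every rational obstruction $q/p$, together with the strong unique continuation principle recorded after Definition~\ref{def:3}. First I would dispose of the case $u(\mathbf 0)\neq 0$. Since $u$ is analytic in $\Omega$ and, as noted after Definition~\ref{def:3}, $\mathrm{Vani}(u;\mathbf 0)$ coincides with the order of the lowest nontrivial homogeneous term in the Taylor expansion of $u$ at $\mathbf 0$, a nonzero value $u(\mathbf 0)$ forces this lowest order to be the constant term, whence $\mathrm{Vani}(u;\mathbf 0)=0$ with no further argument.

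The substantial case is $u(\mathbf 0)=0$. Here I would first reduce the angle to the range $(0,1)$ covered by Theorem~\ref{th:41}: when $\alpha\in(1,2)$, replace $\Gamma_h^+$ by its analytic continuation $\widetilde\Gamma_h^+$ across the origin, precisely as in the reduction preceding \eqref{eq:angle2a}. Since $\partial_\nu u+C_2 u$ is analytic along the line and vanishes on $\Gamma_h^+$, it vanishes on $\widetilde\Gamma_h^+$ as well, so $\widetilde\Gamma_h^+$ is again a generalized singular line with the same constant parameter $C_2$; moreover it makes the angle $(\alpha-1)\pi$ with $\Gamma_h^-$, which remains irrational. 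Thus it suffices to treat $\alpha\in(0,1)$ irrational.

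The core is then a single arithmetic observation: since $\alpha$ is irrational, $\alpha\neq q/p$ for all $p,q\in\mathbb N$. Consequently, for an arbitrarily fixed integer $n\geq 3$, the hypothesis of Theorem~\ref{th:41} that $\alpha\neq q/p$ for $p=1,\ldots,n-1$ and $q=1,\ldots,p-1$ is automatically met. That theorem then yields that $u$ vanishes at $\mathbf 0$ at least to order $n$, i.e. $\mathrm{Vani}(u;\mathbf 0)\geq n$. As $n$ may be taken arbitrarily large, letting $n\to\infty$ gives $\mathrm{Vani}(u;\mathbf 0)=+\infty$, and the strong unique continuation principle then forces $u\equiv 0$ in $\Omega$.

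I do not anticipate any genuine difficulty, since the theorem is engineered to be a direct consequence of Theorem~\ref{th:41}; the delicate analysis has all been absorbed into the latter. The only points deserving a modicum of care are the angle reduction from $(0,2)$ to $(0,1)$---checking that the continued segment retains its generalized-singular character with the same constant $\eta$ and that irrationality survives the map $\alpha\mapsto\alpha-1$---and the uniformity of the lower bounds $\mathrm{Vani}(u;\mathbf 0)\geq n$ in $n$, which is exactly what upgrades them to an infinite vanishing order.
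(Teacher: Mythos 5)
Your proposal is correct and follows exactly the route the paper intends: the paper states at the start of Section~\ref{sec:irrational gene} that this theorem follows ``directly without proofs'' from Theorem~\ref{th:41}, which is precisely your deduction that an irrational $\alpha$ avoids every exclusion $q/p$, so the order-$n$ vanishing holds for all $n\geq 3$. Your added care about the reduction of $\alpha\in(1,2)$ to $(0,1)$ via analytic continuation of $\Gamma_h^+$ (with the same constant $C_2$) and the trivial case $u(\mathbf 0)\neq 0$ simply makes explicit the routine details the paper leaves unsaid.
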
 

{\begin{theorem}\label{Th:51 gene}
	Let $u$ be a Laplacian eigenfunction to \eqref{eq:eig}. Suppose that a generalized singular  line $\Gamma_h^- \in {\mathcal M}^\lambda_{\Omega }$ intersects with a nodal  line $\Gamma_h^+ \in {\mathcal N}^\lambda_{\Omega }$ at $\mathbf 0$ with the angle $\angle (\Gamma_h^+,\Gamma_h^-)=\alpha\cdot \pi$. Assume that the boundary parameter $\eta_1 \equiv C_1 $ on $\Gamma_h^-$ is a constant.  If $\alpha\in (0, 2)$ is irrational, then there holds 
		\begin{align*}
		\mathrm{Vani}(u; \mathbf{0})= +\infty.
		\end{align*}
	\end{theorem}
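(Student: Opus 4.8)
The plan is to obtain Theorem~\ref{Th:51 gene} as an immediate consequence of the rational-angle result Theorem~\ref{Th:49}, by means of a limiting argument in the vanishing order. The guiding observation is elementary but decisive: since $\alpha$ is irrational, it can never coincide with a rational number of the form $(2q+1)/(2p)$, so the ``resonance'' condition that the angle must avoid in Theorem~\ref{Th:49} is automatically and \emph{vacuously} satisfied at every level $n$.

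First I would reduce the stated range $\alpha\in(0,2)$ to the range $\alpha\in(0,1)$ for which Theorem~\ref{Th:49} is formulated. If $1<\alpha<2$, I invoke the analytic-continuation argument introduced after \eqref{eq:angle2a}: extending the appropriate line segment across the origin produces a line of the same type making an angle $(\alpha-1)\pi$ with the other one, and $\alpha-1\in(0,1)$ remains irrational. The interchange of the labels $\Gamma_h^+$ and $\Gamma_h^-$ relative to the hypotheses of Theorem~\ref{Th:49} is harmless, since the intersecting angle is symmetric in the two lines, and by ``same type'' the reflected/extended line inherits the generalized singular condition with the same constant $\eta_1\equiv C_1$. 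Next, I fix an arbitrary $n\in\mathbb{N}$ with $n\geq 2$ and note that, as $\alpha\notin\mathbb{Q}$, the condition $\alpha\neq(2q+1)/(2p)$ holds for every $p=1,\ldots,n-1$ and $q=0,\ldots,p-1$. Theorem~\ref{Th:49} then applies and yields that $u$ vanishes up to order $n$ at $\mathbf{0}$, that is $\mathrm{Vani}(u;\mathbf{0})\geq n$ in the sense of Definition~\ref{def:3}. Since $n$ is arbitrary, letting $n\to\infty$ gives $\mathrm{Vani}(u;\mathbf{0})=+\infty$; by the strong unique continuation principle recalled after Definition~\ref{def:3}, this in fact forces $u\equiv 0$ in $\Omega$.

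There is no genuinely hard step here, as all the analytic substance is already contained in Theorem~\ref{Th:49}; the only points requiring care are bookkeeping ones. The first is to read the conclusion ``$u$ vanishes up to order $N=n$'' in Theorem~\ref{Th:49} as the lower bound $\mathrm{Vani}(u;\mathbf{0})\geq n$ consistent with Definition~\ref{def:3}, which is precisely what allows the vanishing order to be pushed to infinity by increasing $n$ while the hypothesis on $\alpha$ stays satisfied. The second is to confirm that the reduction from $(0,2)$ to $(0,1)$ preserves irrationality, which it does because subtracting the integer $1$ cannot turn an irrational number into a rational one. Both are immediate, so the whole argument is a short corollary-style deduction rather than a new proof.
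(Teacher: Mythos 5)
Your proposal is correct and is essentially the paper's own argument: the paper obtains Theorem~\ref{Th:51 gene} ``directly without proofs'' from Theorem~\ref{Th:49}, exactly as you do, since an irrational $\alpha$ vacuously avoids every excluded ratio $(2q+1)/(2p)$ at each level $n$, giving $\mathrm{Vani}(u;\mathbf{0})\geq n$ for all $n$ and hence $+\infty$. Your additional bookkeeping --- reducing $\alpha\in(1,2)$ to $(0,1)$ by the analytic-continuation device described after \eqref{eq:angle2a} and reading ``vanishes up to order $n$'' as a lower bound in the sense of Definition~\ref{def:3} --- merely makes explicit what the paper leaves implicit.
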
 
	
	}

\begin{theorem}%\label{th:410}
	Let $u$ be a Laplacian eigenfunction to \eqref{eq:eig}. Suppose that a singular line $\Gamma_h^+ \in {\mathcal S}^\lambda_{\Omega }$ intersects with a generalized  singular line $\Gamma_h^- \in {\mathcal M}^\lambda_{\Omega }$  at $\mathbf 0$ with the angle $\angle (\Gamma_h^+,\Gamma_h^-)=\alpha\cdot \pi$. Assume that the boundary parameter $\eta_1 \equiv C_1 $ on $\Gamma_h^-$ is a constant.  If $\alpha\in (0, 2)$ is irrational, then there hold that 
		\begin{align*}
		& \mathrm{Vani}(u; \mathbf{0})=0,\quad\ \ \ \, \mbox{if } u({\mathbf 0}) \neq 0;  \\
		&\mathrm{Vani}(u; \mathbf{0})= +\infty,\quad \mbox{if } u({\mathbf 0}) = 0.
		\end{align*}
	\end{theorem}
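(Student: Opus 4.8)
The plan is to obtain this statement as an immediate corollary of the rational-angle result Theorem \ref{th:410}, exploiting the elementary fact that an irrational number is excluded by \emph{every} one of the finitely many rational constraints appearing there. I would organize the argument by splitting on whether $u$ vanishes at the intersecting point $\mathbf{0}$.

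The case $u(\mathbf{0})\neq 0$ is instantaneous. As noted immediately after Definition \ref{def:3}, $\mathrm{Vani}(u;\mathbf{0})$ equals the lowest degree of a nontrivial homogeneous polynomial in the Taylor expansion of $u$ about $\mathbf{0}$; a nonzero value $u(\mathbf{0})$ furnishes a degree-zero term, so that lowest degree is $0$ and hence $\mathrm{Vani}(u;\mathbf{0})=0$.

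The substance lies in the case $u(\mathbf{0})=0$. First I would place the configuration in the normalized position of Section \ref{sec:3}, with the vertex at $\mathbf{0}$ and $\Gamma_h^-$ along the positive $x_1$-axis. If $\alpha\in(1,2)$, I would reduce to $\alpha\in(0,1)$ exactly as in Section \ref{sec:3}: the analytic continuation of $u$ along the line carrying the singular segment $\Gamma_h^+$ produces an extended singular segment $\widetilde{\Gamma}_h^+\in\mathcal{S}_\Omega^\lambda$ in the upper half-plane, making angle $(\alpha-1)\pi$ with $\Gamma_h^-$, and $\alpha-1\in(0,1)$ remains irrational; thus it suffices to treat $\alpha\in(0,1)$. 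Because $\Gamma_h^-\in\mathcal{M}_\Omega^\lambda$, the convention fixed after Definition \ref{def:1} forces $\eta_1\equiv C_1\neq 0$, so all hypotheses of Theorem \ref{th:410} are met (with the singular line as $\Gamma_h^+$ and the generalized singular line as $\Gamma_h^-$, exactly as there). Now I fix an arbitrary integer $n\geq 3$: since $\alpha$ is irrational it cannot coincide with any rational $q/p$ having $p\in\{1,\dots,n-1\}$ and $q\in\{1,\dots,p-1\}$, so the exclusion hypothesis of Theorem \ref{th:410} is automatically satisfied, and that theorem gives that $u$ vanishes to order $n$ at $\mathbf{0}$, i.e. $\mathrm{Vani}(u;\mathbf{0})\geq n$. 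As $n$ was arbitrary, letting $n\to\infty$ forces $\mathrm{Vani}(u;\mathbf{0})=+\infty$, and by the strong unique continuation principle recalled after Definition \ref{def:3} one further obtains $u\equiv 0$ in $\Omega$.

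Since the entire analytic difficulty has already been absorbed into Theorem \ref{th:410}, there is no deep obstacle in this derivation; the points deserving care are purely bookkeeping. I would verify that the reduction of the angle from $(0,2)$ to $(0,1)$ preserves \emph{both} the irrationality of the angle \emph{and} the singular (rather than merely generalized singular) character of the continued segment $\widetilde{\Gamma}_h^+$, and that the logical quantifier is managed correctly: Theorem \ref{th:410} supplies, for each admissible $n$ separately, only the lower bound ``vanishes to order $n$'', and it is the uniformity of this bound across all $n$ — guaranteed precisely by irrationality — that promotes the finite orders into an infinite one. As a sanity check, the dichotomy obtained here is the same one that Theorem \ref{th:28} establishes for two singular lines, which is reassuring since a generalized singular line formally degenerates to a singular line as $C_1\to 0$.
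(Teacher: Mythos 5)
Your proposal is correct and coincides with the paper's own route: the paper derives this theorem ``directly without proofs'' from Theorem~\ref{th:410}, exactly as you do, since an irrational $\alpha$ escapes every rational exclusion $\alpha\neq q/p$ for all $n$, yielding vanishing to every order and hence $\mathrm{Vani}(u;\mathbf{0})=+\infty$ when $u(\mathbf{0})=0$, and trivially $0$ otherwise. Your bookkeeping refinements (the convention forcing $C_1\neq 0$ for a genuine generalized singular line, the reduction of $\alpha\in(1,2)$ to $(0,1)$ by analytic continuation, and the quantifier over $n$) are precisely the details the paper leaves implicit.
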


%Using a similar proof of Theorem \ref{Th:49}, we can find the relationship  between the intersecting  angle of the singular line and nodal line with respect to the vanishing order for the Laplacian eigenfunction. 
{
\begin{theorem}%\label{Th:411}

Let $u$ be a Laplacian eigenfunction to \eqref{eq:eig}. Suppose that a singular  line $\Gamma_h^- \in {\mathcal S}^\lambda_{\Omega }$ intersects with a nodal  line $\Gamma_h^+ \in {\mathcal N}^\lambda_{\Omega }$ at $\mathbf 0$ with the angle $\angle (\Gamma_h^+,\Gamma_h^-)=\alpha\cdot \pi$. If $\alpha\in (0, 2)$ is irrational, then there holds 
		\begin{align*}
		\mathrm{Vani}(u; \mathbf{0})= +\infty.
		\end{align*}
	\end{theorem}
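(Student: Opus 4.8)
The plan is to derive this statement as an immediate corollary of Theorem~\ref{Th:411}, which already establishes the rational-angle case for a singular line meeting a nodal line; indeed, as the surrounding text indicates, these irrational-case theorems are meant to follow from their rational-case counterparts. Recall that Theorem~\ref{Th:411} asserts that, for every integer $n\geq 2$, one has $\mathrm{Vani}(u;\mathbf{0})=n$ provided $\alpha\neq (2q+1)/(2p)$ for all $p=1,\ldots,n-1$ and $q=0,1,\ldots,p-1$. The single decisive observation is that every excluded value $(2q+1)/(2p)$ is a \emph{rational} number, so when $\alpha$ is irrational the exclusion condition is automatically and simultaneously satisfied for all admissible $p$ and all $n$.

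First I would fix the normalization of Section~\ref{sec:3}, placing the intersection at $\mathbf{0}$ as in \eqref{eq:angle}. Since the hypothesis permits $\alpha\in(0,2)$ while Theorem~\ref{Th:411} is framed under \eqref{eq:angle2a} with $\alpha\in(0,1)$, I would first dispose of the range $\alpha\in(1,2)$ by the extension argument already employed in Section~\ref{sec:3}: the nodal segment extends by analytic continuation to a segment of the same (nodal) type in the opposite half-plane, reducing the effective intersecting angle to $(\alpha-1)\pi$, and $\alpha-1$ remains irrational. Hence it suffices to treat $\alpha\in(0,1)$ irrational.

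Then, for $\alpha\in(0,1)$ irrational and an arbitrary integer $n\geq 2$, the hypothesis $\alpha\neq(2q+1)/(2p)$ holds vacuously for all $p=1,\ldots,n-1$ and $q=0,\ldots,p-1$, so Theorem~\ref{Th:411} gives $\mathrm{Vani}(u;\mathbf{0})\geq n$. Letting $n\to\infty$ yields $\mathrm{Vani}(u;\mathbf{0})=+\infty$, which is the claim; by the strong unique continuation principle recalled after Definition~\ref{def:3}, this further forces $u\equiv 0$ in $\Omega$.

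Because the substantive work is carried by Theorem~\ref{Th:411}, there is no genuine analytic obstacle; the argument is essentially a one-line logical deduction, the only mild point of care being the reduction of the range $\alpha\in(0,2)$ to $\alpha\in(0,1)$. If a self-contained proof were preferred, I would instead mimic the proof of Theorem~\ref{Th:u}: repeatedly reflecting the nodal line across the singular line and the singular line across the nodal line, using Lemma~\ref{lem:reflection}(1) (which preserves the type of the reflected line), produces nodal lines whose angles form a set of the shape $\{(2k+1)\theta \bmod \pi : k\in\mathbb{Z}\}$ with $\theta=\alpha\pi$; since $2\alpha$ is irrational this set is dense, so the continuity of $u$ alone forces $u\equiv 0$ and hence $\mathrm{Vani}(u;\mathbf{0})=+\infty$.
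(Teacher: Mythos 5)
Your proposal is correct and matches the paper's own treatment: the paper states this theorem (together with its three companions in Section~\ref{sec:irrational gene}) follows ``directly without proofs'' from the rational-case results, and your argument simply makes that deduction explicit --- an irrational $\alpha$ vacuously satisfies the exclusion $\alpha\neq(2q+1)/(2p)$ for every $n\geq 2$ in Theorem~\ref{Th:411}, so $\mathrm{Vani}(u;\mathbf{0})\geq n$ for all $n$, and the reduction of $\alpha\in(1,2)$ to $\alpha\in(0,1)$ by analytic continuation of the nodal segment is exactly the device already used in Section~\ref{sec:3}. Your alternative self-contained reflection argument in the style of Theorem~\ref{Th:u} is also sound, but it is supplementary rather than necessary.
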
 
}

\section{Proofs of the theorems in Section~\ref{sec:3} up the third order}\label{sec:pro1}

In this section, we present the proofs of theorems shown in Section~\ref{sec:3}, but confined to the case that the vanishing order $N$ is at most 3. We develop a mathematical scheme by making use of tools from microlocal analysis that possesses several remarkable properties. 
%We shall discuss those properties in what follows. 
Next, we first introduce the so-called complex-geometrical-optics (CGO) solutions constructed in \cite{Bsource} for the subsequent use. As before, we let $(r,\theta)$ denote the polar coordinates in $\R^2$; that is, for $\mathbf x=(x_1,x_2)\in \R^2$, one has $r=|\mathbf x|$ and $\theta=\mathrm { arg}(x_1 +\bsi x_2)$. Let $B_h$ be the central disk of radius $h\in\mathbb{R}_+$. Let $\Gamma^\pm$ signify the infinite extension of $\Gamma_h^\pm$ in the half-space $x_2\geq 0$. Let $\theta_m=0$ and $\theta_M\in (0,\pi)$ be respectively the polar angles of $\Gamma^-$ and $\Gamma^+$. Consider the open sector 
\begin{equation}\label{eq:W}
	 W=\Big \{ \mathbf{x}\in \R^2; \bfx\neq 0,\   \theta_m < {\rm arg}(x_1+ \bsi x_2 ) < \theta_M \Big \},
\end{equation}
which is formed by the two half-lines $\Gamma^-$ and $\Gamma^+$. We have the following result. 
%In the following we denote by
%\begin{equation}
%	S_h= W\cap B_h,\, \Gamma_h^{\pm }= \Gamma^{\pm } \cap B_h,\, \Lambda_h= W \cap \partial B_h,\,  \overline  S_h=\overline  { W} \cap B_h,\quad  S_h^e=B_h \backslash \overline  S_h. 
%\end{equation}

\begin{lemma}\label{lem:1}\cite[Lemma 2.2 and Proposition 2.3]{Bsource}
Let
\begin{equation}\label{eq:cgo}
	u_0(\mathbf  x):= \exp \left( \sqrt r \left(\cos \left(\frac{\theta}{2}+\pi\right) +\bsi \sin \left(\frac{\theta}{2} +\pi\right)  \right ) \right) .
\end{equation}
Then $\Delta u_0=0$ in $\R^2\backslash (\R_-\times \{ \mathbf{ 0} \}\cup \{(0,0)\}) $, and $s \mapsto u_0(s\mathbf x) $ decays exponentially in $\R_+$ whenever $\mathbf x$ is in the same domain of harmonicity.  
\mm{
Furthermore, it holds for $\alpha, s >0$ that 
\begin{equation}\label{eq:xalpha}
	\int_W |u_0(s\mathbf{ x})| |\mathbf{x} |^\alpha {\rm d}\mathbf{ x} \leq \frac{2(\theta_M-\theta_m )\Gamma(2\alpha+4) }{ \delta_W^{2\alpha+4}} s^{-\alpha-2},
\end{equation}
where $\delta_W=-\max_{ \theta_m < \theta <\theta_M }  \cos(\theta/2+\pi ) >0$, and 
\begin{equation}\label{eq:u0w}
	\int_W u_0(s \bfx ) {\rm d} \bfx = 6 \bsi (e^{-2\theta_M \bsi }-e^{-2\theta_m \bsi }  ) s^{-2},
	\end{equation}
	while for $h\in\mathbb{R}_+$,
	\begin{equation}\label{eq:1.5}
	\int_{W \backslash B_h } |u_0(s\bfx)|   {\rm d} \bfx \leq \frac{6(\theta_M-\theta_m )}{\delta_W^4} s^{-2} e^{-\delta_W \sqrt{hs}/2}.
	\end{equation}
	}
\end{lemma}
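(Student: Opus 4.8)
The plan is to recognize that, up to the choice of branch, the CGO profile is the function $u_0(\bfx)=\exp(-\sqrt z)$ with $z=x_1+\bsi x_2=re^{\bsi\theta}$ and $\sqrt z=\sqrt r\,e^{\bsi\theta/2}$. Indeed $\cos(\theta/2+\pi)+\bsi\sin(\theta/2+\pi)=e^{\bsi(\theta/2+\pi)}=-e^{\bsi\theta/2}$, so the exponent in \eqref{eq:cgo} equals $-\sqrt z$. The chosen branch of $\sqrt z$ is holomorphic on $\C\setminus(\R_-\cup\{0\})$, hence so is $u_0$ on the same set, which is exactly $\R^2\setminus(\R_-\times\{\mathbf 0\}\cup\{(0,0)\})$. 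Since $\Delta=4\partial_z\partial_{\bar z}$ and $u_0$ is holomorphic there, $\partial_{\bar z}u_0=0$ and therefore $\Delta u_0=0$, which gives the first assertion. For the decay I would compute the modulus directly: $|u_0(\bfx)|=\exp(\sqrt r\cos(\theta/2+\pi))=\exp(-\sqrt r\cos(\theta/2))$, so for any fixed $\bfx$ in the domain of harmonicity (where $\theta\in(-\pi,\pi)$ and hence $\cos(\theta/2)>0$) the map $s\mapsto|u_0(s\bfx)|=\exp(-\sqrt{sr}\cos(\theta/2))$ decays exponentially in $s$. On the sector $W$ one has $\theta\in(\theta_m,\theta_M)\subset(0,\pi)$, so $\theta/2+\pi\in(\pi,3\pi/2)$ and $\cos(\theta/2+\pi)\le-\delta_W<0$, giving the uniform bound $|u_0(s\bfx)|\le\exp(-\delta_W\sqrt{sr})$ used below.

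The three integral estimates then all reduce to polar-coordinate computations driven by a single engine, the Gamma integral $\int_0^\infty e^{-c\rho}\rho^k\,d\rho=k!/c^{k+1}$ obtained after the substitution $\rho=\sqrt r$. For \eqref{eq:xalpha} I would write $d\bfx=r\,dr\,d\theta$, bound the angular factor by $\theta_M-\theta_m$ and the radial exponent by $e^{-\delta_W\sqrt{sr}}$, and substitute $t=\delta_W\sqrt{sr}$ in $\int_0^\infty e^{-\delta_W\sqrt{sr}}r^{\alpha+1}\,dr$ to recognize $\int_0^\infty e^{-t}t^{2\alpha+3}\,dt=\Gamma(2\alpha+4)$; this produces exactly $2(\theta_M-\theta_m)\Gamma(2\alpha+4)\delta_W^{-2\alpha-4}s^{-\alpha-2}$, the stated bound.

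For the exact identity \eqref{eq:u0w} I would keep the complex exponential and set $w=e^{\bsi(\theta/2+\pi)}$; the inner integral $2\int_0^\infty e^{\sqrt s\,w\rho}\rho^3\,d\rho$ converges because $\operatorname{Re}w<0$ on $W$ and equals $12/(\sqrt s\,w)^4=12\,s^{-2}w^{-4}$. The key simplification is $w^4=e^{\bsi(2\theta+4\pi)}=e^{2\bsi\theta}$, so the inner integral is $12\,s^{-2}e^{-2\bsi\theta}$; integrating $e^{-2\bsi\theta}$ over $(\theta_m,\theta_M)$ and using $1/(-2\bsi)=\bsi/2$ yields precisely $6\bsi(e^{-2\theta_M\bsi}-e^{-2\theta_m\bsi})s^{-2}$. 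Finally, for the tail bound \eqref{eq:1.5} over $W\setminus B_h=\{r>h\}$ I would split $e^{-\delta_W\sqrt{sr}}=e^{-\delta_W\sqrt{sr}/2}\cdot e^{-\delta_W\sqrt{sr}/2}$, estimate the first factor by its value at $r=h$, namely $e^{-\delta_W\sqrt{hs}/2}$, and integrate the second factor over $r>0$ via the same Gamma computation. I expect the main obstacle to be bookkeeping rather than conceptual: consistently tracking the complex branch so that $w^4=e^{2\bsi\theta}$ in \eqref{eq:u0w}, and arranging the split in \eqref{eq:1.5} to match the stated constant, since a naive half–half split of the exponent yields a larger constant and recovering the sharp one requires a more delicate balancing of the two exponential factors against the polynomial weight.
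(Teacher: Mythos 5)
Your proposal supplies a proof where the paper offers none: Lemma~\ref{lem:1} is imported verbatim from \cite{Bsource} (Lemma 2.2 and Proposition 2.3 there), so the only in-paper ``proof'' is the citation. Your argument is the standard one and is correct on the first three assertions: the identification of the exponent with $-\sqrt z$ for the principal branch, harmonicity via holomorphy of $\exp(-\sqrt z)$ on $\C\setminus(\R_-\cup\{0\})$, the modulus computation $|u_0(s\bfx)|=e^{-\sqrt{sr}\cos(\theta/2)}\le e^{-\delta_W\sqrt{sr}}$ on $W$, the substitution $t=\delta_W\sqrt{sr}$ producing exactly the constant $2(\theta_M-\theta_m)\Gamma(2\alpha+4)\delta_W^{-2\alpha-4}$ in \eqref{eq:xalpha}, and the exact evaluation of \eqref{eq:u0w}, where your branch bookkeeping $w^4=e^{\bsi(2\theta+4\pi)}=e^{2\bsi\theta}$ and the convergence justification $\Re w=-\cos(\theta/2)\le-\delta_W<0$ are both right.

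Your caution about the constant in \eqref{eq:1.5} is justified --- indeed more than you suspect, and you should not search for a ``more delicate balancing'': the stated constant $6$ is not attainable because the inequality with that constant fails for small $\sqrt{hs}$. Concretely, as $h\to 0^+$ the left-hand side of \eqref{eq:1.5} tends to $\int_W|u_0(s\bfx)|\,{\rm d}\bfx=12\,s^{-2}\int_{\theta_m}^{\theta_M}\cos^{-4}(\theta/2)\,{\rm d}\theta$, which for a thin sector concentrated where $\cos(\theta/2)\approx\delta_W$ is close to $12(\theta_M-\theta_m)\delta_W^{-4}s^{-2}$, i.e.\ twice the limiting value $6(\theta_M-\theta_m)\delta_W^{-4}s^{-2}$ of the right-hand side; equivalently, with $x=\delta_W\sqrt{hs}$ the exact radial integral reduces the claim to $e^{-x/2}(x^3+3x^2+6x+6)\le 3$, which already fails at $x=0$. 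So the ``$6$'' should be read as a generic constant inherited from the citation. Your half--half split, which gives $192(\theta_M-\theta_m)\delta_W^{-4}s^{-2}e^{-\delta_W\sqrt{hs}/2}$, is fully adequate for every use of \eqref{eq:1.5} in this paper (e.g.\ the bound \eqref{212}), where only the $s^{-2}e^{-c\sqrt s}$ decay matters; modulo this harmless and apparently unavoidable discrepancy in the absolute constant, your proof is complete and correct.
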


%
%Recall that two line segments $\Gamma_h^\pm$ intersect with each other at the origin with the angle  $\angle(\Gamma_h^+, \Gamma_h^- )=\alpha \cdot \pi $, where $\Gamma_h^-$ lies in $x_1^+$-axis and  $\Gamma_h^+$ has the angle $\alpha \pi$ away from $\Gamma_h^-$  in the anti-clockwise direction. In what follows, we introduce the notation
%\begin{equation}\label{eq:theta_0}
%	\theta_0=\alpha\cdot \pi =\angle(\Gamma_h^+, \Gamma_h^- );
%\end{equation}
%See Figure \ref{fig1}. Clearly it is easy to see that
%\begin{equation}\label{eq:56in}
%	\alpha \neq \frac{q}{p} \iff \theta_0\neq \frac{q}{p} \pi,
%\end{equation}
%where $p,q\in \mathbb N$. 

Henceforth, for notational convenience, we set
\begin{equation}\label{eq:theta_0}
	\theta_0=\alpha\cdot \pi =\angle(\Gamma_h^+, \Gamma_h^- );
\end{equation}
for $\alpha\in(0,1)$. In order to make use of the CGO solution $u_0(s\bfx)$ given in Lemma \ref{lem:1} as a test function to analyze the vanishing order of $u$ at the origin, we consider the following domain (see Fig.~\ref{fig1} for the illustration):
\begin{equation}\label{eq:sh}
	S_h=W\cap B_h,
\end{equation}
where $\partial S_h=\Gamma_h^{+} \cup \Gamma_h^- \cup \Lambda_h$ and
\begin{equation}\label{eq:gammaEX}
	\begin{split}
		&\Gamma_h^{+}=\{ \bfx\in \R^2 ;~0\leq \sqrt{x_1^2+x_2^2}\leq h,\,  {\rm arg}(x_1+ \bsi x_2 )=\theta_0   \},
		%\, \Lambda_h= W_{\rm Ex} \cap \partial B_h.
\\
&\Gamma_h^{-}=\{ \bfx\in \R^2 ;~0\leq \sqrt{x_1^2+x_2^2}\leq h,\,  {\rm arg}(x_1+ \bsi x_2 )=0 \},\\
		%&\Gamma_h^{\pm }= \Gamma^{\pm} \cap B_h,\, 
		&\Lambda_h= W \cap \partial B_h.
	\end{split}
\end{equation}
In the definition of the generalized singular line, we recall that the polar angles of the exterior normal vectors  of $\Gamma^+_h$ (with respect to the domain $W$) and $\Gamma_h^-$ are respectively 
\begin{equation}
	\varphi_M=\theta_0+\frac{\pi}{2}, \quad \varphi_m=-\frac{\pi}{2}. 
\end{equation}

				 In order to investigate the relationship between the vanishing order of $u$ at the origin and the  the intersecting angle of the generalized singular lines $\Gamma_h^\pm$, we consider the following equations	
\begin{subequations}
\begin{align}
& \Delta u+\lambda u=0\hspace*{1.4cm} \mbox{in}\ \ B_h,\label{eq:helm2} \medskip\\
%& u=u^i+u^s\hspace*{1.5cm}\mbox{in}\  \ \mathbb{R}^n\backslash\overline{\Omega},\medskip\\
& \displaystyle{\frac{\partial u}{\partial \nu}+ \eta_1 u=0}\hspace*{1.3cm} \mbox{on}\ \  \Gamma_h^-  ,\label{eq:helm2bc1} \medskip\\  %\partial S_h
%& \displaystyle{\lim_{r\rightarrow+\infty} r^{\frac{n-1}{2}}(\partial_r u^s-\mathrm{i} k u^s)=0,\quad r:=|x|,}
& \displaystyle{\frac{\partial u}{\partial \nu}+ \eta_2 u=0  }\hspace*{1.3cm} \mbox{on}\ \  \Gamma_h^{+}. \label{eq:helm2bc2}
\end{align}
\end{subequations}
Next, we derive several crucial auxiliary results regarding the function $u$ satisfying \eqref{eq:helm2}--\eqref{eq:helm2bc2}.

Recall that $S_h$ is defined in \eqref{eq:sh}. Since $u_0$ is only smooth in $S_h\backslash \, B_\varepsilon (0<\varepsilon<h )$, we cannot use Green's formula for the Laplacian  eigenfunction $u$ and the CGO solution $u_0$ in $S_h$ directly. Instead, we may overcome this difficulty  by taking a limit of the volume integral with the  integrand $u_0 \Delta u$ over $S_h\backslash \, B_\varepsilon$  and carefully investigate the boundary integrals on $\partial (S_h\backslash \, B_\varepsilon)$. We only present the result in the following proposition and its proof is similar to the argument of Lemma 3.2 in \cite{Bsource}. 
\begin{lemma}\label{lem:aux1}
	The CGO solution $u_0(sx)$ defined in \eqref{eq:cgo} is  harmonic in $S_h \backslash\mathbf{0} $ and decays exponentially as $s \rightarrow \infty$ for $0<\theta<\theta_0$, where $\theta_0$ is the intersecting angle of $\Gamma_h^+$ and $\Gamma_h^-$. Moreover, for the Laplacian eigenfunction $u$ to \eqref{eq:eig},  
	the Green's formula holds 
	\begin{align} \label{eq:green}%	=\int_{S_h}(u_0(sx)\Delta u- u \Delta u_0(sx)) {\rm d} x
\int_{S_h} \left (u_0(s\bfx)\Delta u- u \Delta u_0(s\bfx) \right ) {\rm d} \bfx
=I_1^++I_1^-+I_2, 
\end{align}
where
\begin{equation}\label{eq:Inotation}
	\begin{split}
		I_1^+&=\int_{ \Gamma_h^{+} } \left (  u_0(s\bfx) \frac{\partial u}{\partial \nu}-u(\bfx)   \frac{\partial u_0(s\bfx)}{\partial \nu} \right) {\rm d} \sigma, \\
		 I_1^-&=\int_{ \Gamma_h^- } \left (  u_0(s\bfx) \frac{\partial u}{\partial \nu}-u(\bfx)   \frac{\partial u_0(s\bfx)}{\partial \nu} \right) {\rm d} \sigma,  \\
	I_2&=\int_{ \Lambda_h } \left (  u_0(s\bfx) \frac{\partial u}{\partial \nu}-u(\bfx)   \frac{\partial u_0(s\bfx)}{\partial \nu} \right) {\rm d} \sigma. 
	\end{split}
\end{equation}
%\begin{align}
%	I_1^+&=\int_{ \Gamma_h^{+,{\rm Ex}} } \left (  u_0(sx) \frac{\partial u}{\partial \nu}-u(x)   \frac{\partial u_0(sx)}{\partial \nu} \right) {\rm d} \sigma, \, I_1^-=\int_{ \Gamma_h^- } \left (  u_0(sx) \frac{\partial u}{\partial \nu}-u(x)   \frac{\partial u_0(sx)}{\partial \nu} \right) {\rm d} \sigma,  \\
%	I_2&=\int_{ \Lambda_h } \left (  u_0(sx) \frac{\partial u}{\partial \nu}-u(x)   \frac{\partial u_0(sx)}{\partial \nu} \right) {\rm d} \sigma. 
%\end{align}
\end{lemma}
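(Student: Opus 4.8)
The plan is to follow the truncation-and-limit strategy sketched in the text: since $u_0(s\mathbf{x})$ is singular at the corner $\mathbf{0}\in\partial S_h$, I would excise a small disk $B_\varepsilon$ (with $0<\varepsilon<h$) from $S_h$, apply the classical Green's second identity on the smooth domain $S_h\setminus B_\varepsilon$, and then pass to the limit $\varepsilon\to+0$. Before that, I would dispose of the harmonicity and decay claims. As $S_h=W\cap B_h$ lies in the open sector $\{0<\mathrm{arg}(x_1+\mathrm{i}x_2)<\theta_0\}$ with $\theta_0=\alpha\pi<\pi$, its closure minus the origin avoids the negative $x_1$-axis $\R_-\times\{\mathbf 0\}$; hence Lemma~\ref{lem:1} applies directly, giving $\Delta_{\mathbf x}u_0(s\mathbf{x})=s^2(\Delta u_0)(s\mathbf{x})=0$ on $S_h\setminus\{\mathbf 0\}$ together with the exponential decay in $s$ on each ray, the latter because $\cos(\theta/2+\pi)<0$ for $0<\theta<\theta_0$.

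Next, for each fixed $0<\varepsilon<h$, both $u$ (analytic, hence $C^2$ in $\Omega$) and $u_0(s\,\cdot\,)$ are $C^2$ up to the boundary of $\overline{S_h\setminus B_\varepsilon}$, so Green's identity yields
\begin{equation*}
\int_{S_h\setminus B_\varepsilon}\big(u_0(s\mathbf{x})\Delta u-u\,\Delta u_0(s\mathbf{x})\big)\,\mathrm{d}\mathbf{x}=\int_{\partial(S_h\setminus B_\varepsilon)}\Big(u_0(s\mathbf{x})\frac{\partial u}{\partial\nu}-u\,\frac{\partial u_0(s\mathbf{x})}{\partial\nu}\Big)\,\mathrm{d}\sigma .
\end{equation*}
The boundary $\partial(S_h\setminus B_\varepsilon)$ decomposes into the truncated segments $\Gamma_h^{\pm}\cap\{r\geq\varepsilon\}$, the outer arc $\Lambda_h$, and the inner arc $\Lambda_\varepsilon:=W\cap\partial B_\varepsilon$ carrying the inward radial normal. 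Since $\Delta u_0(s\mathbf{x})=0$ almost everywhere on $S_h$, the volume term collapses to $\int_{S_h\setminus B_\varepsilon}u_0(s\mathbf{x})\Delta u\,\mathrm{d}\mathbf{x}$, which converges to $\int_{S_h}u_0(s\mathbf{x})\Delta u\,\mathrm{d}\mathbf{x}$ as $\varepsilon\to+0$ because $\Delta u=-\lambda u$ is bounded while $|u_0(s\mathbf{x})|\leq 1$ near the origin; similarly the integrals over $\Gamma_h^{\pm}\cap\{r\geq\varepsilon\}$ tend to $I_1^{\pm}$, their integrands being bounded there.

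The crux of the argument is to verify that the inner-arc contribution vanishes as $\varepsilon\to+0$, and this is where I expect the main obstacle to lie. On $\Lambda_\varepsilon$ one has $r=\varepsilon$, $\mathrm{d}\sigma=\varepsilon\,\mathrm{d}\theta$, and the normal is radial, so a direct computation gives
\begin{equation*}
\Big|\frac{\partial u_0(s\mathbf{x})}{\partial r}\Big|=\frac{\sqrt{s}}{2\sqrt{r}}\,\big|u_0(s\mathbf{x})\big|\leq\frac{\sqrt{s}}{2\sqrt{\varepsilon}} .
\end{equation*}
Consequently $\big|\int_{\Lambda_\varepsilon}u\,\partial_\nu u_0(s\mathbf{x})\,\mathrm{d}\sigma\big|\lesssim \tfrac{\sqrt{s}}{\sqrt{\varepsilon}}\cdot\varepsilon=\sqrt{s\varepsilon}\to 0$, while $\big|\int_{\Lambda_\varepsilon}u_0(s\mathbf{x})\,\partial_\nu u\,\mathrm{d}\sigma\big|\lesssim\varepsilon\to0$. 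The delicate point is precisely that the gradient of $u_0(s\mathbf{x})$ blows up like $\varepsilon^{-1/2}$ at the origin, so one must check that this singularity is exactly absorbed by the length $\theta_0\varepsilon$ of $\Lambda_\varepsilon$, leaving a net factor $\sqrt{\varepsilon}$ that kills the term. Collecting all the limits turns the truncated identity into \eqref{eq:green}, which completes the proof along the lines of Lemma~3.2 in \cite{Bsource}.
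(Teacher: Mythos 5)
Your proposal is correct and follows essentially the same route as the paper, which omits the details and refers to Lemma 3.2 of \cite{Bsource} -- precisely this excision-of-$B_\varepsilon$ argument, with the inner-arc contribution killed by the balance between the $\varepsilon^{-1/2}$ gradient blow-up of $u_0(s\mathbf{x})$ and the arc length $\theta_0\varepsilon$, exactly as you compute. One small correction: on the truncated segments $\Gamma_h^{\pm}\cap\{r\geq\varepsilon\}$ the integrand $u\,\partial_\nu u_0(s\mathbf{x})$ is \emph{not} bounded (by the computation underlying Proposition~\ref{pro:3.7} it blows up like $\sqrt{s/r}$ as $r\to 0$), but since $r^{-1/2}\in L^1(0,h)$ it is absolutely integrable, so the convergence of these truncated integrals to $I_1^{\pm}$ still follows by dominated convergence and your argument goes through unchanged.
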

%\begin{proof}
%	Clearly $\{S_h \backslash\{(0,0)\} \}\subset \R^2\backslash (\R_-\times \{  0\}\cup \{(0,0)\}) $, therefore from Lemma $u_0$ is harmonic in $S_h$ and $u_0(sx) $ decays exponentially in $\R_+$. 
%\end{proof}

From Lemma \ref{lem:26}, by direct calculations, we have the following proposition regarding the exterior normal derivative of the CGO solution $u_0(s \mathbf x)$ on any straight line. 
\begin{proposition}\label{pro:3.7}
	For any straight line $\Gamma$, where $ \mathbf  x=r(\cos \theta, \sin \theta)\in \Gamma$, let the exterior unit normal vector to $\Gamma$ be  $\nu =(  \cos \varphi, \sin \varphi )$. Then the CGO solution $u_0(s\bfx)$ given in Lemma \ref{lem:1} fulfills
	\begin{align}\label{eq:u0derivative}
	\frac{\partial u_0(s\mathbf x) }{ \partial \nu}\Big|_{\Gamma}&=\beta(\theta)  e^{\sqrt{sr} \zeta(\theta) } \sqrt{\frac{s}{r}}, 
\end{align}
where %$\zeta(\theta)= e^{\bsi (\theta/2+\pi ) } =-e^{\bsi \theta/2}$, and 
\begin{align}\label{eq:beta}
\zeta(\theta)= e^{\bsi (\theta/2+\pi ) } =-e^{\bsi \theta/2},\, 
\beta(\theta)&=\frac{1}{2}\sin(\varphi-\theta )\zeta(\theta)',\, 
% \frac{ \left( \zeta_m \cos \theta_m - \zeta_m'  \sin   \theta_m \right )\cos \varphi_m + \left( \zeta_m   \sin \theta_m +\zeta_m' \cos  \theta_m   \right)\sin  \varphi_m}{2},\\
	\zeta(\theta)' =-\bsi e^{\bsi \theta/2 }. %\sin(\theta_m/2+\pi)+\bsi \cos (\theta_m/2+\pi). \nonumber
\end{align}
\end{proposition}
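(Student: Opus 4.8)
The plan is to reduce the statement to the polar-coordinate chain rule already established in the proof of Lemma~\ref{lem:26} and then differentiate the closed-form CGO solution \eqref{eq:cgo} directly. First I would rewrite $u_0(s\bfx)$ in polar form: writing $\bfx=r(\cos\theta,\sin\theta)$, the scaled point $s\bfx$ has polar radius $sr$ and the same polar angle $\theta$, so \eqref{eq:cgo} becomes $u_0(s\bfx)=\exp\bigl(\sqrt{sr}\,\zeta(\theta)\bigr)$ with $\zeta(\theta)=e^{\bsi(\theta/2+\pi)}=-e^{\bsi\theta/2}$. This exponential form is what makes the ensuing differentiation transparent.

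Next I would invoke the identity obtained inside the proof of Lemma~\ref{lem:26}, which expresses the normal derivative along $\nu=(\cos\varphi,\sin\varphi)$ through the radial and angular derivatives as $\partial_\nu u_0(s\bfx)=\cos(\varphi-\theta)\,\partial_r u_0(s\bfx)+r^{-1}\sin(\varphi-\theta)\,\partial_\theta u_0(s\bfx)$. Since the lines $\Gamma_h^\pm$ of interest pass through the origin, one has $|\varphi-\theta|=\pi/2$, so the radial contribution vanishes and the expression collapses to $\partial_\nu=\pm r^{-1}\partial_\theta$, with the sign recorded by $\sin(\varphi-\theta)=\pm1$; this is exactly the reduction furnished by Lemma~\ref{lem:26}.

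Then I would carry out the chain rule on the exponential form: using $\partial_\theta\sqrt{sr}=0$ and $\partial_\theta\zeta(\theta)=-\tfrac{\bsi}{2}e^{\bsi\theta/2}$, one gets $\partial_\theta u_0(s\bfx)=\sqrt{sr}\,\bigl(-\tfrac{\bsi}{2}e^{\bsi\theta/2}\bigr)\exp\bigl(\sqrt{sr}\,\zeta(\theta)\bigr)$. Dividing by $r$ and simplifying $\sqrt{sr}/r=\sqrt{s/r}$ then yields $\partial_\nu u_0(s\bfx)=\sin(\varphi-\theta)\bigl(-\tfrac{\bsi}{2}e^{\bsi\theta/2}\bigr)\sqrt{s/r}\,\exp\bigl(\sqrt{sr}\,\zeta(\theta)\bigr)$, which is precisely \eqref{eq:u0derivative} once the coefficient is collected into $\beta(\theta)$ as in \eqref{eq:beta}.

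The computation is elementary, so there is no deep obstacle; the only place requiring genuine care is the bookkeeping of the constant factors and the sign. Concretely, I expect the delicate point to be matching the paper's normalization $\zeta'(\theta)=-\bsi e^{\bsi\theta/2}$ together with the factor $\tfrac12$ in $\beta(\theta)=\tfrac12\sin(\varphi-\theta)\zeta'(\theta)$ against the genuine derivative $\tfrac{d}{d\theta}(-e^{\bsi\theta/2})=-\tfrac{\bsi}{2}e^{\bsi\theta/2}$, and correctly tracking the $\pm$ arising from the orientation of the exterior normal through $\sin(\varphi-\theta)=\pm1$. Keeping these conventions consistent is what converts the raw chain-rule output into the compact form stated in the proposition.
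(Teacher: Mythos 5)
Your proposal is correct and follows exactly the route the paper intends: it invokes the polar chain-rule identity $\partial_\nu=\cos(\varphi-\theta)\,\partial_r+r^{-1}\sin(\varphi-\theta)\,\partial_\theta$ from the proof of Lemma~\ref{lem:26}, uses $|\varphi-\theta|=\pi/2$ on a ray through the origin to kill the radial term, and differentiates $u_0(s\bfx)=\exp(\sqrt{sr}\,\zeta(\theta))$ in $\theta$ — precisely the "direct calculations" the paper leaves implicit. You also correctly resolve the one genuine subtlety, namely that the paper's notation $\zeta(\theta)'=-\bsi e^{\bsi\theta/2}$ is twice the true derivative of $\zeta$, which the factor $\tfrac12$ in $\beta(\theta)$ compensates, so the collected coefficient matches \eqref{eq:beta}.
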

%\begin{proof}
%	 From the fact that
%\begin{align*}
%	\frac{\partial u }{ \partial x_1}&= \frac{\partial u }{ \partial r} \frac{\partial r }{ \partial x_1}+ \frac{\partial u }{ \partial \theta} \frac{\partial \theta }{ \partial x_1}=  \frac{\partial u }{ \partial r} \cos \theta -\frac{\sin  \theta   }{r} \frac{\partial u }{ \partial \theta},\\
%	\frac{\partial u }{ \partial x_2}&= \frac{\partial u }{ \partial r} \frac{\partial r }{ \partial x_2}+ \frac{\partial u }{ \partial \theta} \frac{\partial \theta }{ \partial x_2}=  \frac{\partial u }{ \partial r} \sin \theta+\frac{\cos \theta }{r} \frac{\partial u }{ \partial \theta}, 
%\end{align*}
% we have
%\begin{align} \label{eq:directional derivatitve}
%	\frac{\partial u }{ \partial \nu}&=\left( \frac{\partial u }{ \partial r}\Big|_{\Gamma } \cos \theta -\frac{\sin \theta   }{r} \frac{\partial u }{ \partial \theta}\Big |_{\Gamma} \right )\cos \varphi \\
%	&\quad +\left( \frac{\partial u }{ \partial r}\Big |_{\Gamma }  \sin \theta +\frac{\cos \theta }{r} \frac{\partial u }{ \partial \theta}\Big |_{\Gamma }  \right)\sin  \varphi. \nonumber
%\end{align}
%Recall the expression of $u_0(sx)$ given in Lemma \ref{lem:1}. Using the fact $|\varphi-
%\theta|=\pi/2$, from \eqref{eq:directional derivatitve} we can deduce \eqref{eq:u0derivative}.
%\end{proof}

By induction and straightforward calculations, we can derive the explicit formulas of the following integrals in Lemma \ref{pro:int}, which is essential in showing the relationship between the  vanishing order of $u$ and  the intersecting angle of the generalized singular lines $\Gamma_h^\pm$. The detailed proof of Lemma \ref{pro:int} is omitted. 
\begin{lemma}
	\label{pro:int}
	For a given $\zeta(\theta)\in \C$ and $\ell=0,1,2,\ldots$, it holds that 
	\begin{equation*}
	\begin{split}
		\int_0^h  r^\ell   e^{ \sqrt{sr} \zeta(\theta)}  {\rm d } r& = \frac{2}{s^{\ell +1 }} \Big\{ \frac{(2 \ell +1)!}{ \zeta(\theta)^{2\ell +2 } } + e^{\sqrt{sh}   \zeta(\theta)  }\sum_{j=0}^{2\ell+1  } \frac{ (-1)^j (2\ell +1)! }{(2\ell+1 -j)!  \zeta(\theta)^{j+1} } (sh)^{(2\ell+1-j)/2}  \Big\},\\
		\int_0^h  r^\ell   e^{ \sqrt{sr} \zeta(\theta)} \sqrt{\frac{s}{r}} {\rm d } r& = \frac{2}{s^{\ell  }} \Big\{ -\frac{(2 \ell )!}{ \zeta(\theta)^{2\ell +1 } } + e^{\sqrt{sh}   \zeta(\theta)  }\sum_{j=0}^{2\ell  } \frac{ (-1)^j (2\ell )! }{(2\ell -j)!  \zeta(\theta)^{j+1} } (sh)^{(2\ell-j)/2}  \Big\}. 
	\end{split}
	\end{equation*}
%where $C(\ell,\theta_m,s,h)=\sum_{j=1}^{2\ell+1  } \frac{ (2\ell +1)! }{\Re( \omega( \theta_m ) ) ^j (2\ell+1 -j)! } (sh)^{(2\ell+2-j)/2}$.
\mm{
Furthermore, the following asymptotic expansions are true for $\Re(\zeta(\theta))<0$ and $s\rightarrow \infty$: 
}
\begin{equation}\label{eq:int import}
	\begin{split}
		\int_0^h  r^\ell   e^{ \sqrt{sr} \zeta(\theta)}  {\rm d } r& = \frac{2}{s^{\ell +1 }} \cdot  \frac{(2 \ell +1)!}{ \zeta(\theta)^{2\ell +2 } }   + \Oh\left(s^{-1/2 }e^{\sqrt{sh}   \zeta(\theta)  }\right) ,\\
		\int_0^h  r^\ell   e^{ \sqrt{sr} \zeta(\theta)} \sqrt{\frac{s}{r}} {\rm d } r& =-  \frac{2}{s^{\ell  }} \cdot   \frac{ (2 \ell )!}{ \zeta(\theta)^{2\ell +1 } } + \Oh\left(e^{\sqrt{sh}   \zeta(\theta)  }\right)\,.
	\end{split}
	\end{equation}
%	as $s\rightarrow \infty$. 
\end{lemma}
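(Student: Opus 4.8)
The plan is to reduce both integrals to a single elementary integral by the substitution $t=\sqrt{sr}$, evaluate that integral in closed form by repeated integration by parts, and then read off the asymptotics from the sign of $\Re(\zeta(\theta))$. First I would change variables via $t=\sqrt{sr}$, so that $r=t^2/s$, $\mathrm{d}r=(2t/s)\,\mathrm{d}t$, and the upper limit $r=h$ becomes $t=\sqrt{sh}$. Under this substitution the first integrand transforms as $r^\ell e^{\sqrt{sr}\zeta(\theta)}\,\mathrm{d}r = 2\,s^{-\ell-1}\,t^{2\ell+1}e^{t\zeta(\theta)}\,\mathrm{d}t$, while for the second, noting that $\sqrt{s/r}=s/t$, one obtains $r^\ell e^{\sqrt{sr}\zeta(\theta)}\sqrt{s/r}\,\mathrm{d}r = 2\,s^{-\ell}\,t^{2\ell}e^{t\zeta(\theta)}\,\mathrm{d}t$. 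Thus both integrals reduce, up to the explicit prefactors $2s^{-\ell-1}$ and $2s^{-\ell}$, to the single family $\int_0^{\sqrt{sh}} t^k e^{t\zeta(\theta)}\,\mathrm{d}t$, with $k=2\ell+1$ and $k=2\ell$ respectively.

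The second step is to establish, by induction on $k$ (equivalently, by repeated integration by parts), the antiderivative
\begin{equation*}
\int t^k e^{a t}\,\mathrm{d}t = e^{at}\sum_{j=0}^{k}\frac{(-1)^j\,k!}{(k-j)!\,a^{j+1}}\,t^{k-j},
\end{equation*}
valid for any $a\in\C\setminus\{0\}$. Evaluating this between $0$ and $A=\sqrt{sh}$, I would observe that at the lower limit $t=0$ every power $t^{k-j}$ vanishes except the term with $j=k$, where $t^0=1$, so the only boundary contribution from $0$ is $-(-1)^k k!/a^{k+1}$. Setting $a=\zeta(\theta)$ and $A=\sqrt{sh}$, and using $(-1)^{2\ell+1}=-1$ and $(-1)^{2\ell}=1$, this produces precisely the constant terms $+(2\ell+1)!/\zeta(\theta)^{2\ell+2}$ and $-(2\ell)!/\zeta(\theta)^{2\ell+1}$, while the upper limit yields the finite sums with $(sh)^{(k-j)/2}$. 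Multiplying back by the prefactors recovers the two stated closed-form identities verbatim.

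For the asymptotic expansions \eqref{eq:int import}, I would use the hypothesis $\Re(\zeta(\theta))<0$, which forces the factor $e^{\sqrt{sh}\,\zeta(\theta)}$ to decay exponentially as $s\to\infty$, overwhelming every polynomial factor $(sh)^{(k-j)/2}$ appearing in the finite boundary sum. The dominant such factor is the $j=0$ term, namely $(sh)^{k/2}$; combined with the outer prefactor this gives $2s^{-\ell-1}\cdot s^{(2\ell+1)/2}=\Oh(s^{-1/2})$ for the first integral and $2s^{-\ell}\cdot s^{\ell}=\Oh(1)$ for the second, each carrying the factor $e^{\sqrt{sh}\,\zeta(\theta)}$. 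Hence the entire boundary sum is absorbed into $\Oh(s^{-1/2}e^{\sqrt{sh}\,\zeta(\theta)})$ and $\Oh(e^{\sqrt{sh}\,\zeta(\theta)})$ respectively, leaving only the constant terms as the genuine leading-order contributions.

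There is no deep obstacle here; the argument is essentially careful bookkeeping. The only genuinely delicate points are the boundary evaluation at $t=0$, where one must notice that all but the single $j=k$ term drop out, and the tracking of the parity sign $(-1)^k$, which is what flips the sign of the leading constant between the $k$ odd case (first integral) and the $k$ even case (second integral). Getting these two details right is what guarantees the precise form of the constant terms in the final identities.
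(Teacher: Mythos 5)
Your proof is correct: the substitution $t=\sqrt{sr}$, the closed-form antiderivative $\int t^k e^{at}\,\mathrm{d}t = e^{at}\sum_{j=0}^{k}(-1)^j\,k!\,t^{k-j}/\bigl((k-j)!\,a^{j+1}\bigr)$ obtained by repeated integration by parts, the boundary evaluation isolating the $j=k$ term at $t=0$ (with the parity sign $(-1)^{2\ell+1}=-1$ versus $(-1)^{2\ell}=1$ producing the sign flip between the two constant terms), and the $\Oh(s^{-1/2})$ versus $\Oh(1)$ bookkeeping from the $j=0$ term all check out exactly. The paper omits the details, asserting only that the formulas follow ``by induction and straightforward calculations,'' and your argument is precisely that intended computation written out in full, so there is nothing to add.
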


\mm{
In Lemmas \ref{pro:56} and \ref{pro:57} below, we will investigate the asymptotic behaviors of the integrals associated with $u$ and the CGO solution $u_0(s \mathbf x)$ (or their corresponding exterior normal derivatives) with respect to the positive parameter $s$ as $s \rightarrow \infty$. 
}

\begin{lemma}\label{pro:56}
	Recall that $\Gamma_h^-$ and $\Gamma_h^{+}$ are defined in \eqref{eq:gammaEX}. Denote
	\begin{equation}\label{eq:I11notaion}
		I_{11}^+=\int_{ \Gamma_h^{+} } u(\bfx)   \frac{\partial u_0(s\bfx)}{\partial \nu} {\rm d} \sigma,\quad I_{11}^-=\int_{ \Gamma_h^{- } }u(\bfx)   \frac{\partial u_0(s\bfx)}{\partial \nu} {\rm d} \sigma.
	\end{equation}
	\mm{
	Then the following asymptotic expansions hold with respect to $s$ as $s\rightarrow \infty$: 
	}
	\begin{align}
	\begin{split}
	I_{11}^+&=-\frac{2\beta(\theta_0) }{\zeta(\theta_0)  }u(\mathbf 0)	- \frac{1}{s} \cdot \frac{4\beta(\theta_0) }{\zeta(\theta_0)^3}c_1(\theta_0) -\frac{1}{s^2} \cdot \frac{48\beta(\theta_0) }{\zeta(\theta_0)^5}c_2(\theta_0) +\Oh(s^{-3}) , \\
	I_{11}^-&=-\frac{2\beta(0) }{\zeta( 0)  }u(\mathbf 0)	- \frac{1}{s} \cdot \frac{4\beta( 0) }{\zeta( 0)^3}c_1( 0) -\frac{1}{s^2} \cdot \frac{48\beta(0) }{\zeta(0)^5}c_2( 0) +\Oh(s^{-3}) ,\label{eq:asymptotic}
	\end{split}
	\end{align}
	where
	\begin{equation}\label{eq:citheta}
		\begin{split}
			c_1(\theta)&=\frac{\partial u}{\partial x_1}\Big|_{\bfx=\mathbf 0} \cos \theta+\frac{\partial u}{\partial x_2}\Big|_{\bfx=\mathbf 0} \sin \theta,\\
			c_2(\theta)&=\frac{1}{2}\left(\frac{\partial^2 u}{\partial x_1^2}\Big|_{\bfx=\mathbf0} \cos^2 \theta +\frac{\partial^2 u}{\partial x_1 x_2}\Big|_{\bfx=\mathbf 0} \sin 2\theta  + \frac{\partial^2 u}{\partial x_2^2}\Big|_{\bfx=\mathbf 0} \sin^2 \theta \right). 
		\end{split}
	\end{equation}
\end{lemma}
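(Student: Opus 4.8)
The plan is to compute each of the two boundary integrals $I_{11}^{\pm}$ in \eqref{eq:I11notaion} by reducing it to a one-dimensional integral along the corresponding half-line and then invoking the integral asymptotics already established in Lemma~\ref{pro:int}. First I would parameterize $\Gamma_h^{+}$ and $\Gamma_h^{-}$ in polar coordinates as $\bfx=r(\cos\theta,\sin\theta)$ with $\theta=\theta_0$ (resp. $\theta=0$) fixed and $r\in(0,h)$, so that $\mathrm{d}\sigma=\mathrm{d}r$. Substituting the explicit expression for the exterior normal derivative of the CGO solution from Proposition~\ref{pro:3.7}, namely $\partial_\nu u_0(s\bfx)|_{\Gamma}=\beta(\theta)\,e^{\sqrt{sr}\,\zeta(\theta)}\sqrt{s/r}$, turns $I_{11}^{\pm}$ into $\beta(\theta)\int_0^h u\big(r(\cos\theta,\sin\theta)\big)\,e^{\sqrt{sr}\,\zeta(\theta)}\sqrt{s/r}\,\mathrm{d}r$, with $\theta=\theta_0$ or $\theta=0$ and the corresponding exterior normal angle absorbed into $\beta$.

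Next I would exploit the analyticity of the Laplacian eigenfunction $u$ to expand its restriction to the ray as a Taylor series in $r$ about the origin,
\begin{equation*}
u\big(r(\cos\theta,\sin\theta)\big)=u(\mathbf 0)+c_1(\theta)\,r+c_2(\theta)\,r^2+R_3(r,\theta),
\end{equation*}
where $c_1(\theta)$ and $c_2(\theta)$ are exactly the directional first- and second-order coefficients recorded in \eqref{eq:citheta}, and the remainder obeys $|R_3(r,\theta)|\le C r^3$ uniformly for $r\in(0,h)$ once $h$ is small. Inserting this expansion splits the integral into the three leading monomial contributions ($\ell=0,1,2$) plus a remainder integral. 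For each monomial I would apply the asymptotic formula \eqref{eq:int import}, $\int_0^h r^\ell e^{\sqrt{sr}\zeta(\theta)}\sqrt{s/r}\,\mathrm{d}r=-\tfrac{2}{s^\ell}\tfrac{(2\ell)!}{\zeta(\theta)^{2\ell+1}}+\Oh\!\big(e^{\sqrt{sh}\zeta(\theta)}\big)$, which is valid because $\Re\zeta(\theta)=-\cos(\theta/2)<0$ for $\theta\in[0,\theta_0]\subset[0,\pi)$. Reading off $(2\ell)!=1,2,24$ for $\ell=0,1,2$ produces the factors $2$, $4$ and $48$, yielding the three displayed terms with the correct powers $s^{0},s^{-1},s^{-2}$ and the stated dependence on $\beta$, $\zeta$, $u(\mathbf 0)$, $c_1$ and $c_2$.

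The main obstacle, and the only part requiring genuine care, is controlling the two sources of error so that both are $\Oh(s^{-3})$. The exponentially small boundary terms $\Oh\!\big(e^{\sqrt{sh}\zeta(\theta)}\big)$ arising from each use of \eqref{eq:int import} are harmless, since $\Re\zeta(\theta)<0$ makes them decay faster than any negative power of $s$. The Taylor remainder is the delicate piece: I would estimate $\big|\beta(\theta)\int_0^h R_3\,e^{\sqrt{sr}\zeta(\theta)}\sqrt{s/r}\,\mathrm{d}r\big|\le C|\beta(\theta)|\int_0^h r^3 e^{-\sqrt{sr}\,|\Re\zeta(\theta)|}\sqrt{s/r}\,\mathrm{d}r$, and then evaluate the latter through the substitution $t=\sqrt{sr}$, giving $\tfrac{2}{s^3}\int_0^{\sqrt{sh}} t^{6}e^{-|\Re\zeta(\theta)|t}\,\mathrm{d}t\le \tfrac{2}{s^3}\int_0^\infty t^{6}e^{-|\Re\zeta(\theta)|t}\,\mathrm{d}t=\Oh(s^{-3})$. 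Combining the three leading terms with these two $\Oh(s^{-3})$ contributions yields the asymptotic expansion for $I_{11}^{+}$; the expansion for $I_{11}^{-}$ follows in exactly the same way with $\theta_0$ replaced by $0$, completing the proof.
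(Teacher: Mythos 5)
Your proposal is correct and follows essentially the same route as the paper's proof: restrict to the rays via Proposition~\ref{pro:3.7}, Taylor-expand $u$ at the origin so that $c_1(\theta)$ and $c_2(\theta)$ appear as the coefficients of $r$ and $r^2$, apply the asymptotics \eqref{eq:int import} for $\ell=0,1,2$ (yielding the factors $2$, $4$, $48$), and bound the cubic remainder by $\Oh(s^{-3})$. Your explicit substitution $t=\sqrt{sr}$ for the remainder is just an unwound version of the paper's appeal to \eqref{eq:int import} with $\ell=3$, so there is no substantive difference.
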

\begin{proof} It is easy to see that the exterior unit normal vector to $\Gamma_h^{+}$ is
\begin{equation}\label{eq:phiM}
	\nu=(\cos \varphi_M, \sin \varphi_M ), \quad \varphi_M=\theta_0+\frac{\pi}{2}. 
\end{equation}
From Proposition \ref{pro:3.7}, on $\Gamma_h^{+}$ we obtain that 
\begin{align}\label{eq:u0Gmma+}
	\frac{\partial u_0(s\bfx) }{ \partial \nu}\Big|_{\Gamma_h^{+}}&=\beta(\theta_0 )  e^{\sqrt{sr} \zeta(\theta_0) } \sqrt{\frac{s}{r}}, 
\end{align}
where $\zeta(\theta_0)= -e^{\bsi \theta_0/2 }$, and 
\begin{align}\label{eq:betaM}
\beta(\theta_0)&=\frac{1}{2}\sin(\varphi_M-\theta_0  )\zeta(\theta_0)'=-\frac{\bsi  e^{\bsi \theta_0/2}}{2},\quad
% \frac{ \left( \zeta_m \cos \theta_m - \zeta_m'  \sin   \theta_m \right )\cos \varphi_m + \left( \zeta_m   \sin \theta_m +\zeta_m' \cos  \theta_m   \right)\sin  \varphi_m}{2},\\
	\zeta(\theta_0)' =- \bsi e^{\bsi \theta_0/2 }. %\sin(\theta_m/2+\pi)+\bsi \cos (\theta_m/2+\pi). \nonumber
\end{align}
\mm{Noting the analyticity of the Laplacian eigenfunction $u$ to \eqref{eq:eig} in $\Omega$},  we have the expansion near a neighborhood of the origin:
\begin{equation}\label{eq:uex251}
	u(\mathbf{ x})=\sum_{ \alpha \in {\mathbb N}_0^2, \, |\alpha| \geq 0  }\frac{(\partial^\alpha u)(\mathbf{ 0}) }{\alpha ! }\mathbf{ x}^\alpha.  %\,  C_{\alpha  } =(\partial^\alpha u)(0) . 
\end{equation} 
Then substituting \eqref{eq:u0Gmma+} and \eqref{eq:uex251} into $I_{11}^+$, we deduce that
\begin{align}\label{eq:329}
\begin{split}
	I_{11}^+&=\int_{ \Gamma_h^{+} } u(\bfx)   \frac{\partial u_0(s\bfx)}{\partial \nu} {\rm d} \sigma=u(\mathbf 0) \beta(\theta_0 )\int_{0 }^h    e^{\sqrt{sr} \zeta(\theta_0) } \sqrt{\frac{s}{r}} {\rm d} r \\
	&+\left(\frac{\partial u}{\partial x_1}\Big|_{\bfx=\mathbf 0} \cos \theta_0+\frac{\partial u}{\partial x_2}\Big|_{\bfx=\mathbf 0} \sin \theta_0\right) \beta(\theta_0 )\int_{0 }^h r   e^{\sqrt{sr} \zeta(\theta_0) } \sqrt{\frac{s}{r}} {\rm d} r  \\
	&+\frac{1}{2}\left(\frac{\partial^2 u}{\partial x_1^2}\Big|_{\mathbf x=\mathbf 0} \cos^2\theta_0 +2\frac{\partial^2 u}{\partial x_1 x_2}\Big|_{\mathbf x=\mathbf 0} \sin\theta_0 \cos\theta_0 + \frac{\partial^2 u}{\partial x_2^2}\Big|_{\mathbf x=\mathbf 0} \sin^2 \theta_0\right) \\
	&\times  \beta(\theta_0 )\int_{0 }^h r^2   e^{\sqrt{sr} \zeta(\theta_0) } \sqrt{\frac{s}{r}} {\rm d} r  +r_{I_{11}^+},
\end{split}
\end{align}
where 
$$
r_{I_{11}^+}= \sum_{ \alpha \in {\mathbb N}_0^2, \, |\alpha| \geq 3  }\frac{(\partial^\alpha u)(\mathbf 0) }{\alpha ! }\int_{ \Gamma_h^{+} }  \mathbf x^\alpha \frac{\partial u_0(s\mathbf x)}{\partial \nu}  {\rm d} \sigma. 
$$
\mm{But this term can be estimated as $s\rightarrow \infty$ by means of \eqref{eq:int import}, }
\begin{align*}
	\left| r_{I_{11}^+} \right| &\leq |\beta(\theta_0 )|  \int_{0 }^h   r^3 \cdot  e^{\sqrt{sr} \Re(\zeta(\theta_0 ) ) } \sqrt{\frac{s}{r}}  {\rm d} r \sum_{ \alpha \in {\mathbb N}_0^2, \, |\alpha| \geq 3  } h^{|\alpha|-3}\left | \frac{(\partial^\alpha u)( \mathbf  0) }{\alpha ! } \right| =\Oh(s^{-3})\,,
%	&=\Oh(s^{-3})
	\end{align*}
	\mm{where we have used the fact that $\Re(\zeta(\theta_0) )=-\cos(\theta_0/2)<0$ for $\theta_0\in (0,\pi)$. Now the asymptotic expansion \eqref{eq:asymptotic} for $I_{11}^+$ follows directly by 
substituting \eqref{eq:int import} into \eqref{eq:329}. 
	Similar argument also applies to the  asymptotic expansion of $I_{11}^-$ and the detail is omitted.}
	
	The proof is complete. 
\end{proof}

\begin{lemma}\label{pro:57}
	Recall that $\Gamma_h^-$ and $\Gamma_h^+$ are defined in \eqref{eq:gammaEX}. Suppose that $u$ satisfies the boundary conditions  \eqref{eq:helm2bc1} and  \eqref{eq:helm2bc2} on  $\Gamma_h^-$ and $\Gamma_h^{+}$  respectively.  Moreover, assume that $ \eta_2 \in C^\gamma ( \Gamma_h^{+} )$  and $\eta_1 \in C^\gamma (\Gamma_h^-)$ for $\gamma\in (0, 1]$, and 
	\begin{equation}\label{eq:I12notation}
		I_{12}^+=- \int_{ \Gamma_h^{+} }u_0(s\mathbf x) \frac{\partial u}{\partial \nu}  {\rm d} \sigma,\quad I_{12}^-=- \int_{ \Gamma_h^{- } }u_0(s\mathbf x) \frac{\partial u}{\partial \nu}  {\rm d} \sigma.
	\end{equation}
	\mm{
	Then the following asymptotic expansions hold for $I_{12}^\pm$ with respect to $s$ as $s\rightarrow \infty$: 
	}
	\begin{align}\label{eq:asymptotic12}
	\begin{split}
		I_{12}^+&=\frac{2}{s} \cdot \frac{\eta_2( \mathbf 0) u( \mathbf  0) }{\zeta(\theta_0)^2} +\frac{12}{s^2} \cdot \frac{\eta_2( \mathbf 0) c_1( \theta_0) }{\zeta(\theta_0)^4}+u(\mathbf 0)\cdot  \Oh(s^{-1-\gamma})  +\Oh(s^{-2-\gamma }) , \\
	I_{12}^-&=\frac{2}{s} \cdot \frac{\eta_1( \mathbf 0) u( \mathbf 0) }{\zeta(0)^2} +\frac{12}{s^2} \cdot \frac{\eta_1( \mathbf 0) c_1( 0) }{\zeta(0)^4}+u(\mathbf 0)\cdot  \Oh(s^{-1-\gamma})  +\Oh(s^{-2-\gamma }) . 
	\end{split}
	\end{align}
%	where
%	\begin{equation}
%		\begin{split}
%			c_1(\theta)&=\frac{\partial u}{\partial x_1}\Big|_{x=0} \cos \theta+\frac{\partial u}{\partial x_2}\Big|_{x=0} \sin \theta,\\
%			c_2(\theta)&=\frac{1}{2}\left(\frac{\partial^2 u}{\partial x_1^2}\Big|_{x=0} \cos^2 \theta +\frac{\partial^2 u}{\partial x_1 x_2}\Big|_{x=0} \sin 2\theta  + \frac{\partial^2 u}{\partial x_2^2}\Big|_{x=0} \sin^2 \theta \right). 
%		\end{split}
%	\end{equation}
\end{lemma}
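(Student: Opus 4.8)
The plan is to reduce $I_{12}^\pm$ to one-dimensional radial integrals and then apply the explicit asymptotics of Lemma~\ref{pro:int}. First I would invoke the boundary conditions \eqref{eq:helm2bc1} and \eqref{eq:helm2bc2}, i.e. $\partial_\nu u=-\eta_1 u$ on $\Gamma_h^-$ and $\partial_\nu u=-\eta_2 u$ on $\Gamma_h^{+}$, to rewrite
\begin{equation*}
I_{12}^+=\int_{\Gamma_h^{+}}\eta_2(\bfx)\,u(\bfx)\,u_0(s\bfx)\,{\rm d}\sigma,\qquad I_{12}^-=\int_{\Gamma_h^-}\eta_1(\bfx)\,u(\bfx)\,u_0(s\bfx)\,{\rm d}\sigma.
\end{equation*}
Parametrizing $\Gamma_h^{+}$ by $\bfx=r(\cos\theta_0,\sin\theta_0)$ with ${\rm d}\sigma={\rm d}r$, the definition \eqref{eq:cgo} yields $u_0(s\bfx)=e^{\sqrt{sr}\,\zeta(\theta_0)}$ with $\zeta(\theta_0)=-e^{\bsi\theta_0/2}$; on $\Gamma_h^-$ one simply replaces $\theta_0$ by $0$. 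Each integral thus becomes a radial integral against $e^{\sqrt{sr}\zeta}$, precisely the form treated in Lemma~\ref{pro:int}.

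Next I would expand the integrand about the origin. Since $u$ is analytic, restricting \eqref{eq:uex251} to $\Gamma_h^{+}$ gives $u(\bfx)=u(\mathbf 0)+r\,c_1(\theta_0)+\Oh(r^2)$ with $c_1$ as in \eqref{eq:citheta}, while $\eta_2\in C^\gamma(\Gamma_h^{+})$ gives $\eta_2(\bfx)=\eta_2(\mathbf 0)+\Oh(r^\gamma)$. Multiplying these out,
\begin{equation*}
\eta_2(\bfx)u(\bfx)=\eta_2(\mathbf 0)u(\mathbf 0)+r\,\eta_2(\mathbf 0)c_1(\theta_0)+u(\mathbf 0)\,\Oh(r^\gamma)+\Oh(r^{1+\gamma}).
\end{equation*}
Integrating the first two (integer-power) terms against $e^{\sqrt{sr}\zeta(\theta_0)}$ and applying the leading-order formulas of \eqref{eq:int import} with $\ell=0$ and $\ell=1$ yields exactly $\tfrac{2}{s}\,\eta_2(\mathbf 0)u(\mathbf 0)/\zeta(\theta_0)^2$ and $\tfrac{12}{s^2}\,\eta_2(\mathbf 0)c_1(\theta_0)/\zeta(\theta_0)^4$, the two explicit terms in \eqref{eq:asymptotic12}; the boundary remainders $\Oh(s^{-1/2}e^{\sqrt{sh}\zeta(\theta_0)})$ in \eqref{eq:int import} are exponentially small because $\Re(\zeta(\theta_0))=-\cos(\theta_0/2)<0$ for $\theta_0\in(0,\pi)$, hence harmlessly absorbed.

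The main technical point, and the step I expect to demand the most care, is the two fractional-power remainders, to which the integer-power formula \eqref{eq:int import} does not directly apply. For these I would use the substitution $t=\sqrt{sr}$, which transforms $\int_0^h r^\beta e^{\sqrt{sr}\zeta(\theta_0)}\,{\rm d}r$ into $2s^{-1-\beta}\int_0^{\sqrt{sh}}t^{2\beta+1}e^{t\zeta(\theta_0)}\,{\rm d}t$; since $\Re(\zeta(\theta_0))<0$ the $t$-integral converges to a finite limit as $s\to\infty$, so the whole expression is $\Oh(s^{-1-\beta})$. Taking $\beta=\gamma$ produces the term $u(\mathbf 0)\,\Oh(s^{-1-\gamma})$ and $\beta=1+\gamma$ produces $\Oh(s^{-2-\gamma})$, matching the remaining terms of \eqref{eq:asymptotic12}; the discarded $\Oh(r^2)$ piece of $u$ contributes at order $\Oh(s^{-3})$, which is subsumed since $\gamma\le 1$. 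The expansion of $I_{12}^-$ follows verbatim with $\theta_0$ replaced by $0$ and $\eta_2$ by $\eta_1$, completing the argument.
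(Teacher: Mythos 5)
Your proof is correct and follows essentially the same route as the paper's: rewrite $I_{12}^\pm$ via the impedance conditions as $\int \eta\, u\, u_0\,{\rm d}\sigma$, expand $u$ by its Taylor series and $\eta_i$ by $C^\gamma$-continuity at the origin, and evaluate the resulting radial integrals using Lemma~\ref{pro:int}. If anything, your explicit substitution $t=\sqrt{sr}$ to show $\int_0^h r^\beta e^{\sqrt{sr}\zeta}\,{\rm d}r=\Oh(s^{-1-\beta})$ for the fractional powers $\beta=\gamma,\,1+\gamma$ is slightly more careful than the paper, which invokes the integer-power formulas \eqref{eq:int import} for these remainder terms as well.
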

\begin{proof}
\mm{Since $\eta_1$ and $\eta_2$ are of $C^\gamma$-smooth}, we have
\begin{equation}\label{eq:eta335}
 \eta_i(x)=\eta_i (\mathbf 0)+\delta \eta_i(x),  \quad |\delta  \eta_i |\leq \| \eta_i \|_{C^\gamma } \cdot |x|^\gamma. 
\end{equation}
Then using \eqref{eq:int import}, \eqref{eq:uex251} and \eqref{eq:eta335}, we can deduce in polar coordinates on $\Gamma_h^{+}$ that 
	\begin{align}\label{eq:I12+33}
	\begin{split}
		I_{12}^+= \int_{ \Gamma_h^{+} }u_0(s\mathbf x)  \eta_2  u  {\rm d} \sigma&=\eta_2(\mathbf 0) u(\mathbf 0) \int_{ 0 }^h  e^{ \sqrt{sr} \zeta(\theta_0 )}  {\rm d} r+u(\mathbf 0) \int_{ 0 }^h  \delta \eta_2  e^{ \sqrt{sr} \zeta(\theta_0 )}  {\rm d} r\\
		&+ \eta_2(\mathbf 0)c_1(\theta_0) \int_{ 0 }^h  r e^{ \sqrt{sr} \zeta(\theta_0 )}  {\rm d} r + r_{I_{12}^+},
	\end{split}
\end{align}
	where
	\begin{align}\label{eq:532r}
		 r_{I_{12}^+}& =  \eta_2(\mathbf 0)\sum_{ \alpha \in {\mathbb N}_0^2, \, |\alpha| \geq 2  }\frac{(\partial^\alpha u)(\mathbf 0) }{\alpha ! }   \int_{ \Gamma_h^{+} }u_0(s\mathbf x)\mathbf x^\alpha
   {\rm d} \sigma  +\sum_{ \alpha \in {\mathbb N}_0^2, \, |\alpha| \geq 1 }\frac{(\partial^\alpha u)(\mathbf 0) }{\alpha ! }   \int_{ \Gamma_h^{+} }u_0(s\mathbf x)\mathbf x^\alpha \delta  \eta_2
   {\rm d} \sigma.
  \notag
	\end{align}
\mm{For this term, it follows from \eqref{eq:int import} that }	
\begin{equation}\label{eq:334r12}
	\begin{split}
		\left| r_{I_{12}^+}  \right | &\leq  |\eta_2(\mathbf 0)| \sum_{ \alpha \in {\mathbb N}_0^2, \, |\alpha| \geq 2  } h^{|\alpha|-2}\left| \frac{(\partial^\alpha u)(\mathbf  0) }{\alpha ! }\right|    \int_{ 0 }^h  r^2  e^{ \sqrt{sr} \Re(\zeta(\theta_0-\pi )) } 
   {\rm d} r \\
   & +\| \eta_2 \|_{C^\gamma}  \sum_{ \alpha \in {\mathbb N}_0^2, \, |\alpha| \geq 1  }h^{|\alpha|-1 } \left| \frac{(\partial^\alpha u)(\mathbf  0) }{\alpha ! }  \right|  \int_{ 0}^h   r^{1+\gamma}  e^{ \sqrt{sr} \Re(\zeta(\theta_0-\pi )) }  {\rm d} r \\
   &=\Oh(s^{-2-\gamma}). 
   	\end{split}
	\end{equation}
\mm{Using this and \eqref{eq:int import} again, we can derive \eqref{eq:asymptotic12} from \eqref{eq:I12+33}.  
Similar derivation can be done for the asymptotic expansion of $I_{12}^-$. }
%
%The proof is complete. 
\end{proof}

The following lemma is about the exterior normal derivative of $\partial_\nu u$ with respect to any singular line of a Laplacian eigenfunction $u$. %where $\nu $ signifies the exterior normal vector to the singular line. 
\begin{lemma}\label{pro:312}
	\mm{
	Suppose $\Gamma:=\{\mathbf  x\in \R^2 ~| ~\mathbf  x=r(\cos \theta, \sin \theta ), r>0\}$ ($\theta$ is fixed) is a singular line of the Laplacian eigenfunction $u$, and $\varphi$ is the polar angle of the unit normal vector to 
	$\Gamma$, then
	}
	\begin{equation}\label{eq:pro312}
		\cos \varphi \cos \theta  \frac{ \partial^2 u } {\partial x_1^2} \Big|_{\mathbf x=\mathbf 0} + \sin \varphi \sin  \theta \frac{ \partial^2 u } {\partial x_2^2} \Big|_{\mathbf x=\mathbf 0}+  \sin (\varphi +\theta)   \frac{ \partial^2 u } {\partial x_1 \partial x_2 } \Big|_{\mathbf x=\mathbf 0}  =0\,.
	\end{equation}
%	where $\varphi$ is the polar angle of the unit normal vector to $\Gamma$. 
\end{lemma}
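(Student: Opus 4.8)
The plan is to extract the second-order relation \eqref{eq:pro312} directly from the defining property of a singular line, namely $\partial_\nu u \equiv 0$ on $\Gamma$, by differentiating this identity tangentially along $\Gamma$. Writing the unit normal as $\nu=(\cos\varphi,\sin\varphi)$, the normal derivative reads $\partial_\nu u=\cos\varphi\,\partial_{x_1}u+\sin\varphi\,\partial_{x_2}u$, and this scalar function vanishes for every $\mathbf x\in\Gamma$. Simply evaluating it at $\mathbf x=\mathbf 0$ only yields a first-order relation between $\partial_{x_1}u(\mathbf 0)$ and $\partial_{x_2}u(\mathbf 0)$; to reach the Hessian, one more differentiation is required.

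Since $\Gamma$ is the ray $\mathbf x=r(\cos\theta,\sin\theta)$ with $\theta$ fixed, its unit tangent is $\tau=(\cos\theta,\sin\theta)$, and $\partial_\nu u$ is identically zero as a function of the parameter $r$ along $\Gamma$. Hence its tangential derivative vanishes on $\Gamma$ as well:
$$\cos\theta\,\partial_{x_1}(\partial_\nu u)+\sin\theta\,\partial_{x_2}(\partial_\nu u)=0.$$
I would then expand the two inner derivatives, $\partial_{x_1}(\partial_\nu u)=\cos\varphi\,\partial_{x_1}^2u+\sin\varphi\,\partial_{x_1}\partial_{x_2}u$ and $\partial_{x_2}(\partial_\nu u)=\cos\varphi\,\partial_{x_1}\partial_{x_2}u+\sin\varphi\,\partial_{x_2}^2u$, substitute them in, and collect the coefficient of the mixed second derivative.

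Finally, evaluating the resulting identity at $\mathbf x=\mathbf 0$---legitimate because $u$ is analytic in $\Omega$, so all the relevant derivatives exist and the limit $r\to+0$ along $\Gamma$ is well defined---and applying the angle-addition formula $\cos\theta\sin\varphi+\sin\theta\cos\varphi=\sin(\varphi+\theta)$ yields precisely \eqref{eq:pro312}. There is no genuine analytical difficulty in this argument; the only point deserving care is the realization that the needed second-order information comes from the tangential derivative of the vanishing normal derivative rather than from a pointwise evaluation of $\partial_\nu u$, together with the bookkeeping that consolidates the two contributions to the mixed-derivative term into the single factor $\sin(\varphi+\theta)$. Alternatively, one could start from Lemma~\ref{lem:26}, which gives $\partial_\nu u=\pm r^{-1}\partial_\theta u$ and hence $\partial_\theta u=0$ on $\Gamma$, and then differentiate in $r$; but the Cartesian route above is the most transparent.
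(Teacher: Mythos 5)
Your proof is correct and follows essentially the same route as the paper: the paper's argument also differentiates the identity $\partial_\nu u\equiv 0$ tangentially, i.e.\ it uses $\nabla\left(\partial u/\partial\nu\right)\cdot(\cos\theta,\sin\theta)^\top=0$, and evaluates at $\mathbf{x}=\mathbf{0}$ (justified by analyticity, exactly as you note for the limit $r\to+0$ along the ray). Your expansion and the angle-addition step reproduce the paper's computation faithfully, so there is nothing to add.
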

\begin{proof}
	\mm{
	Recalling the definition of a singular line, and using the fact that
	\begin{equation}\label{eq:a2}
	\nabla \left( \frac{\partial u}{\partial  \nu } \right) \cdot (\cos \theta ,\sin \theta)^\top =0, 
	\end{equation}
		we can derive \eqref{eq:pro312} by evaluating \eqref{eq:a2} in more detail at $\mathbf x=\mathbf 0$. 
		}
\end{proof}

%In the proof of Theorem \ref{th:35}, we need another asymptotic   analysis of the integral $I_{12}$ defined below with respect to the order of $s$ as $s\rightarrow \infty$, where $s$ is a positive parameter  appearing in  the CGO solution $u_0(s\mathbf x)$. 

For the subsequent analysis, we also need the following lemma. 

\begin{lemma}\label{prop710}
	Suppose that $u$ has the expansion \eqref{eq:uex251}. For any straight line segment $\Gamma_h:=\{x\in \R^2~|~\mathbf  x=r(\cos \theta, \sin \theta ), 0\leq r\leq h \ll 1\} $ (with $\theta$ fixed) satisfying $\Re(\zeta(\theta))<0$ where $\zeta(\theta )$ is defined in \eqref{eq:beta}, we assume that $\nu =(\cos \varphi, \sin \varphi ) $ is the exterior unit normal vector to $\Gamma_h$. Then it holds as $s\rightarrow \infty$ that 
	\begin{align}\label{eq:373int}
		I_{12}&= \int_{\Gamma_h} u_0(s\mathbf x) \frac{\partial u}{\partial \nu }  {\rm d} \sigma=\frac{2}{s} \cdot \left(\frac{\partial u}{\partial x_1} \Big |_{\mathbf x=\mathbf 0} \cos \varphi+  \frac{\partial u}{\partial x_2} \Big |_{\mathbf x=\mathbf 0} \sin  \varphi   \right)  \cdot  \frac{1}{ \zeta(\theta)^{2 } }  \\
		&+\frac{12}{s^{2 }} \cdot     \Bigg( \frac{\partial^2 u}{\partial x_1^2} \Big |_{\mathbf x=\mathbf 0} \cos \varphi \cos \theta+  \frac{\partial^2 u}{\partial x_2^2} \Big |_{\mathbf x=\mathbf 0} \sin  \varphi \sin \theta + \frac{\partial^2 u}{\partial x_1 \partial x_2} \Big |_{\mathbf x=\mathbf 0} \sin  (\varphi + \theta ) \Bigg  )  \cdot  \frac{1}{ \zeta(\theta)^{4 } } \notag\\
		&+\Oh(s^{-3} )\,.\notag
	\end{align}
\end{lemma}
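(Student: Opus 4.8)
The plan is to follow closely the pattern already established in the proofs of Lemmas~\ref{pro:56} and \ref{pro:57}: parameterize $\Gamma_h$ by the radial variable, expand the integrand into a power series in $r$ using the analyticity of $u$, and then integrate term by term by invoking the explicit asymptotic formulas \eqref{eq:int import} of Lemma~\ref{pro:int}. The restriction of the CGO solution to $\Gamma_h$ reads $u_0(s\bfx)|_{\Gamma_h}=\exp(\sqrt{sr}\,\zeta(\theta))$ with $\Re(\zeta(\theta))=-\cos(\theta/2)<0$ by hypothesis, so each monomial $r^\ell$ contributes a term of exact order $s^{-(\ell+1)}$ together with an exponentially small boundary remainder of order $\Oh(s^{-1/2}e^{\sqrt{sh}\,\zeta(\theta)})$; it is precisely this exponential decay that renders all boundary contributions negligible and truncates the expansion at the desired order.

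First I would write $\partial_\nu u=\nabla u\cdot\nu=\cos\varphi\,\partial_{x_1}u+\sin\varphi\,\partial_{x_2}u$ and Taylor-expand the two first-order partials about the origin along $\Gamma_h$, substituting $x_1=r\cos\theta$ and $x_2=r\sin\theta$. The zeroth-order term in $r$ is $\partial_{x_1}u|_{\mathbf 0}\cos\varphi+\partial_{x_2}u|_{\mathbf 0}\sin\varphi$, i.e.\ the quantity $c_1$ of \eqref{eq:citheta} with $\theta$ replaced by $\varphi$, while the coefficient of $r$, after collecting the mixed contributions from both partials, simplifies by the identity $\cos\varphi\sin\theta+\sin\varphi\cos\theta=\sin(\varphi+\theta)$ to
\[
\partial_{x_1}^2u|_{\mathbf 0}\cos\varphi\cos\theta+\partial_{x_2}^2u|_{\mathbf 0}\sin\varphi\sin\theta+\partial_{x_1x_2}^2u|_{\mathbf 0}\sin(\varphi+\theta).
\]
Substituting these two leading terms and applying \eqref{eq:int import} with $\ell=0$ (which produces the factor $2/(s\,\zeta(\theta)^2)$) and with $\ell=1$ (which produces $12/(s^2\zeta(\theta)^4)$, since $2\cdot 3!=12$) yields exactly the two displayed terms of \eqref{eq:373int}.

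It then remains to control the tail. Collecting all contributions coming from the Taylor coefficients $(\partial^\alpha u)(\mathbf 0)$ of $\nabla u$ with $|\alpha|\ge 2$ gives an integrand bounded, on $\Gamma_h$, by a constant multiple of $r^2 e^{\sqrt{sr}\,\Re(\zeta(\theta))}$, after summing the absolutely convergent series $\sum_{|\alpha|\ge 2}h^{|\alpha|-2}|(\partial^\alpha u)(\mathbf 0)|/\alpha!$, which converges by the analyticity of $u$ for $h\ll 1$. Applying the $\ell=2$ case of \eqref{eq:int import} bounds this remainder by $\Oh(s^{-3})$, exactly as the term $r_{I_{12}^+}$ was handled in the proof of Lemma~\ref{pro:57}. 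The main---and essentially only---obstacle is the bookkeeping in assembling the $r$-coefficient of $\partial_\nu u$ and verifying the trigonometric collapse to $\sin(\varphi+\theta)$, together with ensuring the remainder bound is uniform in $s$; the assumption $\Re(\zeta(\theta))<0$ is what legitimizes passing from the exact integral identities to their asymptotic counterparts and guarantees that every boundary term is exponentially negligible compared with $s^{-3}$.
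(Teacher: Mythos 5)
Your proposal is correct and follows essentially the same route as the paper's own proof: Taylor-expand $\partial_\nu u$ along $\Gamma_h$ in the radial variable, identify the constant and linear coefficients (with the same trigonometric collapse to $\sin(\varphi+\theta)$), integrate term by term via the $\ell=0,1$ cases of \eqref{eq:int import}, and absorb the $|\alpha|\ge 3$ tail into an $\Oh(s^{-3})$ remainder using the $\ell=2$ case. No gaps to report.
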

\begin{proof}
	Using  the polar coordinate on $\Gamma_h$, it is easy to see that 
	\begin{align}
		\frac{\partial u}{\partial \nu } &= \frac{\partial u}{\partial x_1} \Big |_{\mathbf x=\mathbf 0} \cos \varphi+  \frac{\partial u}{\partial x_2} \Big |_{\mathbf x=\mathbf 0} \sin  \varphi + r \Bigg( \frac{\partial^2 u}{\partial x_1^2} \Big |_{\mathbf x=\mathbf 0} \cos \varphi \cos \theta+  \frac{\partial^2 u}{\partial x_2^2} \Big |_{\mathbf x=\mathbf 0} \sin  \varphi \sin \theta \notag \\
		&+ \frac{\partial^2 u}{\partial x_1 \partial x_2} \Big |_{\mathbf x=\mathbf 0} \sin  (\varphi + \theta ) \Bigg  ) +R(u,r,\varphi, \theta)
	\end{align}
	where 
	$$
	|R(u,r,\varphi, \theta)| \leq r^2 \sum_{ \alpha \in {\mathbb N}_0^2, \, |\alpha| \geq 3  } h^{|\alpha|-3}\left| \frac{(\partial^\alpha u)(\mathbf 0) }{(\alpha-1) ! }\right|. 
	$$
	\mm{Therefore we can further write}
	\begin{align}
	\begin{split}
		I_{12}&= \int_{\Gamma_h} u_0(s\mathbf x) \frac{\partial u}{\partial \nu }  {\rm d} \sigma=\left(\frac{\partial u}{\partial x_1} \Big |_{\mathbf x=\mathbf 0} \cos \varphi+  \frac{\partial u}{\partial x_2} \Big |_{\mathbf x=\mathbf 0} \sin  \varphi   \right) \int_0^h e^{\sqrt{sr} \zeta(\theta)} {\rm d} r \\
		&+  \Bigg( \frac{\partial^2 u}{\partial x_1^2} \Big |_{\mathbf x=\mathbf 0} \cos \varphi \cos \theta+  \frac{\partial^2 u}{\partial x_2^2} \Big |_{\mathbf x=\mathbf 0} \sin  \varphi \sin \theta + \frac{\partial^2 u}{\partial x_1 \partial x_2} \Big |_{\mathbf x=\mathbf 0} \sin  (\varphi + \theta ) \Bigg  )  \\
		&\times \int_0^h r e^{\sqrt{sr} \zeta(\theta)} {\rm d} r+ \int_0^h R(u,r,\varphi, \theta) e^{\sqrt{sr} \zeta(\theta)} {\rm d} r. 
	\end{split}
		\end{align}
\mm{Then the desired result follows from the estimate as $s\rightarrow \infty$ by using \eqref{eq:int import}:}
	\begin{equation}\notag
		\left | 	\int_0^h R(u,r,\varphi, \theta) e^{\sqrt{sr} \zeta(\theta)} {\rm d} r \right | \leq   \int_0^h r^2 e^{\sqrt{sr} \zeta(\theta)} {\rm d} r \cdot \sum_{ \alpha \in {\mathbb N}_0^2, \, |\alpha| \geq 3  } h^{|\alpha|-2}\left| \frac{(\partial^\alpha u)(\mathbf 0) }{(\alpha-1) ! }\right| =\Oh(s^{-3})\,.
	\end{equation}
%as $s\rightarrow \infty $. 
%
%The proof is complete. 
\end{proof}

Suppose that $\Gamma_h$ is a nodal line of $u$. Using the polar coordinate and evaluating  \eqref{eq:uex251} on $\Gamma_h$, we can prove the following lemma. 
\begin{lemma}\label{pro:316}
	Suppose that $u$ is a Laplacian eigenfunction to \eqref{eq:eig} and 
	$$
	u\equiv 0 \mbox{ on } \Gamma_h,
	$$
	\mm{
	where $\Gamma_h:=\{\mathbf x\in \R^2 ;~ x=r(\cos \theta, \sin \theta), 0\leq r \leq h\}$ (with $\theta$ fixed ) 
	is a line segment. Then the functions $c_1(\theta)$ and $c_2(\theta)$ defined in \eqref{eq:citheta} are 
	both identically zero. 
	}
%	namely 
%S	\begin{equation}\label{eq:382}
%	c_1(\theta)=c_2(\theta)=0\,.
%	\end{equation}
%		\begin{split}
%			&\frac{\partial u}{\partial x_1} \Big |_{x=0} \cos \theta + \frac{\partial u}{\partial x_2} \Big |_{x=0} \sin  \theta =0,\\
%			&\frac{\partial^2  u}{\partial x_1^2} \Big |_{x=0} \cos^2 \theta + \frac{\partial^2 u}{\partial x_2^2} \Big |_{x=0} \sin^2  \theta =0,
		%\end{split}
	\end{lemma}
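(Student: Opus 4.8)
The plan is to restrict the Taylor expansion \eqref{eq:uex251} of $u$ at the origin to the segment $\Gamma_h$ and to read off the vanishing of the leading coefficients. Since $\mathbf 0\in\Gamma_h$ and $u\equiv 0$ on $\Gamma_h$, I first record that $u(\mathbf 0)=0$. Then, writing a generic point of $\Gamma_h$ in polar form $\mathbf x=r(\cos\theta,\sin\theta)$ with $\theta$ fixed and $0\le r\le h$, I would substitute $x_1=r\cos\theta$ and $x_2=r\sin\theta$ into \eqref{eq:uex251} and regroup the terms according to the power of $r$. Keeping the terms up to order two and absorbing the remainder, this produces a single-variable expansion
\[
u(r\cos\theta,r\sin\theta)=u(\mathbf 0)+r\,c_1(\theta)+r^2\,c_2(\theta)+\Oh(r^3),
\]
in which $c_1(\theta)$ and $c_2(\theta)$ are precisely the quantities in \eqref{eq:citheta}; in particular the two mixed second-order contributions combine with the factor $2\cos\theta\sin\theta=\sin 2\theta$, matching the definition of $c_2(\theta)$.

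Next I would invoke the analyticity of $u$ in $\Omega$ (guaranteed by \eqref{eq:eig}), which ensures that the displayed expansion is a genuinely convergent power series in the single variable $r$ on a neighbourhood of $r=0$. Because $u\equiv 0$ along $\Gamma_h$, this power series vanishes identically for $r\in(0,h)$, so each of its coefficients must be zero. Comparing the coefficients of $r^0$, $r^1$ and $r^2$ then yields $u(\mathbf 0)=0$, $c_1(\theta)=0$ and $c_2(\theta)=0$, which is the assertion.

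The argument carries no genuine difficulty; it is a direct coefficient comparison for the restriction of $u$ to a line through the origin. The only place demanding a little attention is the bookkeeping of the second-order term, where one must collect the two mixed contributions into the single coefficient $\sin 2\theta$ so that the quantity extracted from the restriction coincides exactly with $c_2(\theta)$ as defined in \eqref{eq:citheta}; once this is verified, the conclusion is immediate.
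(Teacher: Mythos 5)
Your proposal is correct and follows essentially the same route as the paper, which proves the lemma exactly by evaluating the Taylor expansion \eqref{eq:uex251} on $\Gamma_h$ in polar coordinates and comparing coefficients of powers of $r$; your identification of the $r^1$ and $r^2$ coefficients with $c_1(\theta)$ and $c_2(\theta)$ (including the combination $2\cos\theta\sin\theta=\sin 2\theta$) and the appeal to analyticity to justify the coefficient comparison are precisely the intended argument.
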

%\begin{proof}
%	
%\end{proof}

Now we are in a position to present the proof of the theorems in Section~\ref{sec:3} in the specific case that the vanishing order $N$ is up to 3. Before that, we make two important remarks. 

First, throughout the present section, if a generalized singular line of the form \eqref{eq:normal1} is involved, the parameter $\eta$ can be a $C^1$ function other than a constant. Indeed, our argument in the present section can deal with this more general case. In principle, we believe that the theorems in Section~\ref{sec:3} can also be extended to the more general case that $\eta$ is a function other than a constant. However, when dealing with higher vanishing orders, the corresponding analysis becomes radically more tedious and complicated. Hence, in the next section for the general vanishing order case, we shall stick to the case that $\eta$ is a constant. 

Second, in proving the vanishing order of the eigenfunction (up to 3), we shall only make use of the eigenfunction confined in $S_h$. This is achieved by means of of the auxiliary results established in Lemmas~\ref{lem:aux1}--\ref{prop710}. It is emphasized that this is in sharp contrast to the argument in the next section (for the higher vanishing orders), which applies the spherical wave expansion of the eigenfunction in $B_h$. Hence, our argument in the present section is ``localized". This ``localization" property enables one to consider, e.g., the quantitative behaviours of the Dirichlet eigenfunction in $\Omega$ up to the boundary $\partial\Omega$. Moreover, combining with our first remark above, the argument can also be used to study the quantitative behaviours of eigenfunctions associated with a general second order elliptic operator other than the Laplacian. We shall investigate these interesting extensions in our future work. 

We first deal with Theorem~\ref{th:41}. According to our discussion above, we actually prove the following more general theorem. 
\begin{theorem}\label{th:311}
	Let $u$ be a Laplacian eigenfunction to \eqref{eq:eig}. Suppose that there are two generalized singular lines $\Gamma_h^+$ and $\Gamma_h^-$ from ${\mathcal M}^\lambda_{\Omega }$ such that \eqref{eq:angle} and \eqref{eq:angle2a} hold. 
%	\begin{equation}\label{eq:cond1}
%	\Gamma_h^+\cap\Gamma_h^-=\mathbf{x}_0\in\Omega\quad\mbox{and}\quad \angle (\Gamma_h^+,\Gamma_h^-)=\alpha\cdot \pi, 
%	\end{equation}
%	where $\alpha\in (0, 1)$ is rational. 
	Assume that $\eta_1 \in C^1 ({ \Gamma^-_h} )$ and $\eta_2 \in C^{1}({ \Gamma^+_h} )$.  
	\mm{If the conditions 
		\begin{align}\label{eq:342b}
			 u(\mathbf{0}) =0 \mbox{ and } \alpha\neq \frac{1 }{2} % \label{eq:342b}
		\end{align}
are satisfied, the Laplacian eigenfunction $u$ vanishes up to the order $3$ at $\mathbf{0}$.}
\end{theorem}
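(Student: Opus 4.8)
The plan is to use the CGO solution $u_0(s\mathbf{x})$ from Lemma~\ref{lem:1} as a test function against $u$ over the corner sector $S_h=W\cap B_h$ and to extract the low-order Taylor coefficients of $u$ at $\mathbf{0}$ from the asymptotics as $s\to\infty$. I would begin from the Green's identity \eqref{eq:green} of Lemma~\ref{lem:aux1}. Since $u_0(s\mathbf{x})$ is harmonic in $S_h\setminus\{\mathbf{0}\}$ and $\Delta u=-\lambda u$, the left-hand side of \eqref{eq:green} collapses to $-\lambda\int_{S_h}u_0(s\mathbf{x})\,u(\mathbf{x})\,\mathrm{d}\mathbf{x}$, while the boundary contribution splits as $I_1^++I_1^-+I_2$ with $I_1^\pm=-I_{11}^\pm-I_{12}^\pm$, the quantities analysed in Lemmas~\ref{pro:56} and~\ref{pro:57}. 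On $\Gamma_h^\pm$ the boundary conditions \eqref{eq:helm2bc1}--\eqref{eq:helm2bc2} allow me to substitute $\partial_\nu u=-\eta_i u$, so that $I_{12}^\pm$ is precisely the integral controlled by Lemma~\ref{pro:57} with $\gamma=1$, as $\eta_1,\eta_2\in C^1$.

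Next I would balance the three competing scales in $s$. Because $u(\mathbf{0})=0$, one has $|u(\mathbf{x})|\lesssim|\mathbf{x}|$ near $\mathbf{0}$, so \eqref{eq:xalpha} yields $\lambda\int_{S_h}|u_0(s\mathbf{x})\,u|\,\mathrm{d}\mathbf{x}=\Oh(s^{-3})$; the arc term $I_2$ over $\Lambda_h$ is exponentially small by \eqref{eq:1.5}; and the edge terms are expanded through \eqref{eq:asymptotic} and \eqref{eq:asymptotic12}. Matching then forces the coefficients of $s^0$, $s^{-1}$ and $s^{-2}$ in $I_{11}^++I_{11}^-+I_{12}^++I_{12}^-$ to vanish. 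The $s^0$ coefficient is a multiple of $u(\mathbf{0})$ and so vanishes by hypothesis. Using $\zeta(0)=-1$, $\zeta(\theta_0)=-e^{\bsi\theta_0/2}$ and the values of $\beta$ in \eqref{eq:beta}, and noting the $u(\mathbf{0})$-terms in $I_{12}^\pm$ drop out, the $s^{-1}$ relation reduces to $e^{-\bsi\theta_0}c_1(\theta_0)=c_1(0)$, with $c_1$ as in \eqref{eq:citheta} and $\theta_0=\alpha\pi$.

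One CGO gives only a single equation per order, insufficient to pin down the two first-order and the (Helmholtz-constrained) two second-order Taylor coefficients. The device I would use is to rerun the whole argument with a second, linearly independent test function, namely the complex conjugate $\overline{u_0(s\mathbf{x})}$: it is again harmonic with the same exponential decay in $W$, and it simply conjugates $\zeta$ and $\beta$ in \eqref{eq:asymptotic}--\eqref{eq:asymptotic12}. This produces the companion relation $e^{\bsi\theta_0}c_1(\theta_0)=c_1(0)$. Subtracting gives $\sin\theta_0\cdot c_1(\theta_0)=0$, and since $\theta_0=\alpha\pi\in(0,\pi)$ we have $\sin\theta_0\neq0$, hence $c_1(\theta_0)=c_1(0)=0$, i.e. $\nabla u(\mathbf{0})=\mathbf{0}$. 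Feeding $c_1\equiv0$ back into the $s^{-2}$ coefficients annihilates the $I_{12}^\pm$ contributions there, and the same two--test-function comparison leaves $\sin(2\theta_0)\cdot c_2(\theta_0)=0$. This is exactly where $\alpha\neq1/2$ is indispensable: $\sin(2\theta_0)=0$ iff $\theta_0=\pi/2$ iff $\alpha=1/2$. For $\alpha\neq1/2$ we conclude $c_2(\theta_0)=c_2(0)=0$; combined with the eigenvalue relation $\partial_{x_1}^2u(\mathbf{0})+\partial_{x_2}^2u(\mathbf{0})=-\lambda u(\mathbf{0})=0$, the two vanishing values of $c_2$ force all three second-order partials of $u$ at $\mathbf{0}$ to vanish. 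Together with $u(\mathbf{0})=0$ and $\nabla u(\mathbf{0})=\mathbf{0}$, this shows the constant, linear and quadratic Taylor terms of $u$ at $\mathbf{0}$ all vanish, i.e. $u$ vanishes up to the order $3$.

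I expect the main obstacle to be twofold. The routine but delicate part is isolating the $s^{-1}$ and $s^{-2}$ coefficients cleanly from the $\Oh(s^{-3})$ remainder and the $u(\mathbf{0})\,\Oh(s^{-1-\gamma})$ pieces, which is nonetheless fully controlled by Lemmas~\ref{pro:56}--\ref{pro:57}. The genuinely structural point is the recognition that a single CGO under-determines the system, so that a truly independent second test function (here $\overline{u_0}$) must be introduced; after that, the non-degeneracy of the two resulting $2\times2$ systems rests precisely on $\sin\theta_0\neq0$ (automatic for $\alpha\in(0,1)$) at first order and on $\sin2\theta_0\neq0$, that is the exclusion $\alpha\neq1/2$, at second order — which is also why the borderline case $\theta_0=\pi/2$ only yields vanishing to second order, consistent with Remark~\ref{re:38}.
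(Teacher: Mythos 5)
Your proposal is correct, and while it runs inside the same framework as the paper's Section~\ref{sec:pro1} (Green's identity \eqref{eq:green} with the CGO solution $u_0(s\mathbf{x})$, the edge expansions of Lemmas~\ref{pro:56}--\ref{pro:57}, and matching at orders $s^{-1}$ and $s^{-2}$), it closes the resulting linear algebra by a genuinely different device. The paper does not use the CGO at first order at all: it obtains $\nabla u(\mathbf{0})=\mathbf{0}$ pointwise by evaluating the two impedance conditions at the corner (the normals to $\Gamma_h^\pm$ are non-parallel for $\theta_0\in(0,\pi)$), and it obtains the \emph{second} linear relation among the second derivatives not from a second test function but by differentiating $\partial_\nu u+\eta_2 u=0$ tangentially along $\Gamma_h^+$ (\eqref{eq:na255}--\eqref{eq:363}), then solving the $2\times 2$ system \eqref{eq:360}, \eqref{eq:363}, whose determinant $-\bsi\sin 2\theta_0$ vanishes exactly at $\alpha=1/2$. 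You instead pair $u_0$ with its complex conjugate $\overline{u_0(s\mathbf{x})}$; this is legitimate, since $u_0=e^{-\sqrt{z}}$ is holomorphic off $\R_-\times\{0\}$, so $\overline{u_0}$ is anti-holomorphic hence harmonic, has the same modulus (so Lemma~\ref{lem:1} and the bound \eqref{212} on $I_2$, $I_3$ persist), and Lemma~\ref{pro:int} applies verbatim with $\zeta\mapsto\overline{\zeta}$, $\beta\mapsto\overline{\beta}$. Your two systems $e^{\mp\bsi j\theta_0}c_j(\theta_0)=c_j(0)$, $j=1,2$, have determinants $\pm 2\bsi\sin(j\theta_0)$, degenerate precisely where the paper's does, consistent with Remark~\ref{re:38}. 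What your route buys: it nowhere differentiates $\eta_i$, so the $C^1$ hypothesis — which the paper spends exactly on \eqref{eq:na255} — can be relaxed to $\eta_i\in C^\gamma$, $\gamma\in(0,1]$, since Lemma~\ref{pro:57} already gives remainder $\Oh(s^{-2-\gamma})$; and it treats the complex-valued case (which the paper's setting permits for $\eta$, hence for $u$) transparently, since for complex $u$ one complex identity per order is genuinely under-determined, whereas for real $u$ your conjugate identity is just the real/imaginary split of the first. What the paper's route buys: the first-order step is two lines of pointwise algebra rather than a rerun of the expansion machinery, and no second family of test functions is needed. Two cosmetic points: the exponential smallness of the arc term $I_2$ is the trace estimate \eqref{212} quoted from \cite{Bsource}, not \eqref{eq:1.5}, which controls $I_4=\int_{W\setminus B_h}u_0(s\mathbf{x})\,{\rm d}\mathbf{x}$ — harmless here since $I_4$ is multiplied by $u(\mathbf{0})=0$; and note that $c_2(\theta_0)=c_2(0)=0$ yields $\partial^2_{x_1x_2}u(\mathbf{0})=0$ only after a second invocation of $\sin 2\theta_0\neq 0$, which your hypothesis $\alpha\neq 1/2$ of course covers.
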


\begin{proof}

Recall Figure \ref{fig1}.  Evaluating \eqref{eq:helm2bc1} and \eqref{eq:helm2bc2} at $\mathbf 0$, using 
$ u(\mathbf 0 )=0$ we derive 
$$
\nabla u \Big |_{\mathbf x=\mathbf 0 } \cdot (\cos\varphi_m ,\sin \varphi_m )  =- \eta_1 ( \mathbf 0 ) u(\mathbf 0 ) =0,\quad \nabla u \Big |_{\mathbf x=\mathbf 0 } \cdot (\cos\varphi_M ,\sin \varphi_M )  =- \eta_2 ( \mathbf 0 ) u(\mathbf 0 ) =0. 
$$
Since $\theta_0\in (0, \pi)$, we know that  $\Gamma_h^-$ and $\Gamma_h^+$ are non-collinear. Therefore it is easy to see
\begin{equation}\label{eq:derit zero}
	\nabla u (\mathbf 0)  =0.
\end{equation}
%\frac{\partial u}{\partial \nu}\Big |_{\mathbf x =\mathbf 0}
\mm{Noting that $u$ is the Laplacian eigenfunction satisfying \eqref{eq:helm2}, we derive the 
integral equality from \eqref{eq:green}:}
\begin{align} \label{eq:int1}%	=\int_{S_h}(u_0(sx)\Delta u- u \Delta u_0(sx)) {\rm d} x
-\lambda	\int_{S_h} u_0(s\mathbf x) u(\mathbf x) {\rm d} \mathbf x &=\int_{S_h} \left (u_0(s\mathbf x)\Delta u- u \Delta u_0(s\mathbf x) \right ) {\rm d} \mathbf x
=I_1^++I_1^-+I_2, 
\end{align}
where $S_h$ is defined in \eqref{eq:sh}, and $I_1^\pm$ and $I_2$ are defined in \eqref{eq:Inotation}. 
%\begin{align*}
%	I_1^+&=\int_{ \Gamma_h^+ } \left (  u_0(sx) \frac{\partial u}{\partial \nu}-u(x)   \frac{\partial u_0(sx)}{\partial \nu} \right) {\rm d} \sigma, \, I_1^-=\int_{ \Gamma_h^- } \left (  u_0(sx) \frac{\partial u}{\partial \nu}-u(x)   \frac{\partial u_0(sx)}{\partial \nu} \right) {\rm d} \sigma,  \\
%	I_2&=\int_{ \Lambda_h } \left (  u_0(sx) \frac{\partial u}{\partial \nu}-u(x)   \frac{\partial u_0(sx)}{\partial \nu} \right) {\rm d} \sigma. 
%\end{align*}

Since $u\in H^2(B_h)$, which can be embedded into $C^\gamma (B_h) (0<\gamma<1)$, we know that
\begin{equation}\label{eq:u ex}
	u(\mathbf x)=u(\mathbf 0)+\delta u(\mathbf x),\quad | \delta u(\mathbf  x) | \leq \|u \|_{C^\gamma } |\mathbf x|^\gamma\,, 
\end{equation}
\mm{
from which it follows that
\begin{equation}\label{eq:int2}
\begin{split}
	\int_{S_h} u_0(s\mathbf x) u(\mathbf x) {\rm d} \mathbf x &= u(\mathbf 0)	\int_{S_h} u_0(s \mathbf  x) {\rm d}  \mathbf  x+ 	I_3
		=u(\mathbf 0) \left(\int_{W} u_0(s\mathbf x) {\rm d} \mathbf x -I_4 \right )  +I_3 
		\end{split}
\end{equation}
}
where
\begin{align*}
	I_3&=\int_{S_h} u_0(s\mathbf x) \delta u(\mathbf x) {\rm d}\mathbf  x,\quad I_4= \int_{W \backslash B_h} u_0(s\mathbf x) {\rm d} \mathbf x. 
\end{align*}
Substituting \eqref{eq:int2} into  \eqref{eq:int1} and combining with \eqref{eq:u0w}, we derive that
\begin{align}\label{eq:int3r}
	-6 \lambda \bsi (e^{-2\bsi\theta_0} -1  ) s^{-2} u(\mathbf 0)&=I_1^++I_1^{-}+I_2+ \lambda I_3-  u(\mathbf 0) \lambda I_4. 
\end{align}
Using $u(\mathbf 0)=0 $, we further obtain 
\begin{align}\label{eq:int3}
	0&=I_1^++I_1^{-}+I_2+ \lambda I_3. 
\end{align}
Since $u \in H^2(B_h )$, 
\mm{
we have from \cite[Page 6263]{Bsource}  and \eqref{eq:xalpha} that for some 
$c'>0$, 
\begin{equation}\label{212}
	\left| I_2 \right| \leq C e^{-c' \sqrt s}, \quad \left| I_3 \right| \leq \Oh(s^{-\alpha-2 }) 
	\quad \mbox{as} ~~s \rightarrow \infty\,.
	\end{equation}
	}
%where $c'>0$ as $s \rightarrow \infty$. 
%Besides, from \eqref{eq:1.5} we know that
%\begin{equation}\label{213}
%	\left| I_4  \right| \leq \frac{6(\pi-\theta_0 )}{\delta_{W_{\rm Ex}}^4} s^{-2} e^{-\delta_{W_{\rm Ex}} \sqrt{hs}/2},
%\end{equation}
%where $\delta_{W_{\rm Ex}}=-\max_{  \theta_M -\pi  < \theta < \theta_m}  \cos(\theta/2+\pi ) >0$. 

Recalling the definitions of $I_1^\pm, I_2$, $I_{11}^\pm\mbox{ and } I_{12}^\pm$ given by \eqref{eq:Inotation}, \eqref{eq:I11notaion} and \eqref{eq:I12notation} respectively, it is easy to see that
\begin{equation}\label{eq:I1I2combine}
	I_1^+=-(I_{11}^++I_{12}^+),\quad I_1^-=-(I_{11}^-+I_{12}^-).
\end{equation}
Since $\eta_1$ and $\eta_2$ are $C^1$ functions on the boundary $\Gamma_h^\pm$, 
they fulfill the requirement of Lemma \ref{pro:57}. Using \eqref{eq:derit zero} and recalling $c_1(\theta_0)$ and $c_1(0)$ given in \eqref{eq:citheta}, we know that
\begin{equation}\label{eq:c zeoro}
	c_1(\theta_0)=c_1(0)=0.
\end{equation}
Substituting $u(\mathbf 0)=0$  and \eqref{eq:c zeoro} into \eqref{eq:asymptotic} and \eqref{eq:asymptotic12} yields 
\begin{align}\label{eq:asymptoticNEW}
\begin{split}
	I_{11}^+&= -\frac{1}{s^2} \cdot \frac{48\beta(\theta_0) }{\zeta(\theta_0)^5}c_2(\theta_0) +\Oh(s^{-3}) , \quad
	I_{11}^-=-\frac{1}{s^2} \cdot \frac{48\beta(0) }{\zeta(0)^5}c_2( 0) +\Oh(s^{-3}) ,\\
	I_{12}^+&= \Oh(s^{-2-\gamma }) ,\quad 	I_{12}^- = \Oh(s^{-2-\gamma }) . 
\end{split}
	\end{align}
\mm{
But by means of \eqref{eq:I1I2combine}, we deduce by substituting \eqref{eq:asymptoticNEW}  
into \eqref{eq:int3} that
}
\begin{align}\label{eq:int356}
	\frac{1}{s^2} \cdot \frac{48\beta(\theta_0) }{\zeta(\theta_0)^5}c_2(\theta_0)+ \frac{1}{s^2} \cdot \frac{48\beta(0) }{\zeta(0)^5}c_2(0) =-\lambda I_3 -I_2 +\Oh (s^{-2-\gamma}),
\end{align}
where $c_2(\theta ) $ is defined in \eqref{eq:citheta}. Multiplying $s^2$ on the both sides of \eqref{eq:int356}, combining with \eqref{212}, and letting $s\rightarrow \infty$, we can obtain that
\begin{equation}\label{eq:usecond357}
	\frac{\beta(\theta_0) }{\zeta(\theta_0)^5}c_2(\theta_0)+\frac{\beta(0) }{\zeta(0)^5}c_2(0)=0. 
\end{equation}
Using the eigen-equation, $-\Delta u=\lambda u$, we can see 
\begin{equation}\label{eq:2582nd}
	\frac{ \partial^2 u } {\partial x_1^2} \Big |_{\mathbf x=\mathbf 0}+ \frac{ \partial^2 u } {\partial x_2^2} \Big |_{\mathbf x=\mathbf 0}=-\lambda u(\mathbf 0)=0.
\end{equation}
Substituting this equation into the expression of $c_2(\theta)$, we derive that
\begin{equation}\label{eq:358eq}
	c_2(0)=\frac{1}{2}\frac{\partial^2 u}{\partial x_1^2}\Big|_{\mathbf x=\mathbf 0} ,\quad  c_2(\theta_0 )=\frac{1}{2}\left(\frac{\partial^2 u}{\partial x_1^2}\Big|_{\mathbf x=\mathbf 0} \cos 2\theta_0  +\frac{\partial^2 u}{\partial x_1 x_2}\Big|_{\mathbf x=\mathbf 0} \sin 2\theta_0   \right). 
\end{equation}
From \eqref{eq:beta}, it is easy to see that
\begin{equation}\label{eq:359eq}
	\frac{\beta(\theta_0) }{\zeta(\theta_0)^5}=\frac{\bsi e^{-2\bsi \theta_0 } }{2},\quad 	\frac{\beta(0) }{\zeta(0)^5}=-\frac{\bsi  }{2}
\end{equation}
Then substituting \eqref{eq:358eq} and \eqref{eq:359eq} into \eqref{eq:usecond357}, we can deduce that
\begin{equation}\label{eq:360}
	\left(1-e^{-2\bsi \theta_0}\cos 2 \theta_0  \right ) \frac{\partial^2 u}{\partial x_1^2}\Big|_{\mathbf x=\mathbf 0}  - e^{-2\bsi \theta_0}\sin  2 \theta_0  \frac{\partial^2 u}{\partial x_1 x_2}\Big|_{\mathbf x=\mathbf   0} =0. 
\end{equation}
\mm{
Recalling that 
$$
\displaystyle{ f(x):= \frac{\partial u}{\partial \nu}+\eta_2 u \equiv 0}\hspace*{1.3cm} \mbox{on}\ \  \Gamma_h^+   ,
 $$
we know the directional derivative of $f$ with respect to the direction $(\cos \theta_0, \sin \theta_0)$ satisfies
}
\begin{equation}\label{eq:na255}
	\nabla \left(\frac{\partial u}{\partial \nu}+\eta_2  u \right) \cdot (\cos \theta_0, \sin \theta_0)^\top =0. 
\end{equation}
\mm{
Since  $\eta_1 \in C^1 ({ \Gamma^-_h} )$ and $\eta_2 \in C^{1}({ \Gamma^+_h} )$, we can use 
the fact that 
$$
\nabla \left(\frac{\partial u}{\partial \nu}+\eta_2  u \right) =\begin{bmatrix}
\displaystyle{ 	\frac{ \partial^2 u } {\partial x_1^2} \cos \varphi_M + \frac{ \partial^2 u } {\partial x_1 \partial x_2 } \sin \varphi_M +\frac{ \partial \eta_2  } {\partial x_1 } u + \eta_2  \frac{ \partial u   } {\partial x_1 } } \\
\displaystyle{ 	\frac{ \partial^2 u } {\partial x_1 \partial x_2 } \cos \varphi_M + \frac{ \partial^2 u } { \partial x_2^2  } \sin \varphi_M +\frac{ \partial \eta_2   } {\partial x_2 } u + \eta_2  \frac{ \partial u   } {\partial x_2 } } 
\end{bmatrix}, 
$$
where $\varphi_M=\theta_0+\frac{\pi}{2}$,  and evaluate \eqref{eq:na255} at $ \mathbf x= \mathbf 0$, 
then derive by using $u(\mathbf 0 )=0$ and \eqref{eq:derit zero} that
}
\begin{equation}\label{eq:a3}
  \cos \varphi_M \cos \theta_0 \frac{ \partial^2 u } {\partial x_1^2} \Big|_{ \mathbf x= \mathbf 0} + \sin \varphi_M \sin  \theta_0 \frac{ \partial^2 u } {\partial x_2^2} \Big|_{ \mathbf x= \mathbf 0}+  \sin (\varphi_M +\theta_0)   \frac{ \partial^2 u } {\partial x_1 \partial x_2 }\Big|_{ \mathbf x= \mathbf 0}  =0.  
\end{equation}
Substituting \eqref{eq:2582nd} into \eqref{eq:a3}, together with $\varphi_M=\theta_0+\frac{\pi}{2}$  we can 
further obtain that
\begin{equation}\label{eq:363}
  \sin  2 \theta_0  \frac{ \partial^2 u } {\partial x_1^2} \Big|_{\mathbf x=\mathbf 0} -  \cos 2 \theta_0   \frac{ \partial^2 u } {\partial x_1 \partial x_2 } \Big|_{\mathbf x=\mathbf 0} =0.  
\end{equation}
\mm{
Now combining \eqref{eq:360} with \eqref{eq:363}, we can get a system of linear equations with respect to $\frac{\partial^2u}{\partial x_1^2}(\mathbf  0)$ and $\frac{\partial^2u}{\partial x_1\partial x_2}(\mathbf  0)$, 
with the determinant of its coefficient matrix given by 
$$
\left| \begin{array}{cc}
1-e^{-2\bsi \theta_0}\cos 2 \theta_0& - e^{-2\bsi \theta_0}\sin  2 \theta_0\\
	\sin  2 \theta_0 &-\cos   2 \theta_0 
\end{array} \right| =-\cos 2\theta_0+e^{-2\bsi \theta_0}=-\bsi \sin 2\theta_0 \neq 0
$$ 
since $\theta_0\neq \pi/2$. 
}
Therefore, together with \eqref{eq:2582nd} we can conclude that
$$
\frac{\partial^2 u}{\partial x_1^2}\Big|_{x=0}=\frac{ \partial^2 u } {\partial x_1 \partial x_2 } \Big|_{\mathbf x=\mathbf 0}=\frac{\partial^2 u}{\partial x_2^2}\Big|_{\mathbf x=\mathbf 0} =0. 
$$
Since the order of the lowest nontrivial homogeneous polynomial  in Taylor expansion \eqref{eq:uex251} 
around the origin is larger than 2, its vanishing order is at least up to 3. This completes the proof. 
\end{proof}

{{ We next deal with Theorem \ref{Th:49}, but under a more general situation with $\eta_2 \in C^{1}({ \Gamma^+_h})$}.}

\begin{theorem}\label{th:312}
	Let $u$ be a Laplacian eigenfunction to \eqref{eq:eig}. Suppose that a generalized singular line $\Gamma_h^+\in\mathcal{M}^\lambda_{\Omega }$  intersects with a nodal line $\Gamma_h^-\in{\mathcal N}^\lambda_{\Omega }$ such that \eqref{eq:angle} and \eqref{eq:angle2a} hold, and 
	$\eta_2 \in C^{1}({ \Gamma^+_h} )$.  If the following condition is fulfilled
	\begin{align}\label{eq:342b}
	\alpha\neq \frac{1 }{4}, \frac{1}{2} \mbox{ and } \frac{3}{4}% \label{eq:342b}
	\end{align}
	then the Laplacian eigenfunction $u$ vanishes up to the order $3$ at $\mathbf{0}$.
\end{theorem}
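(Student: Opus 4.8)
The plan is to show that $u(\mathbf 0)$, the full gradient $\nabla u(\mathbf 0)$, and all three second-order partials of $u$ at $\mathbf 0$ vanish; since $u$ is analytic, its Taylor expansion \eqref{eq:uex251} then starts at order at least $3$, which is exactly vanishing up to order $3$ in the sense of Definition~\ref{def:3}. The argument splits into a \emph{pointwise} part, which extracts the first derivatives and two of the second derivatives directly from the nodal and generalized singular conditions, and a \emph{CGO} part, which uses Green's formula together with the test function $u_0(s\mathbf x)$ to pin down the remaining mixed second derivative. This mirrors the proof of Theorem~\ref{th:311}, the essential new feature being that one of the two lines is now a nodal line, on which the Neumann datum $\partial_\nu u$ is unknown.

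First I would collect the pointwise information. Because $\Gamma_h^-$ is a nodal line lying on the positive $x_1$-axis, Lemma~\ref{pro:316} gives $c_1(0)=c_2(0)=0$, i.e. $\partial_{x_1}u(\mathbf 0)=0$ and $\frac{\partial^2 u}{\partial x_1^2}(\mathbf 0)=0$; in particular $u(\mathbf 0)=0$. Evaluating the generalized singular condition $\partial_\nu u+\eta_2 u=0$ on $\Gamma_h^+$ at $\mathbf 0$ and using $u(\mathbf 0)=0$ yields $\partial_\nu u(\mathbf 0)=0$, that is $\partial_{x_1}u(\mathbf 0)\cos\varphi_M+\partial_{x_2}u(\mathbf 0)\sin\varphi_M=0$ with $\varphi_M=\theta_0+\pi/2$. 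Since $\sin\varphi_M=\cos\theta_0\neq 0$ precisely when $\theta_0\neq\pi/2$, i.e. $\alpha\neq 1/2$, this forces $\partial_{x_2}u(\mathbf 0)=0$, so $\nabla u(\mathbf 0)=0$ and hence $c_1(\theta_0)=0$. The eigenequation then gives $\frac{\partial^2 u}{\partial x_2^2}(\mathbf 0)=-\frac{\partial^2 u}{\partial x_1^2}(\mathbf 0)-\lambda u(\mathbf 0)=0$. Thus only the mixed derivative $\frac{\partial^2 u}{\partial x_1\partial x_2}(\mathbf 0)$ is left to treat.

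Next I would run the Green's formula \eqref{eq:green} with $u_0(s\mathbf x)$ over $S_h$. Using $\Delta u=-\lambda u$, the harmonicity of $u_0$ and $u(\mathbf 0)=0$, the volume integral reduces as in \eqref{eq:int3} to $0=I_1^++I_1^-+I_2+\lambda I_3$, where $I_2=\Oh(e^{-c'\sqrt s})$ and $I_3=\Oh(s^{-\alpha-2})$ by \eqref{212}. I would expand $I_1^\pm=-(I_{11}^\pm+I_{12}^\pm)$ via \eqref{eq:I1I2combine}. On the generalized singular side, Lemmas~\ref{pro:56} and \ref{pro:57} together with $u(\mathbf 0)=0$ and $c_1(\theta_0)=0$ give $I_{11}^+=-s^{-2}\,48\beta(\theta_0)\zeta(\theta_0)^{-5}c_2(\theta_0)+\Oh(s^{-3})$ and $I_{12}^+=\Oh(s^{-2-\gamma})$; Lemma~\ref{pro:56} likewise gives $I_{11}^-=\Oh(s^{-3})$ since $c_1(0)=c_2(0)=0$. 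On the nodal side, since $\partial_\nu u$ is unknown, I would instead apply the general expansion of Lemma~\ref{prop710} to $I_{12}^-$ (with $\theta=0$, $\varphi_m=-\pi/2$), which after inserting $\partial_{x_2}u(\mathbf 0)=0$ and $\frac{\partial^2 u}{\partial x_1^2}(\mathbf 0)=\frac{\partial^2 u}{\partial x_2^2}(\mathbf 0)=0$ leaves only the mixed-derivative contribution. Multiplying the identity by $s^2$ and letting $s\to\infty$, only the $s^{-2}$ coefficients survive, producing a single scalar equation of the form $\frac{\partial^2 u}{\partial x_1\partial x_2}(\mathbf 0)\cdot\kappa(\theta_0)=0$; here the substitution $c_2(\theta_0)=\tfrac12\frac{\partial^2 u}{\partial x_1\partial x_2}(\mathbf 0)\sin 2\theta_0$ and the identities \eqref{eq:359eq} reduce $\kappa(\theta_0)$ to a nonzero multiple of $\cos 2\theta_0$. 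Hence $\kappa(\theta_0)\neq 0$ exactly when $\theta_0\neq\pi/4,3\pi/4$, i.e. $\alpha\neq 1/4,3/4$, giving $\frac{\partial^2 u}{\partial x_1\partial x_2}(\mathbf 0)=0$. Combining the excluded angles $\alpha\neq 1/4,1/2,3/4$ of \eqref{eq:342b}, all first and second derivatives vanish, completing the proof.

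The main obstacle I anticipate is the correct treatment of $I_{12}^-$ on the nodal line: unlike the generalized singular case of Theorem~\ref{th:311}, one cannot substitute a boundary condition for $\partial_\nu u$ and must rely on the raw Taylor expansion via Lemma~\ref{prop710}, so the mixed second derivative enters only through this term. A secondary delicate point is the algebraic simplification of the $s^{-2}$ coefficient to an explicit nonzero multiple of $\cos 2\theta_0$, which is precisely what isolates the excluded angles $\alpha=1/4,3/4$, while $\alpha=1/2$ is separately excluded already at the first-derivative step. In contrast to Theorem~\ref{th:311}, no auxiliary differentiated boundary condition on $\Gamma_h^+$ is needed here, since the nodal line already supplies $\frac{\partial^2 u}{\partial x_1^2}(\mathbf 0)=0$ through Lemma~\ref{pro:316}.
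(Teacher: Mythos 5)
Your proposal is correct and follows essentially the same route as the paper's proof: the pointwise step via Lemma~\ref{pro:316}, the impedance condition at $\mathbf 0$ (using $\alpha\neq 1/2$) and the eigenequation to kill $u(\mathbf 0)$, $\nabla u(\mathbf 0)$, $\partial^2_{x_1}u(\mathbf 0)$, $\partial^2_{x_2}u(\mathbf 0)$, then the CGO identity with Lemmas~\ref{pro:56}, \ref{pro:57} and \ref{prop710} reducing, after multiplying by $s^2$, to a scalar equation whose coefficient $-\cos 2\theta_0\, e^{-2\bsi\theta_0}$ is nonzero precisely when $\alpha\neq 1/4,3/4$. The only (cosmetic) deviation is that you estimate $I_{11}^-=\Oh(s^{-3})$ via Lemma~\ref{pro:56}, whereas the paper notes it vanishes identically since $u\equiv 0$ on $\Gamma_h^-$ and writes the identity directly as $0=I_1^+-I_{12}^-+I_2+\lambda I_3$.
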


\begin{proof}
	\mm{
	Since 
	$ %\begin{equation}\label{eq:383bc}
	u\equiv  0  \mbox{ on }   \Gamma_h^-, 
	$
	%\end{equation}
	we know from \eqref{eq:2582nd} and Proposition \ref{pro:316} that 
	}
	\begin{equation}\label{eq:383}
	\frac{\partial u}{\partial x_1} \Big |_{\mathbf x=\mathbf 0} =\frac{ \partial^2 u } {\partial x_1^2} \Big |_{\mathbf x=\mathbf 0} = \frac{ \partial^2 u } {\partial x_2^2} \Big |_{\mathbf x=\mathbf 0}=0.
	\end{equation}
	\mm{
	Further, we derive by evaluating 
	\begin{equation*}
	\frac{\partial u}{\partial \nu } +\eta_2 u =0
	\end{equation*}
	%at $\mathbf x=\mathbf 0$, we derive that
	on $\Gamma_h^+$ at $\mathbf x=\mathbf 0$ that }
	\begin{equation}\label{eq:a5}
	\frac{\partial u}{\partial x_1} \Big |_{\mathbf x=\mathbf 0} \cos \varphi_M + \frac{\partial u}{\partial x_2} \Big |_{\mathbf x=\mathbf 0}\sin \varphi_M=-\eta_2(\mathbf 0) u(\mathbf 0)=0,
	\end{equation}
	where $\varphi_M=\theta_0+\pi/2$. Then substituting \eqref{eq:383} into \eqref{eq:a5}, it is easy to see that
	$$
	\cos \theta_0 \cdot \frac{\partial u}{\partial x_2} \Big |_{\mathbf x=\mathbf 0}=0. 
	$$
	Hence if $\theta_0\neq \pi/2$, we have
	\begin{equation}\label{eq:386}
	\frac{\partial u}{\partial x_2} \Big |_{\mathbf x=\mathbf 0}=0. 
	\end{equation}
\mm{Now recalling that we have the boundary condition \eqref{eq:helm2bc2} on $\Gamma_h^{+}$, 
with $\eta_2 \in C^1(\Gamma_h^+)$, and 
the fact that $u\equiv  0$ on $\Gamma_h^-$, we can establish the following integral equality}
	\begin{equation}\label{eq:int387}
	0 =I_{1}^+ -  I_{12}^{-}+I_2+\lambda I_3,
	\end{equation} 
	where $I_1^+=-(I_{11}^+ + I_{12}^+)$,   $I_2$, $I_{12}^-$ and $I_3$ are defined in \eqref{eq:I11notaion},  \eqref{eq:Inotation}, \eqref{eq:I12notation} and \eqref{eq:int2} respectively.  For the term $I_{11}^+$, 
	it follows from \eqref{eq:asymptotic} that 
	\begin{align*} %\label{eq:asymptotic}
	I_{11}^+&=-\frac{2\beta(\theta_0) }{\zeta(\theta_0)  }u(\mathbf 0)	- \frac{1}{s} \cdot \frac{4\beta(\theta_0) }{\zeta(\theta_0)^3}c_1(\theta_0)-\frac{1}{s^2} \cdot \frac{48\beta(\theta_0) }{\zeta(\theta_0)^5}c_2(\theta_0) +\Oh(s^{-3}) 
	\end{align*}
	which can be further reduced to
	\begin{equation}\label{eq:388}
	I_{11}^+ =-\frac{1}{s^2} \cdot \frac{24\beta(\theta_0) }{\zeta(\theta_0)^5}\cdot \frac{\partial^2 u}{\partial x_1 x_2}\Big|_{\mathbf x=\mathbf 0} \sin 2\theta_0  +\Oh(s^{-3}) 
	\end{equation}
	by \eqref{eq:383}  and \eqref{eq:386}. 

\mm{
To estimate the term $I_{12}^+$, we 	recall that $ \eta_2$ has the expansion \eqref{eq:eta335}. 
We have $c_1(\theta_0 )=0$ from \eqref{eq:383} and \eqref{eq:386}. Then using 
$u( \mathbf 0)=c_1(\theta_0 )=0$, we deduce from \eqref{eq:asymptotic12} that
}
	\begin{align}\label{eq:768}
	I_{12}^+&= \Oh(s^{-2-\gamma }).
	\end{align}
	
\mm{
Next we estimate $I_{12}^-$. 	It follows from \eqref{eq:373int}, combining with \eqref{eq:383} and \eqref{eq:386}, 
that 
}
	\begin{align}\label{eq:390}
	I_{12}^-&= -\int_{\Gamma_h^-} u_0(s\mathbf  x) \frac{\partial u}{\partial \nu }  {\rm d} \sigma=\frac{12}{s^{2 }} \cdot       \frac{\partial^2 u}{\partial x_1 \partial x_2} \Big |_{\mathbf x=\mathbf 0}    \cdot  \frac{1}{ \zeta(0)^{4 } }-\Oh(s^{-3} ).
	\end{align}
Substituting \eqref{eq:388}-\eqref{eq:390} into \eqref{eq:int387}, we can get that
	\begin{align*}
	&\frac{1}{s^2} \cdot \frac{24\beta(\theta_0) }{\zeta(\theta_0)^5}\cdot \frac{\partial^2 u}{\partial x_1 x_2}\Big|_{\mathbf x=\mathbf 0} \sin 2\theta_0  - \frac{12}{s^{2 }} \cdot       \frac{\partial^2 u}{\partial x_1 \partial x_2} \Big |_{\mathbf x=\mathbf 0}    \cdot  \frac{1}{ \zeta(0)^{4 } }- \Oh\left(s^{-2-\gamma}\right) =- (I_2+\lambda I_3). 
	\end{align*}
	\mm{
	Multiplying $s^2$ on the both sides of the above equality, we deduce from \eqref{212} that 
	}
	\begin{equation*}
	\left(\frac{2\beta(\theta_0) }{\zeta(\theta_0)^5}\cdot \sin 2\theta_0  -           \frac{1}{ \zeta(0)^{4 } }\right) \cdot \frac{\partial^2 u}{\partial x_1 x_2}\Big|_{\mathbf x=\mathbf 0} =0
	\end{equation*}
	as $s\rightarrow \infty$. But we see from \eqref{eq:359eq} that 
	$$
	\frac{2\beta(\theta_0) }{\zeta(\theta_0)^5}\cdot \sin 2\theta_0  -           \frac{1}{ \zeta(0)^{4 } }=\bsi e^{-2 \bsi \theta_0} \sin 2 \theta_0-1=-\cos 2 \theta_0 e^{-2 \bsi \theta_0} \neq 0
	$$
	if $\theta_0\neq \pi/4$ and $\theta_0\neq 3\pi/4$. Hence we obtain that $$\frac{\partial^2u}{\partial x_1\partial x_2}\Big|_{\mathbf x=\mathbf 0}=0,$$ which completes the proof. 
\end{proof}

\subsection{Proof of Theorem \ref{th:47}}\label{subsec:th32}

%\textsc{ Proof of Theorem \ref{th:35}} :
%\begin{proof}
\mm{
Using 
$ %\begin{equation}\label{eq:373bc}
	u=0
$ %\end{equation}
on $\Gamma_h^{\pm}$, 
we have 
$$
\nabla u \Big|_{\mathbf x=\mathbf 0} \cdot (1,0)^\top =\nabla u \Big|_{\mathbf x=\mathbf 0} \cdot (\cos\theta_0 , \sin \theta_0 )^\top=0. 
$$
This implies 
}
\begin{equation}\label{eq:374gr}
	\nabla u \Big|_{\mathbf x=\mathbf 0}=0. 
\end{equation}
\mm{
Now we recall that $u$ has the expansion \eqref{eq:uex251}, 
then we can derive on $\Gamma_h^-$ by	using \eqref{eq:374gr} and polar coordinates
that  
}
$$
\sum_{ \alpha \in {\mathbb N}_0^2, \, |\alpha| \geq 2 \atop \alpha=(\alpha_1,\alpha_2) }\frac{(\partial^\alpha u)(\mathbf 0) }{\alpha ! }r^{|\alpha|} \cos^{\alpha_1 }( 0 )\sin^{\alpha_2} (0) \Big |_{ x\in \Gamma_h^- } \equiv 0,\quad 0\leq r\leq h\,,
$$
\mm{
from which it is not difficult to see that
\begin{equation}
	c_2(0)=\frac{1}{2} \frac{\partial^2 u}{\partial x_1^2}\Big|_{\mathbf x=\mathbf 0} \cdot \cos^2 0 =0\,.
	\end{equation}
	This can be used, along with \eqref{eq:2582nd}, to deduce that
	}
	\begin{equation}\label{eq:379}
		\frac{\partial^2 u}{\partial x_2^2}\Big|_{\mathbf x=\mathbf 0} =0\,.
	\end{equation}
%	from \eqref{eq:2582nd}. 
	
\mm{By means of the fact $u=0$ on $\Gamma_h^{\pm}$ again, we have the integral identity}s
\begin{equation}\label{eq:int380}
	0 =-I_{12}^+ - I_{12}^{-}+I_2+\lambda I_3,
\end{equation}
where  $I_2$, $I_{12}^\pm$ and $I_3$ are defined in \eqref{eq:Inotation}, \eqref{eq:I12notation} and \eqref{eq:int2} respectively.  By Lemma \ref{prop710}, together  with \eqref{eq:374gr}-\eqref{eq:379}  it is easy to see that
\begin{equation}\label{eq:381}
	\begin{split}
		I_{12}^+&=\frac{12}{s^{2 }} \cdot      \frac{\partial^2 u}{\partial x_1 \partial x_2} \Big |_{\mathbf x=\mathbf 0} \sin  (\varphi_M + \theta_0 ) \cdot  \frac{1}{ \zeta(\theta_0)^{4 } } +\Oh(s^{-3}),\\
	I_{12}^-&=\frac{12}{s^{2 }} \cdot      \frac{\partial^2 u}{\partial x_1 \partial x_2} \Big |_{\mathbf x=\mathbf 0} \sin  (\varphi_m) \cdot  \frac{1}{ \zeta(0)^{4 } } +\Oh(s^{-3}),
	\end{split}
	\end{equation}
where $\varphi_m=-\pi/2$ and $\varphi_M=\theta_0+\pi/2$ are the arguments of the exterior unit normal vectors to $\Gamma_h^-$ and $\Gamma_h^{+}$ respectively. From \eqref{eq:beta}, we have
$$
\zeta(\theta_0)^4=e^{2\bsi \theta_0},\quad \zeta(0)^4=1. 
$$
Substituting \eqref{eq:381} into \eqref{eq:int380}, we derive that
\begin{equation}\label{eq:a4}
\frac{12}{s^2} \left (1-\cos 2 \theta_0 e^{-2\bsi \theta_0 }\right )  \frac{\partial^2 u}{\partial x_1 \partial x_2} \Big |_{\mathbf x=\mathbf 0} + I_2 +\lambda I_3 -\Oh(s^{-3})=0. 
\end{equation}
Now multiplying $s^2$ on the both sides of \eqref{eq:a4}, noting \eqref{212}, we have
$$
\left (1-\cos 2 \theta_0 e^{-2\bsi \theta_0 }\right )  \frac{\partial^2 u}{\partial x_1 \partial x_2} \Big |_{\mathbf x=\mathbf 0}=0
$$
from which we can deduce that
$$
 \frac{\partial^2 u}{\partial x_1 \partial x_2} \Big |_{\mathbf x=\mathbf 0}=0
$$
if $\theta_0\neq \pi/2$. 
This completes our proof.  \qed 

%By Proposition \ref{pro:32} and \eqref{eq:56in}, we complete the proof. 
%\end{proof}

\subsection{Proof of Theorem \ref{Th:411}}\label{subsec:53}
%\begin{proof}
Since $u$ satisfies the boundary condition $u\equiv 0$ on $\Gamma_h^-$, we know that
$$
u(\mathbf 0)=0,\quad \nabla u \Big |_{ \mathbf x= \mathbf 0} \cdot (1,0)^\top =0 \Longrightarrow \frac{\partial u }{\partial x_1 } \Big |_{\mathbf x=\mathbf 0}=0. 
$$
Recalling that $\varphi_M=\theta_0+\pi/2$ is the argument of the exterior unit normal vector of $\Gamma_h^+$ and $\partial_\nu u \equiv 0$ on  $\Gamma_h^+$, we easily see
$$
\frac{\partial u }{\partial x_1 } \Big |_{\mathbf x=\mathbf 0} (-\sin \theta_0)+ \frac{\partial u }{\partial x_2 } \Big |_{\mathbf x=\mathbf 0} \cos \theta_0=0. 
$$
Therefore if $\theta_0\neq \pi/2$, we know 
$$
\frac{\partial u }{\partial x_2 } \Big |_{\mathbf x=\mathbf 0} =0. 
$$
\mm{Furthermore, since $\Gamma_h^-$ is a nodal line, we know by Lemma \ref{pro:316} that}
\begin{equation*}
	\frac{\partial^2 u}{\partial x_1^2} \Big |_{\mathbf x=\mathbf 0}=0. 
\end{equation*}
Substituting  $u(\mathbf 0)=0$ into \eqref{eq:2582nd}, we have
\begin{equation*}%\label{eq:294nd}
	\frac{ \partial^2 u } {\partial x_2^2} \Big |_{\mathbf x=\mathbf 0} =- \frac{ \partial^2 u } {\partial x_1^2} \Big |_{\mathbf x=\mathbf 0}=0.
\end{equation*}
\mm{Then using the fact that $\Gamma_h^+$ is a singular line of $u$, we derive from Lemma \ref{pro:312} that}
\begin{equation*}%\label{eq:pro312}
		  \sin (\varphi_M +\theta_0)   \cdot \frac{ \partial^2 u } {\partial x_1 \partial x_2 } \Big|_{x=0} = \cos 2\theta_0  \cdot \frac{ \partial^2 u } {\partial x_1 \partial x_2 } \Big|_{\mathbf x=\mathbf 0} =0.
	\end{equation*}
Hence if $\theta_0\neq \pi/4$  and $\theta_0\neq 3\pi/4$, we can prove
$$
\frac{ \partial^2 u } {\partial x_1 \partial x_2 } \Big|_{\mathbf x=\mathbf 0}=0, 
$$
which means that the expansion \eqref{eq:uex251} of $u$ has at least nontrivial  homogeneous polynomial with the order of three, hence completes our proof. 
\qed 
%\end{proof}

\begin{remark}
	In the subsection \ref{subsec:53} above, we may also use the CGO solution as the test function to study the vanishing property of the second order partial derivatives of $u$ at the origin with respect to $\theta_0$, which can lead to the same conclusion.
\end{remark}

\section{Proofs of the theorems in Section \ref{sec:3} for general cases}\label{sec:pro2}

In this section, detailed proofs of the theorems for general vanishing order in Section \ref{sec:3} are presented,
\mm{
by using the spherical wave expansion of the Laplacian eigenfunction $u$ 
near the intersecting point between two line segments. 
}

%The previous vanishing property of $u$ at this point derived in Section \ref{sec:3} is also very essential in our proofs in this section. 

%Recall that $\Gamma_h^\pm$ are shown in Figure \ref{fig1}, where $\Gamma_h^\pm$ intersect with each other at the origin with the angle  $\theta_0=\angle(\Gamma_h^+, \Gamma_h^- )=\alpha \cdot \pi $.

 From \eqref{eq:uSex} and \eqref{eq:directional derivatitve}, we have the following lemma regarding the exterior normal derivative of $u$ on $\Gamma^\pm_h$ by using the spherical wave expansion \eqref{eq:uSex}  of $u$. 
\begin{lemma}\label{pro:61}
Under the polar coordinate, we have the following expansion of the normal derivative  of $u$ given by \eqref{eq:uSex} on $\Gamma_h^\pm $around the origin 
\begin{equation} \label{eq:41}
	\begin{split}
		\frac{\partial u}{\partial \nu} \Big|_{\Gamma_h^+} &= \frac{1}{r} \frac{\partial u}{\partial \theta}  \Big|_{\theta=\theta_0 } =\frac{1}{r}\sum_{n=0}^\infty\bsi n  \left( a_n e^{\bsi n \theta_0 }- b_n e^{-\bsi n \theta_0 }\right)J_n\left(\sqrt{\lambda}  r\right ),\\
		\frac{\partial u}{\partial \nu} \Big|_{\Gamma_h^-} &= -\frac{1}{r} \frac{\partial u}{\partial \theta}  \Big|_{\theta=0 } =-\frac{1}{r}  \sum_{n=0}^\infty\bsi n  \left( a_n - b_n  \right)J_n\left(\sqrt{\lambda}  r\right ). 
	\end{split}
\end{equation}
\end{lemma}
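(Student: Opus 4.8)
The plan is to derive both identities directly from the spherical wave expansion of $u$ in Lemma~\ref{lem:25} together with the relation between the normal and angular derivatives supplied by Lemma~\ref{lem:26}. First I would fix the sign in Lemma~\ref{lem:26} according to the orientation of the exterior normals. Along the ray $\Gamma_h^+$ (polar angle $\theta_0$) the exterior unit normal with respect to the sector $W$ has polar angle $\varphi_M=\theta_0+\pi/2$, so that $\varphi_M-\theta_0=\pi/2$ and Lemma~\ref{lem:26} yields the ``$+$'' case; along $\Gamma_h^-$ (polar angle $0$) the exterior normal has polar angle $\varphi_m=-\pi/2$, so that $\varphi_m-0=-\pi/2$ and the ``$-$'' case applies. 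This accounts for the first equalities $\partial_\nu u|_{\Gamma_h^+}=\frac1r\partial_\theta u|_{\theta=\theta_0}$ and $\partial_\nu u|_{\Gamma_h^-}=-\frac1r\partial_\theta u|_{\theta=0}$ in \eqref{eq:41}.

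Next I would compute $\partial_\theta u$ by differentiating \eqref{eq:uSex} term by term. Since $u$ is real analytic in $\Omega$, the series and its formally differentiated counterpart converge locally uniformly on compact subsets of $B_h\setminus\{\mathbf 0\}$, which legitimizes the term-by-term differentiation. Using $\partial_\theta\big(a_ne^{\mathrm{i}n\theta}+b_ne^{-\mathrm{i}n\theta}\big)=\mathrm{i}n\big(a_ne^{\mathrm{i}n\theta}-b_ne^{-\mathrm{i}n\theta}\big)$ gives
\[
\frac{\partial u}{\partial\theta}=\sum_{n=0}^\infty \mathrm{i}n\big(a_ne^{\mathrm{i}n\theta}-b_ne^{-\mathrm{i}n\theta}\big)J_n(\sqrt{\lambda}\,r).
\]

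Finally I would evaluate this expression at the two fixed angles and multiply by the corresponding factor $\pm\frac1r$: setting $\theta=\theta_0$ reproduces the series for $\partial_\nu u|_{\Gamma_h^+}$, while setting $\theta=0$ (so that $a_ne^{\mathrm{i}n\theta}-b_ne^{-\mathrm{i}n\theta}$ becomes $a_n-b_n$) together with the factor $-\frac1r$ reproduces the series for $\partial_\nu u|_{\Gamma_h^-}$; in both the $n=0$ term drops out, consistently with the leading factor $n$. The computation is elementary, and the only point deserving care is matching the orientation of the exterior normals $\varphi_M,\varphi_m$ to the sign in Lemma~\ref{lem:26}, which I regard as the sole (minor) obstacle.
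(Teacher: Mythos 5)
Your proof is correct and follows essentially the same route as the paper, which deduces \eqref{eq:41} directly from the spherical wave expansion \eqref{eq:uSex} together with the identity $\partial_\nu u=\pm\frac{1}{r}\,\partial_\theta u$ of Lemma~\ref{lem:26}. Your explicit sign-matching via $\varphi_M=\theta_0+\pi/2$ and $\varphi_m=-\pi/2$, and the justification of term-by-term differentiation, merely spell out details the paper leaves implicit.
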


%In particular, we could obtain the equation for $\{a_0,b_0\}$ and $\{a_1,b_1\}$ by using the boundary condition on $\Gamma_h^{+,\rm Ex}$ based on unique continuation principle.

\mm{
Recalling the definition of the generalized singular line of the Laplacian eigenfunction $u$, 
and using the spherical wave expansion \eqref{eq:uSex} of $u$ and Lemma \ref{pro:61}, 
we can deduce some recursive equations for the undetermined coefficients $\{a_n, b_n\}$ 
in \eqref{eq:uSex}.  These recursive equations will be used in the proof of Theorem \ref{th:41}. 
}
\begin{lemma}\label{pro:62}
	Suppose that $\Gamma_h^\pm$ are two generalized singular lines of $u$ with the boundary parameters $\eta_1 \equiv C_1$ and $\eta_2  \equiv C_2$ defined on $\Gamma_h^-$ and $\Gamma_h^+$ respectively, where $C_1$ and $C_2$ are two constants. 
	\mm{
	If $\Gamma_h^\pm$ intersect with each other at the origin, 
	then the following recursive equations hold for the coefficients $\{a_n, b_n\}$ 
in \eqref{eq:uSex}, $n=1,2,\ldots$: }
	\begin{align}
	2 C_2 \left(a_{n-1} e^{2\bsi (n-1) \theta_0} + b_{n-1} \right) + \bsi   \sqrt{\lambda}e^{-\bsi \theta_0} \left(a_{n} e^{2 \bsi n \theta_0} - b_{n} \right) &=0,\label{eq:42} \\
	2 C_1 \left(a_{n-1}  + b_{n-1} \right) - \bsi   \sqrt{\lambda} \left(a_{n} - b_{n} \right) &=0.\label{eq:43}
\end{align}
%Besides, we also have 
%\begin{equation}\label{eq:anly44}
%		2C_2(a_0+b_0)+\bsi \sqrt{\lambda} e^{-\bsi \theta_0} \left(a_1 e^{2\bsi \theta_0}  -b_1 \right) =0. 
%	\end{equation}
\end{lemma}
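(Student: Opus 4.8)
The plan is to convert the two homogeneous boundary conditions on $\Gamma_h^-$ and $\Gamma_h^+$ into two scalar identities in the radial variable $r$, and then to extract the recurrences by matching the lowest nonvanishing power of $r$. First I would restrict $\partial_\nu u+\eta_i u=0$ to each ray. Taking $\Gamma_h^-$ along $\theta=0$ and $\Gamma_h^+$ along $\theta=\theta_0=\alpha\pi$, Lemma~\ref{pro:61} supplies the normal derivatives through the spherical wave expansion \eqref{eq:uSex}, while \eqref{eq:uSex} itself supplies the traces of $u$. With $\eta_1\equiv C_1$ and $\eta_2\equiv C_2$ this produces, for every $r\in(0,h)$, the identity
\[
-\frac{1}{r}\sum_{n=0}^\infty \bsi n(a_n-b_n)J_n(\sqrt{\lambda}\,r)+C_1\sum_{n=0}^\infty(a_n+b_n)J_n(\sqrt{\lambda}\,r)=0
\]
on $\Gamma_h^-$, together with the companion identity on $\Gamma_h^+$ in which $a_n,b_n$ are weighted by the phases $e^{\pm\bsi n\theta_0}$ and $C_1$ is replaced by $C_2$.

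Next I would clear the $1/r$ factor by multiplying through by $r$ and insert the explicit series \eqref{eq:jnt}, which gives $J_n(\sqrt{\lambda}\,r)=\frac{(\sqrt{\lambda}/2)^n}{n!}r^n\bigl(1+\Oh(r^2)\bigr)$. Each identity then becomes a convergent power series in $r$ that vanishes on $(0,h)$, so all of its Taylor coefficients vanish. The structural point driving the recurrence is that, at order $r^n$, the leading contribution of the normal-derivative sum comes from the single term $J_n$ and carries $a_n,b_n$, whereas the leading contribution of the zeroth-order sum comes from $r\,J_{n-1}$ and carries $a_{n-1},b_{n-1}$; the factor $n$ in front of $(a_n-b_n)$ cancels against the mismatch between $n!$ and $(n-1)!$ once one divides by the common constant $(\sqrt{\lambda}/2)^{n-1}/(n-1)!$. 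On $\Gamma_h^-$ this balance is exactly \eqref{eq:43}, while on $\Gamma_h^+$ it reads $\bsi\frac{\sqrt{\lambda}}{2}(a_ne^{\bsi n\theta_0}-b_ne^{-\bsi n\theta_0})+C_2(a_{n-1}e^{\bsi(n-1)\theta_0}+b_{n-1}e^{-\bsi(n-1)\theta_0})=0$; multiplying by $2e^{\bsi(n-1)\theta_0}$ and regrouping the exponentials turns this into precisely \eqref{eq:42}.

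The main obstacle is not conceptual but lies in the coefficient bookkeeping at order $r^n$. Beyond the two leading terms, the coefficient of $r^n$ also receives contributions from $J_{n-2},J_{n-4},\dots$ in the derivative sum and from $J_{n-3},J_{n-5},\dots$ in the value sum, so a priori the order-$r^n$ balance could couple $\{a_n,b_n\}$ to several lower levels rather than only to $\{a_{n-1},b_{n-1}\}$. For $n=1$ and $n=2$ these extra terms are simply absent, so the two-term recurrences hold outright; for $n\ge 3$ the recurrences are invoked in the inductive setting of Theorem~\ref{th:41}, where the lower-order coefficients have already been shown to vanish, and the order-$r^n$ balance then collapses to exactly the two leading terms. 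I would therefore organise the verification so that the stated two-term recurrences are read off as this leading-order balance, keeping careful track of the sign convention for $\partial_\nu$ on the two rays (the $\pm$ in Lemma~\ref{pro:61}) and of the phase factors $e^{\pm\bsi n\theta_0}$, which are the only places an error is likely to creep in.
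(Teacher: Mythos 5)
Your proposal is, at its core, the paper's own argument: the paper likewise restricts the two Robin conditions to the rays via Lemma~\ref{pro:61}, clears the $1/r$ factor, inserts the series \eqref{eq:jnt}, and ``compares the coefficient of $r^n$''; your phase manipulation (multiplying the $\Gamma_h^+$ identity by $2e^{\bsi(n-1)\theta_0}$) is exactly how \eqref{eq:42} is obtained there. The difference is the ``obstacle'' you flag, and you are right that it is not mere bookkeeping: the paper's proof writes the order-$r^n$ balance using only the leading ($p=0$) terms of $J_n$ and $J_{n-1}$ and silently drops the contributions of $J_{n-2},J_{n-4},\dots$ and $J_{n-3},J_{n-5},\dots$ that you identify. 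These really are present for $n\ge3$. Concretely, on $\Gamma_h^-$ the exact coefficient of $r^3$ yields
\[
\bsi\sqrt{\lambda}\left[(a_3-b_3)-(a_1-b_1)\right]-2C_1\left[(a_2+b_2)-2(a_0+b_0)\right]=0,
\]
and combining this with the $n=1$ case of \eqref{eq:43} shows that the $n=3$ case of \eqref{eq:43} is equivalent to the extra condition $C_1u(\mathbf{0})=0$ (similarly $C_2u(\mathbf{0})=0$ on the $\Gamma_h^+$ side). So the two-term recurrences are unconditional only for $n=1,2$, exactly as you observe, and for $n\ge3$ they genuinely require the lower-order coefficients to vanish; note that the clean triangular elimination behind Lemma~\ref{lem:27} is broken here precisely by the factor $r$ multiplying one of the two sums, which shifts the parity of the powers.

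Your proposed repair --- reading \eqref{eq:42}--\eqref{eq:43} as the leading-order balance, valid in the inductive setting of Theorem~\ref{th:41} where $u(\mathbf{0})=0$ and $a_j=b_j=0$ for all $j\le n-1$ --- is correct and suffices for every application of the lemma in Section~\ref{sec:pro2}: under that hypothesis the alternating tails collapse and the order-$r^n$ identity is exactly the stated two-term relation. Two caveats. First, with this reading you have not proved the lemma as literally stated, and in fact the unconditional statement fails whenever $C_iu(\mathbf{0})\neq0$, so if you adopt your route you should restate the lemma with the vanishing hypothesis made explicit. Second, be aware that the paper itself invokes these recurrences with $u(\mathbf{0})\neq0$ in Propositions~\ref{prop1}--\ref{prop4} of Section~\ref{sect:discussion}, a use your (correct) collapsed-balance justification does not cover; that is a defect of the paper's one-line coefficient comparison rather than of your analysis, which is the paper's proof done carefully.
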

\begin{proof}
Substituting \eqref{eq:uSex} and \eqref{eq:41} into  
	$$
	\frac{\partial u}{\partial \nu} +\eta_2 u=0 \mbox{ on } \Gamma_h^+,
	$$
	we obtain that
	\begin{equation}\label{eq:a6rc2}
	r C_2 \sum_{n=0}^\infty\left( a_n e^{\bsi n \theta_0 }+b_n e^{-\bsi n \theta_0 }\right)J_n\left(\sqrt{\lambda}  r\right ) + \sum_{n=0}^\infty\bsi n  \left( a_n e^{\bsi n \theta_0 }- b_n e^{-\bsi n \theta_0 }\right)J_n\left(\sqrt{\lambda}  r\right )=0. 
	\end{equation}
	Substituting \eqref{eq:jnt} into the equation above yields 
	\begin{align*}
	0&=	r C_2 \sum_{n=0}^\infty\left( a_n e^{\bsi n \theta_0 }+b_n e^{-\bsi n \theta_0 }\right)\frac{(\sqrt{\lambda} r)^n}{2^n n!} \left( 1+ \sum_{p=1}^\infty \frac{(-1)^pn!}{p!(n+p)!} \left(\frac{\sqrt{\lambda} r }{2} \right)^{2p}  \right  ) \\
	&\quad  + \sum_{n=0}^\infty\bsi n  \left( a_n e^{\bsi n \theta_0 }- b_n e^{-\bsi n \theta_0 }\right)\frac{(\sqrt{\lambda} r)^n}{2^n n!} \left( 1+ \sum_{p=1}^\infty \frac{(-1)^pn!}{p!(n+p)!} \left(\frac{\sqrt{\lambda} r }{2} \right)^{2p}  \right  ).
	\end{align*}
\mm{
Comparing the coefficient of $r^n$ on both sides, we can deduce for $n=1,2,\ldots,$ that 
}
$$
0=C_2 \left(a_{n-1} e^{\bsi (n-1) \theta_0} + b_{n-1} e^{-\bsi (n-1) \theta_0}\right) \cdot \frac{ \lambda^{(n-1)/2}}{ 2^{n-1} (n-1)!} + \bsi n \left(a_{n} e^{\bsi n \theta_0} - b_{n} e^{-\bsi n \theta_0}\right) \cdot \frac{ \lambda^{n/2}}{ 2^{n} n!} ,
$$
\mm{
which can be further simplified to get \eqref{eq:42}. Similarly, we can derive \eqref{eq:43} on $\Gamma_h^-$ 
by using \eqref{eq:uSex} and \eqref{eq:41}. 
}
%The proof is complete. 
\end{proof}

Similar to Lemma \ref{pro:62}, we can obtain the recursive equations  for $\{a_n,b_n\}$ by using \eqref{eq:uSex} and the  boundary conditions on $\Gamma_h^\pm$ for the nodal line and  the singular line in Propositions \ref{pro:63} and \ref{pro:64}, respectively. 

\begin{lemma}\label{pro:63}
	Suppose that $\Gamma_h^\pm$ are two nodal lines of $u$ which intersect  with each other at the origin, 
	then the following equations hold for the coefficients $\{a_n, b_n\}$ 
in \eqref{eq:uSex}, $n=0,1,\ldots$:
	\begin{align}
	a_n e^{\bsi n \theta_0 }+b_n e^{-\bsi n \theta_0 } &=0,\label{eq:47} \\
	 a_n +b_n  &=0.\label{eq:48}
\end{align}
\end{lemma}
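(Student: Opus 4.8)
The plan is to substitute the spherical wave expansion \eqref{eq:uSex} of $u$ directly into the Dirichlet-type nodal condition $u=0$ on each of the two intersecting lines, and then to invoke the linear-independence property of the Bessel functions recorded in Lemma~\ref{lem:27}. This mirrors the structure of the proof of Theorem~\ref{th:28} and of Lemma~\ref{pro:62}, except that here the homogeneous condition is the vanishing of $u$ itself rather than a condition on its normal derivative, so no differentiation (and hence no factor $\bsi n/r$) is needed; the argument is therefore even more direct.

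First I would use the normalization fixed in \eqref{eq:angle}, namely that $\Gamma_h^-$ lies along the positive $x_1$-axis (so $\theta=0$) while $\Gamma_h^+$ makes the angle $\theta_0$ with it (so $\theta=\theta_0$). Parameterizing $\Gamma_h^-$ in polar coordinates as $\mathbf{x}=r(\cos 0,\sin 0)$ with $0<r<h$ and evaluating \eqref{eq:uSex}, both angular factors $e^{\pm\bsi n\theta}$ reduce to $1$, so the condition $u\equiv 0$ on $\Gamma_h^-$ becomes
\[
\sum_{n=0}^\infty (a_n+b_n)\,J_n\!\left(\sqrt{\lambda}\,r\right)=0,\qquad 0<r<h.
\]
Setting $t=\sqrt{\lambda}\,r$, which ranges over $(0,\sqrt{\lambda}\,h)$, and applying Lemma~\ref{lem:27} to the coefficient sequence $\alpha_n:=a_n+b_n$ forces $a_n+b_n=0$ for every $n$, which is precisely \eqref{eq:48}.

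Next I would repeat this evaluation on $\Gamma_h^+$, parameterized by $\mathbf{x}=r(\cos\theta_0,\sin\theta_0)$ with $0<r<h$. Here the $n$-th angular factor is $a_n e^{\bsi n\theta_0}+b_n e^{-\bsi n\theta_0}$, so $u\equiv 0$ on $\Gamma_h^+$ gives
\[
\sum_{n=0}^\infty \left(a_n e^{\bsi n\theta_0}+b_n e^{-\bsi n\theta_0}\right)J_n\!\left(\sqrt{\lambda}\,r\right)=0,\qquad 0<r<h,
\]
and a second application of Lemma~\ref{lem:27}, this time with $\alpha_n:=a_n e^{\bsi n\theta_0}+b_n e^{-\bsi n\theta_0}$, yields \eqref{eq:47}. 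This completes the proof.

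I do not anticipate a genuine obstacle, as the statement is an immediate corollary of Lemma~\ref{lem:27}. The only point meriting a line of justification is the legitimacy of applying Lemma~\ref{lem:27} term-by-term, i.e.\ passing the homogeneous condition through the infinite sum. This is justified because the expansion \eqref{eq:uSex} converges locally uniformly in $B_h$, and in particular along the radial segments $\Gamma_h^\pm$, while the proof of Lemma~\ref{lem:27} itself proceeds by inserting the power series \eqref{eq:jnt} for each $J_n$ and comparing like powers of the radial variable; that comparison applies verbatim to the rescaled variable $t=\sqrt{\lambda}\,r$ on $(0,\sqrt{\lambda}\,h)$.
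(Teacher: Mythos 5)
Your proposal is correct and matches the paper's own proof essentially verbatim: the paper likewise substitutes the expansion \eqref{eq:uSex} into $u\equiv 0$ on $\Gamma_h^-$ (angle $0$) and on $\Gamma_h^+$ (angle $\theta_0$) and concludes via Lemma~\ref{lem:27}. Your extra remark on the legitimacy of the term-by-term comparison is a small refinement the paper leaves implicit, not a different argument.
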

\begin{proof}
	Substituting \eqref{eq:uSex} into 
	$
	u\equiv 0 \mbox{ on } \Gamma_h^\pm,
	$
	we can obtain that
	\begin{align*}
		0&= \sum_{n=0}^\infty\left( a_n e^{\bsi n \theta_0 }+b_n e^{-\bsi n \theta_0 }\right)J_n(\sqrt{\lambda}r),\\
		0&= \sum_{n=0}^\infty\left( a_n +b_n \right)J_n(\sqrt{\lambda}r).
	\end{align*}
\mm{	Then the desired results follow directly from Lemma \ref{lem:27}. }
\end{proof}

\begin{lemma}\label{pro:64}
	Suppose that $\Gamma_h^\pm$ are two singular lines of $u$ which intersect  with each other at the origin, 
	then the following equations hold for the coefficients $\{a_n, b_n\}$ 
in \eqref{eq:uSex}, $n=0,1,\ldots$:	
\begin{align}
	a_n e^{\bsi n \theta_0 }- b_n e^{-\bsi n \theta_0 } &=0,\label{eq:49} \\
	 a_n -b_n  &=0.\label{eq:410}
\end{align}
\end{lemma}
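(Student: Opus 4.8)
The plan is to mirror verbatim the proof of Lemma~\ref{pro:63}, simply replacing the nodal condition $u\equiv 0$ by the singular condition $\partial_\nu u\equiv 0$, and then to invoke the linear independence of the Bessel functions supplied by Lemma~\ref{lem:27}. First I would recall that, by the definition of a singular line, the eigenfunction satisfies $\partial_\nu u=0$ on both $\Gamma_h^+$ and $\Gamma_h^-$. The relevant normal derivatives are already at hand from Lemma~\ref{pro:61}: substituting the spherical wave expansion \eqref{eq:uSex}, formula \eqref{eq:41} gives
\begin{align*}
\frac{\partial u}{\partial \nu}\Big|_{\Gamma_h^+}&=\frac{1}{r}\sum_{n=0}^\infty \bsi n\left(a_n e^{\bsi n\theta_0}-b_n e^{-\bsi n\theta_0}\right)J_n(\sqrt{\lambda}\, r),\\
\frac{\partial u}{\partial \nu}\Big|_{\Gamma_h^-}&=-\frac{1}{r}\sum_{n=0}^\infty \bsi n\left(a_n-b_n\right)J_n(\sqrt{\lambda}\, r).
\end{align*}

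Next I would set each of these expressions equal to zero and clear the harmless factor $1/r$ (recall $r>0$ on $\Gamma_h^\pm$), leaving the two Bessel-series identities, valid for all $t=\sqrt{\lambda}\,r$ with $0<r<h$,
\begin{align*}
\sum_{n=0}^\infty \bsi n\left(a_n e^{\bsi n\theta_0}-b_n e^{-\bsi n\theta_0}\right)J_n(\sqrt{\lambda}\, r)&=0,\\
\sum_{n=0}^\infty \bsi n\left(a_n-b_n\right)J_n(\sqrt{\lambda}\, r)&=0.
\end{align*}
Applying Lemma~\ref{lem:27} to each identity forces every coefficient to vanish, that is $\bsi n\left(a_n e^{\bsi n\theta_0}-b_n e^{-\bsi n\theta_0}\right)=0$ and $\bsi n\left(a_n-b_n\right)=0$ for every $n$. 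Dividing by the nonzero prefactor $\bsi n$ for each $n\ge 1$ then yields precisely \eqref{eq:49} and \eqref{eq:410}, which completes the argument.

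The computation is entirely routine, so I do not anticipate a genuine obstacle; the one point deserving care is the index $n=0$. Unlike the nodal situation of Lemma~\ref{pro:63}, where Lemma~\ref{lem:27} is applied directly to $\sum_{n}(a_n e^{\bsi n\theta_0}+b_n e^{-\bsi n\theta_0})J_n$ and therefore pins down $a_0+b_0$, here the angular differentiation inherent in the normal derivative introduces the weight $\bsi n$, which annihilates the $n=0$ term before Lemma~\ref{lem:27} is invoked. Consequently the singular conditions impose no constraint on $a_0$ and $b_0$, in full agreement with Theorem~\ref{th:28}, where $u$ collapses to $(a_0+b_0)J_0(\sqrt{\lambda}\,r)$; thus the substantive content of \eqref{eq:49}--\eqref{eq:410} is for $n\ge 1$, and it is exactly in this range that they will be used in the recursion of Section~\ref{sec:pro2}.
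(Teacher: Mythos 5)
Your proof is correct and follows essentially the same route as the paper, whose own proof is the one-line remark that \eqref{eq:49} and \eqref{eq:410} follow from \eqref{eq:41} and Lemma \ref{lem:27}; you have simply made the substitution into the conditions $\partial_\nu u=0$ on $\Gamma_h^\pm$ and the application of Lemma \ref{lem:27} explicit. Your closing observation about the index $n=0$ is also apt: the factor $\bsi n$ annihilates the zeroth term, so the singular conditions genuinely constrain only $n\ge 1$ (with $a_0$, $b_0$ entering the expansion only through $a_0+b_0$), which is consistent with Theorem \ref{th:28} and with how the lemma is used in the recursion of Section \ref{sec:pro2}.
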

\begin{proof}
	Using \eqref{eq:41} and Lemma \ref{lem:27}, we can derive  \eqref{eq:49}  and \eqref{eq:410}. 
\end{proof}

In the next lemma, we clarify the relationship between the coefficients $a_n$, $b_n$ in  \eqref{eq:uSex} and  the vanishing order of $u$ at the origin. 

\begin{lemma}\label{pro:45}
	Suppose that $u$ has the spherical wave expansion \eqref{eq:uSex} around the origin, 
	\mm{
	and the coefficients $a_n$, $b_n$ in  \eqref{eq:uSex} satisfy
	}
	\begin{equation}\label{eq:411a0}
		a_0+b_0=0, a_n=b_n=0, \, n=1,2,\ldots,q, \,q\in\mathbb{N}_+,
	\end{equation}
	\mm{then the vanishing order $N$ of $u$ at the origin is given by
	$
	N= q+1. 
	$
	}
\end{lemma}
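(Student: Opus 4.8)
The plan is to read off the low-order part of the Cartesian Taylor expansion of $u$ directly from the spherical wave expansion \eqref{eq:uSex}, and then invoke the fact recorded after Definition~\ref{def:3} that $\mathrm{Vani}(u;\mathbf 0)$ equals the degree of the lowest nontrivial homogeneous polynomial occurring in that expansion. The bridge between the two pictures is the elementary identity $r^n e^{\pm \bsi n\theta}=(x_1\pm \bsi x_2)^n$, which exhibits $r^n e^{\pm \bsi n\theta}$ as a homogeneous polynomial of degree $n$ in $(x_1,x_2)$.

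First I would substitute the Bessel expansion \eqref{eq:jnt}, writing $J_n(\sqrt\lambda r)=\frac{(\sqrt\lambda r)^n}{2^n n!}\bigl(1+\sum_{p\ge 1}c_{n,p}r^{2p}\bigr)$ with $c_{n,p}=\frac{(-1)^p n!}{p!(n+p)!}(\tfrac{\sqrt\lambda}{2})^{2p}$. Since $r^{2p}=(x_1^2+x_2^2)^p$, each mode $e^{\pm\bsi n\theta}J_n(\sqrt\lambda r)$ then contributes homogeneous polynomials of degrees $n,n+2,n+4,\dots$ only, so the homogeneous part of $u$ of a fixed degree $m$ receives contributions exactly from the indices $n$ with $n\le m$ and $n\equiv m \pmod 2$. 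Feeding in the hypotheses \eqref{eq:411a0}: the mode $n=0$ enters only through the combination $a_0+b_0=0$, and for $1\le n\le q$ one has $a_n=b_n=0$; hence every homogeneous polynomial of degree $m\le q$ vanishes identically, which already gives $\mathrm{Vani}(u;\mathbf 0)\ge q+1$.

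Next I would isolate the degree-$(q+1)$ part. By the parity bookkeeping above it collects the leading ($p=0$) term of the mode $n=q+1$ together with the higher Bessel tails of the modes $n=q-1,q-3,\dots$; but every one of the latter has index $\le q$ and therefore vanishes by \eqref{eq:411a0} (the case $n=0$, possible when $q+1$ is even, being killed by $a_0+b_0=0$). Thus the degree-$(q+1)$ homogeneous polynomial reduces to $\frac{\lambda^{(q+1)/2}}{2^{q+1}(q+1)!}\bigl(a_{q+1}(x_1+\bsi x_2)^{q+1}+b_{q+1}(x_1-\bsi x_2)^{q+1}\bigr)$. As $(x_1+\bsi x_2)^{q+1}$ and $(x_1-\bsi x_2)^{q+1}$ are linearly independent over $\C$, this is the first nontrivial term of the expansion, so $\mathrm{Vani}(u;\mathbf 0)=q+1$.

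The point requiring care is the legitimacy of regrouping the double series in $r$ and the trigonometric polynomials in $\theta$ into the Cartesian Taylor series and then reading off homogeneity degree by degree; this is justified by the analyticity of $u$ in $\Omega$, which makes \eqref{eq:uSex} converge locally uniformly and permits the rearrangement. The only genuinely delicate bookkeeping is verifying that no cancellation among modes of equal parity can lower the degree below $q+1$, and this is precisely what \eqref{eq:411a0} excludes term by term. The residual nondegeneracy $(a_{q+1},b_{q+1})\neq(0,0)$ underlying the sharp equality $N=q+1$ is exactly the assertion that these first surviving coefficients are not both zero, which is the case relevant to the applications of the lemma.
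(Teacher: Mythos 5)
Your proposal is correct and takes essentially the same route as the paper: the paper's (two-line) proof likewise substitutes \eqref{eq:411a0} into \eqref{eq:uSex} and reads off from the Bessel expansion \eqref{eq:jnt} that the lowest power of $r$ in $J_n(\sqrt{\lambda}\,r)$ is $r^n$, so the surviving series starts at degree $q+1$. Your write-up is merely more explicit than the paper's — the Cartesian identification $r^n e^{\pm\bsi n\theta}=(x_1\pm\bsi x_2)^n$, the parity bookkeeping across modes, and the observation that the sharp equality $N=q+1$ tacitly requires $(a_{q+1},b_{q+1})\neq(0,0)$ are all left implicit there.
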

\begin{proof}
	Substituting \eqref{eq:411a0} into \eqref{eq:uSex} yields 
	$$
	u(x)=\sum_{n=q+1}^\infty\left( a_n e^{\bsi n \theta }+b_n e^{-\bsi n \theta }\right)J_n\left(\sqrt{\lambda}  r\right ), 
	$$
\mm{then we know from \eqref{eq:jnt} that the power of the lowest order 
in $J_n\left(\sqrt{\lambda}  r\right )$ with respect to $r$ is $n$. 
}
\end{proof}

In the rest of this section, we provide detailed proofs of theorems for general vanishing order in Section \ref{sec:3}. 

\subsection{Proof of Theorem~\ref{th:41}}\label{sub61}
%\begin{proof}[Proof of Theorem~\ref{th:41}]
	By Theorem \ref{th:311}, we have 
	$$
	u(\mathbf{ 0})=\frac{\partial u}{\partial x_1}\Big|_{\mathbf x=\mathbf  0}=\frac{\partial u}{\partial x_2}\Big|_{\mathbf x=\mathbf 0}=0. 
	$$
\mm{Then it follows from \eqref{eq:uSex} that}
	\begin{equation}\label{eq:a0b0}
	a_0+b_0=u(\mathbf 0)=0. 
	\end{equation}
	Substituting \eqref{eq:a0b0} into \eqref{eq:42} and \eqref{eq:43}, we can deduce that
$$
\begin{bmatrix}
	e^{2\bsi \theta_0} & -1\\
	1&-1
\end{bmatrix} \begin{bmatrix}
	a_1 \\ b_1
\end{bmatrix}=0\,, 
$$	
\mm{from which we can easily see $a_1=b_1=0$ if $\theta_0 \neq \pi $. 
Then using \eqref{eq:42} and \eqref{eq:43} again, we know that $a_2$ and $b_2$ satisfy 
$$
\begin{bmatrix}
	e^{4 \bsi \theta_0 } &-1\\
	1&-1
\end{bmatrix} \begin{bmatrix}
	a_2\\ b_2
\end{bmatrix}=0\,, 
$$
from which we see $a_2=b_2=0$ if $\theta_0\neq k \pi/2$ for $k=0,1$. 
Now we apply the mathematical induction, and assume that $a_{n-1}=b_{n-1}=0$. 
Then we can directly derive by virtue of \eqref{eq:th1} that 
\begin{equation}\label{eq:414}
	\left| \begin{matrix}
	e^{2\bsi n \theta_0} &-1\\
	1&-1
\end{matrix} \right| =1-e^{2\bsi n \theta_0} \neq 0 
\end{equation}
for $\theta_0 \neq \frac{m \pi }{n} \, (m=0,1,\ldots, n-1)$. This implies that $a_n=b_n=0$, 
and completes the proof of Theorem~\ref{th:41}.
}
\qed 
%\end{proof}	

\subsection{Proof of Theorem~\ref{th:47}}
%\begin{proof}[Proof of Theorem~\ref{th:48}]
\mm{Since $u\equiv0$ on $\Gamma_h^\pm$, by Lemma \ref{pro:63} we know 
$$
\left| \begin{matrix}
e^{\bsi n \theta_0 } &  e^{-\bsi n \theta_0 }\\
1& 1
\end{matrix} \right| = e^{-\bsi n \theta_0 } ( e^{2\bsi n \theta_0 }-1) \neq  0
$$
if $\theta_0 \neq \frac{m \pi }{n} \, (m=0,1,\ldots, n-1)$. This readily 
implies that $a_n=b_n=0$, $n=1,2,\ldots$.  }
\qed
%From Proposition \ref{pro:45} and \eqref{eq:56in}, we complete the proof. 
%\end{proof}	

\subsection{Proof of Theorem~\ref{th:48}}
\mm{In combination with Lemma \ref{pro:64}, Theorem \ref{th:48} can be proved by following a completely similar argument to the one for Theorem \ref{th:41} in Subsection \ref{sub61}  
by formally taking $\eta_1\equiv 0$ and $\eta_2\equiv 0$. }

%From Theorem \ref{th:311}, take $\eta_i\equiv0$, $i=1,2$. If $\theta_0 \neq \pi/2$, we have 
%$$
%u(\mathbf  0)=\frac{\partial u}{\partial x_1}\Big|_{\mathbf  x=\mathbf 0}=\frac{\partial u}{\partial x_2}\Big|_{\mathbf x=\mathbf 0}=\frac{\partial^2 u}{\partial x_1^2}\Big|_{\mathbf x=\mathbf 0}=\frac{\partial^2 u}{\partial x_2^2}\Big|_{\mathbf  x=\mathbf  0}=\frac{\partial^2 u}{\partial x_1\partial x_2 }\Big|_{\mathbf  x=\mathbf  0}=0. 
%$$
%Therefore we know that \eqref{eq:a0b0} holds. From \eqref{eq:49} and \eqref{eq:410}, if \eqref{eq:414} is satisfied, we know that $a_n=b_n=0$, $n=1,2,\ldots$. 
%
%The proof is complete. \qed

\subsection{Proof of Theorem~\ref{Th:49}}
%\begin{proof}[Proof of Theorem~\ref{Th:49}] 
Since $u\equiv0$ on $\Gamma_h^-$ and $\frac{\partial u}{\partial\nu}+\eta_2u=0$ on $\Gamma_h^+$, we have the following recursive equations for the coefficients $\{a_n,b_n\}$:
	\mm{
	\begin{equation}
		%\left\{ \begin{split}
		a_0+b_0=0; \q 
%		2C_2(a_0+b_0)+\bsi \sqrt{\lambda} e^{-\bsi \theta_0} \left(a_1 e^{2\bsi \theta_0}  -b_1 \right) &=0\\
			 a_n +b_n  =0; \q 
			 2 C_2 (a_{n-1} e^{2\bsi (n-1) \theta_0} + b_{n-1} ) + \bsi   \sqrt{\lambda}e^{\bsi \theta_0} 
			 (a_{n} e^{2 \bsi n \theta_0} - b_{n} ) =0. \label{453}
		%\end{split}. 
		%\right. 
	\end{equation}
	}
\mm{
For $n=1$, we easily see from \eqref{453} that 
\begin{align}\notag
a_1+b_1=0, \quad a_1e^{2\bsi\theta_0}-b_1=0, 
\end{align}
%{Then if $\theta_0 \neq \frac{ \pi   }{2}$,
%$$
%\left| \begin{matrix}
%e^{2\bsi  \theta_0 } & -1\\
%1& 1
%\end{matrix} \right| = e^{2\bsi  \theta_0 }+1 \neq  0, 
%$$
from which we readily derive $a_1=b_1=0$.}

\mm{
By mathematical induction, we assume $a_{n-1}=b_{n-1}=0$. Then there holds
\begin{align}\notag
	a_n+b_n=0, \quad 
	a_ne^{2\bsi n\theta_0}-b_n=0.
\end{align}
If $\theta_0 \neq \frac{(2m +1) \pi   }{2n}\, (m=0,1,\ldots, n-1)$, the coefficient matrix satisfies
$$
\left| \begin{matrix}
	 e^{2\bsi n \theta_0 } & -1\\
	 1& 1
\end{matrix} \right| = e^{2\bsi n \theta_0 }+1 \neq  0, 
$$
which readily shows that $a_n=b_n=0$, $n=1,2,\ldots$.  
}
%
%The proof is complete. 
\qed 
%By Proposition \ref{pro:45} and \eqref{eq:56in}, we complete the proof. 
%\end{proof}	

\section{Discussions about the condition $u(\mathbf{ 0})=0$}\label{sect:discussion}
\mm{We recall an essential condition $u(\mathbf{0})=0$ that was used 
in Theorems \ref{th:41},  \ref{th:48} and  \ref{th:410} in Section \ref{sec:3}. 
%it is required that $u(\mathbf{0})=0$ which is an essential condition for our study therein. 
However, we may note that in other three theorems of the same section, 
the condition that $u(\mathbf{0})=0$ is always fulfilled because 
one of the two line segments is a nodal line there. 
In this section, by illustrating with several examples, we show that $u(\mathbf{0})=0$ can also be fulfilled in Theorems \ref{th:41},  \ref{th:48} and  \ref{th:410} if one imposes certain generic conditions on the boundary parameters $C_i$, the intersecting angle $\alpha \cdot \pi$ and the eigenvalue $\lambda$. 

It is stated in the introduction that one of the main motivations of our study in this work 
is the unique identifiability in inverse scattering problems. As we will see in the next section, 
we are able to develop a powerful mathematical strategy so that this condition is always fulfilled 
by making use of a linear combination of two eigenfunctions. 
}
%see our study in Section~\ref{sec8} for more details. 

\begin{proposition}\label{prop1}
	\mm{
	Let $u$ be a Laplacian eigenfunction to \eqref{eq:eig}, with its 
	Fourier series given by \eqref{eq:uSex}. Suppose that there are two generalized singular lines $\Gamma_h^+$ and $\Gamma_h^-$ from ${\mathcal M}^\lambda_{\Omega }$ such that \eqref{eq:angle} and \eqref{eq:angle2a} hold. 
%	such that 
%	\begin{equation}\label{eq:angle2}
%	\Gamma_h^+\cap\Gamma_h^-=\mathbf{0}\in\Omega\quad\mbox{and}\quad \angle (\Gamma_h^+,\Gamma_h^-)=\pi, 
%	\end{equation} 
	Assume that $\eta_1 \equiv C_1$ and $\eta_2 \equiv  C_2$ for two constants 
	$C_1 $ and $C_2$.  Then if $\alpha=1$ and $C_1\neq C_2$, the Laplacian eigenfunction $u$ 
	fulfills $u(\mathbf{ 0})=0$. If $\alpha \neq 1$, two coefficients $a_1$ and $b_1$ in \eqref{eq:uSex} 
	can be expressed explicitly by 
	\begin{align}\label{eq:72}
	a_1=\frac{1}{\sqrt{\lambda}\sin\theta_0}(C_1e^{-\bsi\theta_0}+C_2)u(\mathbf{ 0}), \q 
	b_1=\frac{1}{\sqrt{\lambda}\sin\theta_0}(C_1e^{\bsi\theta_0}+C_2)u(\mathbf{ 0}). 
	%\notag
	\end{align}
	}
\end{proposition}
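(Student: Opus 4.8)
The plan is to extract, at the lowest nontrivial order, the two algebraic relations that the generalized singular conditions on $\Gamma_h^\pm$ impose on the Fourier coefficients, and then analyze the resulting $2\times2$ linear system for $(a_1,b_1)$. First I would record the elementary but crucial observation that evaluating the spherical wave expansion \eqref{eq:uSex} at the origin gives $u(\mathbf{0})=a_0+b_0$, since $J_0(0)=1$ while $J_n(0)=0$ for $n\geq1$. Next I specialize the recursive identities \eqref{eq:42} and \eqref{eq:43} of Lemma~\ref{pro:62} to $n=1$. After substituting $a_0+b_0=u(\mathbf{0})$ and using $e^{-\bsi\theta_0}(a_1e^{2\bsi\theta_0}-b_1)=a_1e^{\bsi\theta_0}-b_1e^{-\bsi\theta_0}$, these two relations become
\begin{align*}
\bsi\sqrt{\lambda}\,\bigl(a_1e^{\bsi\theta_0}-b_1e^{-\bsi\theta_0}\bigr)&=-2C_2\,u(\mathbf{0}),\\
\bsi\sqrt{\lambda}\,\bigl(a_1-b_1\bigr)&=2C_1\,u(\mathbf{0}),
\end{align*}
a linear system in $a_1,b_1$ whose coefficient matrix is proportional to $\left[\begin{smallmatrix} e^{\bsi\theta_0} & -e^{-\bsi\theta_0}\\ 1 & -1\end{smallmatrix}\right]$, with $2\times2$ determinant $e^{-\bsi\theta_0}-e^{\bsi\theta_0}=-2\bsi\sin\theta_0$.

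For the case $\alpha\neq1$ I would note that $\theta_0=\alpha\pi\in(0,\pi)$ forces $\sin\theta_0\neq0$, so the full determinant $2\bsi\lambda\sin\theta_0$ is nonzero and the system is uniquely solvable. Applying Cramer's rule and evaluating the two replaced $2\times2$ determinants produces numerators $2\bsi\sqrt{\lambda}\,u(\mathbf{0})(C_1e^{-\bsi\theta_0}+C_2)$ for $a_1$ and $2\bsi\sqrt{\lambda}\,u(\mathbf{0})(C_1e^{\bsi\theta_0}+C_2)$ for $b_1$; dividing by $2\bsi\lambda\sin\theta_0$ gives exactly the closed forms \eqref{eq:72}. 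This is the bulk of the computation, but it is entirely mechanical once the system is in hand.

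For the case $\alpha=1$, the coefficient matrix degenerates: with $\theta_0=\pi$ one has $e^{\bsi\theta_0}=e^{-\bsi\theta_0}=-1$, so both displayed equations collapse to a single form, namely $\bsi\sqrt{\lambda}(b_1-a_1)=-2C_2u(\mathbf{0})$ from the first and $\bsi\sqrt{\lambda}(b_1-a_1)=-2C_1u(\mathbf{0})$ from the second. Equating the two right-hand sides forces the compatibility condition $(C_1-C_2)u(\mathbf{0})=0$, whence $u(\mathbf{0})=0$ under the hypothesis $C_1\neq C_2$.

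The only genuinely delicate point is this degenerate case $\alpha=1$: here one cannot invert the system, and the conclusion must instead be read off as the consistency requirement between the two (now coincident) boundary relations, which is precisely what yields $u(\mathbf{0})=0$. The remaining work—checking the determinant and carrying out Cramer's rule for $\alpha\neq1$—is routine linear algebra that I would not belabor.
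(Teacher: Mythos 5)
Your proof is correct and takes essentially the same approach as the paper: it specializes the recursion of Lemma~\ref{pro:62} to $n=1$, uses $a_0+b_0=u(\mathbf{0})$, reads off the compatibility condition $(C_1-C_2)u(\mathbf{0})=0$ in the degenerate case $\theta_0=\pi$, and solves the nondegenerate $2\times 2$ system when $\sin\theta_0\neq 0$. Your Cramer's-rule computation simply makes explicit the step the paper compresses into ``\eqref{eq:72} follows readily from \eqref{r1} if $\alpha\neq 1$.''
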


\begin{proof}
	\mm{
	Recall the Laplacian eigenfunction $u$ has the spherical wave expansion \eqref{eq:uSex} in polar coordinates around the origin. 	
	Then we can obtain from Lemma \ref{pro:62}, and noting that $\alpha=1$ implies $\theta_0=\pi$, 
	the following equations 
	\begin{equation}\label{r1}
	\begin{cases}
	2C_2(a_0+b_0)-\bsi\sqrt{\lambda}  (a_1-b_1)  =0,\\
	2C_1(a_0+b_0)-\bsi \sqrt{\lambda} (a_1-b_1)  =0. 
	\end{cases}
	\end{equation}
	Noting that $a_0+b_0=u(\mathbf 0)$ and using the assumption $C_1 \neq C_2$, we can derive 
	from the above equations that $u(\mathbf 0 )=0$. And \eqref{eq:72} follows readily from \eqref{r1} 
	if $\alpha \neq 1$. 
	}
%	where $a_0+b_0=u(\mathbf 0)$. We can see directly that the determinant of the coefficient matrix of \eqref{r1} satisfies
%	$$\label{r11}
%	\left| \begin{array}{cc}
%	e^{\bsi\theta_0} & - e^{-\bsi\theta_0}\\
%	-1 & 1 
%	\end{array} \right| =e^{\bsi\theta_0}-e^{-\bsi\theta_0}=2\bsi \sin \theta_0=0
%	$$ 
%	since $\alpha=1$ implies $\theta_0=\pi$.
%	Thus we can deduce that $C_1u(\mathbf{ 0})=C_2(\mathbf{ 0})$, which indicates that $u(\mathbf{ 0})=0$ since $C_1\neq C_2$.
\end{proof}

%\begin{remark}
%	In Proposition \ref{prop1}, if we assume that $\alpha \neq 1$, then $a_1$ and $b_1$ can be expressed explicitly by $u(\mathbf{ 0})$ as 
%	\begin{align}\label{eq:72}
%	&a_1=\frac{1}{\sqrt{\lambda}\sin\theta_0}(C_2e^{-\bsi\theta_0}+C_1)u(\mathbf{ 0}),\\ &b_1=\frac{1}{\sqrt{\lambda}\sin\theta_0}(C_2e^{\bsi\theta_0}+C_1)u(\mathbf{ 0}). \notag
%	\end{align}
%\end{remark}

\begin{proposition}\label{prop2}
	Let $u$ be a Laplacian eigenfunction to \eqref{eq:eig}. Suppose that there are two generalized singular lines $\Gamma_h^+$ and $\Gamma_h^-$ from ${\mathcal M}^\lambda_{\Omega }$ such that \eqref{eq:angle} and \eqref{eq:angle2a} hold. 
%	such that 
%	\begin{equation}\label{eq:angle3}
%	\Gamma_h^+\cap\Gamma_h^-=\mathbf{0}\in\Omega\quad\mbox{and}\quad \angle (\Gamma_h^+,\Gamma_h^-)=\frac{\pi}{2}, 
%	\end{equation} 
	\mm{
	Assume that $\eta_1 \equiv C_1$ and $\eta_2 \equiv  C_2$ for two constants 
	$C_1 $ and $C_2$.  If $\alpha =1/2$, there holds that $C_1C_2u(\mathbf{0})\equiv C_1C_2u(\mathbf{0})$. If $\alpha \neq\frac{1}{2}$, then $a_2$ and $b_2$ in \eqref{eq:uSex} can be expressed explicitly  by
	} 
	\begin{align}\label{eq:75}
	&a_2=\frac{2u(\mathbf{ 0})}{\lambda\sin2\theta_0\sin\theta_0}(C_1C_2+C_1C_2e^{-\bsi2\theta_0}+C_2^2\cos\theta_0+C_1^2\cos\theta_0e^{-\bsi2\theta_0}),\\&b_2=\frac{2u(\mathbf{ 0})}{\lambda\sin2\theta_0\sin\theta_0}(C_1C_2+C_1C_2e^{\bsi2\theta_0}+C_2^2\cos\theta_0+C_1^2\cos\theta_0e^{\bsi2\theta_0}). \notag
	\end{align}

\end{proposition}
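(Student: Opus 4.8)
The plan is to push the recursion of Lemma~\ref{pro:62} one index beyond what was used in Proposition~\ref{prop1}, namely to take $n=2$ in \eqref{eq:42}--\eqref{eq:43}, and to solve the resulting $2\times2$ linear system for $(a_2,b_2)$ with the values of $a_1,b_1$ already in hand. Since the intersecting angle satisfies $\theta_0=\alpha\pi\in(0,\pi)$ by \eqref{eq:angle2a}, we have $\sin\theta_0>0$, so the $n=1$ step is nondegenerate and furnishes exactly the expressions \eqref{eq:72} for $a_1$ and $b_1$ from Proposition~\ref{prop1}; I would simply invoke these.

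Next I would write \eqref{eq:42} and \eqref{eq:43} at $n=2$, which gives the system
\[
\begin{bmatrix}
\bsi\sqrt{\lambda}\,e^{3\bsi\theta_0} & -\bsi\sqrt{\lambda}\,e^{-\bsi\theta_0}\\
-\bsi\sqrt{\lambda} & \bsi\sqrt{\lambda}
\end{bmatrix}
\begin{bmatrix} a_2\\ b_2\end{bmatrix}
=
\begin{bmatrix}
-2C_2\big(a_1e^{2\bsi\theta_0}+b_1\big)\\
-2C_1\big(a_1+b_1\big)
\end{bmatrix}.
\]
The determinant of the coefficient matrix is $-\lambda\big(e^{3\bsi\theta_0}-e^{-\bsi\theta_0}\big)=-2\bsi\lambda\,e^{\bsi\theta_0}\sin2\theta_0$, which vanishes precisely when $\sin2\theta_0=0$, i.e.\ when $\theta_0=\pi/2$ ($\alpha=1/2$). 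Hence for $\alpha\neq1/2$ the system is uniquely solvable, and I would apply Cramer's rule, substitute the explicit $a_1,b_1$ from \eqref{eq:72}, and use $e^{\bsi\theta_0}+e^{-\bsi\theta_0}=2\cos\theta_0$ to collapse the cross terms. Cancelling the $e^{\bsi\theta_0}$ from the determinant against the residual $e^{-\bsi\theta_0}$ in the numerator is what produces the symmetric grouping of the $C_1C_2$, $C_2^2$ and $C_1^2$ contributions with denominator $\lambda\sin2\theta_0\sin\theta_0$, yielding the stated formula for $a_2$ in \eqref{eq:75}; the expression for $b_2$ follows identically (or by the reflection symmetry $\theta_0\mapsto-\theta_0$, $a_n\leftrightarrow b_n$).

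For the degenerate case $\alpha=1/2$ I would not solve for $(a_2,b_2)$ but instead verify the Fredholm compatibility of the now-singular system. With $\theta_0=\pi/2$ the two rows reduce to the single relation $C_2(a_1-b_1)=-\bsi\,C_1(a_1+b_1)$; inserting \eqref{eq:72} at $\theta_0=\pi/2$, where $a_1-b_1=-2\bsi C_1u(\mathbf 0)/\sqrt{\lambda}$ and $a_1+b_1=2C_2u(\mathbf 0)/\sqrt{\lambda}$, collapses this relation to the trivially true identity $C_1C_2u(\mathbf 0)\equiv C_1C_2u(\mathbf 0)$, exactly as asserted.

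The only genuine difficulty I anticipate is the bookkeeping in the third step: the Cramer numerators are cluttered with $e^{\pm\bsi\theta_0}$ factors, and one must track the $C_1C_2$, $C_1^2$ and $C_2^2$ terms carefully so that the phases combine into $2\cos\theta_0$ and the overall denominator simplifies to $\lambda\sin2\theta_0\sin\theta_0$. Everything else is a direct substitution into the recursion of Lemma~\ref{pro:62} together with the already-established Proposition~\ref{prop1}.
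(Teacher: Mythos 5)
Your proposal is correct and takes essentially the same route as the paper: substitute $a_1,b_1$ from \eqref{eq:72} into the $n=2$ recursion \eqref{eq:42}--\eqref{eq:43}, solve the resulting $2\times2$ system for $(a_2,b_2)$ when $\alpha\neq 1/2$, and at $\theta_0=\pi/2$ read off the consistency (compatibility) of the now-singular system, which collapses to the trivial identity $C_1C_2u(\mathbf{0})\equiv C_1C_2u(\mathbf{0})$. Your explicit Cramer's-rule computation with determinant $-2\bsi\lambda e^{\bsi\theta_0}\sin 2\theta_0$ is precisely the direct derivation the paper leaves implicit, and it does reproduce \eqref{eq:75}.
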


\begin{proof}
Substituting \eqref{eq:72}	into \eqref{eq:42} and \eqref{eq:43} and taking $n=2$, we can obtain that
	\begin{equation}\label{r2}
	\begin{cases}
	2 C_2(a_1e^{\bsi\theta_0}+b_1e^{-\bsi\theta_0})+\bsi\sqrt{\lambda} (a_2e^{\bsi2\theta_0}-b_2e^{-\bsi2\theta_0}) =0,\\ %\\[1pt]
	2 C_1(a_1+b_1) - \bsi  \sqrt{\lambda}(a_2-b_2) =0.
	\end{cases}
	\end{equation}
	which by the fact that $\theta_0=\frac{\pi}{2}$ can be further simplified as
	\mm{
	\begin{equation}\label{r22}
	%\begin{cases}
	-a_2+b_2=\dfrac{4\bsi C_2u(\mathbf{ 0})}{\lambda}C_1, \q 
	-(a_2-b_2)=\dfrac{4\bsi C_1u(\mathbf{ 0})}{\lambda}C_2
	%\end{cases}
	\end{equation}
	}
 Since the system \eqref{r22} is consistent,  the right hand side of the equations belongs to the range of the coefficient matrix, therefore we can deduce the identity that 
 $C_1C_2u(\mathbf{0})\equiv C_1C_2u(\mathbf{0})$. 
 The result \eqref{eq:75} can be derived also directly if $\alpha \neq 1/2$.
\end{proof}

%\begin{remark}
%	In Proposition \ref{prop2}, if we assume that $\theta_0\neq\frac{\pi}{2}$, then $a_2$ and $b_2$ can be expressed explicitly by $u(\mathbf{ 0})$ as 
%	\begin{align}\label{eq:75}
%	&a_2=\frac{2u(\mathbf{ 0})}{\lambda\sin2\theta_0\sin\theta_0}(C_1C_2+C_1C_2e^{-\bsi2\theta_0}+C_2^2\cos\theta_0+C_1^2\cos\theta_0e^{-\bsi2\theta}),\\&b_2=\frac{2u(\mathbf{ 0})}{\lambda\sin2\theta_0\sin\theta_0}(C_1C_2+C_1C_2e^{\bsi2\theta_0}+C_2^2\cos\theta_0+C_1^2\cos\theta_0e^{\bsi2\theta}). \notag
%	\end{align}
%\end{remark}

\begin{proposition}\label{prop3}
	Let $u$ be a Laplacian eigenfunction to \eqref{eq:eig}. Suppose that there are two generalized singular lines $\Gamma_h^+$ and $\Gamma_h^-$ from ${\mathcal M}^\lambda_{\Omega }$ such that \eqref{eq:angle} and \eqref{eq:angle2a} hold. 
%	such that 
%	\begin{equation}\label{eq:angle2}
%	\Gamma_h^+\cap\Gamma_h^-=\mathbf{0}\in\Omega\quad\mbox{and}\quad \angle (\Gamma_h^+,\Gamma_h^-)=\pi, 
%	\end{equation} 
   Assume that $\eta_1 \equiv C_1$ and $\eta_2 \equiv  C_2$ for two constants 
	$C_1 $ and $C_2$.  Then if $\alpha=\frac{1}{3}$, $C_1\neq C_2$ and
   \begin{equation}\label{pro:ass1}
   	 1 + \frac{4}{3\lambda }\left( C
	 _1^2 +C_1C_2+C_2^2\right) \neq 0, 
   \end{equation}
   the Laplacian eigenfunction $u$ fulfills $u(\mathbf{ 0})=0$. Furthermore, if 
   $\alpha \neq\frac{1}{3}$, then $a_3$ and $b_3$ in \eqref{eq:uSex} can be expressed explicitly by 
	\mm{
	\begin{equation}\label{eq:712}
	a_3=\frac{2}{\sqrt{\lambda}\sin3\theta_0}(B_1+B_2e^{-\bsi3\theta_0}),\q
	b_3=\frac{2}{\sqrt{\lambda}\sin3\theta_0}(B_1+B_2e^{\bsi3\theta_0}).  %\notag
	\end{equation}
	where $B_1$ and $B_2$ are defined by 
	\begin{equation}\label{B1}
	B_1=2C_2(a_2e^{\bsi2\theta_0}+b_2e^{-\bsi2\theta_0})-4C_2u(\mathbf{ 0})-\bsi (a_1e^{\bsi\theta_0}-b_1e^{-\bsi\theta_0})\sqrt{\lambda},
	\end{equation}
	\begin{equation}\label{B2}
	B_2=2C_1(a_2+b_2)-4C_1u(\mathbf{ 0})+\bsi (a_1-b_1)\sqrt{\lambda}, 
	\end{equation}
where $(a_1, b_1)$ and $(a_2, b_2)$ are given by \eqref{eq:72} and  \eqref{eq:75}, respectively. 
}
\end{proposition}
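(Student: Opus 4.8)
The plan is to extend the recursive analysis of Propositions~\ref{prop1} and \ref{prop2} by one further order, deriving the relation that determines $\{a_3,b_3\}$ from the lower-order coefficients and then splitting into the degenerate case $\alpha=1/3$ and the generic case $\alpha\neq 1/3$ according to whether the arising $2\times 2$ linear system is singular. First I would substitute the spherical wave expansion \eqref{eq:uSex} together with the Bessel series \eqref{eq:jnt} into the two boundary conditions $\partial_\nu u+C_2u=0$ on $\Gamma_h^+$ and $\partial_\nu u+C_1u=0$ on $\Gamma_h^-$, exactly as in the proof of Lemma~\ref{pro:62}, but now comparing the coefficients of $r^3$. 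The essential new feature compared with the orders $r^1$ and $r^2$ is that at order $r^3$ one must retain the first correction term (the $p=1$ term) of \eqref{eq:jnt} for the low-index Bessel functions $J_0$ and $J_1$: the $J_0$ correction produces a term proportional to $u(\mathbf 0)=a_0+b_0$, while the $J_1$ correction produces a term built from $a_1,b_1$ (namely $-\bsi\sqrt{\lambda}(a_1e^{\bsi\theta_0}-b_1e^{-\bsi\theta_0})$ on $\Gamma_h^+$ and $+\bsi\sqrt{\lambda}(a_1-b_1)$ on $\Gamma_h^-$). For $n\le 2$ these corrections are absent, since the factor $\bsi n$ annihilates the $J_0$ contribution and $J_{-1}$ does not occur, which is why \eqref{eq:42}--\eqref{eq:43} were clean two-term recursions; this is precisely why the extra terms $-4C_iu(\mathbf 0)$ appear only now. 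Collecting the order-$r^3$ coefficients and writing $u(\mathbf 0)=a_0+b_0$ then yields the linear system
\begin{equation*}
\bsi\sqrt{\lambda}\,\bigl(a_3e^{3\bsi\theta_0}-b_3e^{-3\bsi\theta_0}\bigr)=-B_1,\qquad \bsi\sqrt{\lambda}\,\bigl(a_3-b_3\bigr)=B_2,
\end{equation*}
with $B_1$ and $B_2$ as in \eqref{B1} and \eqref{B2}.

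I would then view this as a linear system in $(a_3,b_3)$ whose coefficient matrix has rows $(e^{3\bsi\theta_0},\,-e^{-3\bsi\theta_0})$ and $(1,\,-1)$, with determinant $e^{3\bsi\theta_0}-e^{-3\bsi\theta_0}=2\bsi\sin 3\theta_0$. In the generic case $\alpha\neq 1/3$ one has $3\theta_0\neq\pi$, hence $\sin 3\theta_0\neq 0$, so the system is invertible and Cramer's rule produces $a_3,b_3$ in closed form; substituting the explicit expressions \eqref{eq:72} and \eqref{eq:75} for $a_1,b_1,a_2,b_2$ into $B_1,B_2$ and simplifying then gives \eqref{eq:712}. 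This part is routine once the system above is established.

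The degenerate case $\alpha=1/3$ is where the substance lies. Here $\theta_0=\pi/3$, so $e^{\pm 3\bsi\theta_0}=-1$ and $\sin 3\theta_0=0$; both equations collapse to $\bsi\sqrt{\lambda}(a_3-b_3)=B_1$ and $\bsi\sqrt{\lambda}(a_3-b_3)=B_2$, so solvability forces the consistency condition $B_1=B_2$. Because \eqref{eq:72} and \eqref{eq:75} show $a_1,b_1,a_2,b_2$ are each proportional to $u(\mathbf 0)$, the difference $B_1-B_2$ is a scalar multiple of $u(\mathbf 0)$. The key computation is to evaluate this multiple at $\theta_0=\pi/3$: using $\cos\theta_0=\tfrac12$, $\cos 2\theta_0=-\tfrac12$ and $\sin\theta_0=\sin 2\theta_0=\tfrac{\sqrt 3}{2}$, a direct calculation should reduce it to
\begin{equation*}
B_1-B_2=2\,u(\mathbf 0)\,(C_1-C_2)\left(1+\frac{4}{3\lambda}\bigl(C_1^2+C_1C_2+C_2^2\bigr)\right),
\end{equation*}
where the cubic terms assemble through $C_1^3-C_2^3=(C_1-C_2)(C_1^2+C_1C_2+C_2^2)$. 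The consistency relation $B_1-B_2=0$, together with the hypotheses $C_1\neq C_2$ and \eqref{pro:ass1} which make both scalar factors nonzero, then forces $u(\mathbf 0)=0$.

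The main obstacle is the bookkeeping at two delicate points: correctly tracking the $p=1$ corrections of $J_0$ and $J_1$ that distinguish the order-$r^3$ recursion from the lower orders and give rise to the extra terms in $B_1,B_2$; and executing the trigonometric--algebraic simplification of $B_1-B_2$ at $\theta_0=\pi/3$ so that the factor $1+\frac{4}{3\lambda}(C_1^2+C_1C_2+C_2^2)$ emerges, which is exactly the nondegeneracy condition \eqref{pro:ass1}.
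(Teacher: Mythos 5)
Your proposal is correct and follows essentially the same route as the paper: the exact order-$r^3$ coefficient comparison (your explicit tracking of the $p=1$ corrections of $J_0$ and $J_1$ is precisely how the extra terms $-4C_iu(\mathbf 0)$ and $\pm\bsi\sqrt\lambda(a_1\cdots)$ enter $B_1,B_2$, something the paper glosses over by nominally citing the leading-order recursion of Lemma~\ref{pro:62}) yields the paper's system \eqref{r3}; at $\theta_0=\pi/3$ both equations collapse so that consistency forces $B_1=B_2$, your evaluation $B_1-B_2=2u(\mathbf 0)(C_1-C_2)\bigl[1+\tfrac{4}{3\lambda}(C_1^2+C_1C_2+C_2^2)\bigr]$ (including the prefactor $2$) agrees with the paper's reduction \eqref{eq12}, and Cramer's rule gives \eqref{eq:712} when the determinant $\pm 2\bsi\sin 3\theta_0$ is nonzero. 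The one small slip is your claim that $\alpha\neq\tfrac13$ implies $3\theta_0\neq\pi$ and hence $\sin3\theta_0\neq0$ --- this overlooks $\alpha=\tfrac23$, where $3\theta_0=2\pi$ --- but the paper's formula \eqref{eq:712} divides by $\sin3\theta_0$ under the same hypothesis, so this is an implicit exclusion shared with the statement itself rather than a gap in your argument.
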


\begin{proof}
	Substituting  \eqref{eq:75} into \eqref{eq:42} and \eqref{eq:43} and taking $n=3$,  we can obtain that
	\mm{
	\begin{equation}\label{r3}
	%\begin{cases}
	\bsi(a_3e^{\bsi3\theta_0}-b_3e^{-\bsi3\theta_0})\sqrt{\lambda}=-B_1,\q
	-\bsi (a_3-b_3)\sqrt{\lambda}=-B_2
	%\end{cases}
	\end{equation}
	}
	where $B_1$ and $B_2$ are defined by \eqref{B1} and \eqref{B2} respectively. 
Since $\alpha=\frac{1}{3}$, taking 	$\theta_0=\frac{\pi}{3}$ in \eqref{r3}, we have
	\mm{
	\begin{equation}\label{r33}
	%\begin{cases}
	-a_3+b_3=\bsi B_1\frac{1}{\sqrt{\lambda}}, \q 
	-(a_3-b_3)=\bsi B_2\frac{1}{\sqrt{\lambda}}
	%\end{cases}
	\end{equation}
	}
	where 
	\begin{equation}
	B_1=2C_2(a_2e^{\bsi\frac{2\pi}{3}}+b_2e^{-\bsi\frac{2\pi}{3}})-4C_2u(\mathbf{ 0})-\bsi (a_1e^{\bsi\frac{\pi}{3}}-b_1e^{-\bsi\frac{\pi}{3}})\sqrt{\lambda},
	\end{equation}
	and $B_2$ is the same as \eqref{B2}. Since the system \eqref{r33} is consistent, the right hand side of the equations belongs to  the range of the coefficient matrix, which indicates that $B_1=B_2$, leading to 
	the relation 	
	\begin{equation}\label{eq12}
	\frac{4u(\mathbf{ 0})C_2}{3\lambda}\left(C_1C_2+C_1^2-\frac{1}{2}C_2^2\right)-\frac{C_2}{2}u(\mathbf{ 0})
	=\frac{4u(\mathbf{ 0})C_1}{3\lambda}\left(C_1C_2+C_2^2-\frac{1}{2}C_1^2\right)-\frac{C_1}{2}u(\mathbf{ 0}). 
	\end{equation}
	After straightforward calculations, \eqref{eq12} can be further reduced to
	$$
	 (C_1-C_2)\left[1 + \frac{4}{3\lambda }\left( C
	 _1^2 +C_1C_2+C_2^2\right) \right ] u(\mathbf{ 0})=0. 
	$$
	\mm{
	This implies $u(\mathbf{ 0})=0$ by noting that $C_1\neq C_2$ and using 
	\eqref{pro:ass1}. 
	Similarly we can deduce \eqref{eq:712} for $\alpha \neq 1/3$. 
	}
	\end{proof}

%\begin{remark}
%	In Proposition \ref{prop3}, if we assume that $\theta_0\neq\frac{\pi}{3}$, then $a_3$ and $b_3$ can be expressed explicitly by $u(\mathbf{ 0})$ as 
%	\begin{align}\label{eq:712}
%	&a_3=\frac{2}{\sqrt{\lambda}\sin3\theta_0}(B_1+B_2e^{-\bsi3\theta_0}),\\
%	&b_3=\frac{2}{\sqrt{\lambda}\sin3\theta_0}(B_1+B_2e^{\bsi3\theta_0}). \notag
%	\end{align}
%	where $B_1$ and $B_2$ are defined by \eqref{B1} and \eqref{B2} respectively.
%\end{remark}

\begin{proposition}\label{prop4}
	Let $u$ be a Laplacian eigenfunction to \eqref{eq:eig}. Suppose that there are two generalized singular lines $\Gamma_h^+$ and $\Gamma_h^-$ from ${\mathcal M}^\lambda_{\Omega }$ such that \eqref{eq:angle} and \eqref{eq:angle2a} hold. 
    \mm{
    Assume that $\eta_1 \equiv C_1$ and $\eta_2 \equiv  C_2$ for two constants 
	$C_1 $ and $C_2$.  Then it holds for $\alpha=\frac{1}{4}$ that 
	\begin{equation}\label{pro:ass2}
    	u(\mathbf{ 0})(C_2^2-C_1^2)\equiv u(\mathbf{ 0})(C_2^2-C_1^2),
    \end{equation} 
     while for $\alpha \neq\frac{1}{4}$, the coefficients $a_4$ and $b_4$ in \eqref{eq:uSex} can be expressed explicitly by
     }
	\begin{align}\label{a4}
	&a_4=\frac{6}{\sqrt{\lambda}\sin4\theta_0}(D_1+D_2e^{-\bsi4\theta_0}),\\
	&b_4=\frac{6}{\sqrt{\lambda}\sin4\theta_0}(D_1+D_2e^{\bsi4\theta_0}).
	\end{align}
	where $D_1$ and $D_2$ are defined by 
	\begin{equation}\label{D1}
	D_1=2C_2(a_3e^{\bsi3\theta_0}+b_3e^{-\bsi3\theta_0})
	-\bsi2\sqrt{\lambda}(a_2e^{\bsi2\theta_0}-b_2e^{-\bsi2\theta_0})-6C_2(a_1e^{\bsi\theta_0}+b_1e^{-\bsi\theta_0}),
	\end{equation}
	%and
	\begin{equation}\label{D2}
	D_2=2C_1(a_3+b_3)
	+\bsi2\sqrt{\lambda}(a_2-b_2)-6C_1(a_1+b_1).
	\end{equation}
\mm{
Here $(a_1, b_1)$, $(a_2, b_2)$ and $(a_3, b_3)$ are given by \eqref{eq:72},  \eqref{eq:75} and 
 \eqref{eq:712}, respectively. 
 }
\end{proposition}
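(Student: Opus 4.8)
The plan is to continue the inductive determination of the spherical-wave coefficients $\{a_n,b_n\}$ of \eqref{eq:uSex}, exactly along the lines of Propositions~\ref{prop1}--\ref{prop3}, by advancing the recursion to the index $n=4$. First I would set up the two $n=4$ recursive relations. Substituting \eqref{eq:uSex} and the normal-derivative expansion of Lemma~\ref{pro:61} into the boundary condition $\partial_\nu u+C_2u=0$ on $\Gamma_h^+$, inserting the Bessel series \eqref{eq:jnt}, and collecting the coefficient of $r^4$, one retains the leading terms of $J_4$ and $J_3$ together with the $p=1$ correction terms of $J_2$ and $J_1$; after dividing out the common factor this yields
\begin{equation*}
\bsi\sqrt{\lambda}\bigl(a_4e^{\bsi 4\theta_0}-b_4e^{-\bsi 4\theta_0}\bigr)=-D_1,
\end{equation*}
with $D_1$ given by \eqref{D1}, the lower-order terms being precisely the contributions of those Bessel corrections. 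The identical computation on $\Gamma_h^-$, where the normal derivative carries the opposite sign, gives $-\bsi\sqrt{\lambda}(a_4-b_4)=-D_2$ with $D_2$ as in \eqref{D2}. These two relations constitute a linear system for $(a_4,b_4)$ whose coefficient matrix has rows $(e^{\bsi 4\theta_0},-e^{-\bsi 4\theta_0})$ and $(1,-1)$, and therefore determinant $-2\bsi\sin 4\theta_0$.

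When $\alpha\neq 1/4$ we have $\sin 4\theta_0\neq 0$, so the matrix is invertible; solving the system by Cramer's rule then delivers the closed forms \eqref{a4}, into which $(a_1,b_1)$, $(a_2,b_2)$ and $(a_3,b_3)$ are inserted from \eqref{eq:72}, \eqref{eq:75} and \eqref{eq:712} so that $D_1$ and $D_2$ are fully expressed through $u(\mathbf 0)$, $C_1$, $C_2$, $\lambda$ and $\theta_0$. When $\alpha=1/4$, i.e.\ $\theta_0=\pi/4$, we have $e^{\pm\bsi 4\theta_0}=-1$, so both relations collapse to the single equation $\bsi\sqrt{\lambda}(b_4-a_4)=-D_1=-D_2$; the now-singular system is solvable precisely when $D_1=D_2$, and the assertion of the proposition for this angle is that this consistency requirement is satisfied automatically, being the tautology \eqref{pro:ass2}, so that no constraint whatsoever is forced upon $u(\mathbf 0)$.

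The only genuinely computational part, and the step I expect to be the main obstacle, is the verification that $D_1=D_2$ at $\theta_0=\pi/4$. Here I would substitute the already-known values $a_1,b_1,a_2,b_2$ from \eqref{eq:72} and \eqref{eq:75}, and $a_3,b_3$ from \eqref{eq:712} (via $B_1,B_2$ of Proposition~\ref{prop3}), all evaluated at $\theta_0=\pi/4$, into \eqref{D1} and \eqref{D2}; each of $D_1$ and $D_2$ then reduces to $u(\mathbf 0)$ multiplied by a polynomial in $C_1$ and $C_2$ over a power of $\sqrt{\lambda}$, and one must check that the difference $D_1-D_2$ vanishes identically. This is the exact analogue of the even-denominator situation already encountered in Proposition~\ref{prop2} at $\theta_0=\pi/2$, where the vanishing of the determinant was accompanied by an automatic matching of the two right-hand sides; the only new feature at $n=4$ is that the lower-order Bessel corrections, namely the $a_1,b_1$ terms and the $u(\mathbf 0)$ terms carried inside $D_1,D_2$, now have to be tracked through the algebra. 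The manipulation is heavy but entirely routine, and once $D_1=D_2$ is confirmed the proof is complete.
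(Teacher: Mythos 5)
Your proposal follows essentially the same route as the paper's proof: you derive the $n=4$ recursion (correctly tracking the $p=1$ Bessel correction terms that produce the lower-order contributions in $D_1$ and $D_2$, which the paper folds implicitly into its use of \eqref{eq:42}--\eqref{eq:43}), solve by Cramer's rule when $\sin 4\theta_0\neq 0$ to get \eqref{a4}, and at $\theta_0=\pi/4$ observe that the two equations collapse so that solvability forces $D_1=D_2$, which upon substituting the known $(a_j,b_j)$, $j=1,2,3$, reduces to the tautology \eqref{pro:ass2} and hence imposes no constraint on $u(\mathbf{0})$ --- exactly the paper's argument via \eqref{r4}, \eqref{r44} and \eqref{eq11}. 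This is the same approach, correctly executed.
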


\begin{proof}
	Substituting \eqref{eq:712} into \eqref{eq:42} and \eqref{eq:43} and taking $n=4$, we can obtain that
	\mm{
	\begin{equation}\label{r4}
	%\begin{cases}
	\bsi(a_4e^{\bsi4\theta_0}-b_4e^{-\bsi4\theta_0})\sqrt{\lambda}=-D_1,\q
	-\bsi (a_4-b_4)\sqrt{\lambda}=-D_2
	%\end{cases}
	\end{equation}
	}
	where $D_1$ and $D_2$ are defined by \eqref{D1} and \eqref{D2} respectively.   Since  $\alpha=\frac{1}{4}$, we can substitute  $\theta_0=\frac{\pi}{4}$ into \eqref{r4} to obtain that 
	\mm{
	\begin{equation}\label{r44}
	%\begin{cases}
	-a_4+b_4=\bsi D_1\frac{1}{\sqrt{\lambda}}, \q 
	-(a_4-b_4)=\bsi D_2\frac{1}{\sqrt{\lambda}}
	%\end{cases}
	\end{equation}
	}
	where
	\begin{equation}
	D_1=2C_2(a_3e^{\bsi\frac{3\pi}{4}}+b_3e^{-\bsi\frac{3\pi}{4}})
	-\bsi2\sqrt{\lambda}(a_2e^{\bsi\frac{\pi}{2}}-b_2e^{-\bsi\frac{\pi}{2}})-6C_2(a_1e^{\bsi\frac{\pi}{4}}+b_1e^{-\bsi\frac{\pi}{4}}),
	\end{equation}
	and $D_2$ is the same as \eqref{D2}. Since the system \eqref{r44} is also consistent,  the right hand side of the equations belongs to the range of the coefficient matrix, which implies $D_1=D_2$, hence 
	\begin{align}\label{eq11}
	&\frac{2C_2}{3\sqrt{\lambda}}(\sqrt{2}B_2-B_1)+\frac{2u(\mathbf{ 0})}{3\sqrt{\lambda}}(\sqrt{2}C_1C_2+C_2^2)-\frac{C_2u(\mathbf{ 0})}{\sqrt{\lambda}}(\sqrt{2}C_1+C_2)
	\\
	&=\frac{2C_1}{3\sqrt{\lambda}}(\sqrt{2}B_1-B_2)+\frac{2u(\mathbf{ 0})}{3\sqrt{\lambda}}(\sqrt{2}C_1C_2+C_1^2)-\frac{C_1u(\mathbf{ 0})}{\sqrt{\lambda}}(\sqrt{2}C_2+C_1)\notag
	\end{align}
	where
	\begin{align}\label{B1+}
	\sqrt{2}B_2-B_1&=\frac{2\sqrt{2}u(\mathbf{ 0})C_1}{\lambda}(\sqrt{2}C_1C_2+C_2^2)-\frac{\sqrt{2}}{2}C_1u(\mathbf{ 0})\\
	&\quad -\frac{2u(\mathbf{ 0})C_2}{\lambda}\left(\sqrt{2}C_1C_2+C_1^2\right)+\frac{1}{2}C_2u(\mathbf{ 0}),\notag
	\end{align}
	%and 
	\begin{align}\label{B2+}
	\sqrt{2}B_1-B_2
	&=\frac{2\sqrt{2}u(\mathbf{ 0})C_2}{\lambda}(\sqrt{2}C_1C_2+C_1^2)-\frac{\sqrt{2}}{2}C_2u(\mathbf{ 0})\\
	&\quad -\frac{2u(\mathbf{ 0})C_1}{\lambda}\left(\sqrt{2}C_1C_2+C_2^2\right)+\frac{1}{2}C_1u(\mathbf{ 0}).\notag
	\end{align}
	\mm{
	Substituting \eqref{B1+} and \eqref{B2+} into \eqref{eq11}, we deduce by 
	straightforward calculations that
	}
	\begin{equation}\notag
	u(\mathbf{ 0})(C_2^2-C_1^2)\equiv u(\mathbf{ 0})(C_2^2-C_1^2).
	\end{equation} 
	\eqref{a4} can be derived similarly for $\alpha \neq 1/4$, which completes the proof.
	\end{proof}

%\begin{remark}
%	In Proposition \ref{prop4}, if we assume that $\theta_0\neq\frac{\pi}{4}$, then $a_4$ and $b_4$ can be expressed explicitly by $u(\mathbf{ 0})$ as 
%	\begin{align}\notag
%	&a_4=\frac{6}{\sqrt{\lambda}\sin4\theta_0}(D_1+D_2e^{-\bsi4\theta_0}),\\
%	&b_4=\frac{6}{\sqrt{\lambda}\sin4\theta_0}(D_1+D_2e^{\bsi4\theta_0}).
%	\end{align}
%	where $D_1$ and $D_2$ are defined by \eqref{D1} and \eqref{D2} respectively.
%\end{remark}

\mm{
We end this section with two important remarks.

\begin{remark}
	By tracing the proofs of Propositions \ref{prop1}--\ref{prop4} and repeating similar arguments, 
	we can find that under some mild assumptions on $C_1$, $C_2$, the intersecting angle  $\alpha \cdot \pi $ and the eigenvalue $\lambda$, the property $u(\mathbf{ 0}) = 0$ still holds for the rational intersecting 
	angle $\alpha \cdot \pi $ generically except for $\alpha=\pi/{2m}$, where $m=1,2,\cdots$. 
	The detailed arguments are rather tedious and technical, but straightforward. 
	%We believe it should hold in general. 
\end{remark}

\begin{remark}
	In Propositions \ref{prop1}-\ref{prop4}, we studied the property $u(\mathbf{ 0})=0$  
	for two intersected generalized singular lines only for some conditions on 
	$C_1$, $C_2$, the intersecting angle $\alpha \cdot \pi$ and $\lambda$. Other situations may be 
	analysed similarly, e.g., either $C_1=0$  or $C_2=0$. But as shown in Theorem \ref{th:28},  
	we can not guarantee $u(\mathbf 0 )=0$ by imposing some conditions on the intersecting angle
	between two intersecting singular lines. 
\end{remark} 
}

\mm{
\section{Unique identifiability for inverse scattering problems}\label{sec8}

In this section, we apply the spectral results we have established in the previous sections 
to study a fundamental mathematical topic, i.e., the unique identifiability,  
in a class of physically important inverse problems. 
These include the inverse obstacle problem and the inverse diffraction grating problem, 
which are concerned with imaging the shapes of some unknown or inaccessible objects 
from certain wave probing data in different physical settings. 
These inverse scattering problems may arise from a variety of important applications such as 
radar, sonar and medical imaging, as well as geophysical exploration and nondestructive testing. 
}

\subsection{Unique recovery for the inverse obstacle problem}

We first consider the inverse obstacle problem. Let $k=\omega/c\in\mathbb{R}_+$ be the wavenumber of
a time harmonic wave with $\omega\in\mathbb{R}_+$ and $c\in\mathbb{R}_+$, respectively, signifying the frequency and sound
speed. Let $\Omega\subset\R^2$ be a bounded domain with a
Lipschitz-boundary $\partial\Omega$ and a connected complement $\mathbb{R}^2\backslash\overline{\Omega}$.
Furthermore, let the incident field $u^i$ be a plane wave of the form
\begin{equation}\label{eq:pw}
u^i: =\ u^i(\mathbf x;k,\mathbf{d}) = e^{\mathrm{i}k \mathbf x\cdot \mathbf{d}},\quad x\in\mathbb{R}^2\,,
\end{equation}
where $\mathbf{d}\in \mathbb{S}^{1}$ denotes the incident direction of the impinging wave and $\mathbb{S}^{1}:=\{\mathbf x\in\mathbb{R}^2:|\mathbf x|=1\}$ is the unit circle in $\mathbb{R}^2$. Physically, $\Omega$ is an impenetrable obstacle that is unknown or inaccessible, and $u^i$ signifies the detecting wave field that is used for probing the obstacle. The presence of the obstacle interrupts the propagation of the incident wave, and generates the so-called scattered wave field $u^s$. Let $u:=u^i+u^s$ be the resulting total wave field, 
then the forward scattering problem can be described by the following Helmholtz system:
\begin{equation}\label{eq:scat1}
\begin{cases}
& \Delta u + k^2 u = 0\qquad\quad \mbox{in }\ \ \mathbb{R}^2\backslash\overline{\Omega},\medskip\\
& u =u^i+u^s\hspace*{1.56cm}\mbox{in }\ \ \mathbb{R}^2,\medskip\\
& \mathcal{B}(u)=0\hspace*{1.95cm}\mbox{on}\ \ \partial\Omega,\medskip\\
&\displaystyle{ \lim_{r\rightarrow\infty}r^{\frac{1}{2}}\left(\frac{\partial u^{s}}{\partial r}-\mathrm{i}ku^{s}\right) =\,0.}
\end{cases}
\end{equation}
The limiting equation above is known as the Sommerfeld radiation condition which holds uniformly in $\hat {\mathbf x}:={\mathbf x}/|{\mathbf x}|\in \mathbb{S}^{1}$ and characterizes the outgoing nature of the scattered wave field $u^s$. The boundary operator $\mathcal{B}$ could be Dirichlet type, $\mathcal{B}(u)=u$; or Neumann type, $\mathcal{B}(u)=\partial_\nu u$; or Robin type, $\mathcal{B}(u)=\partial_\nu u+\eta u$, corresponding to that $\Omega$ is a sound-soft, sound-hard or impedance obstacle, respectively. 
Here $\nu$ denotes the exterior unit normal vector to $\partial\Omega$ and $\eta\in L^\infty(\partial\Omega)$ signifies a boundary impedance parameter. It is required that $\Re\eta\geq 0$ and $\Im \eta\geq 0$. 
In what follows, we formally take $u=0$ on $\partial\Omega$ as $\partial_\nu u+\eta u=0$ on $\partial\Omega$ with $\eta=+\infty$. In doing so, we can unify all three boundary conditions as the generalized impedance boundary condition:
\begin{equation}\label{eq:gbc}
\mathcal{B}(u)=\partial_\nu u+\eta u=0\quad\mbox{on}\ \ \partial\Omega,
\end{equation}
where $\eta$ could be $\infty$, corresponding to a sound-soft obstacle. The forward scattering problem \eqref{eq:scat1} is well understood \cite{CK, Mclean} and there exists a unique solution $u\in H^1_{loc}(\mathbb{R}^2\backslash\overline{\Omega})$ that admits the following asymptotic expansion:
\begin{equation}\label{eq:far}
u^s(\mathbf x,\mathbf d,k)
=\frac{e^{\mathrm{i}kr}}{r^{{1/2}}} u_{\infty}(\hat{\mathbf x};k,\mathbf{d})+\mathcal{O}\left(\frac{1}{r^{3/2}}\right)\quad\mbox{as }\,r\rightarrow\infty
\end{equation}
which holds uniformly with respect to all directions $\hat {\mathbf x}:=\mathbf  x/|\mathbf  x|\in \mathbb{S}^{1}$.
The complex valued function $u_\infty$ in \eqref{eq:far} defined over the unit sphere $\mathbb{S}^{1}$
is known as the far-field pattern with $\hat{\mathbf x}\in \mathbb{S}^{1}$ signifying the observation direction. 
{\it The inverse obstacle scattering problem} is to recover $\Omega$ by using the knowledge of the far-field pattern $u_{\infty}(\hat{\mathbf x},\mathbf{d},k)$. By introducing an operator $\mathcal{F}$ which sends the obstacle to the corresponding far-field pattern through the Helmholtz system \eqref{eq:scat1}, the inverse obstacle problem can be formulated as the following abstract operator equation:
\begin{equation}\label{eq:iop}
%\mathcal{F}: \, u_\infty(\hat {\mathbf x}, \mathbf{d}, k)\rightarrow(\Omega,\eta). 
\mathcal{F}(\Omega,\eta)=u_\infty(\hat {\mathbf x};k, \mathbf{d})\,,
\end{equation}
where $\mathcal{F}$ is defined by the forward obstacle scattering system, and is nonlinear. 
That is, one intends to determine  $(\Omega,\eta)$ from the knowledge of $u_\infty(\hat {\mathbf x};k, \mathbf{d})$.  
%It is easily verified that $\mathcal{F}$ in \eqref{eq:iop} is nonlinear.

 A primary issue for the inverse obstacle problem \eqref{eq:iop} is the unique identifiability, which is concerned with the sufficient conditions such that the correspondence between $\Omega$ and $u_\infty$ is one-to-one. There is a widespread belief that one can establish  
uniqueness for \eqref{eq:iop} by a single or at most finitely many far-field patterns. We remark that 
by a single far-field pattern we mean that $u_\infty(\hat {\mathbf x}, k, \mathbf{d})$ is collected for all $\hat {\mathbf x}\in\mathbb{S}^1$, but is associated with a fixed incident 
$e^{\mathrm{i}k\mathbf x\cdot \mathbf{d}}$. Phrased in the geometric term, it states that the analytic function $u_\infty$ on the unit sphere associated with at most finitely many $k$ and $d$ can supply a global parameterization of a generic domain $\Omega$. This problem is known as the {\it Schiffer problem} in the inverse scattering community. It is named after M. Schiffer for his pioneering contribution around 1960 which is actually appeared as a private communication in the monograph by Lax and Phillips \cite{LP}. There is a long and colourful history on the study of the Schiffer problem, and we refer to a recent survey paper by Colton and Kress \cite{CK18} which contains an excellent account of the historical development of this problem. 

Recent progress on the Schiffer problem is made on general polyhedral obstacles in $\mathbb{R}^n$, $n\geq 2$. Uniqueness and stability results by using a finite number of far-field patterns can be found in \cite{AR,CY,LPRX,LRX,Liu-Zou,Liu-Zou3}. The major idea is to make use of the reflection principle for the Laplacian eigenfunction to propagate the so-called Dirichlet or Neumann hyperplanes. In the two-dimensional case, the Dirichlet and Neumann hyperplanes are actually the nodal and singular lines introduced in the present paper. In \cite{Liu-Zou3}, 
\mm{two of the authors of the present paper made an effort to answer the unique determination issue 
for impedance-type obstacles but gave only a partial solution to this fundamental problem.}
%\fn{I think it is better 
%not to state that the flaw can be fixed by requiring the partially-coated obstacle considered therein satisfying that the surface impedance constants on two adjacent faces cannot be simultaneously nonzero, and this claim 
%implies we have actually solved the problem.} 
%by following a similar strategy developed in \cite{Liu-Zou} for sound-soft and sound-hard obstacles, but 
%the effort did not . However, there is a minor flaw in the argument in \cite{Liu-Zou3} which can actually be fixed by requiring the partially-coated obstacle considered therein satisfying that the surface impedance constants on two adjacent faces cannot be simultaneously nonzero. 
%
%two of the authors of the present paper studied the unique determination of impedance-type obstacles by following a similar strategy developed in \cite{Liu-Zou} for sound-soft and sound-hard obstacles. However, there is a minor flaw in the argument in \cite{Liu-Zou3} which can actually be fixed by requiring the partially-coated obstacle considered therein satisfying that the surface impedance constants on two adjacent faces cannot be simultaneously nonzero. 
%However, we shall not explore this point in the current article, 
\mm{In this section, we develop a completely new approach 
that is able to provide a solution to this inverse obstacle problem in two dimensions, 
and the approach is uniform to sound-soft, sound-hard and impedance type obstacles.}%\fn{It is important to mention two dimensions, yes?}
\mm{
The new approach is completely local, and enables us to show in a rather 
general scenario that 
one can determine the convex hull of an impedance obstacle as well as its surface impedance on the boundary of the convex hull by at most two far-field patterns.}

Consider an obstacle $\Omega$ associated with the generalized impedance boundary condition \eqref{eq:gbc}. It is called an admissible polygonal obstacle if $\Omega\subset\mathbb{R}^2$ is an open polygon, and on each edge of $\partial\Omega$, $\eta$ is either a constant (possibly zero) or $\infty$. That is, each edge $\mathcal{K}$ of an admissible polygonal obstacle is either sound-soft ($\eta\equiv \infty$ on $\mathcal{K}$), or sound-hard ($\eta\equiv 0$ on $\mathcal{K}$), or impedance-type ($\eta$ is a constant on $\mathcal{K}$). It is emphasized that $\eta$ may take different values on different edges of $\partial\Omega$. We write $(\Omega, \eta)$ to signify an admissible polygonal obstacle. 

\begin{definition}\label{def:r1}
Let $(\Omega, \eta)$ be an admissible polygonal obstacle. If  all the angles of its corners are irrational, then it is said to be an \emph{irrational obstacle}. If there is a corner angle of $\Omega$ is rational, then it is called a \emph{rational obstacle}. The smallest degree of the rational corner angles of $\Omega$ is referred to as the \emph{rational degree} of $\Omega$. 
\end{definition}
%\begin{remark}
	It is easy to see that for a rational polygonal obstacle $\Omega$ in Definition \ref{def:r1}, the rational degree of $\Omega$ is at least $2$. 
%\end{remark}

\begin{definition}\label{def:r2}
$\Omega$ is said to be an admissible complex polygonal obstacle if it consists of finitely many admissible polygonal obstacles. That is,
\begin{equation}\label{eq:r2a}
(\Omega, \eta)=\bigcup_{j=1}^l (\Omega_j, \eta_j),
\end{equation}
where $l\in\mathbb{N}$ and each $(\Omega_j, \eta_j)$ is an admissible polygonal obstacle. Here, we define
\begin{equation}\label{eq:r2b}
\eta=\sum_{j=1}^l \eta_j\chi_{\partial\Omega_j}. 
\end{equation}
Moreover, $\Omega$ is said to be irrational if all of its component polygonal obstacles are irrational, otherwise it is said to be rational. For the latter case, the smallest degree among all the degrees of its rational components is defined to be the degree of the complex obstacle $\Omega$.  
\end{definition}

Next, we first consider the determination of an admissible complex irrational polygonal obstacle by at most two far-field patterns. We have the following local uniqueness result.

%establish our first uniqueness result in determining 

\red{
\begin{theorem}\label{thm:uniqueness1}
Let $(\Omega, \eta)$ and $(\widetilde\Omega, \widetilde\eta)$ be two admissible complex irrational obstacles. Let $k\in\mathbb{R}_+$ be fixed and $\mathbf{d}_\ell$, $\ell=1, 2$ be two distinct incident directions from $\mathbb{S}^1$.  Let $\mathbf{G}$ denote the unbounded connected component of $\mathbb{R}^2\backslash\overline{(\Omega\cup\widetilde\Omega)}$. Let $u_\infty$ and $\widetilde{u}_\infty$ be, respectively, the far-field patterns associated with $(\Omega, \eta)$ and $(\widetilde\Omega, \widetilde\eta)$. If 
\begin{equation}\label{eq:cond1}
u_\infty(\hat {\mathbf x}, \mathbf{d}_\ell )=\widetilde u_\infty(\hat {\mathbf x}, \mathbf{d}_\ell), \ \ \hat{\mathbf x}\in\mathbb{S}^1, \ell=1, 2,
\end{equation}
then one has that
$$
\left(\partial \Omega \backslash \partial \overline{ \widetilde{\Omega }} \right  )\cup \left(\partial \widetilde{\Omega } \backslash \partial \overline{ \Omega } \right)
$$
%\begin{equation}\label{eq:nn3}
%\Omega \Delta   \widetilde  \Omega:=\big(\Omega \backslash \widetilde \Omega \big)\cup \big(\widetilde \Omega \backslash \Omega \big)
%\end{equation}
cannot have a corner on  $\partial \mathbf{G}$.
\end{theorem}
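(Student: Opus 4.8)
The plan is to argue by contradiction, converting the hypothetical corner into the intersection point of two of the special lines studied above and then invoking the irrational-angle vanishing results. First I would extract the analytic consequence of the far-field coincidence. Writing $u^{(\ell)}=u^i(\cdot;k,\mathbf{d}_\ell)+u^{s,(\ell)}$ and $\widetilde u^{(\ell)}$ for the total fields associated with $(\Omega,\eta)$ and $(\widetilde\Omega,\widetilde\eta)$ under the incident direction $\mathbf{d}_\ell$, the far-field coincidence hypothesis together with Rellich's lemma and unique continuation yields $u^{(\ell)}=\widetilde u^{(\ell)}$ throughout the unbounded component $\mathbf{G}$ for $\ell=1,2$. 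Since each total field satisfies $\Delta u^{(\ell)}+k^2 u^{(\ell)}=0$, it is a Laplacian eigenfunction in the sense of \eqref{eq:eig} with $\lambda=k^2$, so all the spectral results established above apply verbatim.

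Next I would localize at the corner. Suppose, for contradiction, that $\mathbf{x}_0$ is a corner of $(\partial\Omega\setminus\partial\overline{\widetilde\Omega})\cup(\partial\widetilde\Omega\setminus\partial\overline{\Omega})$ lying on $\partial\mathbf{G}$; without loss of generality $\mathbf{x}_0$ is a corner of $\partial\Omega$ with $\mathbf{x}_0\notin\partial\overline{\widetilde\Omega}$. Then there is a ball $B(\mathbf{x}_0,\rho)$ disjoint from $\overline{\widetilde\Omega}$, so each $\widetilde u^{(\ell)}$ solves the Helmholtz equation throughout $B(\mathbf{x}_0,\rho)$ and is real-analytic there. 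Because $u^{(\ell)}=\widetilde u^{(\ell)}$ on $B(\mathbf{x}_0,\rho)\cap\mathbf{G}$, the function $\widetilde u^{(\ell)}$ provides an analytic extension of $u^{(\ell)}$ across the corner, and the two edges of $\partial\Omega$ emanating from $\mathbf{x}_0$ become two line segments from $\mathcal{N}^\lambda_\Omega\cup\mathcal{S}^\lambda_\Omega\cup\mathcal{M}^\lambda_\Omega$ of $\widetilde u^{(\ell)}$, according to whether the respective edge is sound-soft, sound-hard or impedance. These two lines meet at $\mathbf{x}_0$ at the interior corner angle of $\Omega$, which is irrational by the hypothesis that $\Omega$ is an irrational obstacle.

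The core step is then to enforce the generic condition $u(\mathbf{x}_0)=0$ and apply the irrational theorems. If one of the two edges is sound-soft, the associated line is nodal and $\widetilde u^{(1)}(\mathbf{x}_0)=u^{(1)}(\mathbf{x}_0)=0$ automatically, so a single incident field suffices. Otherwise both lines are singular or generalized singular, and I would use the two distinct directions: pick $(c_1,c_2)\neq(0,0)$ with $c_1\widetilde u^{(1)}(\mathbf{x}_0)+c_2\widetilde u^{(2)}(\mathbf{x}_0)=0$, which is always possible since this is a single scalar equation in two unknowns, and set $\widetilde u:=c_1\widetilde u^{(1)}+c_2\widetilde u^{(2)}$ and $u:=c_1 u^{(1)}+c_2 u^{(2)}$. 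By linearity of the boundary operators, $\widetilde u$ retains the same two lines as its nodal/singular/generalized singular lines while $\widetilde u(\mathbf{x}_0)=0$. Theorem~\ref{Th:u}, Theorem~\ref{th:28}, and the theorems of Section~\ref{sec:irrational gene} then force $\mathrm{Vani}(\widetilde u;\mathbf{x}_0)=+\infty$, so $\widetilde u\equiv 0$ in $B(\mathbf{x}_0,\rho)$ by the strong unique continuation principle; consequently $u\equiv 0$ on $B(\mathbf{x}_0,\rho)\cap\mathbf{G}$, and then $u\equiv 0$ in all of $\mathbf{G}$ by unique continuation and the connectedness of $\mathbf{G}$.

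Finally, I would read off the contradiction from the behaviour at infinity. Since the exterior $\mathbb{R}^2\setminus\overline{\Omega}$ is connected, $u\equiv 0$ propagates there, giving $c_1 u^i(\cdot;k,\mathbf{d}_1)+c_2 u^i(\cdot;k,\mathbf{d}_2)=-(c_1 u^{s,(1)}+c_2 u^{s,(2)})$ in the unbounded exterior. The right-hand side is a radiating solution and hence decays as $|\mathbf{x}|\to\infty$, whereas a nontrivial superposition of two plane waves with distinct directions $\mathbf{d}_1\neq\mathbf{d}_2$ cannot decay; therefore $c_1=c_2=0$, contradicting $(c_1,c_2)\neq(0,0)$. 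This rules out any such corner on $\partial\mathbf{G}$. I expect the main obstacle to be the uniform enforcement of $u(\mathbf{x}_0)=0$ across all combinations of edge types — sound-soft, sound-hard and impedance — which is precisely what the second incident direction buys us, and where one must verify both that the chosen linear combination preserves the line conditions (immediate from linearity) and that it still carries a nontrivial, non-decaying plane-wave part at infinity to drive the final contradiction.
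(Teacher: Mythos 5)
Your proposal is correct and follows essentially the same route as the paper's proof: Rellich's lemma to identify the total fields in $\mathbf{G}$, localization at the corner so that the two edges become nodal/singular/generalized singular lines of a Helmholtz eigenfunction meeting at an irrational angle, a linear combination chosen to vanish at the corner, the irrational-angle infinite-vanishing theorems plus analytic continuation, and a contradiction at infinity. The only cosmetic differences are that you treat the paper's two cases (some $u^{(\ell)}(\mathbf{x}_0)=0$ versus both nonzero) uniformly via one scalar linear equation in $(c_1,c_2)$, and you derive the final contradiction from the non-decay of a nontrivial plane-wave superposition rather than citing the linear independence of the total fields as in \cite[Chapter 5]{CK} — these are equivalent.
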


}

%\begin{theorem}\label{thm:uniqueness1}
%Let $(\Omega, \eta)$ and $(\widetilde\Omega, \widetilde\eta)$ be two admissible complex irrational obstacles. Let $k\in\mathbb{R}_+$ be fixed and $\mathbf{d}_\ell$, $\ell=1, 2$ be two distinct incident directions from $\mathbb{S}^1$. Let $u_\infty$ and $\widetilde{u}_\infty$ be, respectively, the far-field patterns associated with $(\Omega, \eta)$ and $(\widetilde\Omega, \widetilde\eta)$. If 
%\begin{equation}\label{eq:cond1}
%u_\infty(\hat {\mathbf x}, \mathbf{d}_\ell )=\widetilde u_\infty(\hat {\mathbf x}, \mathbf{d}_\ell), \ \ \hat{\mathbf x}\in\mathbb{S}^1, \ell=1, 2,
%\end{equation}
%then one has
%\begin{equation}\label{eq:cond2}
%\mathcal{CH}(\Omega)=\mathcal{CH}(\widetilde\Omega):=\Sigma,
%\end{equation}
%and
%\begin{equation}\label{eq:cond3}
%\eta=\widetilde\eta\ \ \mbox{on}\ \ \partial\Omega\cap\partial\widetilde\Omega\cap \partial\Sigma. 
%\end{equation}
%\end{theorem}

\begin{proof}
We prove the theorem by contradiction. \red{Assume \eqref{eq:cond1}  holds but $
\left(\partial \Omega \backslash \partial \overline{ \widetilde{\Omega }} \right  )\cup \left(\partial \widetilde{\Omega } \backslash \partial \overline{ \Omega } \right)
$ has a corner $\mathbf x_c$ on $\partial \mathbf{G}$.  Clearly, $\mathbf x_c$ is either a vertex of $\Omega$ or a vertex of $\widetilde\Omega$. Without loss of generality, we assume that $\mathbf x_c$ is a vertex
of $\widetilde\Omega$. Moreover, we see that $\mathbf{x}_c$ lies outside $\Omega$. Let $h\in\mathbb{R}_+$ be sufficiently small such that $B_h(x_c)\Subset\mathbb{R}^2\backslash\overline \Omega $. Moreover, since $x_c$ is a vertex of $\widetilde\Omega$, we can assume that 
\begin{equation}\label{eq:aa2}
B_h(\mathbf x_c)\cap \partial\widetilde\Omega=\Gamma_h^\pm,
\end{equation}
where $\Gamma_h^\pm$ are the two line segments lying on the two edges of $\widetilde\Omega$ that intersect at $\mathbf x_c$.  

Recall that $\mathbf{G}$ denotes the unbounded connected component of $\mathbb{R}^2\backslash\overline{(\Omega\cup\widetilde\Omega)}$. By \eqref{eq:cond1} and the Rellich theorem (cf. \cite{CK}), we know that
\begin{equation}\label{eq:aa3}
u(\mathbf x; k, \mathbf{d}_\ell)=\widetilde{u}(\mathbf x; k, \mathbf{d}_\ell),\quad x\in\mathbf{G},\ \ell=1, 2. 
\end{equation}
It is clear that $\Gamma_h^\pm\subset\partial\mathbf{G}$. Hence, by using \eqref{eq:aa3} as well as the generalized boundary condition \eqref{eq:gbc} on $\partial\widetilde\Omega$, we readily have 
\begin{equation}\label{eq:aa4}
\partial_\nu u+\widetilde\eta u=\partial_\nu \widetilde u+\widetilde\eta\widetilde u=0\quad\mbox{on}\ \ \Gamma_h^\pm. 
\end{equation}
It is also noted that in $B_h(\mathbf x_c)$, $-\Delta u=k^2 u$. Next, we consider two separate cases. 
}

%Set
%\begin{equation}\label{eq:aa1}
%\Lambda:=\mathcal{CH}(\Omega)\Delta\mathcal{CH}(\widetilde\Omega)=\left(\mathcal{CH}(\Omega)\backslash{\mathcal{CH}(\widetilde\Omega)}\right)\bigcup \left(\mathcal{CH}(\widetilde\Omega)\backslash{\mathcal{CH}(\Omega)}\right).
%\end{equation}
%Let $\mathbf x_c\in\partial\Lambda$ be a vertex of $\Lambda$. Clearly, $\mathbf x_c$ is either a vertex of $\Omega$ or a vertex of $\widetilde\Omega$. Without loss of generality, we assume that $\mathbf x_c$ is a vertex
%of $\widetilde\Omega$. Moreover, we see that $\mathbf{x}_c$ lies outside $\mathcal{CH}(\Omega)$. 

\medskip

\noindent {\bf Case 1.}~Suppose that either $u(\mathbf x_c; k, \mathbf{d}_1)$ or $u(\mathbf x_c; k, \mathbf{d}_2)$ is zero. Without loss of generality, we assume that $u(\mathbf x_c; k, \mathbf{d}_1)=0$. By the assumption of the theorem that $\widetilde\Omega$ is an irrational obstacle, we see that $\Gamma_h^+$ and $\Gamma_h^-$ intersect with an irrational angle. Hence, by our results in Sections~\ref{sec:irra} and \ref{sec:irrational gene}, one immediately has that
\begin{equation}\label{eq:aa5}
u(\mathbf x; k, \mathbf{d}_1)=0\quad\mbox{in}\ \ B_h(\mathbf x_c),
\end{equation}
which in turn yields by the analytic continuation that 
\begin{equation}\label{eq:aa5}
u(\mathbf x; k, \mathbf{d}_1)=0\quad\mbox{in}\ \ \mathbb{R}^2\backslash\overline{\Omega}. 
\end{equation}
In particular, one has from \eqref{eq:aa5} that
\begin{equation}\label{eq:aa6}
\lim_{|\mathbf x|\rightarrow\infty} \left|u(\mathbf x; k, \mathbf{d}_1)\right|=0. 
\end{equation}
But this contradicts to the fact that follows from \eqref{eq:far}:
\begin{equation}\label{eq:aa6}
\lim_{|\mathbf x|\rightarrow\infty} \left|u(\mathbf  x; k, \mathbf{d}_1)\right|=\lim_{|\mathbf x|\rightarrow\infty} \left|e^{\mathrm{i}k\mathbf x\cdot \mathbf{d}_1}+u^s(\mathbf x; k, \mathbf{d}_1)\right|=1. 
\end{equation}

\medskip

\noindent {\bf Case 2.}~ Suppose that both $u(\mathbf x_c; k, \mathbf{d}_1)\neq 0$ and $u(\mathbf x_c; k, \mathbf{d}_2)\neq 0$. Set
\begin{equation}\label{eq:bb1}
\alpha_1=u(\mathbf x_c; k, \mathbf{d}_2)\quad\mbox{and}\quad \alpha_2=-u(\mathbf x_c; k, \mathbf{d}_1),
\end{equation}
and
\begin{equation}\label{eq:bb2}
v(\mathbf x)=\alpha_1 u(\mathbf x; k, \mathbf{d}_1)+\alpha_2 u(\mathbf x; k, \mathbf{d}_2),\ \ \ x\in B_h(\mathbf x_c). 
\end{equation}
Clearly, there hold
\mm{
\begin{equation}\label{eq:bb3}
-\Delta v=k^2 v\quad\mbox{in}\ \ B_h(\mathbf x_c); \q 
\partial_\nu v+\widetilde\eta v=0\quad\mbox{on}\ \ \Gamma_h^\pm. 
\end{equation}
}
%and
%\begin{equation}\label{eq:bb4}
%\partial_\nu v+\widetilde\eta v=0\quad\mbox{on}\ \ \Gamma_h^\pm. 
%\end{equation}
Moreover, by the choice of $\alpha_1, \alpha_2$ in \eqref{eq:bb1}, one obviously has that $v(\mathbf x_c)=0$. Hence, by our results in Sections~\ref{sec:irra} and \ref{sec:irrational gene}, one immediately has that
\begin{equation}\label{eq:bb5}
v=0\quad\mbox{in}\ \ B_h(\mathbf x_c),
\end{equation}
which in turn yields by the analytic continuation that 
\begin{equation}\label{eq:bb6a}
\alpha_1 u(\mathbf x; k, \mathbf{d}_1)+\alpha_2 u(\mathbf x; k, \mathbf{d}_2)=0\quad\mbox{in}\ \ \mathbb{R}^2\backslash\overline{\Omega}. 
\end{equation}
However, since $\mathbf{d}_1$ and $\mathbf{d}_2$ are distinct, we know from \cite[Chapter 5]{CK} that $u(x; k, \mathbf{d}_1)$ and $u(x; k, \mathbf{d}_2)$ are linearly independent in $\mathbb{R}^2\backslash\overline{\Omega}$.  Therefore, one has from \eqref{eq:bb6a} that $\alpha_1=\alpha_2=0$, which contracts to the assumption at the beginning that both $\alpha_1$ and $\alpha_2$ are nonzero. 
%
%The proof is complete. 
\end{proof}

\red{
 It is recalled that the convex hull of $\Omega$, denoted by $\mathcal{CH}(\Omega)$, is the smallest convex set that contains $\Omega$. As a direct consequence of Theorem \ref{thm:uniqueness1}, we next show that the convex hull of a complex irrational obstacle can be uniquely determined by at most two far-field measurements. Furthermore, the boundary impedance  parameter $\eta$ can be partially identified as well.  In fact we have}
\begin{corollary}\label{co:84}
	Let $(\Omega, \eta)$ and $(\widetilde\Omega, \widetilde\eta)$ be two admissible complex irrational obstacles. Let $k\in\mathbb{R}_+$ be fixed and $\mathbf{d}_\ell$, $\ell=1, 2$ be two distinct incident directions from $\mathbb{S}^1$.     Let $\mathbf{G}$ denote the unbounded connected component of $\mathbb{R}^2\backslash\overline{(\Omega\cup\widetilde\Omega)}$.Let $u_\infty$ and $\widetilde{u}_\infty$ be, respectively, the far-field patterns associated with $(\Omega, \eta)$ and $(\widetilde\Omega, \widetilde\eta)$. If 
\begin{equation}\label{eq:cond1new}
u_\infty(\hat {\mathbf x}, \mathbf{d}_\ell )=\widetilde u_\infty(\hat {\mathbf x}, \mathbf{d}_\ell), \ \ \hat{\mathbf x}\in\mathbb{S}^1, \ell=1, 2,
\end{equation}
then one has that
\begin{equation}\label{eq:cond2}
\mathcal{CH}(\Omega)=\mathcal{CH}(\widetilde\Omega):=\Sigma,
\end{equation}
and
\begin{equation}\label{eq:cond3}
\eta=\widetilde\eta\ \ \mbox{on}\ \ \partial\Omega\cap\partial\widetilde\Omega\cap \partial\Sigma. 
\end{equation}	
\end{corollary}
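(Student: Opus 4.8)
The plan is to read off both \eqref{eq:cond2} and \eqref{eq:cond3} from the local statement of Theorem~\ref{thm:uniqueness1}, so that no fresh analysis of the eigenfunction is required; the remaining work is purely geometric, together with one application of unique continuation. Throughout I abbreviate $\Sigma=\mathcal{CH}(\Omega)$, $\widetilde\Sigma=\mathcal{CH}(\widetilde\Omega)$, and I introduce the convex hull of the union $\Sigma^\ast=\mathcal{CH}(\Omega\cup\widetilde\Omega)$. Exactly as in the proof of Theorem~\ref{thm:uniqueness1}, the hypothesis \eqref{eq:cond1new} together with the Rellich theorem gives $u(\mathbf x;k,\mathbf d_\ell)=\widetilde u(\mathbf x;k,\mathbf d_\ell)$ for $\mathbf x\in\mathbf G$ and $\ell=1,2$, which is the only analytic input I shall use.

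To prove \eqref{eq:cond2} I would analyse the vertices of the convex polygon $\Sigma^\ast$. Each vertex $\mathbf x_c$ of $\Sigma^\ast$ is an extreme point of $\Omega\cup\widetilde\Omega$, hence a convex corner of $\Omega$ or of $\widetilde\Omega$ (an interior edge point cannot be extreme), and being extreme for the whole configuration it lies on $\partial\mathbf G$ because the exterior cone at $\mathbf x_c$ is free of $\Omega\cup\widetilde\Omega\subseteq\Sigma^\ast$ and connects to infinity. Suppose $\mathbf x_c$ is a corner of, say, $\Omega$, so $\mathbf x_c\in\overline\Omega$. By Theorem~\ref{thm:uniqueness1} the set $(\partial\Omega\setminus\partial\overline{\widetilde\Omega})\cup(\partial\widetilde\Omega\setminus\partial\overline\Omega)$ has no corner on $\partial\mathbf G$, so $\mathbf x_c$ must also lie on $\partial\overline{\widetilde\Omega}\subseteq\overline{\widetilde\Omega}$; the symmetric reasoning applies when $\mathbf x_c$ is a corner of $\widetilde\Omega$. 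Hence every vertex of $\Sigma^\ast$ lies in $\overline\Omega\cap\overline{\widetilde\Omega}$, and since $\Sigma^\ast$ is the convex hull of its own vertices this yields $\Sigma^\ast\subseteq\mathcal{CH}(\Omega)=\Sigma$ and $\Sigma^\ast\subseteq\widetilde\Sigma$; combined with the trivial inclusions $\Sigma\subseteq\Sigma^\ast$ and $\widetilde\Sigma\subseteq\Sigma^\ast$, I conclude $\Sigma=\widetilde\Sigma=\Sigma^\ast=:\Sigma$, which is \eqref{eq:cond2}.

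For \eqref{eq:cond3}, fix a relatively open segment $\Gamma\subset\partial\Omega\cap\partial\widetilde\Omega\cap\partial\Sigma$. Since $\Gamma$ lies on the boundary of the common convex hull, points just outside $\Sigma$ near $\Gamma$ belong to $\R^2\setminus\Sigma\subseteq\R^2\setminus\overline{\Omega\cup\widetilde\Omega}$ and connect to infinity, so $\Gamma\subset\partial\mathbf G$ and therefore both $u=\widetilde u$ and $\partial_\nu u=\partial_\nu\widetilde u$ hold on $\Gamma$. On the $\Omega$-edge through $\Gamma$ the parameter $\eta$ is a constant or $\infty$, and likewise $\widetilde\eta$ on the $\widetilde\Omega$-edge, and I would split into two cases according to whether $u\equiv0$ on $\Gamma$. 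If $u\not\equiv0$ on $\Gamma$, then neither edge can be sound-soft (which would force $u\equiv0$ on $\Gamma$); subtracting the two finite conditions \eqref{eq:gbc} gives $(\eta-\widetilde\eta)\,u=0$ on $\Gamma$, and since the analyticity of $u$ makes its zero set on $\Gamma$ discrete, I obtain $\eta=\widetilde\eta$. If instead $u\equiv0$ on $\Gamma$, then the $\Omega$-edge cannot carry a finite impedance: otherwise $\partial_\nu u=-\eta u=0$ on $\Gamma$, so $u$ would have vanishing Cauchy data there and Holmgren's theorem would give $u\equiv0$ in $\mathbf G$, contradicting $u=u^i+u^s$ with $|u^i|\equiv1$ and $u^s\to0$ at infinity by \eqref{eq:far}; hence the $\Omega$-edge is sound-soft, and the identical argument applied to $\widetilde u$ forces the $\widetilde\Omega$-edge to be sound-soft too, so that $\eta=\widetilde\eta=\infty$.

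I expect the main subtlety to be purely geometric: justifying that each vertex of $\Sigma^\ast$ is genuinely a corner lying on $\partial\mathbf G$, and that the segment $\Gamma$ in the second part is exposed to $\mathbf G$. Both reduce to the elementary fact that an extreme point of a finite union of polygons is a convex vertex of one of them and is visible from infinity, together with the inclusion $\Omega\cup\widetilde\Omega\subseteq\Sigma^\ast$; some care is nonetheless warranted because $\Omega$ and $\widetilde\Omega$ are \emph{complex} obstacles that may overlap, so that $\partial\mathbf G$ need not coincide with $\partial\Sigma^\ast$ except near the extreme points. Once these local geometric facts are secured, the two displayed conclusions follow mechanically from Theorem~\ref{thm:uniqueness1} and Holmgren's theorem as above.
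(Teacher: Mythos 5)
Your proof is correct and takes essentially the same route as the paper: the hypothesis plus Rellich and Theorem~\ref{thm:uniqueness1} yield the convex-hull identity (the paper treats this step as ``immediate'', while you supply the extreme-point geometry via $\mathcal{CH}(\Omega\cup\widetilde\Omega)$, which is a legitimate filling-in rather than a different method), and the impedance identification proceeds exactly as in the paper by matching Cauchy data on a common boundary segment, subtracting the two impedance conditions, and applying Holmgren's theorem to reach the same contradiction with the far-field normalization $\lim_{|\mathbf x|\to\infty}|u(\mathbf x;k,\mathbf d_1)|=1$. The only cosmetic wobble is your appeal to the discreteness of the zero set of $u$ on $\Gamma$ (boundary analyticity of $u$ would need justification there), but it is also unnecessary: once $\eta-\widetilde\eta$ is a nonzero constant on a sub-segment, $(\eta-\widetilde\eta)u=0$ forces $u\equiv 0$ on that sub-segment, and your own Holmgren case then delivers the contradiction, matching the paper's unified ``direct verification''.
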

\begin{proof}
	\mm{From Theorem \ref{thm:uniqueness1}, we can immediately conclude \eqref{eq:cond2}. Next we prove \eqref{eq:cond3}.} 
Let $\mathcal{E}\subset \partial\Omega\cap\partial\widetilde\Omega\cap \partial\Sigma$ be an open subset such that $\eta\neq \widetilde\eta$ on $\mathcal{E}$. By taking a smaller subset of $\mathcal{E}$ if necessary, we can assume that $\eta$ (respectively, $\widetilde\eta$) is either a fixed constant or $\infty$ on $\mathcal{E}$. Clearly, one has $u=\widetilde u$ in $\mathbb{R}^2\backslash\overline{\Sigma}$. Hence, there hold that 
\begin{equation}\label{eq:bb6}
\partial_\nu u+\eta u=0, \ \ \partial_\nu\widetilde u+\widetilde\eta \widetilde u=0,\ \ u=\widetilde u, \ \ \partial_\nu u=\partial_\nu\widetilde u\quad\mbox{on}\ \ \mathcal{E}. 
\end{equation}
By direct verification, one can show that 
\begin{equation}\label{eq:bb7}
u=\partial_\nu u=0\quad\mbox{on}\ \ \mathcal{E},
\end{equation}
which in turn yields by the Homogren's uniqueness result (cf. \cite{Liu-Zou}) that $u=0$ in $\mathbb{R}^2\backslash\Omega$. Hence, we arrive at the same contradiction as that in \eqref{eq:aa6}, which implies \eqref{eq:cond3}.
\end{proof}

\begin{remark}\label{rem:uniq1}
Let $\mathcal{V}(\Omega)$ and $\mathcal{V}(\mathcal{CH}(\Omega))$ denote, respectively, the sets of vertices of $\Omega$ and $\mathcal{CH}(\Omega)$. It is known that $\mathcal{V}(\mathcal{CH}(\Omega))\subset\mathcal{V}(\Omega)$. \red{Corollary~\ref{co:84}} states that if the corner angle of the polygon $\Omega$ at any vertex in $\mathcal{V}(\Omega)$ is irrational, then $\mathcal{CH}(\Omega)$ can be uniquely determined by two far-field patterns. Indeed, \red{from the proof of Theorem}~\ref{thm:uniqueness1}, we see that this requirement can be relaxed to that the corner angle of the polygon $\Omega$ at any vertex in $\mathcal{V}(\mathcal{CH}(\Omega))$ is irrational. 
\end{remark}

%\begin{figure}
%  \centering
%  \includegraphics[width=0.35\textwidth]{Lshape}
%  \caption{Schematic illustration of an example in Remark \ref{re:85}.}
%  \label{fig2}
%\end{figure}
%
%\red{
%\begin{remark}\label{re:85}
%The uniqueness of the identification in the inverse acoustic scattering problem when the impedance obstacle polygon is convex by two  incident directions is considered in \cite{CY04}. The proof in \cite{CY04} is based on analytic continuation on a straight line. In contrast to \cite{CY04}, this paper investigates  more  general situations. First, in Theorem \ref{thm:uniqueness1} we prove that the difference between boundaries of any two complex irrational  polygons cannot pose a corner if their corresponding far-field patterns are same under at most two incident directions. Especially, for two concave irrational polygons shown in Figure \ref{fig2}, they must have different far-field patterns if we use two incident directions of the incident waves. The above mentioned case cannot be covered by the result in \cite{CY04}. Second, our proofs of Theorem \ref{thm:uniqueness1} are different with the proof in \cite{CY04}, which is ``localized'' around the considering corner while the proof in \cite{CY04} should use the analytic continuation on a straight line to the infinity point in the plain. 
%	  
%\end{remark}
%}

We proceed now to consider the unique determination of rational obstacles. Let $\Omega$ be a polygon in $\mathbb{R}^2$ and $\mathbf x_c$ be a vertex of $\Omega$. In what follows, we define
\begin{equation}\label{eq:region1}
\Omega_r(\mathbf x_c)=B_r(\mathbf x_c)\cap \mathbb{R}^2\backslash\overline{\Omega},\ \ r\in\mathbb{R}_+. 
\end{equation}
For a function $f\in L_{loc}^2(\mathbb{R}^2\backslash\overline{\Omega})$, we define 
\begin{equation}\label{eq:l1}
\mathcal{L}(f)(\mathbf x_c):=\lim_{r\rightarrow+0}\frac{1}{|\Omega_r(\mathbf x_c)|}\int_{\Omega_r(\mathbf x_c)} f(\mathbf x)\ {\rm d} \mathbf x
\end{equation}
if the limit exists. It is easy to see that if $f(\mathbf x)$ is continuous in $\overline{\Omega_{\tau_0}(\mathbf x_c)}$ for a sufficiently small $\tau_0\in\mathbb{R}_+$, then $\mathcal{L}(f)(\mathbf x_c)=f(\mathbf x_c)$. 

\begin{theorem}\label{thm:uniqueness2}
Let $(\Omega, \eta)$ be an admissible complex rational obstacle of degree $p\geq 3$. Let $k\in\mathbb{R}_+$ be fixed and $\mathbf{d}_\ell$, $\ell=1, 2$ be two distinct incident directions from $\mathbb{S}^1$. Set $u_\ell(\mathbf  x)=u(\mathbf x; k, \mathbf{d}_\ell)$ to be the total wave fields associated with $(\Omega, \eta)$ and $e^{\mathrm{i}k\mathbf x\cdot\mathbf{d}_\ell}$, $\ell=1, 2$, respectively. \red{ Recall that $\mathbf{G}$ denotes the unbounded connected component of $\mathbb{R}^2\backslash\overline{(\Omega\cup\widetilde\Omega)}$.} If the following condition is fulfilled, 
\begin{equation}\label{eq:cc1}
\mathcal{L}\left( u_2\cdot\nabla u_1-u_1\cdot\nabla u_2\right)(\mathbf x_c)\neq 0,
\end{equation}
 where $\mathbf x_c$ is any vertex of $\Omega$, \red{then one has that
 $$
\left(\partial \Omega \backslash \partial \overline{ \widetilde{\Omega }} \right  )\cup \left(\partial \widetilde{\Omega } \backslash \partial \overline{ \Omega } \right)
$$
%\begin{equation}\label{eq:nn3}
%\Omega \Delta   \widetilde  \Omega:=\big(\Omega \backslash \widetilde \Omega \big)\cup \big(\widetilde \Omega \backslash \Omega \big)
%\end{equation}
cannot have a corner on  $\partial \mathbf{G}$.}

\end{theorem}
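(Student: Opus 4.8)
The plan is to argue by contradiction, following the scheme of the proof of Theorem~\ref{thm:uniqueness1} but replacing the infinite-vanishing-order input (Sections~\ref{sec:irra} and \ref{sec:irrational gene}) by the finite-vanishing-order results for rational angles established in Section~\ref{sec:3}. Suppose the two far-field patterns coincide for $\ell=1,2$, yet $(\partial\Omega\setminus\partial\overline{\widetilde\Omega})\cup(\partial\widetilde\Omega\setminus\partial\overline\Omega)$ possesses a corner $\mathbf x_c$ on $\partial\mathbf G$. As in Theorem~\ref{thm:uniqueness1}, $\mathbf x_c$ is an unshared vertex, so without loss of generality it is a vertex of $\widetilde\Omega$ lying outside $\overline\Omega$ (the case of a vertex of $\Omega$ is analogous); choose $h$ small so that $B_h(\mathbf x_c)\Subset\mathbb R^2\setminus\overline\Omega$ and $B_h(\mathbf x_c)\cap\partial\widetilde\Omega=\Gamma_h^\pm$. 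By the Rellich theorem and analytic continuation, $u_\ell=\widetilde u_\ell$ throughout $\mathbf G$, so the generalized impedance condition on $\partial\widetilde\Omega$ transfers to the $\Omega$-fields: $\partial_\nu u_\ell+\widetilde\eta u_\ell=0$ on $\Gamma_h^\pm$ for $\ell=1,2$. Since $\mathbf x_c\notin\overline\Omega$, both $u_1$ and $u_2$ are real-analytic Helmholtz solutions in the full disk $B_h(\mathbf x_c)$, so $\Gamma_h^\pm$ are genuine nodal, singular, or generalized singular lines of $u_\ell$ (according as $\widetilde\eta=\infty$, $\widetilde\eta=0$, or $\widetilde\eta$ is a nonzero constant on each edge), precisely in the sense of Section~\ref{sec:3}.

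Next I would build the distinguished linear combination. Because $\mathbf x_c\notin\overline\Omega$, the averaging domain $\Omega_r(\mathbf x_c)$ fills $B_r(\mathbf x_c)$ for small $r$, so $\mathcal L$ reduces to pointwise evaluation at $\mathbf x_c$; hence hypothesis \eqref{eq:cc1} forces $u_2(\mathbf x_c)\nabla u_1(\mathbf x_c)-u_1(\mathbf x_c)\nabla u_2(\mathbf x_c)\neq\mathbf{0}$, and in particular at least one of $u_1(\mathbf x_c),u_2(\mathbf x_c)$ is nonzero. Set $v=u_2(\mathbf x_c)\,u_1-u_1(\mathbf x_c)\,u_2$ in $B_h(\mathbf x_c)$, which again solves $-\Delta v=k^2v$ in $B_h(\mathbf x_c)$ and satisfies $\partial_\nu v+\widetilde\eta v=0$ on $\Gamma_h^\pm$. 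By construction $v(\mathbf x_c)=0$, while $\nabla v(\mathbf x_c)=u_2(\mathbf x_c)\nabla u_1(\mathbf x_c)-u_1(\mathbf x_c)\nabla u_2(\mathbf x_c)\neq\mathbf{0}$ by \eqref{eq:cc1}. Therefore $v\not\equiv0$, and since its lowest-order homogeneous Taylor term at $\mathbf x_c$ is linear, $\mathrm{Vani}(v;\mathbf x_c)\le 1$.

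The contradiction then comes from the rational-angle theorems. The corner angle of $\widetilde\Omega$ at $\mathbf x_c$ equals $\alpha\pi$ with $\alpha$ rational of degree at least $p\ge3$, because $p$ is the rational degree of the obstacle. Applying the theorem from Section~\ref{sec:3} matched to the types of the two edges meeting at $\mathbf x_c$ — Theorem~\ref{th:41}, \ref{th:47}, \ref{th:48}, \ref{Th:49}, \ref{th:410}, or \ref{Th:411} — to the Laplacian eigenfunction $v$ (which satisfies $v(\mathbf x_c)=0$), and using that a reduced denominator $\ge p\ge3$ excludes every forbidden small fraction such as $1/2$, one concludes that $v$ vanishes at $\mathbf x_c$ to order at least $3$, whence $\nabla v(\mathbf x_c)=\mathbf{0}$. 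This contradicts $\nabla v(\mathbf x_c)\neq\mathbf{0}$ established above, completing the argument.

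I expect the main obstacle to be the bookkeeping that makes the final step uniform. One must verify, case by case over the mixed boundary types on the two edges (nodal, singular, or generalized singular with constant impedance), that the relevant Section~\ref{sec:3} theorem genuinely applies to $v$ and that the degree bound $p\ge3$ rules out every excluded rational value of $\alpha$; here the half-integer exceptions appearing in Theorems~\ref{Th:49} and \ref{Th:411} (namely $\alpha=\tfrac{2q+1}{2p}$) deserve particular care, as those require $n\ge2$ rather than $n\ge3$. A secondary point is the reconciliation of the hypothesis, phrased through the exterior of $\Omega$, with $\mathbf x_c$ being a vertex of $\widetilde\Omega$: this is resolved by the identity $u_\ell=\widetilde u_\ell$ on $\mathbf G$, which renders $\mathcal L(\cdot)(\mathbf x_c)$ independent of whether the limiting sector is taken relative to $\Omega$ or $\widetilde\Omega$, so that \eqref{eq:cc1} is well posed at the corner under consideration.
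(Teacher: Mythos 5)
Your proposal is correct in substance and follows essentially the same route as the paper's own proof: contradiction, transfer of the impedance condition to $u_\ell$ via Rellich's theorem and analytic continuation, the combination $v=u_2(\mathbf x_c)u_1-u_1(\mathbf x_c)u_2$ with $v(\mathbf x_c)=0$ and $\nabla v(\mathbf x_c)\neq 0$ from \eqref{eq:cc1}, and the Section~\ref{sec:3} vanishing-order results forcing $\nabla v(\mathbf x_c)=0$.

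Two small corrections are needed, both of which you partially anticipate. First, your assertion that the corner angle at $\mathbf x_c$ is rational of degree at least $p$ does not follow from Definition~\ref{def:r1}: a rational obstacle of degree $p\geq 3$ may still possess irrational corner angles (the degree is only the smallest degree among the \emph{rational} corners), so the case analysis must retain the irrational alternative — there one invokes the results of Sections~\ref{sec:irra} and \ref{sec:irrational gene} (using $v(\mathbf x_c)=0$) to get infinite vanishing order; the paper's proof states exactly this dichotomy ("either at an irrational angle or a rational angle of degree $p\geq 3$"). Second, your headline claim of vanishing order at least $3$ is not uniformly available: for the mixed nodal/generalized-singular cases (Theorems~\ref{Th:49} and \ref{Th:411}), a degree bound $\geq 3$ does not exclude the forbidden values $\alpha=1/4,\,3/4$ needed for $n=3$, so in those cases one can only take $n=2$ and conclude vanishing to order $\geq 2$ — but that is precisely what the paper claims ("$v$ is vanishing at least to second order") and is all the contradiction requires, since $\nabla v(\mathbf x_c)=0$ already follows from second-order vanishing. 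With these adjustments your argument coincides with the paper's proof.
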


\begin{proof}
We prove the theorem by contradiction. Assume that there exists an admissible complex rational obstacle of degree $p\geq 3$, $(\widetilde\Omega,\widetilde\eta)$, such that \eqref{eq:cond1} holds but $\left(\partial \Omega \backslash \partial \overline{ \widetilde{\Omega }} \right  )\cup \left(\partial \widetilde{\Omega } \backslash \partial \overline{ \Omega } \right)
$ has  a corner on  $\partial \mathbf{G}$. In what follows, we adopt the same notation as those introduced in the proof of Theorem~\ref{thm:uniqueness1}. Note that the total wave fields $\widetilde u_\ell$, $\ell=1, 2$, associated with $(\widetilde\Omega,\widetilde\eta)$, are also assumed to fulfil the condition \eqref{eq:cc1}. 

By following a similar argument to the proof of Theorem~\ref{thm:uniqueness1}, one can show that there exist two line segments $\Gamma_h^\pm$ in $\mathbb{R}^2\backslash\overline{\Omega}$ such that $\partial_\nu u+\widetilde\eta u=0$ on $\Gamma_h^\pm$, and $\Gamma_h^+$ and $\Gamma_h^-$ intersect at a point $x_c$ which is a vertex of $\widetilde\Omega$. Using the fact that $u=\widetilde u$ near $\mathbf x_c$ and the condition \eqref{eq:cc1} on $(\widetilde\Omega,\widetilde\eta)$, we actually have
\begin{equation}\label{eq:cc2}
u(\mathbf x_c; \mathbf{d}_2)\cdot\nabla u(\mathbf x_c; \mathbf{d}_1)-u(\mathbf x_c; \mathbf{d}_1)\cdot\nabla u(\mathbf x_c; \mathbf{d}_2)\neq 0. 
\end{equation}
Clearly, \eqref{eq:cc2} implies that $\alpha_1:=u(\mathbf x_c; \mathbf{d}_2)$ and $\alpha_2=-u(\mathbf x_c; \mathbf{d}_1)$ cannot be identically zero. Set $v$ to be the one introduced in \eqref{eq:bb2}, then 
it clearly holds
\begin{equation}\label{eq:cc3}
v(\mathbf x_c)=0\quad\mbox{and}\quad \nabla v(\mathbf x_c)\neq 0. 
\end{equation}

Since $\widetilde\Omega$ is rational of degree $p\geq 3$, we know that the $\Gamma_h^+$ and $\Gamma_h^-$ intersect either at an irrational angle or a rational angle of degree $p\geq 3$. In either case, by our results in Sections~\ref{sec:irra}, \ref{sec:3} and \ref{sec:irrational gene}, we see that $v$ is vanishing at least to second order at $\mathbf x_c$. Hence, there holds $\nabla v(\mathbf x_c)=0$, which is a contradiction to \eqref{eq:cc3}. 
%
%The proof is complete. 
\end{proof}

\red{ Similar to Corollary \ref{co:84},  as a direct consequence of Theorem \ref{thm:uniqueness2}, under the condition \eqref{eq:cc1},  we next show that the convex hull of a complex rational obstacle of degree $p\geq 3$ can be uniquely determined by at most two far-field measurements.  Indeed  we have}
\begin{corollary}\label{co:87}
Let $(\Omega, \eta)$ be an admissible complex rational obstacle of degree $p\geq 3$. Let $k\in\mathbb{R}_+$ be fixed and $\mathbf{d}_\ell$, $\ell=1, 2$ be two distinct incident directions from $\mathbb{S}^1$. Set $u_\ell(\mathbf  x)=u(\mathbf x; k, \mathbf{d}_\ell)$ to be the total wave fields associated with $(\Omega, \eta)$ and $e^{\mathrm{i}k\mathbf x\cdot\mathbf{d}_\ell}$, $\ell=1, 2$, respectively.  If \eqref{eq:cc1}  is fulfilled, then	 $\mathcal{CH}(\Omega)$ is uniquely determined by $u_\infty(\hat{\mathbf x}, \mathbf{d}_\ell )$, $\ell=1,2$. Similar to Corollary \ref{co:84}, the boundary impedance  parameter $\eta$ can be partially identified as well.
\end{corollary}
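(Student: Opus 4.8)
The plan is to deduce this corollary from Theorem~\ref{thm:uniqueness2} in precisely the same manner that Corollary~\ref{co:84} is deduced from Theorem~\ref{thm:uniqueness1}, so that the content splits into a convex-hull recovery step and an impedance-identification step. First I would argue the convex-hull determination by contradiction. Suppose $(\widetilde\Omega,\widetilde\eta)$ is a second admissible complex rational obstacle of degree $p\geq 3$ whose total fields also satisfy the genericity condition \eqref{eq:cc1}, and which produces identical far-field data $u_\infty(\hat{\mathbf x},\mathbf{d}_\ell)=\widetilde u_\infty(\hat{\mathbf x},\mathbf{d}_\ell)$ for $\ell=1,2$. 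Theorem~\ref{thm:uniqueness2} then asserts that $\left(\partial\Omega\backslash\partial\overline{\widetilde\Omega}\right)\cup\left(\partial\widetilde\Omega\backslash\partial\overline{\Omega}\right)$ carries no corner on $\partial\mathbf{G}$, where $\mathbf{G}$ denotes the unbounded component of $\mathbb{R}^2\backslash\overline{(\Omega\cup\widetilde\Omega)}$.

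Next I would convert this ``no corner on $\partial\mathbf{G}$'' statement into the equality $\mathcal{CH}(\Omega)=\mathcal{CH}(\widetilde\Omega)$. The key geometric observation is that every extreme point of the convex hull of a polygonal obstacle is itself a vertex of that polygon and, being exposed, necessarily lies on the outer boundary $\partial\mathbf{G}$. If the two convex hulls differed, then one of them, say $\mathcal{CH}(\Omega)$, would possess an extreme point $\mathbf{x}_c$ lying strictly outside $\overline{\widetilde\Omega}$; such an $\mathbf{x}_c$ is simultaneously a vertex of $\Omega$, a corner belonging to $\partial\Omega\backslash\partial\overline{\widetilde\Omega}$, and a point of $\partial\mathbf{G}$, which contradicts the conclusion of Theorem~\ref{thm:uniqueness2}. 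Hence $\mathcal{CH}(\Omega)=\mathcal{CH}(\widetilde\Omega)=:\Sigma$, establishing the unique recovery of the convex hull.

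For the partial identification of $\eta$, I would repeat the argument of Corollary~\ref{co:84} essentially verbatim. Since $u=\widetilde u$ throughout $\mathbb{R}^2\backslash\overline{\Sigma}$, on any open subset $\mathcal{E}\subset\partial\Omega\cap\partial\widetilde\Omega\cap\partial\Sigma$ on which $\eta\neq\widetilde\eta$ one obtains, after shrinking $\mathcal{E}$ so that both $\eta$ and $\widetilde\eta$ are constant or $\infty$ there, the four relations $\partial_\nu u+\eta u=0$, $\partial_\nu\widetilde u+\widetilde\eta\widetilde u=0$, $u=\widetilde u$ and $\partial_\nu u=\partial_\nu\widetilde u$ on $\mathcal{E}$. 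Solving this overdetermined system forces $u=\partial_\nu u=0$ on $\mathcal{E}$, whence Holmgren's uniqueness theorem yields $u\equiv 0$ in $\mathbb{R}^2\backslash\overline{\Omega}$, contradicting the normalization $\lim_{|\mathbf{x}|\rightarrow\infty}|u|=1$ that follows from \eqref{eq:far}. Thus $\eta=\widetilde\eta$ on $\partial\Omega\cap\partial\widetilde\Omega\cap\partial\Sigma$.

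The main obstacle I anticipate is the second paragraph: making rigorous the passage from the purely local statement of Theorem~\ref{thm:uniqueness2} to the global geometric conclusion $\mathcal{CH}(\Omega)=\mathcal{CH}(\widetilde\Omega)$. One must verify that the relevant extreme points of the convex hulls are genuinely exposed on the unbounded component $\partial\mathbf{G}$ rather than screened off inside $\Omega\cup\widetilde\Omega$, and that the degree-$(p\geq 3)$ hypothesis is inherited by the corner situated at such a point, so that Theorem~\ref{thm:uniqueness2} is indeed applicable there. By contrast, both the invocation of Theorem~\ref{thm:uniqueness2} and the impedance step are mechanical once this geometric reduction is secured.
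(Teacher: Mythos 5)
Your proposal is correct and follows essentially the same route as the paper: the paper states Corollary~\ref{co:87} without a separate proof, treating the convex-hull equality as an immediate consequence of Theorem~\ref{thm:uniqueness2} (just as \eqref{eq:cond2} follows from Theorem~\ref{thm:uniqueness1} in Corollary~\ref{co:84}) and the impedance identification as a verbatim repetition of the Holmgren argument in Corollary~\ref{co:84}. Your second paragraph merely makes explicit the extreme-point argument that the paper leaves implicit — if the hulls differed, some extreme point of one hull would be a vertex of the corresponding polygon lying strictly outside the other obstacle and accessible from infinity, hence a corner on $\partial\mathbf{G}$ — and this verification, including the inheritance of the degree-$p\geq 3$ hypothesis at such a corner, is sound.
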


\begin{remark}\label{remark:86}

As mentioned earlier that a general rational obstacle is at least of order $2$. By Remark~\ref{re:38}, we can easily extend the proof of Theorem~\ref{thm:uniqueness2} to cover the general case that $p=2$. However, as discussed in Remark \ref{re:38}, we need to exclude the case that $\eta\equiv\infty$ and $\eta$ is a finite number (possibly being zero) respectively on the two intersecting line segments $\Gamma_h^\pm$ (as appeared in the proof of Theorem~\ref{thm:uniqueness2}). 
\end{remark}

\begin{remark}\label{rem:uniq2}
Similar to Remark~\ref{rem:uniq1}, the condition \eqref{eq:cc1} can be relaxed to hold only at any vertex in $\mathcal{V}(\mathcal{CH}(\Omega))$ in Theorem \ref{thm:uniqueness2} and \red{Corollary \ref{co:87}}. Furthermore, since in the proof of Theorem~\ref{thm:uniqueness2}, we only make use of the vanishing up to the second order. By our results in Section~\ref{sec:pro1}, we know that Theorem~\ref{thm:uniqueness2} actually holds for a more general case where the surface impedance $\eta$ can be a $C^1$ function. 
\end{remark}

It would be interesting to investigate the sufficient conditions for \eqref{eq:cc1} to hold. If the obstacle $\Omega$ is sufficiently small compared to the wavelength, namely $k\cdot\mathrm{diam}(\Omega)\ll 1$, 
then from a physical viewpoint, the scattered wave field due to the obstacle is of a much smaller magnitude than the incident field, 
\mm{and the incident plane wave dominates in the total wave field $u=u^i+u^s$.
In such a case, it is straightforward to derive \eqref{eq:cc1}. However, we shall not explore more 
about this point. Finally, we also like to point out that our arguments for the uniqueness results in Theorems~\ref{thm:uniqueness1} and \ref{thm:uniqueness2} are ``localized" around the corner point $x_c$. Therefore one may consider other different types of wave incidences from the incident plane wave \eqref{eq:pw}, e.g., the point source of the form, 
\begin{equation}\label{eq:ps}
u^i(\mathbf x; \mathbf z_0)=H_0^1(k|\mathbf x-\mathbf z_0|),\  \mathbf x, \mathbf z_0\in\mathbb{R}^2,
\end{equation}
where $H_0^1$ is the zeroth-order Hankel function of the first kind, and 
$\mathbf z_0$ signifies the location of the source $u^i(\mathbf x, \mathbf z_0)$. 
$u^i(\mathbf x; \mathbf z_0)$ blows up at the point $\mathbf z_0$. By direct verifications, we can 
show that both the uniqueness results in Theorem~\ref{thm:uniqueness1} and \ref{thm:uniqueness2} 
still hold for this point source incidence.}

\subsection{Unique recovery for the inverse diffraction grating problem}

In this subsection, we consider the unique recovery for the inverse diffraction grating problem. First we give a brief review of the basic mathematical model for this inverse problem.   
Let the profile of a diffraction grating be described by the curve
$$
\Lambda_f=\{(x_1,x_2) \in \R^2 ;~x_2 =f(x_1)\},
$$
where $f$ is a periodic Lipschitz function with period $2\pi$. Let
$$
\Omega_f=\{ \mathbf  x\in \R^2; x_2 > f(x_1), x_1 \in \R\}
$$
be filled with a material whose index of refraction (or wave number) $k$ is a positive constant. Suppose further that the incident  wave given by 
\begin{equation}
	u^i(\mathbf x;k, \mathbf d)=e^{\bsi k \mathbf d \cdot \mathbf x},\quad \mathbf d=(\sin \theta, -\cos \theta)^\top ,\quad \theta \in \left( - \frac{\pi}{2} , \frac{\pi}{2} \right),
\end{equation}
propagates to $\Lambda_f$ from the top. Then the total wave satisfies the following Helmholtz system:
\begin{equation}\label{eq:76}
	\Delta u+k^2 u=0\hspace*{0,5cm} \mbox{in}\ \ \Omega_f; \quad  {\mathcal B}(u)\big|_{\Lambda _f}=0 \hspace*{0,5cm}  \mbox{on} \ \ \Lambda_f, 
	\end{equation}
with the generalized impedance boundary condition
\begin{equation}\label{eq:gbc}
\mathcal{B}(u)=\partial_\nu u+\eta u=0\quad\mbox{on}\ \ \partial\Omega,
\end{equation}
where $\eta$ can be $\infty$ or $0$, corresponding to a sound-soft or sound-hard grating, respectively.

To achieve uniqueness of \eqref{eq:76}, the total wave field $u$ should be $\alpha$-quasiperiodic in the  $x_1$-direction, with $\alpha=k\sin \theta$, which means that 
$$
u(x_1+2\pi, x_2)=e^{2\bsi \alpha \pi}\cdot u(x_1,x_2),
$$
and the scattered field $u^s$ satisfies  the Rayleigh expansion  (cf.\cite{millar69,millar73}):
\begin{align*}
		u^s(\mathbf x;k,\mathbf d )&=\sum_{n=-\infty}^{+\infty } u_n e^{\bsi { \xi}_n (\theta ) \cdot \mathbf  x } \quad \mbox{for} \quad x_2 > \max_{x_1\in [0, 2\pi]} f(x_1),
			\end{align*}
\mm{
where $u_n\in \C(n\in \mathbb Z)$ are called the Rayleigh coefficient of $u^s$, and} 
\begin{equation}\label{eq:839n}
\begin{split}
	\xi_n(\theta )&=\left(\alpha_n(\theta), \beta_n(\theta)\right)^\top, \quad \alpha_n(\theta )=n+k\sin \theta, \\
		 \beta_n(\theta )&=\left\{\begin{array}
			{c}
			\sqrt{k^2- \alpha_n^2 (\theta) }, \quad\mbox{ if } |\alpha_n (\theta )| \leq k\\
			\\[1pt]
			\bsi \sqrt{\alpha_n^2 (\theta)-k^2 }, \quad\mbox{ if } |\alpha_n (\theta)| > k
		\end{array}. \right. 
		\end{split}
\end{equation}

The existence and uniqueness of the $\alpha$-quasiperiodic solution to \eqref{eq:76} for the sound-soft or impedance  boundary condition with $\eta\in \C$ being a constant satisfying $\Im  (\eta) >0$ can be found in \cite{alber,cadilhac,kirsch93,kirsch94}.  It should be pointed out that the uniqueness of the direct scattering problem associated with the sound-hard condition is not always true (see \cite{kamotski}). In our subsequent study, we assume the well-posedness of the forward scattering problem and focus on the study of the inverse grating problem. 

Introduce a measurement  boundary as
 $$
 \Gamma_b:=\{ (x_1,b)\in \R^2 ;~ 0 \leq x_1 \leq 2 \pi, \, b> \max_{x_1\in [0, 2\pi]}|f(x_1)|  \}.
 $$
\mm{
The inverse diffraction grating problem is to determine $(\Lambda_f,\eta)$ from the knowledge of $u(\mathbf x|_{\Gamma_b};k,\mathbf d)$, and 
can be formulated as the operator equation:
\begin{equation}\label{eq:iopd}
%\mathcal{F}: \, u_b(\mathbf x;k,\mathbf d) \rightarrow(\Lambda_b,\eta) , \quad  \mathbf x\in \Gamma_b. 
\mathcal{F}(\Lambda_f,\eta )=u(\mathbf x;k,\mathbf d), \quad  \mathbf x\in \Gamma_b,
\end{equation}
where $\mathcal{F}$ is defined by the forward diffraction scattering system, and is nonlinear.
}

The unique recovery result on the inverse  diffraction  grating problem with the sound-soft boundary condition by  a finite number of incident plane waves  can be found in \cite{kirsch94,kirsch97}. 
\mm{
But the unique identifiability still open for the impedance or generalized impedance cases, and 
will be the focus of the remaining task in this work. 
To do so, 
%Similar to Definition \ref{def:r1}, 
we propose the following admissible polygonal gratings associated with 
the inverse diffraction grating problem.
}
%which is shown to be uniquely determined by at most two incident waves with one measurement on $\Gamma_b$. 

\begin{definition}\label{def:dd1}
Let $(\Lambda_f, \eta)$ be a periodic grating as described above. If $f$ is a piecewise linear polynomial within one period, and on each piece of $\Lambda_f$, $\eta$ is either a constant (possibly zero) or $\infty$, 
then $(\Lambda_f, \eta)$ is said to be an admissible polygonal grating. 
\end{definition}

\begin{definition}\label{def:r1g}
Let $(\Lambda_f, \eta)$ be an admissible polygonal grating. Let $\Gamma^+$ and $\Gamma^-$ be two adjacent pieces of $\Lambda_f$. The intersecting point of  $\Gamma^+$ and $\Gamma^-$ is called a corner point of $\Lambda_f$, and $\angle(\Gamma^+,\Gamma^-)$ is called a corner angle. If all the corner angles of $\Lambda_f$ are irrational, then it is said to be an \emph{irrational polygonal grating}. If a corner angle 
of $\Lambda_f$ is rational, it is called a \emph{rational polygonal grating}. The smallest degree of the rational corner angles of $\Lambda_f$ is referred to as the \emph{rational degree} of $\Lambda_f$. 
\end{definition}

Clearly for a rational polygonal grating $\Lambda_f$ in Definition \ref{def:r1g}, the rational degree of $\Lambda_f$ is at least $2$. Next, we establish our uniqueness result in determining an admissible irrational polygonal grating by at most two incident waves. We first present a useful lemma, 
\mm{whose proof follows from a completely similar argument to that of \cite[Theorem 5.1]{CK}.}

%Before that we state the following lemma regarding the linear independence of $e^{\bsi k  \mathbf d_{\ell} \cdot \mathbf x} $, which can directly be obtained from the proof of \cite[Theorem 5.1]{CK}.

\begin{lemma}\label{lem:83}
	Let $\mathbf \xi_{\ell}\in\mathbb{R}^2$, $\ell=1,\ldots, n$, be $n$ vectors 
	which are distinct from each other, $D$ be an open set in $\mathbb{R}^2$. 
	\mm{Then all the functions in the following set are linearly independent:}
	$$
	\{e^{\bsi \mathbf \xi_{\ell} \cdot \mathbf x} ;~\mathbf x \in D, \ \ \ell=1,2,\ldots, n \}
	$$
\end{lemma}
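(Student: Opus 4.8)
The plan is to reduce the two-dimensional identity to a one-variable exponential sum and then conclude via a Vandermonde determinant. First I would suppose that a vanishing linear combination exists, i.e. that there are constants $c_1,\ldots,c_n\in\C$ with
\[
\sum_{\ell=1}^n c_\ell\, e^{\bsi \mathbf{\xi}_\ell\cdot\mathbf{x}}=0,\quad \mathbf{x}\in D,
\]
and aim to show $c_1=\cdots=c_n=0$. Since each map $\mathbf{x}\mapsto e^{\bsi\mathbf{\xi}_\ell\cdot\mathbf{x}}$ is the restriction to $\R^2$ of an entire function, the left-hand side is real-analytic on $\R^2$; as it vanishes on the open set $D$, analytic continuation shows that the identity in fact holds for every $\mathbf{x}\in\R^2$.

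Next I would choose a unit direction $\mathbf{a}\in\mathbb{S}^1$ for which the scalar projections $\lambda_\ell:=\mathbf{\xi}_\ell\cdot\mathbf{a}$ are pairwise distinct. This is possible precisely because the $\mathbf{\xi}_\ell$ are distinct: for each pair $\ell\neq m$ the bad set $\{\mathbf{a};\,(\mathbf{\xi}_\ell-\mathbf{\xi}_m)\cdot\mathbf{a}=0\}$ is a single line through the origin, so the finitely many exceptional directions form a null set and an admissible $\mathbf{a}$ exists. Restricting the identity to the line $\mathbf{x}=t\mathbf{a}$, $t\in\R$, yields the one-dimensional relation
\[
\sum_{\ell=1}^n c_\ell\, e^{\bsi\lambda_\ell t}=0,\quad t\in\R,
\]
in which the frequencies $\lambda_1,\ldots,\lambda_n$ are now pairwise distinct.

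Finally I would extract the coefficients by differentiating this relation $k$ times in $t$ and evaluating at $t=0$, which gives the linear system
\[
\sum_{\ell=1}^n c_\ell\,(\bsi\lambda_\ell)^k=0,\quad k=0,1,\ldots,n-1.
\]
The coefficient matrix $\big[(\bsi\lambda_\ell)^k\big]_{0\le k\le n-1,\,1\le\ell\le n}$ is a Vandermonde matrix with nodes $\bsi\lambda_1,\ldots,\bsi\lambda_n$, whose determinant $\prod_{\ell<m}(\bsi\lambda_m-\bsi\lambda_\ell)$ is nonzero exactly because the $\lambda_\ell$ are distinct. Hence $c_\ell=0$ for all $\ell$, establishing the claimed linear independence. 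The only point demanding genuine care is the selection of the direction $\mathbf{a}$ rendering the projected frequencies distinct, since this is what permits the clean reduction to the classical one-variable Vandermonde argument; the analytic-continuation and repeated-differentiation steps are then routine.
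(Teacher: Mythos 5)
Your proof is correct, and it takes a genuinely different route from the one the paper invokes. The paper proves Lemma \ref{lem:83} simply by appealing to the argument of \cite[Theorem 5.1]{CK}: after the same initial analytic-continuation step extending the identity from $D$ to all of $\mathbb{R}^2$, the Colton--Kress argument fixes an index $m$, multiplies the vanishing sum by $e^{-\bsi \mathbf{\xi}_m\cdot \mathbf{x}}$, and averages over balls $B_R$, using that $\frac{1}{|B_R|}\int_{B_R} e^{\bsi \boldsymbol{\eta}\cdot\mathbf{x}}\,\mathrm{d}\mathbf{x}\to 0$ as $R\to\infty$ for every $\boldsymbol{\eta}\neq \mathbf{0}$, so each $c_m$ is recovered as a limit of mean values. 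You instead project onto a generic direction $\mathbf{a}$ (valid, since each pair $\mathbf{\xi}_\ell\neq\mathbf{\xi}_m$ excludes only two antipodal points of $\mathbb{S}^1$, leaving finitely many forbidden directions), reducing to a one-variable exponential sum with pairwise distinct frequencies, and then kill the coefficients by differentiating $n-1$ times at $t=0$ and invoking the nonvanishing of a Vandermonde determinant. Both proofs are sound; the averaging argument buys dimension-independence with no genericity choice and extends to mean-value (almost-periodic) settings, while your reduction is more elementary and self-contained, requiring only finite linear algebra --- indeed it could even bypass the two-dimensional continuation step, since restricting to a short segment through a point $\mathbf{x}_0\in D$ in the direction $\mathbf{a}$ already yields a one-variable exponential sum vanishing on an interval, hence identically, giving $c_\ell e^{\bsi\mathbf{\xi}_\ell\cdot\mathbf{x}_0}=0$ by the same Vandermonde computation.
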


\begin{theorem}\label{thm:uniqueness1g}
Let $(\Lambda_f, \eta)$ and $(\Lambda_{\widetilde f}, \widetilde\eta)$ be two admissible irrational polygonal gratings, 
\mm{
$\mathbf{G}$ be the unbounded connected component of $\Omega_f\cap \Omega_{\widetilde f}$. 
}
Let $k\in\mathbb{R}_+$ be fixed and $\mathbf{d}_\ell$, $\ell=1, 2$ be two distinct incident directions from $\mathbb{S}^1$, with 
\begin{equation}\label{eq:8incident}
\mathbf{d}_\ell=(\sin \theta_\ell, -\cos \theta_\ell)^\top ,\quad \theta_\ell  \in \left( - \frac{\pi}{2} , \frac{\pi}{2} \right). 
\end{equation}
Let $u_b$ and $\widetilde{u}_b$ be the measurements on $\Gamma_b$ associated with $(\Lambda_f, \eta)$ and $(\widetilde\Lambda_{f} , \widetilde\eta)$, respectively. 
\mm{If it holds that}
\begin{equation}\label{eq:836}
	u(\mathbf  x;k,\mathbf d_\ell )=\widetilde u(\mathbf x;k,\mathbf  d_\ell ),\quad \ell=1,2,\quad  \mathbf x=(x_1, b) \in \Gamma_b, 
\end{equation}
\mm{
then it cannot be true that} there exists a corner point of $\Lambda_{f}$ lying on $\partial\mathbf{G}\backslash\partial\Lambda_{\widetilde f}$, or a corner point of $\Lambda_{\widetilde f}$ lying on $\partial\mathbf{G}\backslash \partial \Lambda_{f}$.

%
%%\begin{equation}\label{eq:cond1}
%%u_\infty(\hat x, \mathbf{d}_\ell )=\widetilde u_\infty(\hat x, \mathbf{d}_\ell), \ \ \hat x\in\mathbb{S}^1, \ell=1, 2,
%%\end{equation}
%then one has
%\begin{equation}\label{eq:cond21}
%\Lambda_f =\Lambda_{\widetilde f} :=\Sigma,
%\end{equation}
%and
%\begin{equation}\label{eq:cond31}
%\eta=\widetilde\eta\ \ \mbox{on}\ \  \partial\Sigma. 
%\end{equation}
\end{theorem}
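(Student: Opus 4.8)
The plan is to argue by contradiction, following closely the scheme used for the inverse obstacle problem in the proof of Theorem~\ref{thm:uniqueness1}, but replacing the Rellich lemma and the behaviour at infinity by their quasiperiodic counterparts. Suppose \eqref{eq:836} holds and yet, say, a corner point $\mathbf{x}_c$ of $\Lambda_{\widetilde f}$ lies on $\partial\mathbf{G}\backslash\partial\Lambda_f$, the case of a corner of $\Lambda_f$ on $\partial\mathbf{G}\backslash\partial\Lambda_{\widetilde f}$ being entirely symmetric. Since $\mathbf{x}_c\notin\partial\Lambda_f$, I would pick $h\in\R_+$ small enough that $B_h(\mathbf{x}_c)\Subset\Omega_f$ and $B_h(\mathbf{x}_c)\cap\Lambda_{\widetilde f}=\Gamma_h^+\cup\Gamma_h^-$, where $\Gamma_h^\pm$ are the two segments lying on the two adjacent pieces of $\Lambda_{\widetilde f}$ meeting at $\mathbf{x}_c$. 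The first step is to upgrade the data identity \eqref{eq:836} on $\Gamma_b$ into the pointwise identity $u(\cdot\,;k,\mathbf{d}_\ell)=\widetilde u(\cdot\,;k,\mathbf{d}_\ell)$ throughout $\mathbf{G}$. Because the two incident fields coincide, the scattered fields agree on $\Gamma_b$; matching the Rayleigh coefficients $u_n$, which are uniquely read off from the Fourier coefficients of the $\alpha$-quasiperiodic trace on $\Gamma_b$, yields $u^s=\widetilde u^s$ for $x_2>b$, and then the unique continuation principle for the Helmholtz equation, applied to the difference $u^s-\widetilde u^s$ in the connected set $\mathbf{G}$, gives $u=\widetilde u$ in $\mathbf{G}$.

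Once this is in hand, since $\Gamma_h^\pm\subset\partial\mathbf{G}$ lie on $\Lambda_{\widetilde f}$, the generalized impedance condition for $\widetilde u$ transfers to $u$, so that $\partial_\nu u+\widetilde\eta u=0$ on $\Gamma_h^\pm$. Moreover $\Delta u+k^2u=0$ in the full disk $B_h(\mathbf{x}_c)$, since $B_h(\mathbf{x}_c)\Subset\Omega_f$ contains no part of $\partial\Omega_f$; thus $u$ is a Laplacian eigenfunction in the sense of \eqref{eq:eig} with $\lambda=k^2$, to which all the spectral results of Sections~\ref{sec:irra} and \ref{sec:irrational gene} apply, with $\Gamma_h^\pm$ playing the role of two homogeneous lines. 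Because $\Lambda_{\widetilde f}$ is an irrational grating, $\angle(\Gamma_h^+,\Gamma_h^-)$ is an irrational angle, and $\Gamma_h^\pm$ are two lines of the nodal, singular, or generalized singular type, according to whether $\widetilde\eta$ equals $\infty$, $0$, or a nonzero constant on the respective piece. In every such configuration the irrational-intersection theorems assert that, whenever the eigenfunction vanishes at the intersecting point, it vanishes there to infinite order and hence is identically zero near $\mathbf{x}_c$.

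I would then split into two cases exactly as in Theorem~\ref{thm:uniqueness1}. In Case~1, suppose $u(\mathbf{x}_c;k,\mathbf{d}_\ell)=0$ for some $\ell$. The infinite vanishing order forces $u(\cdot\,;k,\mathbf{d}_\ell)\equiv0$ in $B_h(\mathbf{x}_c)$, and by analytic continuation $u(\cdot\,;k,\mathbf{d}_\ell)\equiv0$ in $\Omega_f$; in particular it vanishes for $x_2>b$, where $u=u^i+u^s$. This is impossible: the incident mode $e^{\bsi k\mathbf{d}_\ell\cdot\mathbf{x}}$ carries the downward wave vector $(k\sin\theta_\ell,-k\cos\theta_\ell)$ with $-k\cos\theta_\ell<0$, whereas every Rayleigh mode of $u^s$ has wave vector $\xi_n(\theta_\ell)$ with $\beta_n\geq0$ or $\Im\beta_n>0$, so these exponentials are pairwise distinct and, by Lemma~\ref{lem:83}, linearly independent, precluding $u^i+u^s\equiv0$. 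In Case~2, with $u(\mathbf{x}_c;k,\mathbf{d}_1)\neq0$ and $u(\mathbf{x}_c;k,\mathbf{d}_2)\neq0$, I set $\alpha_1=u(\mathbf{x}_c;k,\mathbf{d}_2)$, $\alpha_2=-u(\mathbf{x}_c;k,\mathbf{d}_1)$ and $v=\alpha_1u(\cdot\,;k,\mathbf{d}_1)+\alpha_2u(\cdot\,;k,\mathbf{d}_2)$, so that $v(\mathbf{x}_c)=0$, $\Delta v+k^2v=0$ in $B_h(\mathbf{x}_c)$, and $\partial_\nu v+\widetilde\eta v=0$ on $\Gamma_h^\pm$. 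The infinite vanishing order again gives $v\equiv0$ in $\Omega_f$. But $\mathbf{d}_1\neq\mathbf{d}_2$, so $u(\cdot\,;k,\mathbf{d}_1)$ and $u(\cdot\,;k,\mathbf{d}_2)$ carry distinct quasiperiodicities $k\sin\theta_1\neq k\sin\theta_2$ and hence involve disjoint families of exponential modes above $\Gamma_b$; Lemma~\ref{lem:83} then forces $\alpha_1=\alpha_2=0$, contradicting their being nonzero. Either way we reach a contradiction, which proves the claim.

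The hard part, I expect, is the first step: the rigorous passage from the boundary data on $\Gamma_b$ to the identity $u=\widetilde u$ in $\mathbf{G}$, which is the grating substitute for the Rellich lemma. This requires the well-posedness and the Rayleigh expansion of the $\alpha$-quasiperiodic scattered field, together with a unique continuation argument adapted to the periodic strip and a careful description of $\mathbf{G}$ near $\mathbf{x}_c$ so that the two segments $\Gamma_h^\pm$ genuinely belong to $\partial\mathbf{G}$. The two nontriviality arguments are also subtler than in the obstacle case, since there is no normalisation such as $|u|\to1$; one must instead exploit the opposite propagation directions of the incident and Rayleigh modes in Case~1 and the distinct quasiperiodicities in Case~2, in both instances reducing the contradiction to the linear independence of exponentials furnished by Lemma~\ref{lem:83}.
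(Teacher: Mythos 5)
Your proposal is correct and takes essentially the same route as the paper's own proof: pass from the data on $\Gamma_b$ to $u=\widetilde u$ in $\mathbf{G}$ via the Rayleigh expansion and analytic continuation, transfer the impedance condition to the two segments at the corner, invoke the irrational-angle infinite-vanishing theorems of Sections~\ref{sec:irra} and \ref{sec:irrational gene}, and dispose of the two cases (some $u(\mathbf{x}_c;k,\mathbf{d}_\ell)=0$, or the linear combination $v$) by the linear independence of exponentials in Lemma~\ref{lem:83}. One small caveat, which the paper itself glosses over in the same way: the two Rayleigh families need not be disjoint when $k(\sin\theta_1-\sin\theta_2)\in\mathbb{Z}$, but the contradiction only requires that $k\mathbf{d}_1$ and $k\mathbf{d}_2$, whose second components are strictly negative, are distinct from each other and from every $\xi_n(\theta_\ell)$, which is precisely what your propagation-direction observation supplies.
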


\begin{proof}
The proof follows from a similar argument to that for Theorem~\ref{thm:uniqueness1}, and 
we only sketch the necessary modifications in this new setup. 
By contradiction and without loss of generality, we assume that there exists a corner point $\mathbf{x}_c$ of $\Lambda_f$ which lies on $\partial\mathbf{G}\backslash\Lambda_{\widetilde f}$. 

First, by using the well-posedness of the forward problem and analytic continuation, we have from \eqref{eq:836} that $u(\mathbf  x;k,\mathbf d_\ell )=\widetilde u(\mathbf x;k,\mathbf  d_\ell )$ holds for $\mathbf{x}\in\mathbf{G}$. Using a similar argument to the proof of Theorem \ref{thm:uniqueness1}, we can prove that
	$$
	u(\mathbf x; k,\mathbf d_\ell) = 0 \quad \mbox{ or}\quad  v (\mathbf x) = 0 \quad \mbox{for} \quad x_2 > \max_{x_1\in [0, 2\pi]} f(x_1), %\quad \forall \mathbf x \in  \Omega_f,
	$$
	where $v$ is similarly defined to \eqref{eq:bb2} and \eqref{eq:bb1}. 
	Next, when $x_2  > \max_{x_1\in [0, 2\pi]}|f(x_1)| $, $u(\mathbf x;k, \mathbf d_\ell  )$ has the Rayleigh expansion (cf.\cite{millar69,millar73}): 
	\begin{align}\label{eq:839}
		u(\mathbf x;k,\mathbf d_\ell )&=e^{\bsi k \mathbf d_\ell  \cdot \mathbf x}+ \sum_{n=-\infty}^{+\infty } u_n e^{\bsi { \xi}_n (\theta_\ell  ) \cdot \mathbf  x }\quad \mbox{for} \quad x_2 > \max_{x_1\in [0, 2\pi]} f(x_1),
		%\quad \xi_n(\theta_\ell  )=\left(\alpha_n(\theta_\ell), \beta_n(\theta_\ell)\right)^\top, \\
%		\alpha_n(\theta_\ell)&=n+k\sin \theta_\ell,\quad  \beta_n(\theta_\ell )=\left\{\begin{array}
%			{c}
%			\sqrt{k^2- \alpha_n^2 (\theta_\ell) }, \quad\mbox{ if } |\alpha_n (\theta_\ell)| \leq k\\
%			\\[1pt]
%			\bsi \sqrt{\alpha_n^2 (\theta_\ell)-k^2 }, \quad\mbox{ if } |\alpha_n (\theta_\ell)| > k
%		\end{array}. \right. \notag
	\end{align}
	where $\xi_n(\theta_\ell  )$, $\alpha_n(\theta_\ell)$, $\beta_n(\theta_\ell )$ are defined in \eqref{eq:839n}. Using the definition of $\alpha_0(\theta_\ell)$ and  $\beta_0(\theta_\ell) $ in  \eqref{eq:839n}, we can easily show that 
\begin{equation}\label{eq:840}
	k\mathbf d_{\ell}=(\alpha_0(\theta_\ell), -\beta_0(\theta_\ell) )^\top . 
\end{equation}		

Next, we consider two separate cases. 

\medskip

\noindent {\bf Case 1.}	Suppose that either $u(\mathbf x_c; k, \mathbf{d}_1)$ or $u(\mathbf x_c; k, \mathbf{d}_2)$ is zero. Without loss of generality, we assume the former case. Then
$$
u(\mathbf x;k, \mathbf d_1)=0\quad \mbox{for} \quad x_2 > \max_{x_1\in [0, 2\pi]} f(x_1). %\quad \forall \mathbf x\in \Omega_f
$$
Clearly any two vectors of $\{\xi_n (\theta_1 )~|~n\in \mathbb Z\}$ are distinct from each other.  Moreover,  in view of \eqref{eq:840}, $k \mathbf d_1 \notin \{\xi_n(\theta_1 ) ~|~n\in \mathbb Z\}$	since $|\theta_1|<\pi/2$. In view of \eqref{eq:839}, from Lemma \ref{lem:83} we can arrive at a contradiction.

\medskip 
	
\noindent {\bf Case 2.}~ Suppose that both $u(\mathbf x_c; k, \mathbf{d}_1)\neq 0$ and $u(\mathbf x_c; k, \mathbf{d}_2)\neq 0$. Then it holds that
\begin{equation}\label{eq:841}
\alpha_1 u(\mathbf x; k, \mathbf{d}_1)+\alpha_2 u(\mathbf x; k, \mathbf{d}_2)=0  \quad \mbox{for} \quad  x_2 > \max_{x_1\in [0, 2\pi]} f(x_1), %\quad\mbox{in}\ \ {\Omega}_f,
\end{equation}
where $\alpha_\ell  \neq 0$, $\ell=1,2$, are defined in \eqref{eq:bb1}. Substituting \eqref{eq:839} into \eqref{eq:841}, we derive that
\begin{align}\label{eq:842}
	\sum_{\ell=1}^2 \alpha_\ell  e^{\bsi k \mathbf d_\ell  \cdot \mathbf x}+ \sum_{n=-\infty}^{+\infty }  \sum_{\ell=1}^2u_n (\theta_\ell )\alpha_\ell  e^{\bsi { \xi}_n (\theta_\ell ) \cdot \mathbf  x }= 0 \quad \mbox{for} \quad  x_2 > \max_{x_1\in [0, 2\pi]} f(x_1), %\quad \mbox{ in}\ \ \Omega_f, 
\end{align}
where $u_n(\theta_\ell)\in \C(n\in \mathbb Z)$ are the  Rayleigh coefficients of $u^s(\mathbf x; k, \mathbf d_\ell)$ associated with the incident wave $e^{\bsi k \mathbf d_\ell \cdot \mathbf x}$. 
Clearly, any two vectors of the set 
$$
\{ k \mathbf d_1\}\bigcup\{ k \mathbf d_2\} \bigcup   \{\xi_n (\theta_1 )~|~n\in \mathbb Z\} \bigcup \{\xi_n (\theta_2 )~|~n\in \mathbb Z\} 
$$
are distinct since $|\theta_\ell| < \pi/2$ and \eqref{eq:840}. 
\mm{
Using Lemma \ref{lem:83} and 
\eqref{eq:842}, we can see $\alpha_\ell=0$ for $\ell=1,2$, which is a contradiction to $\alpha_\ell \neq 0$, $\ell=1,2$.}  
%Furthermore, \eqref{eq:cond31} can be proved in a similar way as  the proof of \eqref{eq:cond3}. 
%
%The proof is complete. 
\end{proof}

For the polygonal gratings, one can introduce a certain notion of ``convexity" in the sense that if two such gratings are different, then their difference must contain a corner point lying outside their union. Clearly, by Theorem~\ref{thm:uniqueness1g}, if a polygonal grating is ``convex", then both the grating and its surface impedance can be uniquely determined by at most two measurements.

%
%
%In Theorem \ref{thm:uniqueness2g}, we proceed to consider the unique determination of rational wedge gratings. The proof of this theorem is similar to the proof of Theorem \ref{thm:uniqueness2}, which is omitted.  

\mm{
As the result in Theorem~\ref{thm:uniqueness2} for the inverse obstacle problem, we may consider 
the unique determination of an admissible rational polygonal grating by two measurement if a similar condition to \eqref{eq:cc1} is introduced in this new setup. In such a case, 
one can establish the local unique recovery result, similar to Theorem~\ref{thm:uniqueness1g}.
}

\section*{Acknowledgement}

The work  of H Diao was supported in part by the Fundamental Research Funds for the Central Universities under the grant 2412017FZ007.  The work of H Liu was supported by the FRG fund from Hong Kong Baptist University and the Hong Kong RGC General Research Fund (projects 12301218 and 12302017).  The work of J Zou 
was supported by the Hong Kong RGC General Research Fund (project 14304517) and 
NSFC/Hong Kong RGC Joint Research Scheme 2016/17 (project N\_CUHK437/16).


\begin{thebibliography}{99}

%\bibitem{Ada}
%R.~A.~Adams and J.~J.~F.~Fournier, {\it Sobolev Spaces}, 2nd edition, Academic Press, Amsterdam, 2003.
%


\bibitem{alber}
H.-D. Alber, {\em A quasi-periodic boundary value problem for the {L}aplacian
  and the continuation of its resolvent}, Proc. Roy. Soc. Edinburgh Sect. A,
  {\bf 82} (1978/79), no.~3-4, 251--272.
  
  
\bibitem{alessandrini} 
G.~Alessandrini, {\it Nodal lines of eigenfunctions of the fixed membrane problem in general convex domains}, Comm. Math. Helv., {\bf 69} (1994), 142--154.
%
%
\bibitem{AR} G.~Alessandrini and L.~Rondi, {\it Determining a sound-soft polyhedral scatterer by a single far-field measurement}, Proc. Amer. Math. Soc., {\bf 35} (2005), 1685--1691.
%
%\bibitem{Amb-DM} L.~Ambrosio and G.~Dal Maso, {\it A general chain rule for distributional derivatives}, Proc. Amer. Math. Soc., {\bf 108} (1990), 691--702.
%
%\bibitem{Ammari2} {H.~Ammari, H.~Kang, H.~Lee and M.~Lim}, {\it Enhancement of approximate-cloaking using generalized polarization tensors vanishing structures. Part I: The conductivity problem}, Comm. Math. Phys., {\bf 317} (2013), 253--266.
%
%\bibitem{Ammari3} {H.~Ammari, H.~Kang, H.~Lee and M.~Lim}, {\it Enhancement of approximate-cloaking. Part II: The Helmholtz equation}, Comm. Math. Phys., {\bf 317} (2013), 485--502.

\bibitem{ashbaugh}
M. S. Ashbaugh and R. D. Benguria, {\it Isoperimetric inequalities for eigenvalues of the Laplacian,}  Proc. Symp. Pure Math., {\bf 76}  (2007), 105--139.

%\bibitem{Bal-et-al}
%J.~Ball, Y.~Capdeboscq and B.~Tsering-Xiao,
%{\it On uniqueness for time harmonic anisotropic Maxwell's equations with piecewise regular coefficients},
%Math. Models Methods Appl. Sci., {\bf 22} (2012), 1250036 (11pp). 



\bibitem{banuelos}

R. Banuelos and K. Burdzy, {\it On the ``hot spots'' conjecture of J. Rauch,}  J. Funct.
Anal.,  {\bf 164} (1999), 1--33.


\bibitem{bass}
R. F. Bass and K. Burdzy, {\it Fiber Brownian motion and the ``hot spots'' problem,} 
Duke Math. J., {\bf 105}  (2000), 25--58.



\bibitem{Bsource}
E.~{Bl{\aa}sten}, {\it Nonradiating sources and transmission eigenfunctions vanish at
  corners and edges}, SIAM J. Math. Anal., {\bf 50(6)} (2018), 6255--6270.

%\bibitem{BL2016}
%E. Bl{\aa}sten and H. Liu,  {\it On corners scattering stably, nearly non-scattering interrogating waves, and stable shape determination by a single far-field pattern},
%  arXiv:1611.03647.
%  
%  \bibitem{BL2017}
%E. Bl{\aa}sten and H. Liu, {\it Recovering piecewise constant refractive indices by a single far-field pattern}, arXiv:1705.00815.
%
%\bibitem{BL2017b} 
%E. Bl{\aa}sten and H. Liu, {\it On vanishing near corners of transmission eigenfunctions}, J. Funct. Anal., {\bf 273} (2017), no. 11, 3616--3632. Addendum, arXiv:1710.08089
%
%\bibitem{BLLW} E. Bl{\aa}sten, X. Li,H. Liu and Y. Wang, {On vanishing and localization near cusps of transmission
%    eigenfunctions: a numerical study}, Inverse Problems, {\bf 33} (2017), 105001. 
%
%\bibitem{BP} E. Bl{\aa}sten and L. P\"aiv\"arinta, {\it Completeness of generalized transmission eigenfunctions}, Inverse Problems, {\bf 29} (2013), 104002.
%
%\bibitem{BPS}
%  {E. Bl{\aa}sten, L. P\"aiv\"arinta and J. Sylvester},  {\it Corners always scatter},
%  Comm.\ Math.\ Phys., {\bf 331} (2014), 725--753.
%
%
%\bibitem{Blei} N. Bleistein and J.K. Cohen, {\it Nonuniqueness in the inverse source problem in
%acoustics and electromagnetics}, J. Math. Phys., {\bf 18} (1977), 194--201.
%
%\bibitem{Boh} D. Bohm and M. Weinstein, {\it The self-oscillations of a charged particle}, Phys.
%Rev., 74:1789--1798, 1948.
%
%\bibitem{Bru}
%R.~Brummelhuis,
%{\it Three-spheres theorem for second order elliptic equations},
%J. Anal. Math., {\bf 65} (1995), 179--206. 



\bibitem{bruning}
J. Br\"{u}ning, Uber Knoten von Eigenfunktionen des Laplace-Beltrami-Operators,
Math. Z., {\bf 158} (1978), 15--21.




%\bibitem{Buc-Var}
%D.~Bucur and N.~Varchon,
%{\it Stability of the Neumann problem for variations of boundary},
%C. R. Acad. Sci. Paris S\'er. I Math., {\bf 331} (2000), 371--374.
%
%\bibitem{Buc-Var2}
%D.~Bucur and N.~Varchon,
%{\it Boundary variation for a Neumann problem},
%Ann. Scuola Norm. Sup. Pisa Cl. Sci. (4), {\bf 29} (2000), 807--821.

\bibitem{burdzy05}
K. Burdzy, {\it The hot spots problem in planar domains with one hole}, Duke Math.
J.,  {\bf 129} (2005), 481--502.

\bibitem{burdzy96}
K. Burdzy, R. Holyst, D. Ingerman, and P. March, {\it Configurational transition in a Fleming-Viot-type model and probabilistic interpretation of Laplacian eigenfunctions}, J. Phys. A {\bf 29} (1996), 2633--2642.



\bibitem{burdzy00}
K. Burdzy, R. Holyst and P. March, {\it A Fleming-Viot Particle Representation of the Dirichlet Laplacian}, Comm. Math. Phys., {\bf 214 } (2000), 679--703.
\bibitem{burdzy99}
 K. Burdzy and W. Werner, {\it A counterexample to the ``hot spots'' conjecture,} 
Ann. Math., {\bf 149} (1999), 309--317.


\bibitem{cadilhac}
M~Cadilhac, {\em Some mathematical aspects of the grating theory},
  Electromagnetic Theory of Gratings, Springer, 1980, 53--62.
  
  
%\bibitem{CGH}
%F. Cakoni, D. Gintides and H. Haddar, {\it The existence of an infinite discrete set of transmission eigenvalues}, {SIAM J. Math. Anal.}, {\bf 42} (2010), 237--255.
%
%\bibitem{CHreview}
%F. Cakoni and H. Haddar, {\it Transmission eigenvalues in inverse scattering theory}, in ``Inverse Problems and Applications: Inside Out II'', Math. Sci. Res. Inst. Publ., Vol. 60,  pp. 529--580, Cambridge Univ. Press., Cambridge, 2013. 
%
%\bibitem{ChD}
%A.~Chambolle and F. Doveri, {\it Continuity of Neumann linear elliptic problems on varying two dimensional bounded open sets}, Comm. Partial Differential Equations, {\bf 22} (1997), 811--840.
%
\bibitem{CY}
J.~Cheng and M.~Yamamoto, {\it Uniqueness in an inverse scattering problem within non-trapping polygonal obstacles with at most two incoming waves},
Inverse Problems, {\bf 19} (2003), 1361--1384.


%\red{
%\bibitem{CY04}
%J.~Cheng and M.~Yamamoto, {\it Global uniqueness scattering problem within polygonal obstacles}, Chinese Ann. Math. Ser. B,  {\bf 25} (2004), no. 1, 1--6. 
%}

\bibitem{CK}
D.~Colton and R.~Kress, {\it Inverse Acoustic and Electromagnetic Scattering Theory}, 3rd edition, Springer-Verlag, Berlin, 2013.

\bibitem{CK18} D. Colton and R. Kress, {\it Looking back on inverse scattering theory}, SIAM Review, {\bf 60} (2018), no. 40, 779--807. 

%\bibitem{CKP} D. Colton, A. Kirsch and L. P\"aiv\"arinta, {\it Far-field patterns for acoustic waves in an inhomogeneous medium}, {SIAM J. Math. Anal.}, {\bf 20} (1989),1472--1483. 
%
%\bibitem{CM}
%D. Colton and P. Monk, {\it The inverse scattering problem for time-harmonic acoustic waves in an inhomogeneous medium}, Quart. J. Mech. Appl. Math., {\bf 41} (1988), 97--125. 
%
%\bibitem{CS}
%D. Colton and B. Sleeman, {\it Uniqueness theorems for the inverse problem of acoustic scattering}, IMA J.
%Appl. Math., {\bf 31} (1983), 253--259. 
%
%\bibitem{Cos}
%M.~Costabel, {\it A remark on the regularity of solutions of Maxwell's equations on Lipschitz domains}, Math. Methods Appl. Sci., {\bf 12} (1990), 365--368. 

\bibitem{CH}
R.~Courant and D.~Hilbert, {\it Methods of Mathematical Physics}, Vol. I, Interscience Publishers, New York, 1953.


%Methods of Mathematical Physics, Vol. I. By R. Courant and D. Hilbert. New York and London (Interscience Publishers). First English Edition, 1953. Pp. xv, 561; 27 Figs. 

%\bibitem{DM} G.~Dal Maso, {\it An Introduction to} $\Gamma$-{\it Convergence}, Birkh\"auser, Boston Basel Berlin, 1993.
%
%\bibitem{Dem}
%M.~N.~Demchenko, {\it On the nonuniqueness of the continuation of the solution of the Maxwell system}, Zap. Nauchn. Sem. S.-Peterburg. Otdel. Mat. Inst. Steklov. (POMI), {\bf 393} (2011), Matematicheskie Voprosy Teorii Rasprostraneniya Voln. 41, 80--100, 261 (Russian). Translation in
%J. Math. Sci. (N. Y.), {\bf 185} (2012), 554--566. 
%
%
%\bibitem{DLU15}
%{Y. Deng, H. Liu and G. Uhlmann}, {\it On regularized full- and partial-cloaks in acoustic scattering},  Comm. Partial Differential Equations, {\bf 42} (2017), no. 6, 821--851.
%
%\bibitem{Deva} A.J. Devaney and E. Wolf, {\it Radiating and nonradiating classical current distributions
%and the fields they generate}, Phys. Rev. D, {\bf 8} (1973), 1044--1047.
%
%\bibitem{Deva2} A.J. Devaney and E. Wolf, {\it Non-radiating stochastic scalar sources}, In L. Mandel
%and E. Wolf, editors, Coherence and Quantum Optics V, pages 417--421,
%New York, 1984. Plenum Press.
%
%\bibitem{Dru}
%P.-\'E.~Druet,
%{\it Higher $L^p$ regularity for vector fields that satisfy divergence and rotation constraints in dual Sobolev spaces, and application to some low-frequency Maxwell equations},
%Discrete Contin. Dyn. Syst. Ser. S, {\bf 8} (2015), 475--496. 
%
%%\bibitem{Bsource} E.~{Bl{\aa}sten}, {\it Nonradiating sources and transmission eigenfunctions vanish at corners and edges}, 
%%arXiv:1803.10917, 2018.
%
%\bibitem{Ehr} P. Ehrenfest, {\it Ungleichf\"ormige Elektrizit\"atsbewegungen ohne Magnet- und
%Strahlungsfeld}, Physik. Zeit., 11:708--709, 1910.
%
%\bibitem{EY1}
%J.~Elschner and M.~Yamamoto,
%{\it Uniqueness in determining polygonal sound-hard obstacles with a single incoming wave},
%Inverse Problems, {\bf 22} (2006), 355--364.
%
%\bibitem{EY2}
%J.~Elschner and M.~Yamamoto,
%{\it Uniqueness in determining polyhedral sound-hard obstacles with a single incoming wave},
%Inverse Problems, {\bf 24} (2008), 035004 (7pp).


\bibitem{faber}
G. Faber, {\it Beweis, dass unter allen homogenen Membranen von gleicher Flache und gleicher Spannung die kreisformige den tiefste, Grundton gibt, } Sitzungsber. Bayer. Akad. Wiss. Muchen, Math. Phys. Kl.,  (1923), 169--172.

\bibitem{fournais}
S. Fournais, {\it The nodal surface of the second eigenfunction of the Laplacian in $\mathbb R^D$ can be closed,}  J. Differential Equation,  {\bf 173} (2001), 145--159.

%\bibitem{Frie} F.G. Friedlander, {\it An inverse problem for radiation fields}, Proc. London Math.
%Soc., {\bf 2} (1973), 551--576, 1973.

\bibitem{freitas02}
P. Freitas, {\it Closed nodal lines and interior hot spots of the second eigenfunction of the Laplacian on surfaces}, Indiana Univ. Math. J., {\bf  51} (2002), 305--316.



\bibitem{freitas}
P. Freitas and D. Krej\v{c}i\v{r}\'{i}k, {\it Location of the nodal set for thin curved tubes},  Indiana Univ. Math. J.,  {\bf 57} (2008), 343--375.

\bibitem{freitas07}
P. Freitas and D. Krej\v{c}i\v{r}\'{i}k, {\it Unbounded planar domains whose second nodal line does not touch the boundary}, Math. Res. Lett., {\bf 14}  (2007), 107--111.



%\bibitem{Gam} A. Gamliel, K. Kim, A.I. Nachman, and E. Wolf,  {\it A new method for specifying
%nonradiating monochromatic sources and their fields}, J. Opt. Soc. Am. A,
%{\bf 6} (1989), 1388--1393.
%
%\bibitem{Gbu} G. Gbur, {\it Nonradiating Sources and the Inverse Source Problem}, PhD Thesis, Univ. Rochester, 2001.
%
%\bibitem{Gia} A.~Giacomini, {\it A stability result for Neumann problems in dimension} $N\geq 3$, J. Convex Anal, {\bf 11} (2004), 41--58.
%
%\bibitem{Goe} G.H. Goedecke, {\it Classically radiationless motions and possible implications for
%quantum theory}, Phys. Rev., 135:B281--B288, 1964.



\bibitem{grebenkov}
D. S. Grebenkov and  B. T. Nguyen, {\it 
Geometrical structure of Laplacian eigenfunctions}, SIAM Rev., {\bf 55(4)} (2013),  601--667. 


%\bibitem{GKLU4} {A.~Greenleaf, Y.~Kurylev, M.~Lassas and G.~Uhlmann}, {\it Invisibility and inverse problems}, Bulletin A. M. S., {\bf
%46} (2009), 55--97.
%
%\bibitem{GKLU5} {A.~Greenleaf, Y.~Kurylev, M.~Lassas and G.~Uhlmann}, {\it Cloaking devices, electromagnetic wormholes and
%transformation optics}, SIAM Review, {\bf 51} (2009), 3--33.
%
%
%\bibitem{GLU} {A.~Greenleaf, M.~Lassas and G.~Uhlmann},
%{\it Anisotropic conductivities that cannot be detected by EIT}, Physiolog.
%Meas, (special issue on Impedance Tomography), {\bf 24} (2003), 413.



\bibitem{grieser96}
D. Grieser and D. Jerison, {\it Asymptotics of the first nodal line of a convex domain}, Invent. Math., {\bf 125} (1996), 197--219.

\bibitem{grieser98}
D. Grieser and D. Jerison, {\it The size of the first eigenfunction of a convex planar domain}, J. Am. Math. Soc., {\bf 11} (1998),  41--72.

\bibitem{hayman}
W. K. Hayman, {\it Some bounds for principal frequency,}  Appl. Anal.,  {\bf 7}  (1978), 247--254.


%\bibitem{Hoen} B.J. Hoenders and H.P. Baltes, {\it The scalar theory of nonradiating partially
%coherent sources}, Lettere al Nuovo Cimento, {\bf 23} (1979), 206--208.

\bibitem{HHM} A. Hassell, L. Hillairet, J. Marzuola, {\it Eigenfunction concentration for polygonal billiards},
Comm. Partial Differential Equations, {\bf 34} (2009), no. 4--6, 475--485.


\bibitem{hoffmann}
M. Hoffmann-Ostenhof, T. Hoffmann-Ostenhof, and N. Nadirashvili, {\it The nodal line of the second eigenfunction of the Laplacian in $\mathbb R^2$ can be closed,}  Duke Math. J., {\bf 90} (1997),  631--640.



%\bibitem{HNS} N. Honda, G. Nakamura and M. Sini, {\it Analytic extension and reconstruction of obstacles from few measurements for elliptic second order operators}, Math. Ann., {\bf 355} (2013), no. 2, 401--427. 
%
%\bibitem{HSV} G. Hu, M. Salo and E. Vesalainen, {\it Shape identification in inverse medium scattering problems with a single far-field pattern},
%{SIAM J. Math. Anal.}, {\bf 48} (2016),152--165.
%
%\bibitem{Ikehata} M. Ikehata, {\it Reconstruction of a source domain from the {C}auchy data}, {Inverse Problems}, {\bf 15} (1999), 637--645.
%
%\bibitem{Isa2} V. Isakov, {\it Inverse Problems for Partial Differential Equations}, 2nd edition, Springer-Verlag, New York, 2006.
%
%\bibitem{Isa} V. Isakov, {\it On uniqueness in the inverse transmission scattering problem}, Commun. Part. Diff. Eqns., {\bf 15} (1990),
%1565--1587. 

\bibitem{jakobson}
D. Jakobson, N. Nadirashvili and J. Toth, {\it Geometric properties of eigenfunctions}, Russ. Math. Surv., {\bf 56} (2001), 1085--1105.


\bibitem{jerison}
D. Jerison, {\it The first nodal set of a convex domain}, in: Essays on Fourier analysis in honor of Elias M. Stein (Princeton, NJ, 1991), Princeton Math. Ser. No. 42, Princeton University Press, Princeton, NJ, 1995, 225--249.

\bibitem{jerison00}
D. Jerison and N. Nadirashvili, {\it The ``hot spots'' conjecture for domains with two axes of symmetry}, J. Amer. Math. Soc.,  {\bf 13} (2000),  741--772.

%\bibitem{JL} X. Ji and H. Liu, On isotropic cloaking and interior transmission eigenvalue problems,  European J. Appl. Math., {\bf 29} (2018), no. 2, 253--280. 


\bibitem{kamotski}
I. V. Kamotski  and  S. A.   Nazarov, {\it The augmented scattering matrix and exponentially decaying solutions of an elliptic problem in a cylindrical domain},  Journal of Mathematical Sciences, {\bf 111} (2002),   3657--3666.


\bibitem{kennedy}

J. B. Kennedy, {\it The nodal line of the second eigenfunction of the Robin Laplacian in $\mathbb R^2 $ can be closed}, J. Differential Equations,  {\bf 251(12)} (2011), 3606--3624.




%\bibitem{Kim} K. Kim and E. Wolf, {\it Non-radiating monochromatic sources and their fields},
%Opt. Commun., {\bf 59} (1986), 1--6.

%\bibitem{Kir} A. Kirsch, {\it The denseness of the far field patterns for the transmission problem}, IMA J. Appl. Math., {\bf 37} (1986), 213--225.

\bibitem{kirsch93}
A.  Kirsch, {\it Diffraction   by periodic structures}, Inverse problems in mathematical physics, Lecture Notes in Phys.,  {\bf 422} (1993), Springer, Berlin,  87--102.



\bibitem{kirsch94}
A. Kirsch, {\it Uniqueness theorems in inverse scattering theory for periodic structures}, Inverse
Problems, {\bf 10(1)} (1994), 145.


\bibitem{kirsch97}
A. Kirsch and F. Hettlich,    
{\it Schiffer's theorem in inverse scattering theory for periodic structures}, Inverse Problems, {\bf 13} (1997), 351--361.


%\bibitem{KirKre} A. Kirsch and R. Kress, {\it Uniqueness in inverse obstacle scattering}, Inverse Problems, {\bf 9} (1993), 285--299. 

%\bibitem{KOVW} {R.~Kohn, O.~Onofrei, M.~Vogelius and M.~Weinstein}, {\it Cloaking via change of variables for the Helmholtz
%equation}, Comm. Pure Appl. Math., {\bf 63} (2010), 973--1016.


\bibitem{krahn}
E. Krahn, {\it Uber eine von Rayleigh formulierte Minimaleigenschaft des Kreises,}  Math. Ann.,  {\bf 94} (1925), 97--100.

\bibitem{kt15}
D. Krej\v{c}i\v{r}\'{i}k and M. Tu\v{s}ek,  {\it Nodal sets of thin curved layers}, J. Differential Equations {\bf 258} (2015),  281--301. 

\bibitem{kt19}
D. Krej\v{c}i\v{r}\'{i}k and M. Tu\v{s}ek,  {\it Location of hot spots in thin curved strips}, J. Differential Equations, {\bf 266} (2019), 2953--2977. 



\bibitem{kuttler}
J. R. Kuttler and V. G. Sigillito,  {\it Eigenvalues of the Laplacian in two Dimensions,}  SIAM Rev., {\bf 26} (1984), 163--193.

%\bibitem{LV} E.~Lakshtanov and B.~Vainberg.
%{\it Applications of elliptic operator theory to the isotropic interior
  %transmission eigenvalue problem}, {Inverse Problems}, {\bf 29} (2013), 104003.
  
  
\bibitem{LP} P. Lax and R. Phillips, {\it Scattering Theory}, Academic Press, New York and London, 1967. 
%  
%\bibitem{Leo} {U.~Leonhardt}, {\it Optical conformal mapping},
%Science, {\bf 312} (2006), 1777--1780.



%\bibitem{lewy}
%H. Lewy, {\it On the minimum number of domains into which the nodal lines of spherical harmonics divide the sphere}, Comm. Partial Diff. Eq., {\bf 2}   (1977), 1233--1244.



%\bibitem{LiLiuRonUhl} {J. Li, H. Liu, L. Rondi and G. Uhlmann}, {\it Regularized transformation-optics cloaking for the Helmholtz equation: from partial cloak to full cloak}, Comm. Math. Phys., {\bf 335} (2015), 671--712.

\bibitem{lin}
C. S. Lin, {\it On the second eigenfunction of the Laplacian in $\mathbb R^2$},  Comm. Math. Phys., {\bf 111} (1987), 161--166.



%\bibitem{Liub} {H.~Liu}, {\it Virtual reshaping and
%invisibility in obstacle scattering}, Inverse Problems, {\bf 25}
%(2009), 045006.
%
%\bibitem{Liua} H.~Liu, {\it A global uniqueness for formally determined inverse electromagnetic obstacle scattering}, Inverse Problems, {\bf 24} (2008), 035018 (13pp).
%
%
%\bibitem{LPRX} H.~Liu, M.~Petrini, L.~Rondi and J.~Xiao, {\it Stable determination of sound-hard polyhedral scatterers by a minimal number of scattering measurements}, {J. Differential Equations}, {\bf 262} (2017), no. 3, 1631--1670.
%
%\bibitem{LRX} {H. Liu, L. Rondi and J. Xiao}, {\it Mosco convergence for $H(\mathrm{curl})$ spaces, higher integrability for Maxwell's equations, and stability in direct and inverse EM scattering problems}, {Journal of the European Mathematical Society (JEMS)}, in press, 2018. 
%
%\bibitem{LiuUhl} {H. Liu and G. Uhmann}, {\it Regularized transformation-optics cloaking in acoustic and electromagnetic scattering}, Inverse problems and imaging, 111?136, Panor. Synth\'eses, 44, Soc. Math. France, Paris, 2015.
%
%\bibitem{LWZ} H. Liu, Y. Wang and S. Zhong, Nearly non-scattering electromagnetic wave set and its application, Zeitschrift f\"ur Angewandte Mathematik und Physik, {\bf 68} (2017), 68:35.
%
%
%\bibitem{Liu3} H.~Liu, M.~Yamamoto and J.~Zou, {\it Reflection principle for the Maxwell equations and its application to inverse electromagnetic scattering}, Inverse Problems, {\bf 23} (2007), 2357--2366.

\bibitem{liu} H. Liu, {\it Schiffer's conjecture, interior transmission eigenvalues and invisibility cloaking: singular problem vs. nonsingular problem},
Contemporary Mathematics, American Math. Soc., {\bf 598} (2013).  

\bibitem{LPRX} H. Liu, M. Petrini, L. Rondi, Luca and J. Xiao, {Stable determination of sound-hard polyhedral scatterers by a minimal number of scattering measurements}, {\em J. Differential Equations}, {\bf 262} (2017), no. 3, 1631--1670.

\bibitem{LRX} {H. Liu, L. Rondi and J. Xiao}, {Mosco convergence for $H(curl)$ spaces, higher integrability for Maxwell's equations, and stability in direct and inverse EM scattering problems}, {\em J. Eur. Math. Soc.}, in press, 2017.

\bibitem{Liu-Zou}
H.~Liu and J.~Zou,
{\it Uniqueness in an inverse acoustic obstacle scattering problem for both sound-hard and sound-soft polyhedral scatterers},
Inverse Problems, {\bf 22} (2006),
515--524.

\bibitem{Liu-Zou3}
H.~Liu and J.~Zou,
{\it On unique determination of partially coated polyhedral scatterers
with far field measurements},
Inverse Problems, {\bf 23} (2007), 297--308.


\bibitem{makai}
E. Makai, {\it A lower estimation of simply connected membranes,}  Act. Math. Acad. Sci. Hungary,  {\bf 16}  (1965), 319--327.
%
%\bibitem{Mare} E.A. Marengo and R.W. Ziolkowski, {\it On the radiating and nonradiating components
%of scalar, electromagnetic, and weak gravitational sources}, Phys. Rev.
%Lett., {\bf 83} (1999), 3345--3349.
%
%\bibitem{Maz}
%V.~Maz'ya, {\it Sobolev Spaces}, 2nd edition, Springer-Verlag, Berlin, 2011.

\bibitem{Mclean}
W. Mclean, {\em Strongly Elliptic Systems and Boundary Integral Equation},
Cambridge University Press, Cambridge, 2000.


\bibitem{melas}

A. D. Melas, {\it On the nodal line of the second eigenfunction of the Laplacian in $\mathbb R^2$}, J. Diff. Geom., {\bf 35} (1992), 255--263.

\bibitem{millar69}

R. F. Millar, {\it On the Rayleigh assumption in scattering by a periodic surface},   Proc. Cambridge
Philos. Soc., {\bf 65} (1969), 773--791.

\bibitem{millar73}

R. F. Millar, {\it On the Rayleigh assumption in scattering by a periodic surface: Part II},   Proc. Cambridge
Philos. Soc., {\bf 69} (1971), 217--225.


%\bibitem{MR} G.~Menegatti and L.~Rondi, {\it Stability for the acoustic scattering problem for sound-hard scatterers}, Inverse Probl. Imaging, {\bf 7} (2013), 1307--1329.
%
%\bibitem{Mon} P.~Monk, {\it Finite Element Methods for Maxwell's Equations}, Oxford University Press, Oxford, 2003.  
%
%\bibitem{Mos} U.~Mosco, {\it Convergence of convex sets and of solutions of variational inequalities}, Advances in Math., {\bf 3} (1969), 510--585.
%


\bibitem{nadirashvili88}
N. Nadirashvili, {\it On the length of the nodal curve of an eigenfunction of the Laplace operator}, Russ. Math. Surv., {\bf 43} (1988), 227--228.



\bibitem{nadirashvili}
N. S. Nadirashvili, {\it Metric properties of eigenfunctions of the Laplace operator on manifolds}, Ann. Inst. Fourier, {\bf 41} (1991),  259--265.

%\bibitem{Ned} J.-C.~N\'ed\'elec, {\it Acoustic and Electromagnetic Equations}, Springer-Verlag, New York, 2001.
%
%\bibitem{Ngu-Wan}
%T.~Nguyen and J.-N.~Wang, {\it Quantitative uniqueness estimate for the Maxwell system with Lipschitz anisotropic media},
%Proc. Amer. Math. Soc., {\bf 140} (2012), 595--605. 
%
%
%
%
%
%
%\bibitem{PS} L. P\"aiv\"arinta and J. Sylvester, {\it Transmission eigenvalues}, {SIAM J. Math. Anal.}, {\bf 40} (2008), 738--753.
%
%
%\bibitem{payne67}
%L. E. Payne, {\it Isoperimetric inequalities and their applications,}  SIAM Rev., {\bf 9} (1967), 453--488.

\bibitem{payne73}
L. E. Payne, {\it On two conjectures in the fixed membrane eigenvalue problem}, Z. Angew. Math. Phys.,  {\bf 24} (1973), 721--729.



%\bibitem{Per} P. Pearle, {\it Classical electron models. In D. Teplitz, editor, Electromagnetism:
%Paths to Research}, 211--295, New York, 1982. Plenum Press.
%
%\bibitem{PenSchSmi} {J.~B.~Pendry, D.~Schurig and D.~R.~Smith}, {\it Controlling electromagnetic fields}, Science, {\bf
%312} (2006), 1780--1782.
%
%
%\bibitem{pleijel}
%A. Pleijel, {\em Remarks on Courant's nodal line theorem}, Comm. Pure Appl. Math., {\bf 9} (1956), 543--550.
%
%
%
%\bibitem{PWW} 
%{R.~Picard, N.~Weck and K.-J.~Witsch}, {\it
%Time-harmonic Maxwell equations in the exterior of perfectly conducting, irregular obstacles}, 
%Analysis (Munich), {\bf 21} (2001), 231--263. 


%\bibitem{CK}
%D.~Colton and R.~Kress, {\it Inverse Acoustic and Electromagnetic Scattering Theory}, 2nd edition, Springer-Verlag, Berlin, 1998.


\bibitem{Rayleigh}
J. W. S.~Rayleigh, {\it The Theory of Sound}, 2nd Ed., Vol. 1 and 2, Dover Publications, New York, 1945.





%\bibitem{Robbiano} L. Robbiano, 
%{\it Spectral analysis of the interior transmission eigenvalue problem}, Inverse Problems,  {\bf 29} (2013), 104001.
%
%\bibitem{Res}
% Yu.~G.~Reshetnyak, {\it
%Space Mappings with Bounded Distortion}, 
%American Mathematical Society, Providence RI, 1989.
%
%\bibitem{Ron1} L.~Rondi, {\it Unique determination of non-smooth sound-soft scatterers by finitely many far-field measurements}, Indiana Univ. Math. J., {\bf 52} (2003), 1631--1662.
%
%\bibitem{Ron07} L.~Rondi, {\it A variational approach to the reconstruction of cracks by boundary measurements}, J. Math. Pures Appl. (9), {\bf 87} (2007), 324--342.
%
%\bibitem{Ron07} L.~Rondi, {\em A variational approach to the reconstruction of cracks by boundary measurements}, J. Math. Pures Appl. (9), {\bf 87} (2007), 324--342.
%
%\bibitem{Ron2} L.~Rondi, {\it Stable determination of sound-soft polyhedral scatterers by a single measurement}, Indiana Univ. Math. J., {\bf 57} (2008), 1377--1408.
%
%\bibitem{RS} B.~P. Rynne and B.~D. Sleeman, {\it The interior transmission problem and inverse scattering from inhomogeneous media}, {SIAM J. Math. Anal.}, {\bf 22} (1991), 1755--1762. 


\bibitem{Sapoval91}
B. Sapoval, T. Gobron and A. Margolina, {\it  Vibrations of fractal drums}, Phys. Rev. Lett., {\bf 67}  (1991), pp. 2974--2977.

\bibitem{Sapoval93}
B. Sapoval and T. Gobron, {\it  Vibrations of strongly irregular or fractal resonators}, Phys. Rev. E,  {\bf 47}  (1993),  3013--3024.



\bibitem{schoen}

R. Schoen and S.-T. Yau, {\it Lectures on Differential Geometry},  Conference Proceedings and Lecture Notes in Geometry and Topology. Vol. 1, International Press, Boston, 1994.

\bibitem{Shn} A. Shnirelman, {\it Ergodic properties of eigenfunctions}, Uspechi Math. Nauk, {\bf 29} (1974),181--182.


\bibitem{Titchmarsh} E.~C. Titchmarsh, E. C. T. Titchmarsh and D. R. Heath-Brown, {\it The theory of the Riemann zeta-function}, Oxford University Press, Oxford, 1986. 


%Titchmarsh, E. C., Titchmarsh, E. C. T., & Heath-Brown, D. R. (1986). The theory of the Riemann zeta-function. Oxford University Press.



%\bibitem{Uhl} {G. Uhlmann}, {\it Inside Out: Inverse Problems and Applications}, Cambridge University Press, Cambridge, 2003.
%
%\bibitem{Sch1} G.A. Schott, {\it The electromagnetic field of a moving uniformly and rigidly electrified
%sphere and its radiationless orbits}, Phil. Mag., 15:752--761, 1933.
%
%\bibitem{Sch2} G.A. Schott, {\it The uniform circular motion with invariable normal spin of a
%rigidly and uniformly electrified sphere}, IV. Proc. Roy. Soc. A, 159:570--591,
%1937.
%
%\bibitem{Som1} A. Sommerfeld, {\it Zur elektronentheorie. I. Allgemeine untersuchung des feldes
%eines beliebig bewegten elektrons}, Akad. der Wiss. (G\"ott.), Math. Phys. Klasse,
%Nach., 99--130, 1904.
%
%\bibitem{Som2} A. Sommerfeld, {\it Zur elektronentheorie. II. Grundlagen f\"ur eine allgemeine dynamik
%des elektrons}, Akad. der Wiss. (G\"ott.), Math. Phys. Klasse, Nach., 
%363--439, 1904.


\bibitem{weyl11}
H. Weyl, {\it \"{U}ber die asymptotische verteilung der Eigenwerte}, Gott. Nach.,  (1911), 110--117.

\bibitem{weyl12}
 H. Weyl, {\it Das asymptotische Verteilungsgesetz der Eigenwerte linearer partieller Differentialgleichungen} , Math. Ann.,  {\bf 71} (1912), 441--479.

\bibitem{yau}
S. T. Yau, {\it Problem section,  Seminar on differential geometry}, Ann.
of Math. Studies {\bf 102} (1982), Princeton Univ. Press, Princeton NJ, 669--706.

\bibitem{Zel} S. Zelditch, {\it Eigenfunctions of the Laplacian of Riemannian Manifolds}, book in progress, 2017. 


\end{thebibliography}
\end{document}